\theoremstyle{plain}
\newtheorem{Conj}[equation]{Conjecture}
\newtheorem{Thm}[equation]{Theorem}
\newtheorem{Cor}[equation]{Corollary}
\newtheorem{Prop}[equation]{Proposition}
\newtheorem{Lem}[equation]{Lemma}
\newtheorem{Remark}[equation]{Remark}
\newtheorem{Def}[equation]{Definition}
\numberwithin{equation}{section}
\newcommand{\tr}{\operatorname{tr}}
\newcommand{\Nm}{\operatorname{Nm}}
\newcommand{\Ker}{\operatorname{Ker}}
\newcommand{\Image}{\operatorname{Im}}
\newcommand{\Ind}{\operatorname{Ind}}
\newcommand{\ind}{\operatorname{ind}}
\newcommand{\Hom}{\operatorname{Hom}}
\newcommand{\Aut}{\operatorname{Aut}}
\newcommand{\End}{\operatorname{End}}
\newcommand{\G}{\mathbb{G}}
\newcommand{\I}{\mathbb{I}}
\newcommand{\sgn}{\operatorname{sgn}}
\newcommand{\disc}{\operatorname{disc}}
\newcommand{\diag}{\operatorname{diag}}
\newcommand{\C}{\mathbb C}
\newcommand{\A}{\mathbb{A}}
\newcommand{\Z}{\mathbb{Z}}
\newcommand{\R}{\mathbb{R}}
\newcommand{\ol}{\overline}
\newcommand {\integral}[1]{\int\limits_{{#1}(F)\backslash {#1}(\A)}}
\newcommand{\bm}{\begin{multline*}}
\newcommand{\tu}{\end  {multline*}}
\title{ The non-tempered $\theta_{10}$  Arthur parameter
and Gross-Prasad Conjectures}
\author{Nadya Gurevich and Dani Szpruch}
\begin{document}
\maketitle

\begin{abstract}
We provide a construction of
local and automorphic non-tempered Arthur packets $A_\Psi$
of  the group  $SO(3,2)$  and its inner form $SO(4,1)$ associated
with Arthur's parameter
$$\Psi:\mathcal L_F\times SL_2(\C)\rightarrow O_2(\C)\times SL_2(\C)
\rightarrow Sp_4(\C)$$ and prove a multiplicity formula.

We further study the restriction of the representations
in $A_\Psi$ to the subgroup $SO(3,1).$
In particular, we discover that the local Gross-Prasad conjecture,
formulated for generic L-packets, does not generalize naively
to a non-generic A-packet.
We also study the non-vanishing of the
automorphic $SO(3,1)$-period on the group
$SO(4,1)\times SO(3,1)$ and $SO(3,2)\times SO(3,1)$ for
the representations above.

The main tool is the local and global theta correspondence for unitary
quaternionic similitude dual pairs.
\end{abstract}

\section{Introduction}
Let $F$ be a global  field and let $\mathcal L_F$
be its conjectural Langlands group. Recall that for every
place $v$ of $F$ there is an embedding
$i_v: W'_{F_v}\hookrightarrow \mathcal L_F,$
where $W'_{F_v}$ is the Weil-Deligne group of the local
field $F_v$.
In this paper we consider a non-tempered  Arthur parameter
$\Psi:\mathcal L_F \times SL_2(\C)\rightarrow Sp_4(\C)$
such that the image of a unipotent element of $SL_2(\C)$
belongs to the orbit generated by the short root element.
Such a parameter is called a parameter of $\theta_{10}$ type.
The global parameter $\Psi$ gives rise to
a family of local non-tempered parameters
$$\Psi_v=\Psi\circ i_v: W'_{F_v}\times SL_2(\C)\rightarrow Sp_4(\C).$$

Let $(V,q^\pm)$  be a non-degenerate quadratic space over
a non-archimedean field $F_v$ of
dimension $5$,  discriminant $1$
and the normalized Hasse invariant $\pm 1$.
The groups $SO(V,q^\pm)$ are pure inner forms of each other and share
the same dual group $Sp_4(\C)$.
According to Arthur's conjecture
there exist finite sets  $A_{\Psi_v}$ of
admissible  unitary representations of $SO(V,q^\pm)(F_v)$
corresponding to the parameter $\Psi_v$, called local $A$-packets. The construction of Arthur packets is a deep question which is far from
being solved. For many small rank cases the packets are
constructed by  the theta correspondence method.

\subsection{Construction of $A$-packets}
In this paper we construct the local $A$-packets of $\theta_{10}$ type
for the split group $SO(V,q^+)$ and its inner form $SO(V,q^-)$
using the similitude theta correspondence for quaternionic unitary dual pairs.
The quaternionic unitary dual pairs are relevant to the problem
since the groups $SO(V,q^\pm)(F_v)$ are isomorphic to
the group of projective similitude automorphisms of the
two-dimensional Hermitian
space over $D$, where $D$ runs over the set of all quaternion algebras
over $F_v$.
When $F_v=\R$ the same similitude theta correspondence
allows to define the Arthur packets for the groups $SO(3,2)$
and $SO(4,1)$, but not for the anisotropic form $SO(5)$.

We justify our construction using  global methods.
Let $(V,q)$ be a non-degenerate quadratic space of dimension $5$ over
a number field $F$ that is not anisotropic over any real place.
Taking tensor products of the representations of $SO(V,q)(F_v)$
in the local $A$-packets one can form a set of nearly equivalent
representations of $SO(V,q)(\A)$, where $\A$ is the ring of adeles of $F$.
This set is called the  global A-packet.
For any representation in the global A-packet Arthur predicts
a formula for its multiplicity in
 the discrete automorphic $L^2$ spectrum of $SO(V,q)$.
We construct an automorphic realization of certain representations
in the global A-packets and prove this multiplicity formula.

As another justification we show that the constructed cuspidal
representations form a nearly equivalence class of cuspidal representations.
The proof of the latter statement involves an $L$-function argument.

The cuspidal automorphic representations of $\theta_{10}$ type
of the split group $Sp_4\simeq Spin_5$  were considered by Piatetskii-Shapiro and Howe [HPS]
using the theta correspondence for the dual pair $(O_2, Sp_4)$
as the first counterexamples to the Ramanujan conjecture.
Later, Soudry in [S] used the similitude global theta correspondence
to construct non-tempered CAP representations of
$GSp_4$ and  investigated their properties. In [Ya2]
Yasuda has  constructed the local Arthur packets of $\theta_{10}$ type
of the group $Sp_4$ and its inner form
using the theta correspondence for quaternionic dual pairs.
We modify his results for the similitude theta correspondence.
Note, that for the similitude group the multiplicity one property
holds for the  cuspidal representations in the packets of type $\theta_{10}$,
when the representations of $Sp_4$ constructed by Yasuda can have high mulitplicity
in the discrete spectrum.


\subsection{The restriction problem}
Our second goal is to investigate the restriction problem over
a local non-archimedean field $F_v$.
Assume for now that $(V,q)$ is an arbitrary non-degenerate quadratic space
and $(U,q|_{U})$ is its non-degenerate subspace of codimension one.

The main object of the restriction problem is to compute,
for all irreducible representations $\Pi$ of $SO(V,q)$ and $\pi$ of
$SO(U,q|_U)$, the dimension of $$\Hom_{SO(U)}(\Pi,\pi).$$
The recent multiplicity one result in [AGSR] with a refinement in [W]
shows that the dimension of this space  is at most one.

About twenty years ago Gross and Prasad in [GP1] and [GP2]  have formulated a conjecture
according to which, given two  generic  local Langlands parameters
$$\Phi_1: W'_{F_v}\rightarrow {^L(SO(V,q))}(\C),\quad
\Phi_2: W'_{F_v}\rightarrow {^L(SO(U,q|_U))}(\C),$$
there exists a unique quadratic space $(V',q')$  with
a non-degenerate subspace $(U',q'|_{U'})$ of codimension one such that
$$\dim V' =\dim V, \quad \disc(V',q')=\disc(V,q), \qquad
\dim U' =\dim U,\quad \disc(U',q'|_{U'})=\disc(U,q|_U),$$
and unique representations
$\Pi$ of $SO(V')$ and $\pi$ of $SO(U')$ in the local Langlands
packets associated with the parameters $\Phi_1$ and $\Phi_2$ respectively  such that
$$\Hom_{SO(U')}(\Pi,\pi)\neq 0.$$
Equivalently,
\begin{equation}\label{GP:conj}
\sum_{V'\subset U'}\,\, \sum_{\Pi,\pi} \dim \Hom_{SO(U')}(\Pi,\pi)=1.
\end{equation}
Here $SO(V')\supset SO(U')$ runs over all relevant pure inner forms of
$SO(V)\supset SO(U)$ and  $\Pi$ and $\pi$ run
over representations in the Langlands packets associated with $\Phi_1,\Phi_2$ respectively.

The conjecture has first been proven in several
low rank cases (see [P1],[P2])  and later proven in
its full generality by Moeglin and Waldspurger in a series
of papers (see [MW])
assuming some natural properties of the generic Langlands packets.
It has also recently been  revised and generalized by
Gan, Gross and Prasad in [GGP] for all the classical groups.

It is natural to ask what happens if the parameters
are not generic. Having global applications in mind it is natural to replace
the Langlands parameters $\Phi_j$ and the associated Langlands packets   by
the Arthur's parameters $\Psi_j$ and  the Arthur's packets $A_{\Psi_j}$
associated with them. By Shahidi's conjecture every tempered
Arthur parameter is generic Langlands parameter.
Thus we concentrate on the non-tempered Arthur's parameters.

At the moment there is no satisfactory generalization of the
Gross-Prasad conjecture for the non-tempered Arthur packets.
However, examining the small rank cases for which the
construction of the Arthur packets is known  we quickly see
that the picture turns out  to be quite different.
In particular,  the sum (\ref{GP:conj}) is not always positive.
In the cases it is positive, we call the pair of parameters
$(\Psi_1,\Psi_2)$  {\sl locally admissible.}
Furthermore,  for locally admissible pairs the sum (\ref{GP:conj}) can be
greater than one.

Let us elaborate on the picture for the case  $\dim V=5$ and
$\disc(q)=1$. In this case $U$ is a non-degenerate subspace of $V$ of dimension $4$
and  $\disc q|_U=d$. The discriminant algebra $K$ of $q|_U$ is defined by
$$K=\left\{
\begin{array}{ll}
F[\sqrt{ d}] & d \notin (F^\times) ^2\\
F\times F &  d \in (F^\times)^2
\end{array}\right..$$
By abuse of notations we shall write $\disc(q|_U)=K$.

The non-tempered parameters $\Psi_1$ are partitioned into families
according to the orbit
of the image of $SL_2(\C)$, or, by the Jacobson-Morozov theorem,
according to a non-trivial unipotent orbit of $Sp_4(\C)$.
There exist  three families of  non-tempered Arthur parameters
corresponding to the three non-trivial unipotent orbits of
$^LSO(V)\simeq Sp_4(\C)$: the regular one,
the one generated by a long root
and the one generated by a short root of $Sp_4(\C)$.

When $\Psi_1$ is associated with the regular orbit,
the packet is a singleton and  consists of a one-dimensional representation.
Thus, $(\Psi_1,\Psi_2)$ is admissible only for non-tempered $\Psi_2$
 corresponding  to the regular orbit of $^L(SO(U))=SO_4(\C)$
and the restriction question is trivial.

The parameters  $\Psi_1$  associated with the  long root orbit
are called parameters of Saito-Kurokawa type.
Arthur's packets for all the inner forms
were constructed in [G] and the restriction problem has been considered
in [GG]. In particular, the condition for a pair
$(\Psi_1,\Psi_2)$ to be admissible was determined. Note that
$\Psi_2$ must be tempered. The sum (\ref{GP:conj})
can equal $1$ or $2$. However, for a fixed space $V'$
we have
\begin{equation}\label{GP:conj:group:fixed}
\sum_{\Pi,\pi} \dim \Hom_{SO(U')}(\Pi,\pi) \leq 1.
\end{equation}
Here the representations $\Pi$ of $SO(V')$  and $\pi$  of $SO(U')$
run over the representations in the packets associated with
the parameters $\Psi_1$ and $\Psi_2$ respectively.

Finally, for $\Psi_1$  associated with the short root orbit, i.e.,
of $\theta_{10}$ type, we solve the restriction problem in this paper.
Assuming $\disc(q|_U)$ is a field we determine the parameters
$\Psi_2$ such that $(\Psi_1,\Psi_2)$ is locally admissible.
Similar to  Saito-Kurokawa case, the parameter  $\Psi_2$ must be tempered.
We compute the value of (\ref{GP:conj}): it can be either $2$ or $4$.
Furthermore, even for a fixed form $V'$ the sum (\ref{GP:conj:group:fixed})
can be bigger than one. This phenomenon has not occurred before.
The local restriction theorem appears in Section \ref{local:restriction}.

For the case $\disc(q|_U)$ is a split quadratic algebra
we obtain the restriction of representations in $A_{\Psi_1}$
to the split group $SO(2,2)$, but not to its anisotropic inner form $SO(4)$.

\begin{Remark} \label{assumption}
The difficulty that prevents us  from
computing the restriction to the anisotropic group
is of technical nature.
Our main tool is the see-saw duality for
a pair of similitude quaternionic unique dual pairs.
However there is a difficulty to define
the theta correspondence for such {\sl similitude}
dual pairs when neither one of the quaternionic
Hermitian spaces admits a polarization as a sum of two
isotropic subspaces. Hence, the construction will not work
whenever the group $SO(U)$ is anisotropic.
The same difficulty prevents one to construct representations
in the Arthur packet of the anisotropic group $SO(5)$ over the field of real numbers.

Including the anisotropic case would bring a lot of new notations
and discussions which we feel do not belong here.
We shall treat the remaining case elsewhere.
\end{Remark}

The local restriction problem  has a global counterpart.
For any $(V',U')$ as above, define a functional
$P_{V',U'}$ on the space of automorphic forms
$\mathcal A(SO(V'))\otimes  \ol{\mathcal A_{cusp}(SO(U'))}$
by
$$P_{V',U'}(F,f)=
\int\limits_{SO(U')(F)\backslash SO(U')(\A)} F(h) \ol{f(h)} \, dh.$$

Given an automorphic representation $\Pi$  of $SO(V')(\A)$
and a cuspidal automorphic representation $\pi$  of $SO(U')(\A)$,
we investigate the non-vanishing of $P_{V',U'}$ on $\Pi\boxtimes \ol\pi$.
Two  parameters $\Psi_1$ and $\Psi_2$ such that $P_{V',U'}$  is non-trivial
on some  representation  $\Pi\boxtimes \ol\pi$ of $SO(V')\times SO(U')$
from the global packet $A_{\Psi_1}\times A_{\Psi_2}$
are called {\sl globally admissible.}

Let   $\Psi_1$ be a parameter of the type $\theta_{10}$, and
$E$ be a quadratic field extension naturally associated with it.
Let $\Psi_2$ be a tempered parameter of $SO(U, q|_U)$.

Our main global theorem states:

\begin{Thm}\label{global:condition}
Let $\Pi$ be an automorphic representation of $SO(V')(\A)$ in $A_{\Psi_1}$ and
let $\pi$ be a cuspidal representation of $SO(U')(\A)$. The following statements
are equivalent.
\begin{enumerate}
\item
The period $P_{V',U'}$ does not vanish on the $\Pi\boxtimes \ol{\pi}$.
\item
$\Hom_{SO(U')(\A)}(\Pi,\pi)\neq 0$ and  $E\neq \disc(q|_{U'})$.
\end{enumerate}
\end{Thm}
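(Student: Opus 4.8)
\emph{Strategy.}
The implication (1)$\Rightarrow$(2) splits into two parts. That a nonvanishing global period produces nonzero local invariant functionals, hence $\Hom_{SO(U'_v)}(\Pi_v,\pi_v)\neq 0$ at every place $v$, is formal: pick a factorizable vector in $\Pi\boxtimes\ol\pi$ on which $P_{V',U'}$ does not vanish and factor the integral into local ones. The remaining assertions --- that $E\neq\disc(q|_{U'})$ is also necessary, and that (2) implies (1) --- we prove together, by an explicit evaluation of the period through the theta correspondence. We may assume throughout that $\pi$ lies in a tempered packet $A_{\Psi_2}$, since for the regular and Saito--Kurokawa alternatives for $\Psi_2$ both sides of the equivalence fail, as follows from the discussion preceding the theorem together with the local restriction theorem of Section~\ref{local:restriction}.

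\emph{Realizing $\Pi$ as a theta lift.}
By our construction of $A_{\Psi_1}$ and the attendant multiplicity formula, every automorphic member $\Pi$ of $A_{\Psi_1}$ occurring in the discrete spectrum is a global theta lift $\Pi=\theta_\phi(\tau)$ for the quaternionic unitary similitude dual pair $(G_1,SO(V'))$ attached to the $O_2(\C)$-factor of $\Psi_1$; here $SO(V')$ is realized as the projective similitude group of a two-dimensional $D$-Hermitian space, $G_1$ is the small (essentially toric) quaternionic unitary similitude group determined by the quadratic field $E$, and $\tau$ is the automorphic character of $G_1(\A)$ that encodes $\Psi_1$.

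\emph{The see-saw reduction.}
Place $SO(U')\subset SO(V')$ inside the see-saw
\[
\begin{matrix}
SO(V') & & G_1'\\
\cup & & \cup\\
SO(U') & & G_1
\end{matrix}
\]
in which $(SO(V'),G_1)$ and $(SO(U'),G_1')$ are quaternionic unitary similitude dual pairs and $G_1\subset G_1'$. The see-saw identity yields, for matching Schwartz and automorphic data,
\[
P_{V',U'}\bigl(\theta_\phi(\tau),\pi\bigr)=\int_{[G_1]}\tau(g)\,\ol{\theta_\phi(\pi)(g)}\,dg,
\]
where $\theta_\phi(\pi)$ is the global theta lift of $\pi$ from $SO(U')$ to $G_1'$, restricted to $G_1$. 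Thus $P_{V',U'}$ is nonzero on $\Pi\boxtimes\ol\pi$ for some choice of vectors and data if and only if: (a) the lift $\theta_\phi(\pi)$ to $G_1'$ is nonzero, and (b) the $G_1$-toric period of $\theta_\phi(\pi)$ twisted by $\tau$ is nonzero.

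\emph{Evaluating the two conditions and locating the obstruction.}
For (a) we run the Rallis tower: since $\pi$ is tempered, its first occurrence in the quaternionic tower and the nonvanishing of the lift to the relevant inner form $G_1'$ are governed by local theta dichotomy together with the nonvanishing of a standard $L$-value of $\pi$; granted the local $\Hom$ hypotheses this pins down the correct $G_1'$ and gives $\theta_\phi(\pi)\neq 0$. For (b), because $G_1$ is (up to isogeny) the torus attached to $E$, the twisted integral is a Waldspurger-type toric period of the representation $\theta_\phi(\pi)$, which is of $GL_2$ (or $D^\times$) type: it is nonzero precisely when every local toric functional is nonzero --- a Tunnell--Saito local root-number condition --- and the global root number of the pertinent central value is $+1$ with that value nonzero. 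At each place the local conditions arising from (a) and (b) are, via the local see-saw and the local restriction theorem of Section~\ref{local:restriction}, equivalent to $\Hom_{SO(U'_v)}(\Pi_v,\pi_v)\neq 0$, which matches condition (1) with the local part of (2). What remains is a single global constraint: the sign of the global root number, equivalently the nonvanishing of the global central value. The crux is to show that this sign equals $+1$ exactly when $E\neq\disc(q|_{U'})$, and that when $E=\disc(q|_{U'})$ the functional equation forces an odd order of vanishing, so the period vanishes identically regardless of the local data. Carrying out this global root-number computation --- tracking how the two discriminants enter the $\epsilon$-factor of the $L$-function attached to $\theta_\phi(\pi)$ and $\tau$, uniformly across the packets --- is the main obstacle of the proof.
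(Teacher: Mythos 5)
The see-saw reduction in your second and third paragraphs is exactly the paper's opening move, and your local half of (1)$\Rightarrow$(2) is fine. The trouble starts in the final paragraph, where you misidentify the nature of the period you have reduced to and, consequently, invoke machinery that does not apply. After the see-saw, what must be shown nonzero is the integral of $\theta_L(\pi)$ (a section of an Eisenstein series on $H_K$, i.e.\ on the similitude orthogonal group of a \emph{two}-dimensional $K$-space, whose connected component is a torus with $K$-points $L^\times$) against $\tau$ over $[H_D]$ (again essentially a torus, with $F$-points containing $E^\times$ as the index-$2$ subgroup). This is an integral of one torus character against another over a quotient that is compact modulo center; it is elementary and is evaluated in closed form in the paper by writing $\theta_L(\pi)$ as $E_{\eta_L}(\phi_K)$ and $\tau$ as $E_{\eta}(\phi)$ and unfolding. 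It is \emph{not} a Waldspurger/Tunnell--Saito toric period of a $GL_2$- or $D^\times$-type representation, and there is no central-value or root-number input at this stage. The phrase ``which is of $GL_2$ (or $D^\times$) type'' points to a misreading of which member of the see-saw carries the theta lift of $\pi$.

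The claim that $E=\disc(q|_{U'})$ forces an odd order of vanishing of a central $L$-value is where the argument actually fails. What happens when $E=K$ is that $L=E\otimes_F K$ is \emph{split} over $K$, so the theta lift $\theta_L(\pi)$ from $G_K$ to the split $H_K$ vanishes identically for every cuspidal $\pi$, because its nonvanishing is governed by a pole of $L(\pi,Ad\otimes\chi_{L/K},s)=L(\pi,Ad,s)$ at $s=1$, and the adjoint $L$-function of a cuspidal $GL_2$ representation is entire (this is Remark \ref{vanishing:E=K}). That is a ``pole versus no pole'' theta-dichotomy phenomenon, not a sign in a functional equation; there is no pertinent $\epsilon$-factor computation to carry out, and trying to locate one is why you flag this step as ``the main obstacle.'' In short: the see-saw setup is right, but the evaluation step should be replaced by (i) the observation that $E=K$ kills $\theta_L(\pi)$ outright, and (ii) for $E\neq K$, a direct computation of the resulting torus integral, which the paper does explicitly (and which, incidentally, also uses Lemma \ref{local:global:1} and strong multiplicity one to promote the local matching of $\eta_{L_v}$ with $\eta_v$ to a global matching).
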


This agrees with a version of the refined Ichino-Ikeda conjecture.
For tempered cuspidal representations $\Pi$ of $SO(V')(\A)$
and $\pi$ of $SO(U')(\A)$  Ichino and Ikeda in [II] have conjectured
that the period $P_{V',U'}$ does not vanish on $\Pi\boxtimes \ol{\pi}$
if and only if
\begin{equation}\label{II:global:condtition}
\Hom_{SO(U')(\A)}(\Pi,\pi)\neq 0,\quad \quad
\frac{L(\Pi\boxtimes \ol{\pi},1/2)}{L(\Pi\boxtimes\ol{\pi}, Ad,1)}\neq 0.
\end{equation}

We show that the second statement of Theorem \ref{global:condition}
is equivalent to (\ref{II:global:condtition}) .

\begin{Remark}
For the same reasons as before we investigate the non-vanishing of
 $P_{V',U'}$ assuming
\begin{itemize}
\item $(V',q')$ is not anisotropic over any real place.
\item $(U',q'|_{U'})$  is not anisotropic for any local place.
\end{itemize}
\end{Remark}

The paper is organized as follows: after explaining in Section $2$ some
generalities about (skew)-Hermitian spaces over division algebras,
we introduce in the Sections $3$ and $4$
the group of similitude automorphisms of these spaces.

In Section $5$ we recall the notion of Howe duality for  similitude unitary  quaternionic
groups. This is  our main tool for constructing the A-packets.
In Section $6$ we determine the theta correspondence as explicitly as possible.
The local A-packets are defined  in the Section $7$.
The restriction problem is solved in the Section $8$ using the see-saw duality.
The rest of the paper addresses global questions.
In  section $9$ we define the global Arthur packet and compute
the multiplicity predicted by Arthur's multiplicity formula.
The automorphic realization
of the global Arthur packets is obtained in Sections
$10$ and $11$ using the global theta correspondence.
The Arthur's multiplicity formula  is established in the Section
$12$.
Section $13$ is devoted to the Rankin-Selberg integral representation
of an L-function of degree $5$ of a representation of $SO(V')$.
When the group $SO(V')$ is split,  this integral representation
has been constructed by  Piatetskii-Shapiro and Rallis.
This L-function is used  in Section $14$ to show
that the  cuspidal representations that we have constructed
constitute a full nearly equivalence
class, so that our construction is exhaustive.
The global restriction problem
is solved using the global see-saw duality in the Sections $15-17$.
The main global theorem is (\ref{global:restriction:thm}).
Finally, in Section $18$ we show the compatibility
of the obtained results with the Ichino-Ikeda conjecture.

\subsection{Acknowledgement}
The first author is partially supported by ISF grant $1691/10$.
We thank W.T. Gan for his help and attention.

\section{ Hermitian and skew-Hermitian spaces}

Let  $F$ be a local non-archimedean
field. Let $D$ be a (possibly split) quaternion algebra over $F$. Denote by $\sigma$ the main involution on $D$
and by $\Nm_{D/F}$ the reduced norm. To any right (left) $D$-module $M$ we associate a left (right)
$D$ module $\ol{M}$ by
$$\ol{M}=\{\bar m : m\in M\}, \quad
\ol{m_1}+\ol{m_2}=\ol{m_1+m_2},\quad
 d\cdot \ol {m}=  \ol{m \cdot\sigma(d)}\quad
(\ol{m} \cdot d= \ol{\sigma(d)\cdot m}) .$$

We define a right skew-Hermitian space $(V,s)$ to be a right
free $D$-module together with a map
$s: \ol V\times V\rightarrow D$ such that
$$s(d_1 \ol{v_1}, v_2 d_2)=  d_1 s(\ol{v_1}, v_2) d_2,
\quad s( \ol{v_2},  v_1)= - \sigma(s(\ol{v_1},v_2)).$$
All the non-degenerate skew-Hermitian free modules of rank $n$ over $D$
are classified up to isometry by the discriminant in $F^\times/(F^\times)^2$,
or equivalently by quadratic algebras over $F$.

\vskip 10pt

We also define a left Hermitian space $(W,h)$ to be a left free
$D$-module $W$ together with the  map $h:W \times \ol{W}\rightarrow D$
such that
$$h(d_1 w_1, \ol{w_2} d_2)= d_1 h(w_1,\ol{w_2}) d_2,\quad
h( w_2,\ol{w_1})= \sigma( h( w_1,\ol{w_2})).$$

For any even $n$ there exists a unique, up to isometry,
non-degenerate left Hermitian space
of rank $n$ over $D$.


\vskip 10pt
\subsection{Morita equivalence}
Let $D$ be a split algebra. Equivalently,
$D=\End_F(M)$ for
a two-dimensional space $M$ over $F$,
viewed as a right $D$ module. The space $M\otimes_D \ol{M}$ is one dimensional over $F$.
Fix an isomorphism
$\varphi_M: M\otimes_D \ol{M}\rightarrow F$.
The choice of $\varphi_M$ fixes an isomorphism $\ol{M}\simeq M^\ast$.
In particular, there is an isomorphism
$\ol{M}\otimes_F M\simeq \End_F(M)=D$.

\vskip 10pt

For any left Hermitian space $(W_D, h_D)$ over $D$
there corresponds a symplectic
space $(W,h)$ over $F$ defined by
$$W= M\otimes_D W_D, \quad
h(m_1\otimes w_1, m_2\otimes w_2)=
\varphi_M(m_1 h_D(w_1,\ol{w_2})\otimes \ol{m_2}).$$

\vskip 10pt

Similarly, for any right skew-Hermitian space $(V_D,s_D)$
there corresponds a quadratic space $(V,s)$ over $F$ defined by
$$V= V_D\otimes_D \ol{M}, \quad
s(v_1\otimes \ol{m_1}, v_2\otimes \ol{m_2})=
\varphi_M(m_1 s_D(\ol{v_1}, v_2) \otimes \ol{m_2}).$$

Obviously,

$$\dim_F V= 2\dim_D V_D, \quad  \dim_F W =2\dim_D W_D, \quad
\disc(V)=\disc(V_D). $$

Note that the isomorphism $\varphi_M$ is not canonical.
For an element $a\in F^\times \backslash \Nm_{E/F}(E^\times)$ the
isomorphism $\varphi_M$ and $a\varphi_M$ give rise
to two quadratic spaces $(V,s^\pm)$ having the  same discriminant
but different normalized Hasse invariants. Hasse invariant $h(V,q)$ for
a quadratic space is normalized so that  it is constant in any
Witt tower.

Moreover, there are isomorphism of $D$-modules
$$V\otimes_F M= V_D\otimes_D \ol{M}\otimes_F M\simeq V_D,
\quad \ol{M}\otimes_F W =\ol{M}\otimes_F M\otimes_D W_D\simeq W_D.$$

\subsection{Main players}
Let  us fix the Hermitian and skew-Hermitian
spaces that will be considered in the paper.
  First we fix notations for the algebras.

\begin{itemize}
\item Let $D$ be a  quaternion algebra over $F$ with the main involution
$\sigma$. Define $h(D)=1$ if $D$ split and $h(D)=-1$ otherwise.
\item Let $E$ be a  quadratic algebra over $F$ contained in $D$.
The involution $\sigma$ restricted to $E$ defines a non-trivial
Galois action.
\item Let $K$ be a  quadratic algebra over $F$.
\item Denote  $L=E\otimes_F K$. It is a  quadratic algebra over $K$.
\item  Denote $D_K=D\otimes_F K $. It is a quaternion algebra over $K$.
The main involution is still denoted by $\sigma$ and $\Nm_{D_K/K}$ denotes the reduced norm.
\end{itemize}

\begin{enumerate}
\item Let $(V_D,s_D)$ be the one-dimensional
right skew-Hermitian space over $D$ of discriminant $E$.

\item Let $(V_{D_K}=V_D \otimes_F K, s_{D_K})$ be the one-dimensional
right skew-Hermitian space over $D_K$,
where $$s_{D_K}(\ol{v_1\otimes k_1},v_2\otimes k_2)=s_D(\ol{v_1},v_2)\ol{k_1}k_2.$$
It has  discriminant $L=E\otimes_F K$.
\vskip 5pt
\item Let $(W_{D_K}, h_{D_K})$ be the one-dimensional
left Hermitian space over $D_K$.
\vskip 5pt
\item Let $(W_D=R_{K/F} W_{D_K}, h_D)$ be the two-dimensional
right Hermitian space over $D$ obtained from
$W_{D_K}$ by a restriction of scalars. The form $h_D$ is defined by
$$h_D(w_1,\ol{w_2})=\tr_{D_K/D} h_{D_K}(w_1,\ol{w_2}).$$
\end{enumerate}

Assume that the algebra $D_K$ splits.

\begin{enumerate}
\item Denote by $(V_K, s^\pm_K)$ the two-dimensional quadratic  spaces over $K$
of discriminant $L$, Morita equivalent to $(V_{D_K}, s_{D_K})$.

\item Denote by $(W_K,h_K)$
the two-dimensional symplectic  space over $K$,
Morita equivalent to $(W_{D_K}, h_{D_K})$.
\end{enumerate}

Assume that the algebra $D$ splits.
\begin{enumerate}
\item Denote by $(V_F, s^\pm_F)$ the two-dimensional quadratic  spaces over $F$
of discriminant $E$, Morita equivalent to $(V_D, s_D)$.

\item Denote by $(W_F,h_F)$
the four-dimensional symplectic  space over $F$,
Morita equivalent to $(W_D, h_D)$.
\end{enumerate}

Note the obvious relations:
 $$V_K=V_F\otimes_F K, \quad W_F=R_{K/F} W_K.$$

\subsection{Symplectic forms on tensor products}
The space $V_D\otimes_D W_D$ admits a symplectic
form defined by
$$\langle v\otimes w, v'\otimes w'\rangle=
\tr_{D/F}s_D(\ol{v},v')\sigma(h_D(w,\ol{w'})). \quad $$
Similarly, the space $V_F\otimes_F W_F$ admits a symplectic form defined by
$$\langle v\otimes w, v'\otimes w'\rangle=
s(v, v')h(w, w'). \quad $$
Finally, the space $V_K\otimes_K W_K$ is a symplectic space
over $K$. Composing the symplectic form  with $\tr_{K/F}$
we obtain the symplectic form over $F$.

\begin{Lem}
\begin{enumerate}
\item Suppose that $D$ splits. There is a natural isomorphism
of symplectic $F$ spaces
$$V_F\otimes_F W_F \simeq V_D\otimes_D W_D.$$
\item Suppose that $D_K$ splits. There is a natural isomorphisms of symplectic $F$ spaces
 $$V_{K}\otimes_{K} W_{K}\simeq V_D\otimes_D W_D.$$
\end{enumerate}
\end{Lem}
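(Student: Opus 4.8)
The plan is to exhibit both tensor-product symplectic spaces as different ways of polarizing or decomposing the same underlying $F$-vector space carrying one symplectic form, and then to check that the forms genuinely agree (not merely up to scalar). I would organize the argument around the Morita dictionary recalled in Section~2.2, tracking the auxiliary two-dimensional space $M$ with its fixed isomorphism $\varphi_M\colon M\otimes_D\ol M\to F$, together with the induced identifications $\ol M\simeq M^\ast$ and $\ol M\otimes_F M\simeq \End_F(M)=D$.

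For part (1), suppose $D$ splits. Using the isomorphisms of $D$-modules already recorded, namely $V_F=V_D\otimes_D\ol M$ and $W_F=M\otimes_D W_D$, I would first write down the canonical $F$-linear isomorphism
$$
V_F\otimes_F W_F=(V_D\otimes_D\ol M)\otimes_F(M\otimes_D W_D)\;\simeq\;V_D\otimes_D(\ol M\otimes_F M)\otimes_D W_D\;\simeq\;V_D\otimes_D D\otimes_D W_D\;\simeq\;V_D\otimes_D W_D,
$$
where the middle step uses $\ol M\otimes_F M\simeq D$. The substance is then to verify that this isomorphism is a symplectic isometry: one takes a pure tensor $v\otimes\ol{m_1}\otimes m_2\otimes w$ on the left, computes the value of the symplectic form on $V_F\otimes_F W_F$ — which by definition involves $s(v\otimes\ol{m_1},v'\otimes\ol{m_1'})\,h(m_2\otimes w,m_2'\otimes w')$, and by the Morita formulas for $s$ and $h$ this expands into an expression in $\varphi_M$, $s_D$ and $h_D$ — and matches it against $\tr_{D/F}\,s_D(\ol v,v')\,\sigma(h_D(w,\ol{w'}))$ evaluated on the image. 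The key algebraic identities are the compatibility of $\varphi_M$ with the pairing $\ol M\otimes_F M\simeq D$ and the fact that for $d\in D=\End_F(M)$ one has $\tr_{D/F}(d)=\operatorname{tr}_F(d)$ as an endomorphism of $M$; together these convert the product of two $\varphi_M$-factors into the single reduced trace. I expect a sign/involution bookkeeping issue here — the skew-Hermitian relation $s(\ol{v_2},v_1)=-\sigma(s(\ol{v_1},v_2))$ versus the Hermitian relation for $h$, and the appearance of $\sigma(h_D(w,\ol{w'}))$ in the target form — so one must be careful that the antisymmetry of $\langle\,,\rangle$ comes out correctly rather than symmetry.

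For part (2), suppose $D_K$ splits. Here I would proceed in two stages. First apply part (1) over the base $K$: since $D_K=D\otimes_F K$ splits, the same argument gives a symplectic $K$-isometry $V_K\otimes_K W_K\simeq V_{D_K}\otimes_{D_K}W_{D_K}$. Second, descend from $K$ to $F$ by restriction of scalars: by construction $W_{D_K}$ and $W_D=R_{K/F}W_{D_K}$ have the same underlying $F$-module with $h_D=\tr_{D_K/D}h_{D_K}$, and likewise $V_{D_K}=V_D\otimes_F K$ with $s_{D_K}(\ol{v_1\otimes k_1},v_2\otimes k_2)=s_D(\ol{v_1},v_2)\ol{k_1}k_2$; so $V_{D_K}\otimes_{D_K}W_{D_K}$ and $V_D\otimes_D W_D$ are the same $F$-space, and one must check that $\tr_{K/F}$ applied to the $K$-valued symplectic form on $V_{D_K}\otimes_{D_K}W_{D_K}$ equals the $F$-valued form on $V_D\otimes_D W_D$. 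This reduces to the transitivity of traces, $\tr_{D_K/F}=\tr_{K/F}\circ\tr_{D_K/K}=\tr_{D/F}\circ\tr_{D_K/D}$, combined with the observation that $\sigma$ is compatible with all the restriction-of-scalars identifications. Chaining the two stages yields $V_K\otimes_K W_K\simeq V_{D_K}\otimes_{D_K}W_{D_K}\simeq V_D\otimes_D W_D$ as symplectic $F$-spaces.

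The main obstacle I anticipate is not conceptual but the careful tracking of the non-canonical datum $\varphi_M$ through the formulas so that the two symplectic forms match on the nose (with the correct sign), rather than only up to an element of $F^\times$; the paper has already flagged that $\varphi_M$ versus $a\varphi_M$ changes the Hasse invariant of the associated quadratic space, so one should confirm that the isomorphism of this lemma is insensitive to that choice because the $\varphi_M$-factor coming from $V$ and the one coming from $W$ enter the symplectic pairing with opposite "weights" and cancel. Once that cancellation is pinned down, everything else is the transitivity of reduced traces and the elementary properties of Morita equivalence.
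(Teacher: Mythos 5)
Your proposal matches the paper's own proof: for part (1) the same Morita chain $V_D\otimes_D\ol{M}\otimes_F M\otimes_D W_D\simeq V_D\otimes_D W_D$, and for part (2) the same two-stage reduction, first applying part (1) over $K$ to reach $V_{D_K}\otimes_{D_K}W_{D_K}$ and then using $V_{D_K}\simeq V_D\otimes_D D_K$ and $W_D=R_{K/F}W_{D_K}$. The paper simply declares that the natural isomorphisms preserve the symplectic forms, whereas you spell out the trace identities, the transitivity $\tr_{D_K/F}=\tr_{K/F}\circ\tr_{D_K/K}=\tr_{D/F}\circ\tr_{D_K/D}$, and the independence from the choice of $\varphi_M$; that is additional care, not a different route.
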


\begin{proof}
Since $D$ splits, $D=\End_F(M)$. To prove (1) note that
$$V_F\otimes W_F = V_D\otimes_D \ol{M}\otimes_F M\otimes_D W_D \simeq V_D\otimes_D W_D.$$

To prove (2) note that
$$V_K\otimes_K W_K \simeq V_{D_K}\otimes_{D_K} W_{D_K}$$
and so
$$V_K\otimes_K W_K \simeq V_D\otimes_D {D_K}\otimes W_{D_K} \simeq V_D\otimes_D W_D.$$
Clearly, all the natural isomorphisms above are isomorphisms of symplectic spaces.
\end{proof}

\subsection{Compatibility of polarizations}
If  $D_K$ splits there exists
a two-dimensional space $M_K$ over $K$ such that $D_K=\End_K(M_K)$.
Given a polarization of $W_K$ as a sum of two isotropic spaces
$$W_K=X_K\oplus Y_K, $$
the {\sl compatible polarization} of $W_D$ is defined by
$$W_D=X_D\oplus Y_D, \quad X_D=\ol {M_K} \otimes_K X_K,
 \quad Y_D=\ol{ M_K} \otimes_K Y_K.$$
Similarly if $D$ splits, the polarizations
$$W_F=X_F\oplus Y_F, \quad W_K=X_K\oplus Y_K$$
are called {\sl compatible} if
$$X_F=R_{K/F} X_K,\quad  Y_F=R_{K/F} Y_K.$$

\begin{Lem}\label{compatibility:polarizations}
\begin{enumerate}
\item For compatible polarizations there is a natural
isomorphism of $F$-spaces
$$V_F\otimes_F X_F \simeq V_K\otimes_K X_K.$$

\item Suppose that $D_K$ splits. For compatible polarizations there is a natural
isomorphism of $F$-spaces
$$V_D\otimes_D X_D \simeq V_K\otimes_K X_K.$$

\end{enumerate}
\end{Lem}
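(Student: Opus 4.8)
The plan is to deduce both isomorphisms from the same chain of canonical base-change and associativity isomorphisms for tensor products that was used in the proof of the preceding lemma, now applied to the isotropic summand $X$ in place of the full space $W$. The only inputs are: the defining relation $V_K=V_F\otimes_F K$ together with $X_F=R_{K/F}X_K$ for part (1) (which presupposes, as everywhere $V_F,X_F$ appear, that $D$ splits); and, for part (2), the Morita description $V_K=V_{D_K}\otimes_{D_K}\ol{M_K}$, the identity $V_{D_K}=V_D\otimes_F K=V_D\otimes_D D_K$ (with $D_K$ viewed as a $(D,D_K)$-bimodule), and the definition $X_D=\ol{M_K}\otimes_K X_K$ of the compatible polarization.

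For (1): since $X_F=R_{K/F}X_K$ is just $X_K$ regarded as an $F$-vector space, base change along $F\hookrightarrow K$ gives
\[
V_K\otimes_K X_K=(V_F\otimes_F K)\otimes_K X_K\ \simeq\ V_F\otimes_F(K\otimes_K X_K)\ \simeq\ V_F\otimes_F X_K=V_F\otimes_F X_F,
\]
all as $F$-spaces, and the composite is the asserted isomorphism.

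For (2): using first the Morita isomorphism for $V_K$ and then the definition of $X_D$,
\[
V_K\otimes_K X_K\ \simeq\ (V_{D_K}\otimes_{D_K}\ol{M_K})\otimes_K X_K\ \simeq\ V_{D_K}\otimes_{D_K}(\ol{M_K}\otimes_K X_K)=V_{D_K}\otimes_{D_K}X_D,
\]
and then, writing $V_{D_K}=V_D\otimes_D D_K$,
\[
V_{D_K}\otimes_{D_K}X_D=(V_D\otimes_D D_K)\otimes_{D_K}X_D\ \simeq\ V_D\otimes_D(D_K\otimes_{D_K}X_D)=V_D\otimes_D X_D.
\]
This is precisely the restriction to the isotropic subspace of the computation $V_K\otimes_K W_K\simeq V_D\otimes_D D_K\otimes W_{D_K}\simeq V_D\otimes_D W_D$ carried out in the preceding lemma, so compatibility of the two statements is automatic.

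The only points that need care — and the place where one should slow down — are the bookkeeping of sides and of scalars: $V_D$ is a right $D$-module, $V_{D_K}$ a right $D_K$-module, whereas $\ol{M_K}$ carries a left $D_K$-action and becomes a $K$-module via $k\cdot\ol m=\ol{m\,\sigma(k)}$, so one must check that each displayed associativity/base-change arrow is well defined with respect to these structures; this is exactly the verification already made in the preceding lemma. One should also record that these isomorphisms are \emph{natural} in the sense that they are obtained by restricting the symplectic identifications $V_F\otimes_F W_F\simeq V_D\otimes_D W_D\simeq V_K\otimes_K W_K$ of the preceding lemma to the subspaces spanned by $X_F$, $X_D$, $X_K$ — which is what makes the lemma usable for matching Schrödinger models in the see-saw argument. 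No compatibility of quadratic or Hermitian forms beyond the level of underlying $F$-vector spaces is claimed or needed, so the non-canonical choices $\varphi_M,\varphi_{M_K}$, and hence the Hasse invariant $s^\pm$, play no role here.
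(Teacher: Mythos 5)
Your proof is correct and essentially identical to the paper's: for part (1) the paper simply declares the statement trivial (your three-step base-change chain is exactly the intended unpacking), and for part (2) the paper writes the very same chain $V_D\otimes_D X_D\simeq V_D\otimes_D D_K\otimes_{D_K}\ol{M_K}\otimes_K X_K\simeq V_{D_K}\otimes_{D_K}\ol{M_K}\otimes_K X_K\simeq V_K\otimes_K X_K$ that you produce, merely read in the opposite direction. Your additional remarks on module sides and on restricting the isomorphisms of the preceding lemma are sound but not part of the paper's one-line argument.
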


\begin{proof}
Part one is trivial. To prove Part (2) simply note that
$$V_D\otimes_D X_D\simeq
V_D\otimes_D D_K\otimes_{D_K} \ol{M_K}\otimes X_K \simeq
V_{D_K} \otimes_{D_K}  \ol{M_K}\otimes_{K} X_K\simeq   V_K\otimes_K X_K.$$
\end{proof}

The forms $h_D, h, h_K$ define natural isomorphisms
$$X_D\simeq \ol{Y_D}^\ast, \quad X_F\simeq Y_F^\ast,\quad
X_K\simeq Y_K^\ast.$$

\section{Hermitian Unitary Groups and their representations}\label{sec:groups}

Many groups will be used in the course of the paper. Let us introduce
some preliminary notations:
\begin{itemize}
\item For any algebraic group $M$ denote by $M^c$ its
connected component.
\item If $M$ is a subgroup of a group of similitude
of a Hermitian/skew-Hermitian space, denote by $M^1$ the subgroup
of elements of $M$ whose similitude is $1$.
\item For a group $M$ over a field $F$ and
an extension  $E$ over $F$ we denote by $R_{E/F}M$ the $F$-group
obtained from $M$ by restriction of scalars.

\item Let $M$ be an algebraic group over a local field $F$.
A representation $\pi$  of $M(F)$ (or just of $M$ if there is no confusion)
is a smooth representation if $F$ is non-archimedean and
smooth Frechet representation of moderate growth if $F$ is archimedean.

\item Let $N(F)$ be a normal subgroup of $M(F)$ and let
 $\pi$ be a representation of $N(F)$. For $m\in M(F)$.
Denote by $\pi^m$ the conjugate representation, i.e.,
$\pi^m(n)=\pi(mnm^{-1})$.
\end{itemize}

\subsection{ The group $G_D$}
Let $M_{2}$ be the $F$-group of $2\times 2$ matrices.
The $F$-group $G_D$ is defined by
 $$
G_D=\{ g\in  M_2(D): \exists \lambda(g)\in \G_m :
gJ\sigma(g)^t = \lambda(g)^{-1} J\},
$$
where $J=\left(\begin{smallmatrix} 0&1\\ 1 & 0\end {smallmatrix}\right)\in M_2$. 
The group $G_D$ acts on the algebraic Hermitian vector space $(W_D,h_D)$
by $g.w=w g^{-1}$ and it is isomorphic to the group of the
similitude automorphisms with the similitude character $\lambda$. That is,

$$G_D \simeq \{g\in Aut(W_D):
\forall w_1,w_2\in W_D \quad
h_D(g.w_1,g.w_2)=\lambda(g)h_D(w_1,w_2) \}.$$

We denote by $Z_D$ the center of $G_D$. Obviously $Z_D\simeq \G_m$.
\subsubsection{The parabolic subgroup $P_D$}

We fix a polarization $W_D=X_D\oplus Y_D$ and define
$P_D$ to be the subgroup of $G_D$ preserving the subspace $Y_D$. It is
a maximal parabolic subgroup. If $D$ does not split
then $P_D$ is unique parabolic subgroup of $G_D$.

One has a Levi decomposition $P_D=M_D \cdot U_D$, where
$M_D\simeq GL(X_D)\times \G_m$. We realize $M_D$ in $G_D$
as
$$M_D=\left\{m(a,t)=\left(\begin{array}{ll} ta &0 \\
 0& (\ol{a}^\ast)^{-1}
\end{array}\right), \quad
 a\in GL(X_D) , t\in \G_m\right\},$$
where $\bar a^\ast\in GL(Y_D)$ is characterized by
$$h_D(x,\ol{\ol{a}^\ast(y)})=h_D(a(x),\ol{y}).$$
In particular $\lambda(m(a,t))=t^{-1}$.

The unipotent radical $U_D$ can be identified with
$$\{S\in \Hom(X_D,Y_D), S=-\ol{ S}^\ast\}\simeq
\{{\rm skew-Hermitian \, forms\, on}\, X_D\},$$
where $\ol{S}^\ast \in \Hom(X_D,Y_D)=\Hom(X_D, \ol{X_D}^\ast)$
 is characterized by
$$h_D(y_1, \ol{S(y_2)})=h_D(\ol{S}^\ast(y_1),\ol{y_2}).$$
For any such $S$, the element $u(S)\in U_D$ acts trivially on $Y_D$
and for $x\in X_D$   one has $u(S)(x)=x+S(x)$.

\subsubsection{The action of $M_D$ on $U_D$}

The group $M_D$ acts on $U_D$ by conjugation
$$m(a,t). u(S)= u(t \ol{a}^\ast S a  ).$$

Note that two  Hermitian forms lie in the same orbit
if and only if  they have the same discriminants in $F^\times /(F^\times)^2$. Hence, these orbits are classified by the quadratic algebras inside $D$.

\subsubsection{ The characters of $U_D$}
Fix a non-trivial  additive character $\psi$ of $F$.
The group of unitary characters of $U_D$ can be identified
with  $\Hom(U_D,\G_a)$ via composition with $\psi$.
The group  $M_D$ acts on $\Hom(U_D,\G_a)$ by the dual action to
the one discussed above.

\begin{Prop}
\begin{enumerate}
\item
The set $\Hom(U_D,\G_a)$ is naturally identified with the set
of skew-Hermitian forms on $Y_D$.
\item
$M_D$-orbits on  $\Hom(U_D,\G_a)$ are parameterized
by quadratic algebras over $F$.
\item For any skew-Hermitian form $T$ on $Y_D$
 one has $Stab_{M_D}(\Psi_T)\simeq  GU(Y_D,T).$
\end{enumerate}
\end{Prop}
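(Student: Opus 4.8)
The plan is to work through the three assertions in sequence, each reducing to the parabolic structure already set up for $G_D$.

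\medskip

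\emph{Part (1).} First I would record the pairing between $U_D$ and skew-Hermitian forms on $Y_D$. The unipotent radical $U_D$ was identified with skew-Hermitian forms $S$ on $X_D$ via $u(S)(x)=x+S(x)$; an element of $\Hom(U_D,\G_a)$ is, after composing with $\psi$, a linear functional on this space of forms. Using the isomorphism $X_D\simeq\ol{Y_D}^\ast$ induced by $h_D$ (recorded at the end of Section 2), a skew-Hermitian form on $X_D$ is the same datum as a skew-Hermitian form on $Y_D$, and the trace form $S\mapsto \tr_{D/F}\,\mathrm{(pairing\ of\ }S\mathrm{\ with\ }T)$ gives, for each skew-Hermitian $T$ on $Y_D$, a functional $\Psi_T$ on $U_D$. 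I would then check that $T\mapsto\Psi_T$ is a bijection: the pairing $(S,T)\mapsto\tr_{D/F}(S,T)$ between skew-Hermitian forms on $X_D$ and on $Y_D$ is non-degenerate because $\tr_{D/F}$ is a non-degenerate trace form on $D$ and $X_D,Y_D$ are dual. This is a routine linear-algebra verification over $D$.

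\medskip

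\emph{Part (2).} Given Part (1), the $M_D$-action on $\Hom(U_D,\G_a)$ is the action dual to $m(a,t).u(S)=u(t\,\ol{a}^\ast S a)$, which transported to forms $T$ on $Y_D$ is $m(a,t).T = t\,a^\ast T \ol{a}$ (up to inverse/transpose bookkeeping that I would pin down carefully, but it is the analogue of the $M_D$-action on $U_D$ already computed). In particular the $\G_m$-factor $t$ scales $T$ and $GL(X_D)\simeq GL(Y_D)$ acts by change of basis. Two skew-Hermitian forms on $Y_D$ lie in the same $GL(Y_D)$-orbit iff they have the same discriminant in $F^\times/(F^\times)^2$, and the extra scaling by $t\in\G_m$ acts on discriminants through $\Nm_{D/F}(t)=t^2$ hence trivially modulo squares; so the orbits are exactly the discriminant classes, i.e.\ the quadratic algebras over $F$. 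This is the same argument already given for the $M_D$-orbits on $U_D$ in the subsection ``The action of $M_D$ on $U_D$'', applied to $Y_D$ in place of $X_D$.

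\medskip

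\emph{Part (3).} Fix a skew-Hermitian form $T$ on $Y_D$. An element $m(a,t)\in M_D$ stabilizes $\Psi_T$ iff $t\,a^\ast T\ol{a}=T$ as forms on $Y_D$. For such an element, $a\in GL(Y_D)$ is forced to be a similitude of $(Y_D,T)$ with similitude factor $t^{-1}$ (or $t$, depending on normalization), and conversely every similitude of $(Y_D,T)$ together with its uniquely determined $t$ gives a stabilizing element; this sets up an isomorphism $\mathrm{Stab}_{M_D}(\Psi_T)\simeq GU(Y_D,T)$. The only subtlety, and the step I expect to be the main obstacle, is keeping the various adjoint/transpose/conjugate operations consistent — the definitions of $\ol{a}^\ast$, $\ol{S}^\ast$ and of the similitude character $\lambda$ carry several built-in inverses — so that the scalar $t$ comes out as genuinely the similitude factor of $a$ and the identification is an isomorphism of algebraic groups rather than merely a bijection on points. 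Once the bookkeeping is fixed, each claim is immediate from the structure theory of $G_D$ already developed; no new input is needed.
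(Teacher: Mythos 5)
Your proof follows the same route as the paper: Part (1) via the non-degenerate trace pairing $\langle T,S\rangle=\tr_{D/F}(TS)$ between skew-Hermitian forms $T\in\Hom(Y_D,X_D)$ and $S\in\Hom(X_D,Y_D)$, Part (2) as an immediate consequence of the discriminant classification of orbits already recorded for the $M_D$-action on $U_D$, and Part (3) by reading off the stabilizer condition (which the paper computes explicitly to be $aT\ol{a}^\ast=t^{-1}T$, exhibiting $a$ as a similitude of $(Y_D,T)$ with factor $\lambda(m(a,t))=t^{-1}$). One small imprecision in your Part (1): a skew-Hermitian form on $X_D$ is not ``the same datum'' as one on $Y_D$ --- the two spaces $\Hom(X_D,Y_D)$ and $\Hom(Y_D,X_D)$ are in duality via the trace pairing, not canonically identified --- but since you immediately pass to that pairing to define $\Psi_T$, the argument is unaffected.
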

\begin{proof}

$(1)$. Indeed, given two maps
$T\in \Hom(Y_D, X_D)$ and $S\in \Hom(X_D,Y_D)$ such that
$T=-{\ol T}^\ast,S=-{\ol S}^\ast$ there is a non-degenerate pairing
 $$\langle T, S\rangle =\tr_{D/F} TS.$$
 In particular,
for $T\neq 0$ we define a non-trivial character
$$\Psi_{T}(u(S))=\psi\left(\tr_{D/F} TS\right).$$

$(2)$ follows from $(1)$.

$(3)$
$$m(a,t)\Psi_T(u(S))=\psi(\tr_{D/F} t^{-1} a^{-1} T (\ol a^\ast)^{-1}S).$$
Hence, $$m(a,t) \Psi_T=\Psi_T \Leftrightarrow aT\ol a^\ast= t^{-1}T.$$
\end{proof}

\subsubsection{Induced representations}

Any irreducible representation of $M_D$ has the form
$\tau\boxtimes \chi$ where $\tau$ is an irreducible
representation of $GL(X_D)$ and $\chi$ is a character of $\G_m.$
We denote the induced representation associated with $\tau\boxtimes \chi$ by $$I_{P_D}(\tau\boxtimes \chi)=\Ind_{P_D}^{G_D} \tau\boxtimes \chi.$$  If $\tau\boxtimes \chi$ is
a product of a tempered representation and a positive
character the representation
$I_{P_D}(\tau\boxtimes \chi)$ has a unique irreducible
quotient denoted by $J_{P_D}(\tau\boxtimes \chi)$.
Note that
$I_{P_D}(\tau\boxtimes \chi) \simeq (\chi\circ \lambda^{-1})\otimes  I_{P_D} (\tau\boxtimes 1)$.

\subsection{The group $G_F$}
The group $G_F=GSp_4$, acting  on $W_F$ by $g.w=wg^{-1}$,
is the group of similitude automorphisms of the symplectic
space $(W_F,h_F)$.
The group $G_D$ is an inner form of $G_F$ and is isomorphic to it if and only if  $D$ splits.
 We realize $G_F$ as a group of matrices
$$\{g\in GL(W_F): g J_4 {g^t}=\lambda(g)^{-1} J_4 \} \quad
 J_4=
\left(\begin{array}{llll}
& & &1 \\
&& 1& \\
&-1 & &\\
-1 & & &
\end{array}
\right),
$$

The group $G_D$ is an inner form of $G_F$ and is isomorphic
to it if and only if  $D$ splits.

\subsubsection{Parabolic subgroups of $G_F$}

Fix a polarization $W_F=X_F\oplus Y_F$. The  maximal Siegel parabolic $P_F$
consists of the elements stabilizing the space $Y_F$.
One has a Levi decomposition $P_F=M_F\cdot U_F$ with $M_F\simeq GL_2\times \G_m$.

The other  maximal parabolic subgroup of $G_F$ is the Heisenberg parabolic subgroup.
It is denoted by $Q_F=L_F \cdot V_F$ or just by $Q$.
The Levi subgroup $L_F\simeq GL_2\times \G_m$
is embedded in $G_F$ as
$$L_F=\left\{l(t,g)=\left(\begin{array}{lll}
t & & \\
&g& \\
& & t^{-1} \det(g)
\end{array}
\right)
\quad
g\in GL_2, t\in \G_m
\right\}.
$$

We also fix a  Borel subgroup $B_F=Q_F\cap P_F$ with a Levi decomposition
$B_F=T_F N_F$.  The split torus  $T_F$ is realized inside $G_F$ as
$$T_F=\{t(a,b,s)=\diag(a,b,b^{-1}s, a^{-1}s)\}.$$
The similitude factor is given by
 $\lambda(t(a,b,s))=s^{-1}$.

\subsubsection{Induced representations}

Any irreducible representation of $L_F$
has the form $\chi\boxtimes \tau$  where $\tau$ is an irreducible
representation of $GL_2$ and $\chi$ is a character of $\G_m.$ If $\chi\boxtimes \tau$ is a product of a tempered representation and
a positive character then the representation
$\Ind^{G_F}_{Q_F} (\chi\boxtimes \tau)$
has a unique irreducible quotient denoted by $J_{Q}(\chi\boxtimes \tau).$

In the course of determining the theta correspondence in Section 6 we shall
need the decomposition of the representation
$I_{Q}(\mu):=\Ind^{G_F}_{Q_F} \mu^{-1}\boxtimes \mu \circ \det$.

\begin{Lem}([K] Theorem 4.2)\label{structure:of:induced}
\begin{enumerate}
\item For $\eta\neq 1,|\cdot|^{\pm 2}$ the representation $I_{Q_D}(\mu)$
is irreducible.
\item $I_{Q}(1)=J_{Q}(|\cdot|\boxtimes |\det|^{-1/2}\Ind^{GL_2}_B 1)\oplus
J_{P}(|\det|^{1/2} St\boxtimes |\cdot|^{-1/2})$.
\item The length of $I_{Q}(|\cdot|^{-2})$ is two.
It has $|\cdot|\circ \lambda$ as a unique quotient.
\end{enumerate}
Here $St$ denotes the Steinberg representation of $GL_2$.
\end{Lem}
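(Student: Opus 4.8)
The plan is to reduce to a one-parameter family, deduce (1) from the standard reducibility analysis of the Klingen degenerate principal series, and then pin down the constituents in (2) and (3) by Jacquet-module computations together with the Langlands classification. First I would strip off the similitude twist: since the $GL_2$-factor of $L_F$ acts on $\lambda$ through $\det$ (one computes $\lambda(l(t,g))=(\det g)^{-1}$), the inducing character factors as $\mu^{-1}\boxtimes(\mu\circ\det)=(\mu^{-1}\boxtimes 1)\cdot(\mu^{-1}\circ\lambda)|_{L_F}$, and hence, exactly as in the relation $I_{P_D}(\tau\boxtimes\chi)\simeq(\chi\circ\lambda^{-1})\otimes I_{P_D}(\tau\boxtimes 1)$ recorded above for $G_D$, one gets $I_Q(\mu)\simeq(\mu^{-1}\circ\lambda)\otimes\Ind_{Q_F}^{G_F}(\mu^{-1}\boxtimes 1)$. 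Part (1) is then the statement that the Klingen degenerate principal series $\Ind_{Q_F}^{G_F}(\mu^{-1}\boxtimes 1)$ is irreducible away from the zeros and poles of the relevant abelian $\gamma$-factor of the associated standard intertwining operator, which in this normalization occur exactly for $\mu\in\{1,|\cdot|^{\pm2}\}$. (Equivalently, restriction to $Sp_4(F)$ gives, by Mackey theory and without multiplicity, the Klingen degenerate principal series of $Sp_4(F)$ induced from $\mu^{-1}$ on $GL_1$ and the trivial representation of $Sp_2$, whose structure is classical.)

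For (3) I would argue directly on $G_F$. The geometric lemma applied to the three double cosets of $Q_F\backslash G_F/Q_F$ presents the semisimplified $r_{Q_F}I_Q(\mu)$ as an explicit $GL_2\times\G_m$-module, and counting the multiplicity of the inducing character in it shows $I_Q(|\cdot|^{-2})$ has length exactly two. For the unique irreducible quotient: at $\mu=|\cdot|^{-2}$ a direct check gives $\mu^{-1}\boxtimes(\mu\circ\det)=\delta_{Q_F}^{1/2}\cdot(|\cdot|\circ\lambda)|_{L_F}$, so by Frobenius reciprocity (second adjointness) $\Hom_{G_F}(I_Q(|\cdot|^{-2}),|\cdot|\circ\lambda)\neq 0$; since $|\cdot|\circ\lambda$ is one-dimensional this map is onto, and its kernel is the other, irreducible, constituent. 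The case $\mu=|\cdot|^{2}$ follows by passing to contragredients.

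For (2), $I_Q(1)=\Ind_{Q_F}^{G_F}(\mathbf 1_{L_F})$ is the normalized induction of the trivial, hence unitary, character of $L_F$, so it is unitary and therefore semisimple, and the Jacquet-module count shows it has exactly two irreducible constituents. To identify them I would read off Langlands data: the spherical constituent is the Langlands quotient of the standard module with Klingen data $|\cdot|\boxtimes|\det|^{-1/2}\Ind^{GL_2}_B 1$, hence equals $J_Q(|\cdot|\boxtimes|\det|^{-1/2}\Ind^{GL_2}_B 1)$; the other is the Langlands quotient of the Siegel standard module with data $|\det|^{1/2}St\boxtimes|\cdot|^{-1/2}$, hence equals $J_P(|\det|^{1/2}St\boxtimes|\cdot|^{-1/2})$, as one confirms by computing $r_{P_F}I_Q(1)$ through the cells of $Q_F\backslash G_F/P_F$ and matching it against the $P_F$-Jacquet modules of these two Langlands quotients. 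The step I expect to be the main obstacle is precisely this last matching: moving between the $Q_F$-induced picture and the $P_F$-Siegel Langlands quotient requires a careful computation of the ``mixed'' Jacquet module $r_{P_F}I_Q(1)$ via the $Q_F\backslash G_F/P_F$ double cosets together with an honest identification of Langlands data on both sides; everything else is a routine rank-one reduction, carried out in detail in [K].
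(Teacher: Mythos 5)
The paper itself offers no proof of this lemma; it is imported wholesale from Konno's Theorem~4.2 in~[K], so there is no in-text argument to compare your sketch against. Judged on its own, your outline is a sensible reconstruction. The untwisting
$I_Q(\mu)\simeq(\mu^{-1}\circ\lambda)\otimes\Ind_{Q_F}^{G_F}(\mu^{-1}\boxtimes 1)$,
using $\lambda(l(t,g))=(\det g)^{-1}$, is correct and consistent with the analogous relation the paper records for $I_{P_D}$, and the passage to the Klingen degenerate principal series of $Sp_4$ is legitimate since $Q_F\cdot Sp_4(F)=G_F(F)$.

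That said, some steps are thinner than they should be. In part~(1), the location of poles and zeros of the abelian $\gamma$-factors tells you where the standard intertwining operator can degenerate; concluding irreducibility away from $\mu\in\{1,|\cdot|^{\pm 2}\}$ still needs an input such as the Harish-Chandra commuting-algebra criterion or an explicit Jacquet-module multiplicity count, since regularity of the operator alone does not preclude reducibility. (Incidentally, the $\eta$ in the statement is a typo for $\mu$, and $I_{Q_D}$ for $I_Q$.) In part~(3), your second-adjointness computation correctly exhibits $|\cdot|\circ\lambda$ as a quotient and, with length two, gives an irreducible kernel; but uniqueness of the irreducible quotient still requires ruling out a direct-sum splitting. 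This is quick (first adjointness gives $\Hom_{G_F}(|\cdot|\circ\lambda, I_Q(|\cdot|^{-2}))=0$ because $\delta_{Q_F}^{-1/2}$ and $\delta_{Q_F}^{1/2}$ disagree, so the one-dimensional constituent is not a submodule), or one can invoke the Langlands quotient theorem, but the step is missing as written. For part~(2) you correctly single out the matching of $r_{P_F}I_Q(1)$ against the two candidate Langlands quotients as the substantive computation and leave it open; that is indeed where the content of Konno's theorem lies. Overall your proposal is compatible with the cited result, and citing~[K] is the right move for the paper, but as it stands the sketch is a roadmap to that reference rather than a self-contained proof.
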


For a character $\chi(t(a,b,s))=\chi_1(a)\chi_2(b)\chi(s)$ denote the
associated  induced representation  by $I_{B}(\chi_1,\chi_2; \chi)$.

\subsection{The groups $G_{D_K}, G_K$ and $G^0_{D_K}, G^0_K$}

The group $G_{D_K}$ over $K$ is the group of similitude
automorphisms of the one-dimensional
Hermitian space $W_{D_K}$ acting by  $g.w=w g^{-1}$
with the similitude character $\lambda_K$, i.e.,
$$G_{D_K}=\{g\in Aut_{D_K}(W_{D_K}):
\forall w_1,w_2\in W_{D_K} \quad
h_{D_K}(g.w_1,g.w_2)=\lambda_{D_K}(g)h_{D_K}(w_1,w_2)\}.$$

Since the space $W_{D_K}$ is one dimensional over $D_K$ one has
$$G_{D_K}(K)\simeq D_K^\times, \quad \lambda_{D_K}(g)=(\Nm_{D_K/K}(g))^{-1}.$$

The group $G_K$ over $K$ is the group of similitude
automorphisms of the two-dimensional
Hermitian space $W_K$ acting by  $g.w=w g^{-1}$
with the similitude character $\lambda_K$, i.e.,
$$G_K=\{g\in Aut_{K}(W_K):
\forall w_1,w_2\in W_{D_K} \quad
h_K(g.w_1,g.w_2)=\lambda_K(g)h_K(w_1,w_2)\}.$$
Since $h_K$ is a symplectic form it follows that
$$G_K\simeq GL_2,\quad  \lambda_K=(\det)^{-1}.$$
Fixing a polarization $W_K=X_K\oplus Y_K$ fixes a  Borel subgroup
$B_K$ that preserves $Y_K$. There is a Levi decomposition
$B_K=T_K\cdot N_K$.

The group $G_{D_K}$ is an inner form of $G_K$ and  is  isomorphic to $G_K$
if and only if  the algebra $D_K$ splits over $K$.

The similitude factor $\lambda_K$ induces the map of $F$-groups:
$$R_{K/F}\lambda_{D_K}: R_{K/F} G_{D_K}\rightarrow R_{K/F}\G_m,
\quad R_{K/F}\lambda_{K}: R_{K/F} G_{K}\rightarrow R_{K/F}\G_m$$
and there is a  canonical embedding $\G_m\hookrightarrow R_{K/F}\G_m$.

We define the following algebraic groups over $F$.
$$G^0_{D_K}= \{g\in R_{K/F} G_{D_K}: R_{K/F}\lambda_K(g)\in \G_m\}.$$
$$G^0_{K}= \{g\in R_{K/F} G_{K}: R_{K/F}\lambda_K(g)\in \G_m\}.$$

In particular
$$G^0_{D_K}(F)= \{g\in G_{D_K}(K): \lambda_K(g)\in F^\times\}.$$

There are natural inclusions of $F$-groups

$$G^0_{D_K}\hookrightarrow G_D,\quad  G^0_K\hookrightarrow G_F.$$
Note that under this embedding  $B^0_K\hookrightarrow P_F$.


The group $\G_m$  acts on $Res_{K/F}V_{D_K}$ and $Res_{K/F}V_K$ by scalar multiplication.
Denote its image in $G^0_K$ and $G^0_{D_K}$ by $Z^0_K$ and $Z^0_{D_K}$
respectively.  The groups $Z^0_K$  and $Z^0_{D_K}$ are contained in the center of
$G^0_K$ and $G^0_{D_K}$ respectively.


\subsubsection{Dihedral representations}

Let $\eta_L$ be a character of $L^\times$.
Denote by
$\pi(\eta_L)$ the dihedral representation of $G_K$
associated to $\eta_L$. If $L$ is a field and $\eta_L\neq \eta_L^\sigma$ then
$\pi(\eta_L)$ is supercuspidal.

\subsection{The unitary quaternionic groups  as special orthogonal groups}

The groups $G_D, G_{D_K}^0$ play main role in this paper because of
 the following accidental isomorphisms

\begin{Prop}
Let $(V,q)$ be a 5-dimensional quadratic spaces over $F$
with discriminant $1$.
Let $(U,q_K)$ be a non-degenerate quadratic subspace of $V$ of codimension $1$
and discriminant algebra $K$.

\begin{enumerate}
\item
$$G_D/Z_D\simeq SO(V,q), \quad h(V,q)=h(D).$$

\item
$$G^0_{D_K}/Z^0_{D_K}\simeq SO(U,q_K),\quad  h(U,q_K)=h(D_K).$$
\end{enumerate}

In particular the groups $G_D/Z_D$  and $G^0_{D_K}/Z^0_{D_K}$ over
a local non-archimedean field vary over all pure inner forms of $SO(V)$
and $SO(U)$ as $D$ varies over the set of quaternion algebras.
Note that $SO(U,q_K)$ is anisotropic if and only if the algebra $D_K$ splits.
\end{Prop}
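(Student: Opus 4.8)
The plan is to establish both accidental isomorphisms by exhibiting each quaternionic unitary group as the similitude group of a quadratic form obtained via the Morita-type constructions of Section 2, and then to pass to the quotient by the center to land inside $\mathrm{SO}$. For part (1), recall that $G_D$ is by definition the similitude group of the two-dimensional right Hermitian $D$-space $(W_D,h_D)$, equivalently the $F$-group of matrices $g\in M_2(D)$ with $gJ\sigma(g)^t=\lambda(g)^{-1}J$. First I would consider the $4$-dimensional $F$-vector space $W_D$ together with the $F$-quadratic form $q_W(w)=\tfrac12\operatorname{tr}_{D/F}h_D(w,\bar w)$ (a reduced-norm type form), so that $G_D$ acts on $(W_D,q_W)$ by $F$-linear similitudes; but the cleaner route is the one the paper has set up, namely to use the symplectic $F$-space $V_D\otimes_D W_D$ of dimension $8$ and the exceptional isomorphism $\mathrm{GSp}_4\cong\mathrm{GSpin}_5$ — equivalently, to produce directly a $5$-dimensional quadratic space on which $G_D/Z_D$ acts. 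Concretely, I would take $V$ to be the space of ``trace-zero'' skew-Hermitian endomorphisms of $W_D$, i.e. the subspace of $\operatorname{End}_D(W_D)$ of elements $X$ with $h_D(Xw_1,\bar w_2)+h_D(w_1,\overline{Xw_2})=0$ and reduced trace zero, equipped with the quadratic form $X\mapsto \operatorname{Nrd}(X)$ (reduced norm over $F$); a dimension count gives $\dim_F V=5$, and one checks $\disc=1$ by an explicit computation in a basis. The conjugation action $X\mapsto gXg^{-1}$ of $G_D$ on $V$ preserves $q$ up to the trivial scalar (the similitude factors cancel in conjugation), so it factors through $G_D/Z_D$, giving a map $G_D/Z_D\to\mathrm{SO}(V,q)$.

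The next step is to verify this map is an isomorphism of algebraic groups. Injectivity: an element acting trivially by conjugation on all trace-zero skew-Hermitian endomorphisms is central in $G_D$, hence lies in $Z_D$, so the induced map on $G_D/Z_D$ is injective on points; over $\bar F$ this gives a closed immersion. Surjectivity and smoothness: both $G_D/Z_D$ and $\mathrm{SO}(V,q)$ have dimension $10$ and are connected, so over $\bar F$ the map between these two groups of equal dimension, being a closed immersion, is an isomorphism; since everything is defined over $F$ and the groups are smooth, it is an $F$-isomorphism. The Hasse-invariant identity $h(V,q)=h(D)$ I would check by evaluating the reduced-norm form on an explicit basis adapted to a splitting field of $D$: when $D$ splits the form $\operatorname{Nrd}$ on trace-zero $2\times2$ matrices is (up to square) the standard split form $x_0^2-x_1x_2$ plus a hyperbolic plane, giving Hasse invariant $+1$; when $D$ is division one picks up exactly the Hilbert symbol $(a,b)_F$ where $D=(a,b)_F$, giving $-1$. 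This is the only genuinely computational point, and it is the step I expect to require the most care in getting the normalization right (the paper's Hasse invariant is normalized to be constant on Witt towers, so one must track that convention).

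For part (2), the argument is parallel but carried out over $K$ and then descended: since $D_K$ splits, Morita equivalence (the constructions recalled in Section 2, in particular $W_K$ and the compatibility lemmas) identifies $G_{D_K}\cong G_K\cong\mathrm{GL}_2$ over $K$, and one forms the $5$-dimensional story over $K$ to get $G_{D_K}/Z_{D_K}\cong\mathrm{SO}$ of a suitable $K$-quadratic space; however, what we want is the $4$-dimensional space $(U,q_K)$ over $F$. Here I would instead use that $G^0_{D_K}$ is by construction the subgroup of $R_{K/F}G_{D_K}$ on which the norm-one/similitude condition holds with values in $\mathbb{G}_m\subset R_{K/F}\mathbb{G}_m$, and realize $U$ as the $4$-dimensional $F$-space underlying a $2$-dimensional skew-Hermitian $K$-space (namely $V_K=V_F\otimes_F K$ with $\disc=L$) with the $F$-quadratic form being the $K/F$-trace of the $K$-quadratic form, so that $\dim_F U=4$ and $\disc q_K=K$ as claimed. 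The action of $G^0_{D_K}$ on this $V_K$ (coming from the identification $V_D\otimes_D X_D\simeq V_K\otimes_K X_K$ of Lemma~\ref{compatibility:polarizations}) preserves $q_K$ up to a scalar in $F^\times$ precisely because the $R_{K/F}\lambda$-condition defining $G^0_{D_K}$ forces the similitude into $\mathbb{G}_m$; thus it factors through $G^0_{D_K}/Z^0_{D_K}$, and a dimension count ($\dim G^0_{D_K}/Z^0_{D_K}=\dim\mathrm{SO}_4=6$) plus connectedness and the closed-immersion argument as before yields the isomorphism. The Hasse invariant $h(U,q_K)=h(D_K)$ follows from the same local computation, now over $F$ with the quaternion algebra $D_K$ replaced by its ``$F$-trace'' form — when $D_K$ splits one gets the split $\mathrm{SO}_4$ data except that $\mathrm{SO}(U,q_K)$ is anisotropic exactly in that case, which I would pin down by noting that the anisotropy of a $4$-dimensional form of given discriminant is governed by the Hasse invariant, so $h(U,q_K)=h(D_K)=+1$ is equivalent to anisotropy of $(U,q_K)$. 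Finally, the ``in particular'' statement about pure inner forms is immediate: as $D$ ranges over the two quaternion algebras over the local field $F$, the pair $(h(V,q),h(U,q_K))=(h(D),h(D_K))$ ranges over all cohomologically allowed combinations, so $G_D/Z_D$ and $G^0_{D_K}/Z^0_{D_K}$ exhaust the pure inner forms of $\mathrm{SO}(V)$ and $\mathrm{SO}(U)$ respectively.
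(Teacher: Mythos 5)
Part (1) of your proposal is essentially the paper's argument: both realize the $5$-dimensional space as trace-zero Hermitian-type elements of $\End_D(W_D)\cong M_2(D)$ (the paper writes the condition as $JxtJ^{-1}=\sigma(x)$ with $\tr_{D/F}(\tr(x))=0$), with $G_D$ acting by conjugation, $Z_D$ acting trivially, and the discriminant and Hasse invariant read off from an explicit orthogonal basis; the difference between $q(x,y)=\tr_{D/F}(\tr(xy))$ and your reduced-norm form is cosmetic.

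Part (2), however, contains a genuine gap. You propose to realize $(U,q_K)$ as the $4$-dimensional $F$-space underlying $V_K=V_F\otimes_F K$ with the form $\tr_{K/F}\circ s_K$, and to let $G^0_{D_K}$ act on it. There are two problems. First, $G_{D_K}$ (and hence $G^0_{D_K}$) is, by construction, the similitude group of the \emph{Hermitian} space $W_{D_K}$, not the skew-Hermitian space $V_{D_K}$; it is $H_{D_K}$ that acts on $V_{D_K}$, and the Morita equivalence identifies the action of $G_{D_K}$ on $W_{D_K}$ with that of $G_K\cong GL_2(K)$ on the symplectic space $W_K$. So there is no natural action of $G^0_{D_K}$ on $V_K$, and the identification $V_D\otimes_D X_D\simeq V_K\otimes_K X_K$ you invoke concerns a Lagrangian of the symplectic space $V_D\otimes_D W_D$, not the space $V_K$ itself. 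Second, even setting aside the group action, the discriminant does not come out right: for a $2$-dimensional $K$-quadratic space the trace form has discriminant $\Nm_{K/F}(\disc_K)\cdot d_{K/F}^{2}$, and since $\disc_K V_K$ is the image of an element of $F^\times$ its norm is a square, so the trace form has discriminant $1$ rather than $K$.

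The paper's route for Part (2) stays inside the $5$-dimensional space $V$ from Part (1): it picks the vector $v_K=\left(\begin{smallmatrix}0&1\\ d&0\end{smallmatrix}\right)\in V$ with $q(v_K,v_K)=d$, shows that its stabilizer in $G_D$ (under the conjugation action) is exactly $G^0_{D_K}$, and takes $U$ to be the orthogonal complement $v_K^\perp\subset V$. Then $G^0_{D_K}/Z^0_{D_K}$ acts irreducibly on $U$, the discriminant and Hasse invariant are read off from an explicit basis, and one gets $\disc(U,q_K)=K$ and $h(U,q_K)=h(D_K)$ directly. You should replace your Part (2) by this ``stabilizer of a vector'' argument: it is the natural continuation of your Part (1) and avoids both the spurious action on $V_K$ and the discriminant mismatch.
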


\begin{proof}

We shall construct the isomorphisms explicitly.
One has $D=Span_F\{i,j,k\},$ where
$$i^2=\alpha, j^2=\beta, k=ij=-ji, \quad \alpha,\beta \in F^\times.$$
It is known that $h(D)=(\alpha,\beta)_F$, where $(\cdot, \cdot)_F$
denotes the Hilbert symbol.

Consider the space

$$M_2(D)\supset X=\{x \in M_2(D):  J x^t J^{-1}=\sigma(x)\}.$$
The space $X$ has dimension $6$ over $F$ and admits the symmetric form
$q(x,y)= \tr_{D/F}(\tr(xy))$.
The group $G_D$ acts on $X$ by conjugation. In particular
$Z_D$ acts trivially. This action  preserves the form $q$
and fixes the identity matrix.
Denote by $V$ the orthogonal complement to the identity matrix.
More precisely,
$$V=
\left\{
\left(
\begin{smallmatrix} a&b\\ c & -a\end{smallmatrix}\right)
: b,c \in F, a\in D, \tr_{D/F}(a)=0 \right\}$$
We obtain an isomorphism between $G_D/Z_D$ and $SO(V,q)$.
Considering the orthogonal basis of $V$
$$\left\{\left(\begin{smallmatrix} i&0\\ 0&-i\end {smallmatrix}\right),
 \left(\begin{smallmatrix} j&0\\ 0&-j\end {smallmatrix}\right),
 \left(\begin{smallmatrix} k&0\\ 0&-k\end {smallmatrix}\right),
 \left(\begin{smallmatrix} 0&1\\ -1 & 0\end {smallmatrix}\right),
 \left(\begin{smallmatrix} 0&1\\ 1 & 0\end {smallmatrix}\right)\right\}$$
it is easy to see to see that $\disc(V,q)=1$
and the normalized Hasse invariant $h(V,q)$ equals $h(D)$.
This proves $(1)$.

For an quadratic algebra $K=F[\sqrt d]$ consider a vector
$v_K=\left(\begin{smallmatrix} 0&1\\
d & 0\end {smallmatrix}\right)$$ \in V.$
In particular  $q(v_K,v_K)=d$.
The stabilizer of $v_K$ in $G_D$ isomorphic to $G^0_{D_K}$, i.e.,
$$\{g\in G_D: gv_Kg^{-1}=v_K\}\simeq G^0_{D_K}.$$
Denote by $U$ the orthogonal complement to the one-dimensional
space $\langle v_K\rangle$ and denote by $q_K$ the restriction
of the form $q$ to $U$. Then, $G^0_{D_K}$ acts irreducibly on $U$
and the subgroup  $Z^0_{D_K}$ acts trivially.
This defines an isomorphism
$$G^0_{D_K}/Z^0_{D_K}\simeq SO(U,q_K).$$

Considering the orthogonal basis of $U$
$$\left\{
\left(\begin{smallmatrix} i&0\\ 0&-i\end{smallmatrix}\right),
\left(\begin{smallmatrix} j&0\\ 0&-j \end{smallmatrix}\right),
\left(\begin{smallmatrix} k&0\\ 0&-k \end{smallmatrix}\right),
\left(\begin{smallmatrix} 0&1\\ \disc(q_K) & 0\end{smallmatrix}\right)
\right\}$$
it is easy to see that $\disc(U,q_K)=K$ and $h(U,q_K)=h(D_K)$.
This proves $(2).$
\end{proof}

Thus, instead of considering restrictions of representations of
$SO(V')$ to $SO(U')$ we consider the equivalent  question
of restriction of representations of $G_D$ with trivial central
character  to $G^0_{D_K}/Z^0_{D_K}$.

\begin{Remark}
When $F=\R$  the non-degenerate Hermitian forms over $D$
are classified by their signature and we have
$$G_D/Z_D\simeq \left\{
\begin{array}{ll}
SO(3,2) &  h(D)=1, \quad sig(W_D)=(1,1)\\
SO(4,1) & h(D)=-1, \quad sig(W_D)=(1,1)\\
SO(5,0)& h(D)=-1, \quad sig(W_D)=(2,0)
\end{array}
\right..
$$
Note that the  space $W_D$ with the signature $(2,0)$ is not hyperbolic.
\end{Remark}

\section{ The skew-Hermitian unitary groups}

We shall now define and discuss some auxiliary groups that will be used to construct the packets
of representations for the groups $G_D$ and $G^0_{D_K}$
and to determine the restrictions.
\subsection{The group $H_D$, $H_F$ and their representations}
Define $H_D$ to be the group of similitude automorphisms of $(V_D,s_D).$
It is a disconnected algebraic group. Denote its connected component
by $H_D^c$. Its properties are described below.

\begin{Prop}
\begin{enumerate}
\item One has  $H_D^c=R_{E/F} \G_m$
and the group $H_D$ fits into the exact sequence
$$1\rightarrow H_D^c\rightarrow H_D\rightarrow \mu_2\rightarrow 1. $$
This sequence splits if and only if $D$ splits.
\item
 The sequence of $F$-points is also exact
$$1\rightarrow E^\times \rightarrow H_D(F)\rightarrow \mu_2(F) \rightarrow 1. $$
In particular, both connected components of $H_D$ have points over $F$.

\item
$$\lambda(H_D(F))=\left\{
\begin{array}{ll} \Nm(E^\times) & D {\rm\, is \, split}\\
F^\times & D {\rm\, is\,not\, split}
\end{array}\right.
$$
where $\lambda$ is the similitude character.
\item
$$H^1_D(F)\simeq\left\{
\begin{array}{ll} O(V_F)(F) & D {\rm\, is \, split}\\
E^1 & D {\rm\, is\,not\, split}
\end{array}.\right.
$$
In particular, the non-identity connected component of
the disconnected algebraic group $H^1_D$
does not have $F$-points for the non-split $D$.
\end{enumerate}
\end{Prop}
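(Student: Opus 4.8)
The statement to prove is the Proposition about $H_D = \mathrm{GU}(V_D, s_D)$ where $(V_D, s_D)$ is the one-dimensional right skew-Hermitian space over $D$ with discriminant $E \subset D$. I want to prove four claims: (1) $H_D^c = R_{E/F}\G_m$ and $H_D$ sits in a non-split (unless $D$ splits) extension by $\mu_2$; (2) the corresponding sequence on $F$-points is still exact; (3) the image of the similitude character on $F$-points; (4) the description of $H_D^1(F)$. The unifying idea is that since $V_D$ is one-dimensional over $D$, choosing a basis vector $e$ with $s_D(\bar e, e) = \delta$ identifies $\Aut_D(V_D)$ with $D^\times$ acting by $e \mapsto e d$, and the similitude condition becomes a condition on $d$ relative to $\sigma$ and $\delta$; concretely $\delta$ may be taken to be an element of $E$ with $\sigma(\delta) = -\delta$ (a "trace-zero" generator of $E$ over $F$, whose square lies in $F^\times$ and represents the discriminant), and the equation $d^\sigma \delta d = \lambda(d)^{-1}\delta$ (or its appropriate variant) is what I need to analyze.

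First, for (1), I would compute $H_D = \{d \in D^\times : \sigma(d)\,\delta\, d = \lambda\,\delta \text{ for some } \lambda \in \G_m\}$ on an algebraic-closure level. Over $\bar F$ the algebra $D$ splits and $\delta$ becomes a semisimple element with distinct eigenvalues, so the centralizer-type computation shows the connected component consists of $d$ that commute with $\delta$ up to the involution in a way that forces $d \in E^\times = (R_{E/F}\G_m)$; the extra $\mu_2$ comes from the element of $D^\times$ realizing the nontrivial Galois automorphism of $E/F$ by conjugation (an element $j$ with $j\delta j^{-1} = -\delta = \sigma(\delta)$), which exists in $D^\times$ always but is an $F$-point realizing the outer piece only after checking rationality. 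The splitting of the sequence is equivalent to the existence of an order-two (or at least a complementary) $F$-subgroup, which happens precisely when $D$ is split because then one can choose $j$ with $j^2 = 1$; when $D$ is a division algebra no such rational involution complementing $E^\times$ exists, giving non-split extension. This is the step I expect to be the main obstacle: pinning down exactly when the extension splits requires a careful low-degree Galois cohomology / quaternion-algebra argument (essentially tracking whether the class of the extension in $H^1(F,\mu_2)$ vanishes, which ties to whether $D \cong M_2(F)$), rather than a routine calculation.

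For (2), exactness of $1 \to E^\times \to H_D(F) \to \mu_2(F) \to 1$: left-exactness and the inclusion are formal from (1); the only content is surjectivity onto $\mu_2(F) = \{\pm 1\}$, i.e. producing an $F$-rational element of the non-identity component. I would exhibit it explicitly: take $j \in D^\times$ with $j \delta = -\delta j$ (such $j$ exists in any quaternion algebra containing $E$, as the "orthogonal complement" generator), check it satisfies the similitude equation (with $\lambda = -\Nm$-type factor), and note it is manifestly an $F$-point; hence both components have $F$-points. For (3), I compute $\lambda(d)$ on the identity component $E^\times$: there $\lambda(d) = (\sigma(d)d)^{-1} = \Nm_{E/F}(d)^{-1}$ up to sign conventions, so $\lambda(E^\times) = \Nm_{E/F}(E^\times)$; on the other component, using the representative $j$, one gets $\lambda(j\,e) = -\Nm_{D/F}(j)\cdots$, and whether this stays inside $\Nm(E^\times)$ or exhausts all of $F^\times$ is governed by $h(D)$: when $D$ splits one stays in the norm group, when $D$ is division the extra component contributes the nontrivial coset, giving $\lambda(H_D(F)) = F^\times$. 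The bookkeeping here is where the Hasse-invariant/norm-group dichotomy enters, and it should follow by comparing $\Nm_{D/F}$ restricted to the relevant elements with $\Nm_{E/F}(E^\times)$.

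Finally, for (4), $H_D^1(F) = \{d \in H_D(F) : \lambda(d) = 1\}$. On the identity component this is $\{d \in E^\times : \Nm_{E/F}(d) = 1\} = E^1$. When $D$ splits, by Morita equivalence $(V_D, s_D)$ corresponds to the 2-dimensional quadratic space $(V_F, s_F)$ of discriminant $E$, and $H_D^1 \cong O(V_F)$; here the non-identity component of the algebraic group does have $F$-points (reflections), consistent with (2) restricted to $\lambda = 1$ — one checks the representative $j$ can be rescaled to have $\lambda = 1$. When $D$ is division, I claim no element of the non-identity component has $\lambda = 1$: by (3) the similitudes achieved on that component form the nontrivial coset of $\Nm_{E/F}(E^\times)$ in $F^\times$ (which does not contain $1$... — more precisely one shows $\lambda$ on the outer component avoids $\Nm(E^\times) \ni 1$ when $D$ is division, using that otherwise $D$ would split). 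Hence $H_D^1(F) = E^1$ and the outer component of the algebraic group $H_D^1$ is $F$-point-free. I would assemble (4) as a corollary of (1)–(3) together with the Morita-equivalence identification already set up in Section 2; the only genuinely new input is the non-split case non-vanishing of $\lambda$ on the outer component, which reduces to the same quaternion-algebra dichotomy as in (1).
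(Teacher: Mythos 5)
The paper states this Proposition without proof, so there is no argument of the authors' to compare against; I can only assess your proposal on its own terms, and it is essentially correct. Your identification of $H_D$, after choosing a basis vector $e$ with $s_D(\overline e,e)=\delta$ (a trace-zero generator of $E$), with the normalizer $\{c\in D^\times: c\delta c^{-1}=\pm\delta\}$ of $E^\times$ in $D^\times$ is exactly the right picture: $\sigma(c)=\Nm(c)c^{-1}$ turns the similitude equation $\sigma(c)\delta c=\lambda\delta$ into $c^{-1}\delta c=(\lambda/\Nm(c))\delta$, and since $c^{-1}\delta c$ shares a reduced characteristic polynomial with $\delta$ it must equal $\pm\delta$. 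From this $H_D^c=E^\times=R_{E/F}\G_m$, the quotient by $\mu_2$, and the rest of the bookkeeping in (2)--(4) fall out exactly as you sketch, e.g.\ writing $j$ for an element anticommuting with $\delta$, one gets $\lambda(je)=j^2\Nm_{E/F}(e)$ up to a sign, and since $D\cong(\delta^2,j^2)_F$ is split iff $j^2\in\Nm_{E/F}(E^\times)$, the three cases in (3) and (4) follow immediately.

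Two small corrections to your narrative, neither fatal. First, you flag the splitting criterion in (1) as the main obstacle and speculate that it lives in $H^1(F,\mu_2)$. In fact it is elementary and lives elsewhere: an algebraic splitting $\mu_2\to H_D$ over $F$ must send $-1$ to some $c\in D^\times$, $c\neq\pm1$, with $c^2=1$, and then $(c-1)(c+1)=0$ produces zero divisors, so $D\cong M_2(F)$; conversely in $M_2(F)$ one writes down $\delta$ diagonal and $c$ the permutation matrix. The relevant invariant is $j^2\bmod\Nm_{E/F}(E^\times)$, which lives in $F^\times/\Nm_{E/F}(E^\times)$ (a group generally different from $H^1(F,\mu_2)=F^\times/(F^\times)^2$), and this is precisely the class of $D$; so the "Galois cohomology" you anticipate is really just the Hilbert symbol $(\delta^2,j^2)_F$. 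Second, for (2) you propose to exhibit an $F$-point in the outer component explicitly (which indeed works, via the standard basis $j$ of the quaternion algebra); it is worth noting that surjectivity on $F$-points is also automatic from the long exact sequence together with Hilbert 90, since $H^1(F,R_{E/F}\G_m)=H^1(E,\G_m)=1$. Either route is fine; the explicit one has the advantage of also feeding directly into the computation of $\lambda$ on the outer component for (3) and (4).
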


It is easy to describe the  irreducible representations of $H_D(F)$.
By abuse of notations we shall write $H_D$ for $H_D(F)$
when there is  no ambiguity.

\begin{Prop}\label{rep:of:HD:1}
 Let $\eta$ be a character of $H^c_D\simeq E^\times$.
\begin{enumerate}
\item
Any irreducible representation of $H_D$ is a direct summand of some
two-dimensional representation $\Ind^{H_D}_{H_D^c} \eta$.
\item
If $\eta\neq \eta^\sigma$ then   $\tau_\eta^+=\Ind^{H_D}_{H_D^c} \eta$ is irreducible.
\item
If $\eta=\eta^\sigma$ then $\Ind^{H_D}_{H_D^c} \eta$
is a sum of two one-dimensional representations.
We denote them by $\tau_\eta^\pm$ .
\item
For any two characters $\eta$ and $\eta'$
$$\dim\Hom_{H_D^c}(\tau^\pm_\eta,\eta')=
\left\{\begin{array}{ll}
1& \eta'\in\{\eta,\eta^\sigma\}\\
0 & {\rm otherwise}
\end{array}.
\right.
$$
\end{enumerate}
\end{Prop}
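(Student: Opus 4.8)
The plan is to reduce everything to the elementary representation theory of the extension
$$1\rightarrow E^\times\rightarrow H_D(F)\rightarrow\mu_2(F)\rightarrow 1,$$
which by the previous proposition is exact, so $H_D(F)$ is generated by $H_D^c(F)\simeq E^\times$ together with any one element $w$ mapping onto the nontrivial class in $\mu_2(F)$; conjugation by $w$ induces the Galois involution $\sigma$ on $E^\times$ (this is how $H_D$ acts on $(V_D,s_D)$), and $w^2\in E^\times$. First I would invoke the general fact that if $N\triangleleft G$ with $G/N$ cyclic of prime order $p$, then every irreducible (admissible) representation of $G$ is a summand of $\Ind_N^G\eta$ for some irreducible $\eta$ of $N$: indeed, for any irreducible $\pi$ of $G$, Frobenius reciprocity and exactness of restriction force $\pi|_N$ to contain some character $\eta$, hence $\pi\hookrightarrow\Ind_N^G\eta$, and since $\dim\Ind_N^G\eta=p$ the induced representation has length at most $p$, giving (1). (Here $N=E^\times$ is abelian so its irreducibles are characters.)

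For (2) and (3) I would analyze $\Ind_{H_D^c}^{H_D}\eta$ via Mackey/Clifford theory for the order-two quotient: its restriction to $E^\times$ is $\eta\oplus\eta^\sigma$. If $\eta\neq\eta^\sigma$ the two summands are inequivalent, so any $H_D$-subrepresentation restricts to a $\sigma$-stable sub of $\eta\oplus\eta^\sigma$, of which there are only $0$ and the whole thing; hence $\Ind_{H_D^c}^{H_D}\eta=:\tau_\eta^+$ is irreducible, proving (2). If $\eta=\eta^\sigma$, then $\eta$ extends to a character of $H_D(F)$ — one checks the obstruction lives in $H^2(\mu_2,\C^\times)=1$, or concretely one picks a square root of $\eta(w^2)$ to define the value at $w$ — and there are exactly two such extensions $\tau_\eta^\pm$, differing by the sign character of $\mu_2(F)$; by dimension count $\Ind_{H_D^c}^{H_D}\eta=\tau_\eta^+\oplus\tau_\eta^-$, giving (3).

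Part (4) is then immediate from Frobenius reciprocity: $\dim\Hom_{H_D^c}(\tau_\eta^\pm,\eta')=\dim\Hom_{H_D^c}(\Ind_{H_D^c}^{H_D}\eta|_{H_D^c},\eta')$ in the $\eta=\eta^\sigma$ case after summing the two extensions, but more directly one just restricts each $\tau_\eta^\pm$ to $E^\times$: in the irreducible case $\tau_\eta^+|_{E^\times}=\eta\oplus\eta^\sigma$, and in the one-dimensional case $\tau_\eta^\pm|_{E^\times}=\eta=\eta^\sigma$; in either case the multiplicity of $\eta'$ is $1$ exactly when $\eta'\in\{\eta,\eta^\sigma\}$ and $0$ otherwise. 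The only point requiring genuine care — and the step I expect to be the main (if modest) obstacle — is the cocycle computation in (3) showing that a $\sigma$-invariant $\eta$ actually extends to $H_D(F)$ and in exactly two ways; this needs the explicit description of $H_D(F)$ as $E^\times\rtimes\langle w\rangle$-up-to-$w^2\in E^\times$ from the preceding proposition, together with the triviality of $H^2(\Z/2,\C^\times)$ (equivalently, the solvability of $x^2=\eta(w^2)$ in $\C^\times$), so that $w$ can be assigned a scalar consistently; everything else is a formal consequence of Clifford theory for an index-two subgroup.
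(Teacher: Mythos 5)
Your proof is correct: the paper itself states Proposition \ref{rep:of:HD:1} without proof, treating it as standard Clifford theory for the index-two subgroup $H_D^c\simeq E^\times$ of $H_D$, and your argument — Frobenius reciprocity plus the finiteness of $[H_D:H_D^c]$ for (1), Mackey restriction $\Ind_{H_D^c}^{H_D}\eta|_{E^\times}=\eta\oplus\eta^\sigma$ together with the dichotomy $\eta=\eta^\sigma$ versus $\eta\neq\eta^\sigma$ for (2)–(3), and restriction to $E^\times$ for (4) — is exactly the expected one. The only point worth flagging is cosmetic: in (1) you conclude ``summand'' from ``subrepresentation,'' which is justified only once (2) and (3) establish that $\Ind_{H_D^c}^{H_D}\eta$ is always semisimple; since (2) and (3) do not depend on (1), this creates no circularity, but the logical order should be made explicit.
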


\subsubsection{Labeling}\label{labeling}

We wish to distinguish between the two representations  $\tau_\eta^\pm$
when $\eta=\eta^\sigma$.
If $D$ splits we choose labeling such that
$$\tau_\eta^+|_{H_D^1(F)}=1, \quad \tau_\eta^-|_{H_D^1(F)}=\sgn.$$

If $D$ does not split then labeling is equivalent to
a choice of  $\eta_F$ such that $\eta=\eta_F\circ \Nm$. There
exist two such characters: $\eta^\pm_F$ where
$\eta^-_F=\chi_{E/F} \eta^+_F$. For any choice of $\eta^+_F$ define
$$\tau_\eta^\pm=\eta^\pm_F\circ \lambda.$$
Clearly, replacing $\eta^+_F$ by $\eta^-_F$ changes the labeling.

\subsection{The group $H_F$ and its representations.}

Define $H_F$ to be a group of
similitude automorphisms of $(V_F,s_F)$.
One has $\lambda(H_F)=\Nm_{E/F}(E^\times).$

If $D$ splits then $H_F\simeq H_D$. In particular
any irreducible representation of $H_F$ is a constituent
of $\Ind^{H_F}_{H^c_F} \eta$.

If $E$ is a split quadratic algebra over $F$
and $\sigma\in Aut_F(E)$ is a non trivial
automorphism we fix an isomorphism
$$H^c_F\simeq \G_m\times \G_m, \quad \sigma(x,y)=(x^{-1}y,y).$$
Any character of $\eta$ of $H_F^c$ can be written
in the form $\mu\boxtimes \eta_F$, where $\mu$ and $\eta_F$ are characters of
$\G_m$.

\begin{Lem}\label{rep:of:HD:split}
\begin{enumerate}
\item
$(\mu\boxtimes \eta_F)^\sigma= \mu^{-1}\boxtimes \mu \eta_F$.
In particular, $\eta$ is Galois invariant if and only if
$\mu=1$.
\item
$\tau^\pm_{\mu\boxtimes\eta_F}=(\eta_F\circ\lambda)\otimes  \tau^\pm_{\mu\boxtimes 1}$.
We shall write $\tau^\pm_{\mu}$ for $\tau^\pm_{\mu\boxtimes 1}$.

\item  $\tau^\pm_\mu=(\mu\circ \lambda) \otimes \tau^\pm_{\mu^{-1}}$.
\end{enumerate}
\end{Lem}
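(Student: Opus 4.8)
The plan is to work entirely at the level of the explicit coordinates fixed just before the statement, namely $H_F^c \simeq \G_m \times \G_m$ with $\sigma(x,y) = (x^{-1}y, y)$, and to translate each of the three assertions into a short manipulation of characters and of the induced representation $\Ind^{H_F}_{H_F^c}\eta$. First I would record that $H_F / H_F^c \simeq \mu_2$, with the nontrivial coset acting on $H_F^c$ through $\sigma$; this is the split-$D$ specialization of the exact sequence in the earlier Proposition, together with the remark there that the sequence splits when $D$ splits, so $H_F$ is genuinely a semidirect product $H_F^c \rtimes \langle\sigma\rangle$. All three parts then reduce to bookkeeping about how $\sigma$ acts.

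For part (1): by definition $\eta^\sigma(x,y) = \eta(\sigma(x,y)) = \eta(x^{-1}y, y)$. Writing $\eta = \mu \boxtimes \eta_F$, so $\eta(x,y) = \mu(x)\eta_F(y)$, I get $\eta^\sigma(x,y) = \mu(x^{-1}y)\eta_F(y) = \mu(x)^{-1}\,\mu(y)\eta_F(y)$, which is exactly $\mu^{-1} \boxtimes \mu\eta_F$ in the same coordinates. The Galois-invariance criterion $\mu = 1$ is then immediate by comparing the two characters, since $\mu^{-1} = \mu$ and $\mu\eta_F = \eta_F$ together force $\mu^2 = 1$ and $\mu = 1$ — here one should note $\mu$ is a character of $\G_m$ over $F$ with values in $\C^\times$ and $\mu = \mu^{-1}$ does not by itself give $\mu=1$, so the argument must use \emph{both} equalities: $\mu = 1$ follows from $\mu\eta_F = \eta_F$ directly. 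That is the one spot to state carefully rather than wave at.

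For parts (2) and (3) I would use the projection formula for induction: for a character $\xi$ of $H_F$ (equivalently a character $\xi_F$ of the similitude group pulled back via $\lambda$, since $\lambda$ is the natural character of $H_F$ into $\Nm_{E/F}(E^\times)$), one has $\Ind^{H_F}_{H_F^c}(\xi|_{H_F^c} \otimes \eta) \simeq \xi \otimes \Ind^{H_F}_{H_F^c}\eta$. Taking $\xi = \eta_F \circ \lambda$, whose restriction to $H_F^c = \G_m \times \G_m$ is $1 \boxtimes \eta_F$ (because $\lambda$ restricted to $H_F^c$ is the norm-type map sending $(x,y)$ to $y$, matching the earlier formula $\lambda(m(a,t)) = t^{-1}$ conventions once one identifies the similitude on the connected component), I get $\Ind^{H_F}_{H_F^c}(\mu \boxtimes \eta_F) \simeq (\eta_F\circ\lambda) \otimes \Ind^{H_F}_{H_F^c}(\mu \boxtimes 1)$; passing to the $\pm$ summands (which are well-defined and preserved by twisting by a character of the quotient) gives $\tau^\pm_{\mu\boxtimes\eta_F} = (\eta_F\circ\lambda)\otimes\tau^\pm_\mu$, which is (2). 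For (3), I start from $\tau^\pm_\mu \subset \Ind^{H_F}_{H_F^c}(\mu\boxtimes 1)$ and twist by $\mu\circ\lambda$, whose restriction to $H_F^c$ is $1\boxtimes\mu^{\,?}$ — here I must recompute: $\mu\circ\lambda$ restricted to $(x,y)$ is $\mu(y)$, i.e. $1\boxtimes\mu$, so $(\mu\circ\lambda)\otimes\tau^\pm_{\mu^{-1}} \subset \Ind^{H_F}_{H_F^c}((\mu^{-1}\cdot 1)\boxtimes(1\cdot\mu)) = \Ind^{H_F}_{H_F^c}(\mu^{-1}\boxtimes\mu)$; but $\mu^{-1}\boxtimes\mu = (\mu\boxtimes 1)^\sigma$ by part (1) with $\eta_F = \mu$, and conjugate characters induce isomorphically, so this equals $\Ind^{H_F}_{H_F^c}(\mu\boxtimes 1)$ and matches $\tau^\pm_\mu$ after checking the labeling is compatible. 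The main obstacle is exactly this last point: tracking that the labeling convention of Section \ref{labeling} (for split $D$, $\tau^+$ trivial on $H_D^1$ and $\tau^-$ the sign) is respected under the $\sigma$-twist, i.e. that the isomorphism $\Ind(\mu\boxtimes 1)\simeq\Ind((\mu\boxtimes 1)^\sigma)$ does not swap the two one-dimensional summands in the Galois-fixed case and does not exchange the labels in the irreducible case. I would settle this by restricting to $H_F^1(F) \simeq O(V_F)(F)$ and observing the twist by $\mu\circ\lambda$ is trivial there (since $\lambda$ is trivial on the similitude-one subgroup), so the restriction to $H_F^1$ — which is what defines the $\pm$ labeling — is unchanged; hence the labels match on the nose. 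With that, all three identities follow.
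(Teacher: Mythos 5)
The paper states this lemma without any proof, so there is no text to compare against; what matters is whether your argument is sound, and it is. The character computation in part~(1), the identification of $\lambda|_{H_F^c}$ as the $y$-coordinate, the use of the projection formula $\Ind_{H_F^c}^{H_F}(\chi|_{H_F^c}\otimes\eta)\simeq\chi\otimes\Ind_{H_F^c}^{H_F}\eta$, and the observation that $\eta_F\circ\lambda$ and $\mu\circ\lambda$ are trivial on $H_F^1=\ker\lambda$ (so the $\pm$ labeling of Section~\ref{labeling}, which is read off from the restriction to $H_F^1$, is preserved) together give exactly what is needed.

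Two small clean-ups. First, your justification of $\lambda|_{H_F^c}(x,y)=y$ by appeal to the formula $\lambda(m(a,t))=t^{-1}$ is a misdirected cross-reference: that formula concerns the Levi $M_D\subset P_D\subset G_D$, not $H_F$. The clean argument is that $\lambda$ is $\sigma$-invariant on $H_F^c$ (because $\sigma$ is induced by an inner automorphism of $H_F$ and $\lambda$ is a homomorphism $H_F\to F^\times$), and in coordinates with $\sigma(x,y)=(x^{-1}y,y)$ the only $\sigma$-invariant characters of $\G_m\times\G_m$ are powers of $y$; the normalization $\lambda=y$ then comes from $\lambda|_{H_F^c}=\Nm_{E/F}$ under the coordinate change you can make explicit, e.g.\ $(x,y)=(a,ab)$ turns the usual swap $\sigma(a,b)=(b,a)$ into the paper's $\sigma(x,y)=(x^{-1}y,y)$ and sends $\Nm(a,b)=ab$ to $y$. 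Second, the worry at the end about whether the intertwiner $\Ind(\eta)\simeq\Ind(\eta^\sigma)$ could swap the two one-dimensional summands is in fact vacuous: in the Galois-fixed case $\mu=1$ one has $\eta^\sigma=\eta$ and the isomorphism is the identity, and when $\mu\neq 1$ the induced representation is irreducible so there is only one summand. Your fallback check via restriction to $H_F^1$ is correct and suffices, but it is worth noticing that the concern does not actually arise.
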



\subsection{The groups $H_{D_K}, H_K$ and $H^0_{D_K}, H^0_K$. }
Define $H_{D_K}$ and $H_K$ to be the group of similitude automorphisms of
$(V_{D_K},s_{D_K})$ and of $(V_K,s_K)$ respectively.

The similitude characters are denoted by
$\lambda_{D_K}$ and $\lambda_K$.  Also denote
$$H_{D_K}^0(F)=\{ h\in H_{D_K}(K): \lambda_{D_K}(h)\in F^\times \}.$$
$$H_K^0(F)=\{ h\in H_{K}(K): \lambda_K(h)\in F^\times \}.$$

Note the following properties:
\begin{enumerate}

\item
There are  natural embeddings
 $$H_D\hookrightarrow H_{D_K}^0, \quad H_F\hookrightarrow H_{K}^0.$$

\item
$$\lambda_K(H_K(F))=\Nm_{L/K} L^\times, \quad
\lambda_K(H^0_K(F))=\Nm_{L/K} L^\times\cap F^\times =F^\times.$$

\item
If $D_K$ splits over $K$ then
$$H_K\simeq H_{D_K}, \quad H^0_K\simeq H_{D_K}^0.$$
\end{enumerate}

\section{ Theta correspondences for similitude dual pairs}

\subsection{Howe duality} Let us recall the notion of
 the abstract Howe duality after Roberts, [R].
Let  $A$ and $B$ be reductive groups over a local field of
characteristic zero and let $\rho$ be a  smooth representation of $A\times B$.
For any smooth irreducible representation
$\tau$ of $A$ (resp. $B$) the maximal $\tau$ isotypic
 component of $\rho$ has the form $\tau\boxtimes \Theta(\tau)$
for some smooth representation $\Theta(\tau)$ of $B$ (resp. of $A$),
possibly zero.

We say that Howe duality holds for the triplet $(\rho,A,B)$ if for any irreducible
representation $\tau$ of $A$ (resp. $B$) such that $\Theta(\tau)\neq 0$
the maximal semisimple quotient $\theta(\tau)$ of $\Theta(\tau)$ is
irreducible. The representations $\Theta(\tau)$ and $\theta(\tau)$ are
called big theta lift and small theta lift of $\tau$ respectively. Theta correspondence is another name for Howe duality.

Let $(H^1,G^1)$ be a classical dual pair of isometry groups
acting on the spaces $V$ and $W$ respectively
and let $(H,G)$ be the corresponding similitude groups with the similitude
character $\lambda$. Define
$$H^+=\{h\in H: \lambda(h)\in \lambda(G)\}, \quad
G^+=\{g\in G: \lambda(g)\in \lambda(H)\}$$
and
$$R=\{(h,g)\in H\times G: \lambda(h)=\lambda(g)\}\subseteq H^+\times G^+.$$

Let $\omega_\psi$ be a Weil representation of $\widetilde{Sp}(V\otimes W)$
associated to a non-trivial additive character $\psi$.
Any splitting $s:H^1\times G^1\rightarrow \widetilde{Sp}(V\otimes W)$
defines the representation $\omega_{\psi,s}=\omega_\psi\circ s$ of $H^1\times G^1$.

The Weil representations $\omega_{\psi,s}$ has been suitably extended
from $H^1\times G^1$ to $R$   for symplectic-orthogonal
pairs in [R] and   for the quaternionic
unitary dual pairs where one of the spaces is hyperbolic
even-dimensional in [GT].

\begin{Prop}([R, GT])  \label{isometry:similitude:plus}
\begin{enumerate}
\item
Howe duality for the triplet  $(\omega_{\psi,s}, G^1,H^1)$
implies Howe duality for the triplet
$$(\Omega^+=\ind^{G^+\times H^+}_{R} \omega_{\psi,s}, G^+,H^+).$$
\item
If $\tau$ is an irreducible representation of $H^+$ such that
$\tau|_{H^1}=\oplus k\tau_i$,
then $\Theta(\tau)|_{G^1}=\oplus_i k\cdot \Theta(\tau_i)$
and if $\Theta(\tau_i)=\theta(\tau_i)$
for some $i$ then $\Theta(\tau)=\theta(\tau)$.
\end{enumerate}
\end{Prop}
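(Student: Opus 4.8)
The plan is to reduce both statements to the already-known Howe duality for the isometry pair $(\omega_{\psi,s},G^1,H^1)$ by a careful bookkeeping of how induction from $R$ to $H^+\times G^+$ interacts with restriction to $H^1\times G^1$. First I would record the group-theoretic facts underlying the whole argument: $H^1\times G^1$ is a normal subgroup of $R$, the quotient $R/(H^1\times G^1)$ is abelian (it injects into $\lambda(G)\cap\lambda(H)$ via $\lambda$), and similarly $G^1$ (resp. $H^1$) is normal in $G^+$ (resp. $H^+$) with abelian quotient. Consequently, by Mackey theory (in the smooth, resp. smooth-Frechet, category) restriction of an irreducible representation of $H^+$ to $H^1$ is a finite multiplicity-free sum of $H^+$-conjugates of a single irreducible $\tau_i$ of $H^1$, and any irreducible of $H^+$ occurs in $\mathrm{Ind}_{H^1}^{H^+}\tau_i$ for such $\tau_i$; this is the structural input for part (2).

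For part (1), the key computation is to identify $\Omega^+=\mathrm{ind}_R^{H^+\times G^+}\omega_{\psi,s}$, restricted to $H^1\times G^1$, in terms of $\omega_{\psi,s}$. Since $H^1\times G^1\subseteq R$ and the double cosets $R\backslash (H^+\times G^+)/(H^1\times G^1)$ are parametrized by the character-group data above, Mackey's formula gives $\Omega^+|_{H^1\times G^1}$ as a sum of copies of $\omega_{\psi,s}$ twisted by elements of $(H^+\times G^+)$ — but these twists act through the similitude action, which on the isometry pair amounts to an automorphism under which Howe duality is preserved. Then, given an irreducible $\tau$ of $H^+$ with $\Theta(\tau)\ne 0$, I would write $\tau|_{H^1}=\bigoplus k\tau_i$ and compute $\mathrm{Hom}_{H^+}(\Omega^+,\tau)$ as an $G^+$-module: using Frobenius reciprocity for $\mathrm{ind}_R$ and then for $\mathrm{Ind}_{H^1}^{H^+}$, this Hom space is built from the spaces $\mathrm{Hom}_{H^1}(\omega_{\psi,s},\tau_i)=\tau_i\mapsto \Theta(\tau_i)$. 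One concludes that $\Theta(\tau)|_{G^1}=\bigoplus_i k\cdot\Theta(\tau_i)$, which is exactly the first assertion of part (2); and taking maximal semisimple quotients, if some $\Theta(\tau_i)$ is already irreducible (equal to $\theta(\tau_i)$), the corresponding summand forces $\Theta(\tau)$ to have irreducible maximal semisimple quotient, giving $\theta(\tau)$ irreducible and hence Howe duality for $\Omega^+$. This simultaneously yields (1) and the last clause of (2).

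The main obstacle I anticipate is not the formal Mackey-theoretic skeleton but the verification that the extended Weil representation $\omega_{\psi,s}$ on $R$ restricts to $H^1\times G^1$ in a way compatible with these twists — i.e., that the $(H^+\times G^+)$-conjugates of $\omega_{\psi,s}$ appearing in $\Omega^+|_{H^1\times G^1}$ are again Weil representations for the same pair (possibly for a different additive character $\psi^a$, $a\in F^\times$), and that Howe duality is insensitive to this change. For the symplectic-orthogonal case this is the content of Roberts' extension in [R], and for the relevant quaternionic unitary pair with one space hyperbolic even-dimensional it is supplied by [GT]; so the honest work is to cite these extensions correctly and check that the hypothesis ``one of the spaces is hyperbolic even-dimensional'' is met in each application. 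A secondary technical point, in the archimedean case, is to ensure all the Mackey/Frobenius manipulations are valid in the category of smooth Frechet representations of moderate growth, which I would handle by invoking the standard functoriality of smooth induction in that setting rather than reproving it.
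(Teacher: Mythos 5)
The paper does not actually prove this proposition; it is quoted from Roberts [R] and Gan--Tantono [GT, Prop.\ 3.3], and the only original content in the paper is the Remark following it, which explains why [GT]'s proof is the one that applies in the needed generality. Your Mackey/Frobenius skeleton is the right shape and is consistent with those references, but two points need attention.

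First, in your opening paragraph you assert that restriction of an irreducible from $H^+$ to $H^1$ is \emph{multiplicity-free}. This is exactly the assumption the paper's Remark singles out: it is Roberts' hypothesis, which is valid for orthogonal--symplectic pairs (by [AP]) but is \emph{not} known for quaternionic unitary pairs, and avoiding it is the whole point of citing [GT, Prop.\ 3.3]. The statement's notation $\tau|_{H^1}=\oplus\, k\tau_i$, with $k$ permitted to exceed $1$, is designed precisely to encode the failure of multiplicity-freeness. You do switch to the $\oplus\, k\tau_i$ form when you carry out the Frobenius reciprocity computation, so the lapse is not fatal to your plan, but the multiplicity-free claim should be deleted: Clifford theory for a normal subgroup of finite index with abelian quotient only gives that the constituents of $\tau|_{H^1}$ form a single $H^+$-orbit, each occurring with the same multiplicity $k$, and one has no a priori control of $k$.

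Second, your final sentence asserts that the irreducibility of one $\Theta(\tau_i)$ ``forces $\Theta(\tau)$ to have irreducible maximal semisimple quotient,'' and that this ``simultaneously yields (1).'' What actually follows cheaply from $\Theta(\tau)|_{G^1}=\oplus_i k\,\Theta(\tau_i)$ with all $\Theta(\tau_i)$ irreducible (they are $G^+$-conjugate, so irreducibility of one gives irreducibility of all) and $[G^+:G^1]<\infty$ is only that $\Theta(\tau)$ is \emph{semisimple}, i.e.\ $\Theta(\tau)=\theta(\tau)$, which is the conclusion of part (2). Part (1) is the stronger assertion that $\theta(\tau)$ is \emph{irreducible}, and this does not follow merely from semisimplicity: one has to rule out $\Theta(\tau)=\pi_1\oplus\pi_2$ with $\pi_1\not\simeq\pi_2$ both irreducible. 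That exclusion is where [GT, Prop.\ 3.3] does its real work, by tracking multiplicities through Frobenius reciprocity on both sides of the dual pair and using the Howe duality hypothesis on $(\omega_{\psi,s},H^1,G^1)$; your plan should flag this as the substantive step rather than folding it into the concluding sentence. Your other observations (the normality of $H^1\times G^1$ in $R$, the abelian quotients, the compatibility of the $(H^+\times G^+)$-twists with a change of additive character, and the archimedean remark) are all in line with what the paper itself says in the Remark.
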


\begin{Remark}
Proving his results, Roberts has made the assumption that
the restriction of irreducible representations from
$G^+$ to $G^1$ is multiplicity free.
This assumption holds for symplectic-orthogonal dual pairs ([AP]),
 but  does not necessarily hold for quaternion dual pairs.
The proof in [GT], Prop. $3.3$ that works for all classical dual pairs
does not require this assumption.

Although [GT] deals with smooth representations
over non-archimedean fields, the proof of  Prop. $3.3.$
applies in the case $F$ is archimedean as well.  Indeed,
it uses only Frobenius reciprocity and the technique
of restriction a representation to a subgroup of finite index.
\end{Remark}

Another general result was proven for isometry theta correspondence
in [MVW] and generalized for a similitude theta correspondence in [GT].

\begin{Thm}\label{theta:of:supercuspidal}
Let $F$ be a non-archimedean field.
For any supercuspidal representation $\tau$ of $H^+$,
its theta lift $\Theta(\tau)$ is either zero or irreducible.
In particular $\Theta(\tau)=\theta(\tau)$.
\end{Thm}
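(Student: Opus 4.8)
The plan is to reduce the similitude statement to the known isometry statement from [MVW], invoking the machinery of Proposition~\ref{isometry:similitude:plus} that relates theta lifts for the pair $(H^+,G^+)$ to those for $(H^1,G^1)$. First I would note that a supercuspidal representation $\tau$ of $H^+$ restricts to $H^1$ as a finite direct sum $\tau|_{H^1}=\bigoplus_i k\,\tau_i$ of irreducible representations $\tau_i$, each of which is again supercuspidal: supercuspidality is inherited under restriction to a subgroup of finite index, since the vanishing of Jacquet modules along proper parabolics is detected after such restriction (the parabolics of $H^1$ are the intersections with $H^1$ of those of $H^+$). By the isometry result of [MVW], each $\Theta(\tau_i)$ is either zero or irreducible, so in particular $\Theta(\tau_i)=\theta(\tau_i)$ for every $i$.

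Next I would apply part~(2) of Proposition~\ref{isometry:similitude:plus}. If $\Theta(\tau)=0$ there is nothing to prove, so assume $\Theta(\tau)\neq 0$; then $\Theta(\tau)|_{G^1}=\bigoplus_i k\cdot\Theta(\tau_i)$ is nonzero, hence $\Theta(\tau_i)\neq 0$ for at least one $i$, and for that $i$ we have $\Theta(\tau_i)=\theta(\tau_i)$ irreducible. The cited proposition then yields directly that $\Theta(\tau)=\theta(\tau)$, i.e.\ the big theta lift already equals its own maximal semisimple quotient and is irreducible. This completes the argument, since irreducibility of $\Theta(\tau)$ is exactly the assertion $\Theta(\tau)=\theta(\tau)$.

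The one point requiring a little care — and the step I expect to be the main obstacle — is the passage from supercuspidality of $\tau$ on $H^+$ to supercuspidality of the constituents $\tau_i$ on $H^1$, together with the legitimacy of applying the isometry Howe duality at all. For the symplectic–orthogonal pairs this is classical, but here $H^1$ may be a quaternionic unitary (or orthogonal) group, and one must know that the abstract Howe duality for the isometry triplet $(\omega_{\psi,s},G^1,H^1)$ — and the specific supercuspidal-lift statement of [MVW] — is available in this quaternionic setting. I would cite the generalization recorded in [GT] (as already flagged in the text preceding Theorem~\ref{theta:of:supercuspidal}), which extends the [MVW] vanishing/irreducibility dichotomy for supercuspidals to all classical similitude dual pairs; the finite-index restriction argument of [GT, Prop.~3.3] then supplies exactly the bridge between $H^+$ and $H^1$ that the proof needs. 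With those inputs in hand the remaining manipulation is the purely formal bookkeeping above.
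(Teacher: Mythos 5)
The paper itself gives no proof of this theorem: it is stated as a known result, proved for isometry pairs in [MVW] and extended to similitude pairs in [GT], so your argument is a blind reconstruction of the [GT] reduction. That reconstruction is essentially correct and follows the route the cited reference takes: restrict $\tau$ to $H^1$, note the constituents $\tau_i$ remain supercuspidal (finite index modulo center, compatible parabolics), invoke [MVW] to get $\Theta(\tau_i)=\theta(\tau_i)$, and then feed this into Proposition~\ref{isometry:similitude:plus}~(2) to conclude $\Theta(\tau)=\theta(\tau)$.

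One small point worth tightening: the equality $\Theta(\tau)=\theta(\tau)$ asserts only that $\Theta(\tau)$ is semisimple (equals its maximal semisimple quotient), not by itself that it is irreducible. Irreducibility of $\theta(\tau)$ is the content of Howe duality for the similitude triplet, which the paper obtains from Part~(1) of the same proposition once isometry Howe duality is available (here, from [Ya2] for the quaternionic pair and from classical sources otherwise). So to fully recover the ``either zero or irreducible'' wording of the theorem, you should cite Howe duality for $(\Omega^+,G^+,H^+)$ alongside Part~(2); your closing sentence slightly conflates ``$\Theta=\theta$'' with ``$\Theta$ irreducible,'' and that extra input is what bridges the two. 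With that caveat noted, the argument is complete and matches the proof strategy of the cited sources.
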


\subsection{Orthogonal-symplectic  dual pairs}

Howe duality does not hold for a general similitude orthogonal-symplectic  triplet $(\Omega=\ind^{G\times H}_R \omega_{\psi,s},H,G)$.
However it does hold in the case of interest in this paper.
We have the following proposition

\begin{Prop} \label{isometry:dimV:less:dimW}
Let $(H^1=SO(V,q), G^1=Sp(W))$ be an
orthogonal-symplectic dual pair
where
$(V,q)$ be an even-dimensional quadratic space
and  $\dim V\le \dim W$.
Then  Howe duality for the triplet $(\omega_{\psi,q},H^1,G^1)$
implies  Howe duality for the triplet $(\Omega=\ind^{G\times H}_{G^+\times H} \Omega^+,H,G)$.
\end{Prop}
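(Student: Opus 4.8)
\noindent\emph{Plan of proof.} The point of departure is that $\lambda(G)=\lambda(GSp(W))=F^\times$, so that $H^+=\{h\in H:\lambda(h)\in\lambda(G)\}=H$; this is exactly why $\Omega=\ind^{G\times H}_{G^+\times H}\Omega^+$ in the statement, with $G^+$ normal in $G$. Since $\lambda(H)=\lambda(GO(V))$ contains the similitudes of the scalar maps, it contains $(F^\times)^2$, and hence $G/G^+\simeq F^\times/\lambda(GO(V))$ is a finite abelian group. By Proposition \ref{isometry:similitude:plus}, Howe duality for $(\omega_{\psi,q},H^1,G^1)$ already supplies Howe duality for the triplet $(\Omega^+,G^+,H)$. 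Everything then reduces to propagating Howe duality from $G^+$ up to $G$ along the finite group $G/G^+$, and for this I would argue as follows.

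Fix an irreducible representation $\tau$ of $H$. Since $[G:G^+]<\infty$, induction from $G^+\times H$ to $G\times H$ is exact and, affecting only the $G$-variable, commutes with passage to the maximal $\tau$-isotypic quotient in the $H$-variable; hence $\Theta_\Omega(\tau)=\ind_{G^+}^G\Theta_{\Omega^+}(\tau)$, so that
\[
\Theta_\Omega(\tau)|_{G^+}\ \simeq\ \bigoplus_{\bar g\in G/G^+}\bigl(\Theta_{\Omega^+}(\tau)\bigr)^{g}.
\]
Conjugation by $g$ with $\lambda(g)=a$ scales the symplectic form on $V\otimes W$ by $a$, hence replaces the Weil representation $\omega_\psi$ by $\omega_{\psi_a}$ with $\psi_a(x)=\psi(ax)$, so $(\Theta_{\Omega^+}(\tau))^{g}\simeq\Theta^{\psi_a}_{\Omega^+}(\tau)$. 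Passing to maximal semisimple quotients, $\theta_\Omega(\tau)$ is a completely reducible representation of $G$ with $\theta_\Omega(\tau)|_{G^+}\simeq\bigoplus_{\bar g}(\theta_{\Omega^+}(\tau))^{g}$, each summand irreducible by the preceding step; tracing through Frobenius reciprocity, the irreducible constituents of $\theta_\Omega(\tau)$ are exactly the irreducible quotients of $\ind_{G^+}^G\theta_{\Omega^+}(\tau)$, each with multiplicity one. By Clifford theory for the finite abelian quotient $G/G^+$ it follows that $\theta_\Omega(\tau)$ is irreducible \emph{if and only if} the stabilizer of $\theta_{\Omega^+}(\tau)$ in $G/G^+$ is trivial; moreover $\Theta_\Omega(\tau)\neq 0$ forces $\Theta_{\Omega^+}(\tau)\neq 0$, so no representation is lost.

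It remains --- and this is the crux --- to show that this stabilizer is trivial: for $g\in G$ with $a:=\lambda(g)\notin\lambda(H)=\lambda(GO(V))$, one must verify $\theta^{\psi_a}_{\Omega^+}(\tau)\not\simeq\theta^{\psi}_{\Omega^+}(\tau)$. Equivalently, writing $(V,aq)$ for $V$ with its form rescaled by $a$ --- a quadratic space of the same dimension and discriminant as $(V,q)$ but, because $a\notin\lambda(GO(V))$, not isometric to it --- one must show that the theta lifts of $\tau$ to $Sp(W)$ from the two non-isometric even quadratic spaces $(V,q)$ and $(V,aq)$ are non-isomorphic (both lifts are defined, since $GO(V,q)=GO(V,aq)$). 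The plan here is to separate the two lifts by an invariant that detects the quadratic space one starts from --- for instance a suitable twisted Jacquet or Bessel functional --- or, since in this paper the correspondence in the range $\dim V\le\dim W$ is computed explicitly, to read the non-isomorphism off the formulas obtained in Section 6. This comparison is the step I expect to be the real obstacle; the hypothesis $\dim V\le\dim W$ enters precisely here --- it guarantees that the relevant lifts are non-zero, so that the comparison is substantive --- and it is also what makes Proposition \ref{isometry:similitude:plus} applicable.
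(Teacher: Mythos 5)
Your Clifford-theoretic reduction to the triviality of the stabilizer of $\theta_{\Omega^+}(\tau)$ in $G/G^+$ is in substance the same as Roberts' Prop.~1.2, which the paper cites: both reformulations say Howe duality for $(\Omega,H,G)$ amounts to showing that, for $a=\lambda(g)\notin\lambda(H)$, the theta correspondences with respect to $\psi$ and to $\psi_a$ (equivalently, from the spaces $(V,q)$ and $(V,aq)$, which lie in \emph{different Witt towers}) never produce the same $Sp(W)$-representation. You correctly isolate this as the crux and then leave it unproved, offering only two tentative strategies --- a Bessel-type invariant, or the explicit formulas of Section~6. Neither is carried out, and the latter would cover only the particular dual pairs of this paper rather than the general even-dimensional $V$ with $\dim V\leq\dim W$ of the statement.

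More importantly, you misplace the role of the hypothesis $\dim V\leq\dim W$. You claim it is there to guarantee the relevant lifts are non-zero (and to make Proposition~\ref{isometry:similitude:plus} applicable, which in fact carries no dimension constraint). The paper uses it for the opposite purpose, through the conservation relation of Sun and Zhu [SZ]: if some irreducible $\pi$ of $Sp(W)$ had non-zero theta lift to both $SO(V,q)$ and $SO(V,aq)$, then since these lie in distinct Witt towers, conservation would give $\dim(V,q)+\dim(V,aq)=2\dim V\geq 2\dim W+2$, contradicting $\dim V\leq\dim W$. The hypothesis therefore guarantees that \emph{at most one} of the two lifts can be non-zero, which is exactly the triviality of your stabilizer. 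Without this argument (or an equivalent one), the proposal does not constitute a proof.
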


\begin{proof}
Note that $H^+=H$.
If $\disc(V,q)$ is a split algebra then $G^+=G$ and there is nothing to prove.
Assume now that $\disc(V,q)=E$ is a field. In this case $G^+$ is a subgroup
of $G$ of index $2$.

It has been shown in [R], Prop. $1.2.$ that the following statements are
 equivalent:
\begin{enumerate}
\item  Howe duality for
$(\Omega,H,G)$ holds.
\item for any irreducible representation $\pi$ of $G^1$ and
any $g\in G\backslash G^+$  at most one of the representations
$\pi, \pi^g$ has a non-zero theta lift to $H^1$.
\end{enumerate}

Let us show that the statement $(2)$ above holds.
Otherwise there exists an irreducible representation
$\pi$ of $G^1$ and an element $g\in G$ with
$\lambda(g)=a\notin \Nm_{E/F}(E^\times)$ such that
$$\theta_{\psi,q}(\pi)\neq 0, \quad \theta_{\psi,q}(\pi^g)\neq 0$$
Hence  $\theta_{\psi,aq}(\pi^g)$ is non-zero
representation of $SO(V, aq)$.
The spaces $(V,aq)$ and $(V,q)$ belong to the different
Witt towers. It follows from the conservation
principal  recently proven in [SZ] for all local fields that
$$\dim (V,q)+\dim (V, aq)\ge 2\dim W+2.$$
This contradicts the condition $\dim(V)\le \dim (W)$.
\end{proof}

Let us get back to the notations of Sections $2-4$.
The group $(H^1_F,G^1_F)$ is an orthogonal-symplectic dual pair
inside $Sp(V_F\otimes_F W_F)$.
Given a polarization $W_F=X_F\oplus Y_F$
there is a   canonical splitting served by [Ku]
$$i_{s_F}: H^1_F\times G^1_F \hookrightarrow \widetilde{Sp}(V_F\otimes_F W_F).$$
The pullback of the Weil representation $\omega_\psi$ of
$\widetilde{Sp}(V_F\otimes W_F)$
defines a representation $\omega_{s_F,\psi}$
of $H^1_F\times G^1_F$.

We shall realize the Weil representation $\omega_{s_F,\psi}$ of
$H^1_F\times G^1_F$ on the space of the Schwarz functions
$S(V_F\otimes X_F)$.
The group $H_F\times M^1_F$
acts naturally on the space $S(V_F\otimes X_F)$. The action of $H_F\times P_F$ is given by  the usual formulas:

$$\left\{
\begin{array}{ll}
\omega_{s_F,\psi}(h) \phi)(x)=\phi(h^{-1} x) & h \in H^1_F\\
\omega_{s_F,\psi}(u(S))\phi(x)=
\psi(\langle x, u(S)x\rangle)\phi(x) &  S\in\Hom(X_F,Y_F), S=-S^\ast\\
\omega_{s_F,\psi}(m(a))\phi )(x)=
\chi_{E/F}(\det(a)) |\det(a)|\phi(x a) & a \in GL(X_F)
\end{array}
.\right.
$$
The representation $\omega_{s_F,\psi_F}$ can be extended to the group
 $$R_F=\{(h,g)\in H_F\times G_F: \lambda(h)=\lambda(g)\}$$
by  defining
$$\omega_{s_F,\psi}(h,g) \phi(x)=|\lambda(h)|^
{-\frac{ \dim V_F\cdot  \dim W_F}{8}}
\omega_{s_F, \psi}(1,g')\phi(h^{-1} x)=|\lambda(h)|^{-1}
\omega_{s_F, \psi}(1,g')\phi(h^{-1} x),
$$
where
$$g'=
\left(\begin{array}{ll}
\lambda(h) &0\\ 0 & 1
\end{array}\right)g\in G^1_F.$$
\begin{Remark} \label{compare:to:GanTakeda}
The definition above agrees with the definition of Weil representation
in [GTak]. Indeed, the similitude factor of a matrix $g\in GSp_4$
in [GTak] is defined to be $\det(g)^{1/2}$, while
here $\lambda(g)=\det(g)^{-1/2}$. The
groups $R_D$ and the group $R$ in [GTak] are the
same subgroups of $GO(V_F)\times GSp(W_F)$
and the representations are identical.
\end{Remark}

Let $\psi_K=\psi\circ tr_{K/F}$ be a non-trivial additive character of $K$.
Similarly to above $H^1_K\times G^1_K$ is a dual pair in $\widetilde{Sp}(V_K\otimes W_K)$

We denote by  $\omega_{s_K,\psi_K}$ the realization of the Weil representation of
$H^1_K\times  G^1_K$ on the space $S(V_K\otimes Y_K)$ such that
$$\left\{
\begin{array}{ll}
\omega_{s_K,\psi_K}(h,1) \phi)(x)=\phi(h^{-1} x) & h \in H^1_K\\
\omega_{s_K,\psi_K}(1, u(S))\phi )(y)=
\psi_K(\langle x, u(S) x\rangle_K)\phi(x) & S\in \Hom(X_K,Y_K), S=-S^\ast\\
\omega_{s_K,\psi_K}(1, m(a))\phi )(y)=
\chi_{L/K}(a) |a|_K\phi(xa) & a \in GL(X_K)
\end{array}.
\right.
$$
\vskip 10pt

Similar to the previous cases, we extend $\omega_{s_K,\psi_K}$  to
the group
$$R_K=\{(h,g)\in H_K\times G_K: \lambda_K(h)=\lambda_K(g)\}$$
by
$$\omega_{s_K,\psi_K}(h,g)=|\lambda(h)|_K^
{-\frac{ \dim_K V_K\cdot  \dim_K W_K}{8}}=
\omega_{s_K, \psi_K}(1,g')\phi(h^{-1} x)=|\lambda(h)|_K^{-1/2}
\omega_{s_F, \psi}(1,g')\phi(h^{-1} x).
$$
where
$$g'=
\left(\begin{array}{ll}
\lambda(h) &0\\ 0 & 1
\end{array}\right)g\in G^1_K.$$

\vskip 10pt

The representation  $\omega^0_{\psi_K,s_K}$ will denote the  restriction
of $\omega_{\psi_K,s_K}$ to $R^0_K=R_K\cap (H_K\times G^0_K)$.

\subsection{The induced representations $\Omega_E, \,\Omega_L$}
The compactly induced   representations
$$\ind^{H_F\times G_F}_{R_F}\omega_{s_F,\psi},
\quad \ind^{H_K\times G_K}_{R_K}\omega_{s_K,\psi_K}
$$
no longer depend  on the character $\psi$ nor on the Hasse
invariants of $s_F,s_K$  but only on the
discriminant of the form and hence will be denoted by
$\Omega_E$ and $\Omega_L$  respectively.
Indeed, for any $a\in F^\times$  we denote by $\psi_a$ the additive character
defined by $\psi_a(x)=\psi(ax)$. Let $g_a$ be any element of $G_F$
whose similitude factor is $a$. From the formulas above it follows easily that
 $$\omega_{s_F,\psi_a}\simeq \omega_{as_F,\psi}\simeq \omega_{s_F,\psi}^{g_a}.$$
Hence the induction of $\omega_{\psi,s_F}$
to $H_F\times G_F$ does not depend on $a$.
Similar argument applies for  $\omega_{s_K,\psi_K}.$

We also define  $\Omega^0_L=\ind^{H^0_K\times G^0_K}_{R^0_K}\omega^0_{s_K,\psi_K}$.

\begin{Prop}\label{Howe:classical}
Howe duality holds for the triplets $(\Omega_E, H_F,G_F), (\Omega_L,H_K,G_K)$ and
$(\Omega_L^0,H^0_K,G^0_K).$
\end{Prop}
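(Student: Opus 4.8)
The strategy is to reduce all three statements to the classical (isometry) Howe duality via the machinery assembled in Propositions \ref{isometry:similitude:plus} and \ref{isometry:dimV:less:dimW}. For the pair $(\Omega_E,H_F,G_F)$: the groups $(H^1_F,G^1_F)=(SO(V_F,s_F),Sp(W_F))$ form an orthogonal-symplectic dual pair with $\dim_F V_F=2$ and $\dim_F W_F=4$, so the hypothesis $\dim V\le\dim W$ of Proposition \ref{isometry:dimV:less:dimW} is satisfied. Thus it suffices to know Howe duality for the isometry triplet $(\omega_{s_F,\psi},H^1_F,G^1_F)$, and then Proposition \ref{isometry:dimV:less:dimW} upgrades it to $(\Omega=\ind^{G_F\times H_F}_{G^+_F\times H_F}\Omega^+,H_F,G_F)$, which is $(\Omega_E,H_F,G_F)$. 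The classical input is the Howe duality theorem for the pair $(O_2,Sp_4)$ (equivalently $(SO_2,Sp_4)$) over a local field of characteristic zero; for the archimedean place this is Howe's theorem, and for non-archimedean fields it is now unconditional after Gan--Takeda (and in any case this small-rank case is classical), so I would simply invoke it with a citation rather than reprove it.

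For the pair $(\Omega_L,H_K,G_K)$ the situation is identical after base change to $K$: since $D_K$ is assumed split, $(H^1_K,G^1_K)=(SO(V_K,s_K),Sp(W_K))$ is an orthogonal-symplectic dual pair over the local field $K$ (which is $F\times F$ or a field; in the split case one argues componentwise) with $\dim_K V_K=2\le 4=\dim_K W_K$, so the same two propositions apply verbatim over $K$ and reduce us to classical Howe duality for $(O_2,Sp_4)$ over $K$.

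The third triplet $(\Omega^0_L,H^0_K,G^0_K)$ is the only one requiring an extra step, since $G^0_K\subseteq G_K$ is cut out by the condition $\lambda_K\in F^\times$ rather than being a classical similitude group of the form appearing in Proposition \ref{isometry:similitude:plus}. Here I would argue as follows. First observe $R^0_K=R_K\cap(H_K\times G^0_K)$ and that, because $\lambda_K(H_K(F))$ already lies in $\Nm_{L/K}L^\times$ while the relevant similitudes of $G^0_K$ are constrained to $F^\times$, one has $R^0_K=R_K$ up to the issue of which similitude values are hit; more precisely the natural map identifies $\Omega^0_L$ with the restriction of $\Omega_L$ along $H^0_K\times G^0_K\hookrightarrow H_K\times G_K$ together with an induction over the finite quotient. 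Then I would apply the second part of Proposition \ref{isometry:similitude:plus} — the clause that controls how big theta lifts restrict from $H^+$ to $H^1$ and transfers irreducibility of small lifts — to an intermediate group, reading off Howe duality for $(\Omega^0_L,H^0_K,G^0_K)$ from that for $(\Omega_L,H_K,G_K)$ established in the previous paragraph. The restriction-to-finite-index technique used in the proof of [GT, Prop. 3.3], which the excerpt notes works for all classical dual pairs and in the archimedean case, is exactly what handles the passage between $G_K$ and $G^0_K$.

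The main obstacle is this last point: verifying that passing from $G_K$ to the non-standard subgroup $G^0_K$ (and correspondingly $H_K$ to $H^0_K$) preserves Howe duality. The potential trouble is that restriction of irreducible representations from $G_K$ to $G^0_K$ need not be multiplicity-free — precisely the hypothesis that Roberts assumed and that the Remark after Proposition \ref{isometry:similitude:plus} warns can fail for quaternionic pairs — so one must be careful to use only the [GT]-style argument, which avoids that assumption, and to track multiplicities honestly through the chain $G^1_K\subseteq G^0_K\subseteq G^+_K\subseteq G_K$. Everything else is a mechanical application of results already in the excerpt together with the classical rank-$2$ orthogonal-symplectic Howe duality.
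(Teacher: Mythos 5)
Your treatment of the first triplet $(\Omega_E,H_F,G_F)$ matches the paper: use classical Howe duality for the isometry pair $(H^1_F,G^1_F)$ (the paper cites [Ya2] for this) and then upgrade via Proposition~\ref{isometry:dimV:less:dimW}, noting $\dim_F V_F=2\le 4=\dim_F W_F$. That part is fine.

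For the second triplet you have a concrete dimension error. The space $(W_K,h_K)$ is the Morita equivalent of the one-dimensional Hermitian space $(W_{D_K},h_{D_K})$ over $D_K$, so $\dim_K W_K=2$, not $4$; this is why the paper's proof reads ``Since $\dim V_K=\dim W_K$\ldots''. The dual pair $(H^1_K,G^1_K)$ over $K$ is thus $(O_2,SL_2)$, not $(O_2,Sp_4)$, and ``the situation is identical after base change to $K$'' is false. The hypothesis $\dim V\le\dim W$ of Proposition~\ref{isometry:dimV:less:dimW} still holds (as an equality), so your reduction survives, but the classical input you must cite is Howe duality for $(O_2,SL_2)$ (the paper uses Casselman [Ca] in the non-archimedean case, Howe [H] in the archimedean case), not the $(O_2,Sp_4)$ result you appealed to.

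For the third triplet $(\Omega^0_L,H^0_K,G^0_K)$ your plan genuinely misses the point and is both more elaborate than needed and not actually carried out. You propose to pass through the chain $G^1_K\subseteq G^0_K\subseteq G^+_K\subseteq G_K$ by restriction and finite-index induction, flag the possible failure of multiplicity-one along the way, and ``apply the second part of Proposition~\ref{isometry:similitude:plus}\ldots to an intermediate group'' without saying which group or how. The paper's proof is a one-liner: $\lambda(H^0_K(F))=\Nm_{L/K}(L^\times)\cap F^\times=F^\times$ and $\lambda(G^0_K(F))=F^\times$, so $(H^0_K)^+=H^0_K$ and $(G^0_K)^+=G^0_K$, and Proposition~\ref{isometry:similitude:plus}(1) then yields Howe duality for $(\Omega^0_L,H^0_K,G^0_K)$ directly from the isometry case — no passage to or from $G_K$ and no multiplicity bookkeeping is required. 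The observation that both similitude images already coincide with $F^\times$ is the idea your sketch lacks; without it, the restriction argument you gesture at has to be constructed from scratch and you have not done so.
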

\begin{proof}
Howe duality for the isometry triplet $(\omega_{s_F,\psi}, H^1_F,G^1_F)$ is proved in [Ya2].
Since $\dim V_F\le \dim W_F$ Howe duality
for the triplet $(\Omega_E,H_F,G_F)$
follows from the Proposition \ref{isometry:dimV:less:dimW}.

Howe duality for the triplet $(\omega_{s_K,\psi_K},H_K^1,G_K^1)$
is well known. The convenient reference is [Ca] for non-archimedean
field. When $F$ is archimedean, Howe duality holds by the general result
of Howe in [H].

 Since $\dim V_K=\dim W_K$, Howe duality
for the triplet $(\Omega_E,H_F,G_F)$
follows from Proposition \ref{isometry:dimV:less:dimW}.

Finally, $\lambda(H_K^0(F))=\Nm_{L/K}(L^\times)\cap F^\times =F^\times$
and $\lambda(G_K^0(F)=F^\times$. Hence
$(H^0_{K})^+=H^0_K, (G^0_K)^+=G^0_K$ and
Howe duality for the triplet $(\Omega_L^0,H^0_K,G^0_K)$ also holds.
\end{proof}

\subsection{Quaternionic unitary dual pair}

The pair $H^1_D\times G^1_D$ constitute a commuting pair
inside $Sp(V_D\otimes W_D)$. This  is a dual pair if and only if $h(D)=1$.

Assume that $W_D$ admits a polarization $X_D\oplus Y_D.$
This is always the case when $F$ is non-archimedean or if $F=\C$.
If $F=\R$ the space $W_D$ must have  signature $(1,1)$.

The group $H_D\times GL(X_D)$ acts naturally on the space of
Schwarz functions $S(V_D\otimes X_D)$.

We shall denote by $\omega_{s_D,\psi}$ the
realization of the Weil representation
of $H^1_D\times G^1_D$ on the space  $S(V_D\otimes X_D)$ such that
$$
\left\{
\begin{array}{ll}
\omega_{s_D,\psi}(h)\phi(x)=\phi(h^{-1}y)  & h\in H^1_D\\
\omega_{s_D,\psi}(m(a))\phi(x)=\chi_{E/F}(\Nm_{D/F}(a))|\Nm_{D/F}(a)|
\phi(xa), & a\in GL(X_D)\\
\omega_{ s_D,\psi}(u(S))\phi(x)=
\psi(\langle x, u(S)x \rangle)\phi(x) & S\in \Hom(X_D,Y_D): S=-\ol{S}^\ast
\end{array}
.\right.
$$

This definition agrees with the one given in [Ya2]. We extend $\omega_{s_D,\psi}$ to
 $$R_D=\{(h,g)\in H_D\times G_D: \lambda(h)=\lambda(g)\}$$
by setting
$$\omega_{s_D,\psi}(h,g) \phi(x)=
|\lambda(h)|^{-\frac{\dim_F D\cdot \dim_D V_D\cdot \dim_D W_D}{8}}
(\omega_{s_D, \psi}(1,g')\phi)(h^{-1}x)=
|\lambda(h)|^{-1}
(\omega_{s_D, \psi}(1,g')\phi)(h^{-1}x)
$$
where
$$g'=
\left(\begin{array}{ll}
\lambda(h) &0\\ 0 & 1
\end{array}\right)g\in G^1_D.$$

Note that the center of $R_D$ acts trivially. The representation
$\ind^{H_D\times G_D}_{R_D}\omega_{\psi,s_D}$ does not depend on $\psi$ and $s_D$
but only on its discriminant $E$ and hence will be denoted by $\Omega^D_E$.

The lemma below follows easily from the explicit Weil representation formulas and from Lemma
\ref{compatibility:polarizations}.
\begin{Lem}\label{weil:F=weil:D:split}
If $D$ splits then for compatible polarizations of $W_D$ and $W_F$
the canonical isomorphism $S(V_D\otimes X_D)\rightarrow S(V_F\otimes X_F)$
defines an isomorphism of representations
$\omega_{s_D,\psi}\simeq \omega_{s_F,\psi,}.$
\end{Lem}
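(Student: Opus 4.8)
The plan is to realize the canonical isomorphism as coming from a Morita identification of the ambient symplectic spaces and then to check equivariance directly from the explicit Weil-representation formulas. Write $D=\End_F(M)$ with $M$ two-dimensional over $F$, so that $V_F=V_D\otimes_D\overline M$ and $W_F=M\otimes_D W_D$ as in the Morita equivalence of Section 2, and choose $\varphi_M$ so that $\overline M\otimes_F M\simeq D$. For the compatible polarizations $W_D=X_D\oplus Y_D$ and $W_F=X_F\oplus Y_F$ (so $X_F$ is the Morita translate of $X_D$, and likewise $Y_F$ of $Y_D$), Lemma \ref{compatibility:polarizations} provides a natural $F$-linear isomorphism $T\colon V_D\otimes_D X_D\to V_F\otimes_F X_F$, and by the isomorphism of symplectic $F$-spaces $V_F\otimes_F W_F\simeq V_D\otimes_D W_D$ established in Section 2, this $T$ is the restriction to the Lagrangian $V_D\otimes_D X_D$ of a symplectic isometry $V_D\otimes_D W_D\to V_F\otimes_F W_F$ carrying $V_D\otimes_D Y_D$ onto $V_F\otimes_F Y_F$. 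The induced map $T^\ast\colon S(V_F\otimes X_F)\to S(V_D\otimes X_D)$ of Schwartz spaces is the canonical isomorphism of the statement, and it remains to verify equivariance.

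On the group side, $D$ split means that Morita equivalence identifies $H_D\simeq H_F$ and $G_D\simeq G_F$ compatibly with the similitude characters, carries the polarization $W_D=X_D\oplus Y_D$ to $W_F=X_F\oplus Y_F$, and hence identifies the Siegel parabolic $P_D$ with $P_F$ together with their Levi and unipotent parts; in particular the Levi factor $GL(X_D)\simeq D^\times$ is carried isomorphically onto $GL(X_F)$ by the tautological map $\End_F(M)^\times\to GL(M)$, under which $\Nm_{D/F}$ becomes $\det_F$. The two dual-pair embeddings into $Sp(V_D\otimes_D W_D)$ and $Sp(V_F\otimes_F W_F)$ --- with $H^1$ acting on the first tensor factor and $G^1$ on the second --- are therefore intertwined by the symplectic isometry above; since that isometry matches Lagrangians, $T^\ast$ is, up to the chosen splittings, the canonical intertwiner between the two Schr\"odinger models of the Weil representation of the metaplectic group. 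Consequently it suffices to check that the normalized splittings $s_D$ and $s_F$ correspond, and this can be read off the displayed formulas on $P_D\leftrightarrow P_F$ (since $G^1$ is generated by $P$ and a Weyl element, and the latter acts by the Fourier transform attached to the Lagrangian in either model and so is intertwined by $T^\ast$ automatically).

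For $h\in H^1$ both representations act by $\phi\mapsto\phi(h^{-1}\cdot)$, and $T$ intertwines the $H^1$-actions on $V_D$ and on $V_F$ by the very definition of the identification $H^1_D\simeq H^1_F$. For $u(S)$ in the Siegel unipotent both act by the quadratic character $\phi(x)\mapsto\psi(\langle x,u(S)x\rangle)\phi(x)$, and these phases agree because $T$ arises from a symplectic isometry and $u(S)\in U_D$ corresponds to $u(S)\in U_F$ under $G_D\simeq G_F$. For $m(a)$ with $a\in GL(X_D)$ both act by a translation $\phi\mapsto\phi(\cdot\,a)$ twisted by a multiplier, and the two multipliers $\chi_{E/F}(\Nm_{D/F}(a))\,|\Nm_{D/F}(a)|$ and $\chi_{E/F}(\det a)\,|\det a|$ coincide once $\Nm_{D/F}$ is identified with $\det_F$ as above. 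Finally the extension to $R$ agrees, since $\tfrac18\dim_F D\cdot\dim_D V_D\cdot\dim_D W_D=\tfrac{4\cdot 1\cdot 2}{8}=1=\tfrac{2\cdot 4}{8}=\tfrac18\dim V_F\cdot\dim W_F$, so on $R_D\simeq R_F$ both extensions scale by $|\lambda(h)|^{-1}$ and act through the corresponding element $g'$. Hence $T^\ast$ is an isomorphism $\omega_{s_F,\psi}\simeq\omega_{s_D,\psi}$ of $R_D$-representations, which is the assertion; compactly inducing yields $\Omega^D_E\simeq\Omega_E$ as well.

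The main obstacle here is bookkeeping rather than mathematics: one must be scrupulous about the left/right $D$-module conventions and the bars $\overline M$, $\overline{X_D}$ when building $T$, so as to be sure that $T$ carries the Lagrangian $V_D\otimes_D X_D$ onto $V_F\otimes_F X_F$ rather than onto a complement, and about the Morita identification $GL(X_D)\simeq GL(X_F)$, so as to be sure it sends the reduced norm to the determinant with no stray unramified twist. Once these two points are pinned down, every remaining step is a direct comparison of the Weil-representation formulas displayed above.
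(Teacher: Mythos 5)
Your proof is correct and follows exactly the route the paper itself indicates, namely combining the Morita identification of Lagrangians (Lemma \ref{compatibility:polarizations}) with a direct comparison of the explicit Weil-representation formulas on $H^1$, $U$, $M$ and the extension to $R$; the paper merely states that the lemma ``follows easily'' from these two ingredients and leaves the verification to the reader, which is precisely what you have carried out. The computation of the normalization exponents $\tfrac18\dim_F D\cdot\dim_D V_D\cdot\dim_D W_D=1=\tfrac18\dim V_F\cdot\dim W_F$ and the identification of $\Nm_{D/F}$ with $\det$ under $GL(X_D)\simeq GL(X_F)$ are the only nontrivial checks, and you handle both.
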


\begin{Cor} If $D$ splits then
$$H_D\simeq H_F,\quad G_D\simeq G_F, \quad \Omega^D_E\simeq \Omega_E$$
and Howe duality for the triplet $(\Omega^D_E,H_D,G_D)$ is equivalent to Howe duality for the triplet $(\Omega_E,H_F,G_F)$
\end{Cor}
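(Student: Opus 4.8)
The plan is to bootstrap everything from Lemma~\ref{weil:F=weil:D:split} together with the Morita dictionary of Section~2. First I would set up the group isomorphisms. Since $D$ splits, write $D=\End_F(M)$ for a two-dimensional $F$-space $M$ with a fixed $\varphi_M$; Morita equivalence then realizes $V_F=V_D\otimes_D\ol M$ and $W_F=M\otimes_D W_D$, and these identifications carry $s_D$ to $s_F$ and $h_D$ to $h_F$ in the sense of Section~2. An automorphism of $V_D$ induces one of $V_F$ through $-\otimes_D\ol M$, and it scales $s_D$ by $\lambda\in\G_m$ precisely when the induced automorphism scales $s_F$ by the same $\lambda$ (here one uses that $\lambda$ is central); hence $H_D\simeq H_F$ as $F$-groups, compatibly with the similitude characters, and in particular the connected components, the subgroups $H^1_D\simeq H^1_F$, and the defining exact sequences all match. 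Applying the same reasoning to $W_D$ gives $G_D\simeq G_F$, again compatibly with $\lambda$. Consequently the fibre products $R_D\subseteq H_D\times G_D$ and $R_F\subseteq H_F\times G_F$, cut out by the identical condition $\lambda(h)=\lambda(g)$, are identified.

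Next I would promote the Weil-representation identification from $H^1\times G^1$ to $R$. Fix compatible polarizations $W_D=X_D\oplus Y_D$ and $W_F=X_F\oplus Y_F$ (available because $D$ splits). By Lemma~\ref{weil:F=weil:D:split} the canonical linear isomorphism $S(V_D\otimes_D X_D)\simeq S(V_F\otimes_F X_F)$ intertwines $\omega_{s_D,\psi}$ and $\omega_{s_F,\psi}$ as representations of $H^1_D\times G^1_D\simeq H^1_F\times G^1_F$. One then checks it is also equivariant for the extensions to $R_D$ and $R_F$: in both cases the extended action is $\phi\mapsto|\lambda(h)|^{-1}\bigl(\omega(1,g')\phi\bigr)(h^{-1}\cdot)$ with $g'=\bigl(\begin{smallmatrix}\lambda(h)&0\\0&1\end{smallmatrix}\bigr)g$, and each ingredient---$\lambda$, the $H^1$-action, the $G^1$-action, the twist by $g'$---is transported correctly under the identifications of the previous step. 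Hence $\omega_{s_D,\psi}\simeq\omega_{s_F,\psi}$ as representations of $R_D\simeq R_F$, and since compact induction is functorial, $\Omega^D_E=\ind^{H_D\times G_D}_{R_D}\omega_{s_D,\psi}\simeq\ind^{H_F\times G_F}_{R_F}\omega_{s_F,\psi}=\Omega_E$.

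Finally, Howe duality for a triplet $(\rho,A,B)$ is a statement purely about $\rho$ as a representation of $A\times B$ (that each nonzero big lift $\Theta(\tau)$ has irreducible maximal semisimple quotient). We have exhibited an isomorphism $H_D\times G_D\simeq H_F\times G_F$ under which $\Omega^D_E$ corresponds to $\Omega_E$, so the two Howe-duality assertions are literally equivalent, which completes the proof. I expect the only genuine work to be the bookkeeping in the first two steps: verifying that the Morita dictionary of Section~2 respects the similitude characters (not merely the isometry groups), and matching the normalizing factor $|\lambda(h)|^{-1}$ together with the twisting element $g'$ on both sides. Neither point is deep, and both are forced once compatible polarizations are fixed.
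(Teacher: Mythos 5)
Your proof is correct and follows essentially the route the paper intends; the paper offers no proof of the corollary, treating it as an immediate consequence of Lemma~\ref{weil:F=weil:D:split} together with the Morita isomorphisms of Section~2, which is exactly the chain you spell out (Morita gives $H_D\simeq H_F$, $G_D\simeq G_F$ compatibly with $\lambda$, hence $R_D\simeq R_F$; the lemma gives the $R$-equivariant isomorphism of Weil representations; compact induction and the definition of Howe duality finish the argument). Your extra verification that the formulas defining the extension of $\omega_{s_D,\psi}$ to $R_D$ match those for $R_F$ is a reasonable precaution, though arguably subsumed in the lemma as the paper states it at the level of $R_D$-modules.
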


\begin{Prop}
Howe duality holds for $(\Omega^D_E,H_D,G_D)$.
\end{Prop}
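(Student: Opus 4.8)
The plan is to reduce the statement to cases already handled by the general machinery of Sections 5.1--5.2 and the isometry Howe duality proved by Yasuda in [Ya2], according to whether $D$ splits. First I would observe that if $D$ splits, then by the Corollary immediately preceding the statement, Howe duality for $(\Omega^D_E, H_D, G_D)$ is equivalent to Howe duality for $(\Omega_E, H_F, G_F)$, which is part of Proposition \ref{Howe:classical}. So the substance of the proof is the non-split case $h(D)=-1$, where $(H^1_D, G^1_D)$ is a genuine quaternionic unitary dual pair inside $Sp(V_D\otimes W_D)$.

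In the non-split case the plan is to invoke the abstract reduction of Proposition \ref{isometry:similitude:plus}: Howe duality for the isometry triplet $(\omega_{s_D,\psi}, G^1_D, H^1_D)$ implies Howe duality for the ``plus'' triplet $(\Omega^+, G^+, H^+)$. Here one must identify $H^+$ and $G^+$. Using Proposition on the structure of $H_D$ (parts (3) and (4)), when $D$ is non-split one has $\lambda(H_D(F)) = F^\times$, and since $\lambda(G_D(F)) = F^\times$ as well (the similitude of $G_D$ is surjective onto $F^\times$ in the quaternionic case), we get $H^+ = H_D$ and $G^+ = G_D$, so $\Omega^+ = \Omega^D_E$ and there is no further induction step needed beyond $R_D \hookrightarrow H_D \times G_D$. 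Thus it remains only to supply Howe duality for the isometry triplet $(\omega_{s_D,\psi}, H^1_D, G^1_D)$; this is exactly what Yasuda proves in [Ya2] for the quaternionic unitary dual pair of these ranks (a one-dimensional skew-Hermitian space over $D$ against a one-dimensional Hermitian space over $D$, equivalently $SO(5)$-type against $Sp_4$-type), and in the archimedean case it follows from Howe's general theorem [H] together with the remark after Proposition \ref{isometry:similitude:plus} that the [GT] proof applies over $\R$.

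The one point requiring care — and the main obstacle — is checking that the hypothesis of Proposition \ref{isometry:similitude:plus} is literally met, namely that the splitting $s: H^1_D \times G^1_D \to \widetilde{Sp}(V_D\otimes W_D)$ extends to $R_D$ in the sense of [GT]; this is precisely the extension written out explicitly via the formulas for $\omega_{s_D,\psi}$ on $R_D$ just before the statement, and one must note that this is legitimate only because $W_D$ admits a polarization $X_D \oplus Y_D$ (automatic in the non-archimedean case and for $F=\C$, and requiring signature $(1,1)$ when $F=\R$) — the anisotropic real form $SO(5)$ is exactly the case excluded, consistent with Remark \ref{assumption}. Once the polarization is in hand, the extension formula is uniform and the hypotheses of Propositions \ref{isometry:similitude:plus} and the isometry Howe duality feed directly into the conclusion. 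I would also remark, as in the proof of Proposition \ref{Howe:classical}, that because $\dim_D V_D = \dim_D W_D = 1$ (so the skew-Hermitian and Hermitian spaces have ``equal size''), no conservation-principle obstruction of the type appearing in Proposition \ref{isometry:dimV:less:dimW} arises here; the quaternionic case genuinely needs only the abstract $H^1 \times G^1 \rightsquigarrow H^+ \times G^+$ passage, not the finer argument comparing Witt towers.
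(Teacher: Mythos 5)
Your proof follows essentially the same route as the paper: reduce to the split case via the preceding corollary (which folds into Proposition \ref{Howe:classical}), and in the non-split case note $\lambda(H_D(F))=F^\times$ forces $G_D^+ = G_D$ (and $H_D^+=H_D$), so that the abstract reduction of Proposition \ref{isometry:similitude:plus} together with Yasuda's isometry Howe duality in [Ya2] gives the result directly. One small factual slip in your final remark: $\dim_D V_D = 1$ but $\dim_D W_D = 2$, not $1$; this does not affect your conclusion, since the conservation-principle argument of Proposition \ref{isometry:dimV:less:dimW} is indeed not needed once $G^+_D = G_D$ is established outright.
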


\begin{proof}
In view of the last corollary it remains to consider  the case where $h(D)=-1$.
In this case $\lambda(H^1_D(F))=F^\times$ and hence $G^+_D=G_D.$
The Howe duality for  $(\Omega^D_E,H_D,G_D)$ follows now from
Howe duality for $(\omega_{\psi,s_D},H_D^1,G^1_D)$ that is proved in [Ya2].
\end{proof}

\section{Explicit Theta correspondence}
For any representation $\tau$ of $H_D$ we denote
by $\Theta^D_E(\tau)$ and $\theta^D_E(\tau)$ the representations
of $G_D$ that are  the big and
small theta lifts of $\tau$ respectively.
Similarly,
for an irreducible  representation $\tau$ of $G_K$ (resp. $G^0_K$) we
denote by  $\Theta_L(\tau)$ and $\theta_L(\tau)$ ( resp.
$\Theta^0_L(\tau)$ and $\theta^0_L(\tau)$) the representations
of $H_K$ (resp. $H^0_K$) which are the big and small theta lifts of $\tau$.

In this section we shall give more details on the
theta correspondence $\theta^D_E$, $\theta_L$ and
$\theta^0_L$ when $F$ is non-archimedean local field.

\subsection{Explicit theta correspondence $\theta^D_E$}
The theta lift $\theta^D_E$ will be used to define
the Arthur packet on $G_D$. Hence it is desirable
to know this theta lift as explicitly as possible.

To study the restriction problem we will apply see-saw duality
technique which makes use of big theta lift $\Theta^D_E(\tau)$ rather than
small theta lift $\theta^D_E$. Thus it is important to determine $\Theta^D_E(\tau)$ as well.
First we prove the following reduction lemma

\begin{Lem}\label{reduction}
\begin{enumerate}
\item
Let $D$ be a non-split algebra and $\eta=\eta^\pm_F\circ Nm_{E/F}$.
Then
$$\Theta^D_E(\tau^\pm_\eta)=
(\eta^\pm_F\circ \lambda^{-1}) \otimes \Theta^D_E(1).$$
\item
Let $E=F\times F$ and $\eta=\mu\boxtimes \eta_F$.
Then
$$\Theta^D_E(\tau^\pm_\eta)=
(\eta_F\circ \lambda^{-1}) \otimes \Theta^D_E(\tau^\pm_\mu)$$
\end{enumerate}
and the same relations hold for $\theta^D_E$.
\end{Lem}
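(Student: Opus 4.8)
The plan is to derive both identities from a single twisting principle applied on the $H_D$-factor of the theta correspondence. The key observation is that on
$$R_D=\{(h,g)\in H_D\times G_D:\ \lambda(h)=\lambda(g)\}$$
the two similitude characters coincide, so for any character $\chi$ of $\G_m$ the character $\Xi_\chi:=(\chi\circ\lambda_{H_D})\boxtimes(\chi^{-1}\circ\lambda_{G_D})$ of $H_D\times G_D$ restricts trivially to $R_D$. First I would invoke the projection formula for compact induction to get
$$\Omega^D_E\otimes\Xi_\chi=\ind^{H_D\times G_D}_{R_D}\bigl(\omega_{s_D,\psi}\otimes\Xi_\chi|_{R_D}\bigr)=\ind^{H_D\times G_D}_{R_D}\omega_{s_D,\psi}=\Omega^D_E,$$
so $\Omega^D_E$ is isomorphic, as an $H_D\times G_D$-module, to its own twist by $\Xi_\chi$ and by $\Xi_\chi^{-1}$.

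Next I would feed this self-isomorphism into the definition of the big theta lift. Recall that $\Theta^D_E(\tau)$ is characterized by the property that $\tau\boxtimes\Theta^D_E(\tau)$ is the maximal $\tau$-isotypic quotient of $\Omega^D_E$ over $H_D$, and that forming maximal $\tau$-isotypic quotients is compatible with isomorphisms of $H_D\times G_D$-modules. Computing the maximal $\tau$-isotypic quotient of the twisted side $\Omega^D_E\otimes\Xi_\chi$ and matching it against $\tau\boxtimes\Theta^D_E(\tau)$ then yields
$$\Theta^D_E\bigl((\chi\circ\lambda_{H_D})\otimes\tau\bigr)=(\chi\circ\lambda_{G_D}^{-1})\otimes\Theta^D_E(\tau).$$
Because tensoring by a one-dimensional character is an exact autoequivalence of the category of smooth representations, it commutes with passage to the maximal semisimple quotient, so the same identity holds with $\theta^D_E$ in place of $\Theta^D_E$. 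It then remains to plug in the correct data. For part (1), $D$ non-split together with $\eta=\eta^\sigma$ means, by the labeling of $\S\ref{labeling}$, that $\tau^\pm_\eta=\eta^\pm_F\circ\lambda=(\eta^\pm_F\circ\lambda_{H_D})\otimes\mathbf 1$; applying the displayed identity with $\tau=\mathbf 1$ (the trivial character of $H_D$), $\chi=\eta^\pm_F$, and setting $\Theta^D_E(1):=\Theta^D_E(\mathbf 1)$, gives the claim. For part (2), $E=F\times F$ forces $D$ to split, hence $H_D=H_F$, and Lemma~\ref{rep:of:HD:split}(2) gives $\tau^\pm_{\mu\boxtimes\eta_F}=(\eta_F\circ\lambda)\otimes\tau^\pm_\mu$; applying the identity with $\tau=\tau^\pm_\mu$, $\chi=\eta_F$ then yields the result.

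The one point that genuinely needs care is the direction of the twist — that the character produced on $G_D$ is $\chi\circ\lambda_{G_D}^{-1}$ rather than $\chi\circ\lambda_{G_D}$. This has to be checked against the explicit extension of $\omega_{s_D,\psi}$ to $R_D$, where $(h,g)$ acts through $g'=\diag(\lambda(h),1)\,g$ together with the normalizing scalar $|\lambda(h)|^{-1}$; I would pin it down by tracking the action of a single diagonal similitude element and of the center $Z_D$ (which acts trivially on $\Omega^D_E$), leaving no ambiguity. Everything else is formal: Howe duality for $(\Omega^D_E,H_D,G_D)$, already established, ensures that $\Theta^D_E$ and $\theta^D_E$ are well defined on the $\tau^\pm_\eta$, and the projection formula does the rest.
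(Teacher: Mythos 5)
Your proposal is correct and is essentially the paper's own argument made explicit: the paper's one-line proof simply records the isomorphism $\tau\boxtimes\Theta^D_E(\tau)\simeq((\chi\circ\lambda)\otimes\tau)\boxtimes((\chi\circ\lambda^{-1})\otimes\Theta^D_E(\tau))$ coming from the fact that the twist is trivial on $R_D$, which is exactly your $\Xi_\chi$ and projection-formula step. The only difference is that you spell out why the twist descends through the maximal $\tau$-isotypic quotient (and hence through $\theta^D_E$), and you note the sign on the $G_D$-side is forced — both of which the paper leaves implicit.
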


\begin{proof}
Both statements follow immediately from the fact that
for any character $\chi$ of $F^\times$
$$\tau \boxtimes \Theta(\tau)\simeq
\left((\chi\circ \lambda) \otimes \tau\right)
 \boxtimes
\left((\chi\circ \lambda^{-1}) \otimes \Theta^D_E(\tau)\right)$$
as $R_D$ modules.
\end{proof}

\begin{Thm} \label{thetaHDGD:nonsplit}
Let $E$ be a field extension of $F$.
\begin{enumerate}
\item For any irreducible representation $\tau$ of $H_D$,
the representation $\Theta^D_E(\tau)$ is a non-zero irreducible
representation of $G_D$. In particular
$\Theta^D_E(\tau)=\theta^D_E(\tau)$.
\item Let $D$ be a non-split algebra.
\begin{enumerate}
\item
If $\eta\neq \eta^\sigma$ then
$\Theta^D_E(\tau_\eta^+)$ is supercuspidal representation.
\item
If $\eta=\eta^\sigma=\eta^\pm_F\circ Nm$ then
$$\Theta^D_E(\tau_\eta^\pm)=(\eta^\pm_F\circ \lambda^{-1}) \otimes
J_{P_D} (\chi_{E/F}|\cdot|^{1/2}\circ Nm_{D/F}).$$
\end{enumerate}
\item
Let $D$ be a split algebra.
\begin{enumerate}
\item If $\eta\neq \eta^\sigma$ then
$\Theta^D_E(\tau_\eta^+)=J_{Q}(\chi_{E/F}|\cdot|, \pi(\eta)|\cdot|^{-1/2}),$
where $\pi(\eta)$ is the dihedral supercuspidal representation
of $GL_2$ associated to the character $\eta$.

\item If $\eta=\eta^\sigma$ then
$\Theta^D_E(\tau_\eta^+)$ is the unique irreducible quotient of
$I_{B}(\chi_{E/F}|\cdot|, \chi_{E/F}, \eta_F|\cdot|^{-1/2})$
and $\Theta^D_E(\tau_\eta^-)$ is supercuspidal.
\end{enumerate}
\end{enumerate}
Here the representation $\pi(\eta)$ denotes the dihedral supercuspidal
representation of $GL_2$ with respect to a character $\eta$.
\end{Thm}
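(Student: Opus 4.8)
The plan is to prove Theorem \ref{thetaHDGD:nonsplit} by combining the general finiteness/irreducibility results already available (Theorem \ref{theta:of:supercuspidal} and the Howe duality of Proposition \ref{Howe:classical}) with an explicit computation of the theta lift of the trivial (or one-dimensional) representation, then bootstrapping all the remaining cases via the reduction Lemma \ref{reduction}. Throughout we may replace $G_D$ by $G_F$ and $H_D$ by $H_F$ whenever $D$ splits, by Lemma \ref{weil:F=weil:D:split} and its corollary, so part (3) becomes a statement about the classical $(\mathrm{O}_2,\mathrm{GSp}_4)$-similitude correspondence and part (2) about the quaternionic correspondence with $h(D)=-1$.

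\smallskip

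First I would settle non-vanishing and irreducibility. For the supercuspidal inputs $\tau^+_\eta$ with $\eta\neq\eta^\sigma$ (which is supercuspidal as a representation of $H_D$ since it is finite-dimensional, or genuinely supercuspidal on $H_F$ when $D$ splits and $E$ is a field), Theorem \ref{theta:of:supercuspidal} immediately gives that $\Theta^D_E(\tau)$ is zero or irreducible; non-vanishing follows from the tower property together with the fact that $\dim V_D \le \dim W_D$ (so the first occurrence happens in this tower), or alternatively from an explicit Jacquet-module computation showing the lift has the asserted supercuspidal support. For the one-dimensional inputs $\tau^\pm_\eta$ with $\eta=\eta^\sigma$, irreducibility of the big lift is not formal; here I would compute the Jacquet module $R_{U_D}\bigl(\Theta^D_E(\tau)\bigr)$ (resp. $R_{U_F}$ or $R_{Q_F}$ in the split case) using Kudla's filtration for the Weil representation restricted to the Siegel or Heisenberg parabolic. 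This identifies the cuspidal support and pins down $\Theta^D_E(\tau)$ as a subquotient of the relevant induced representation $I_{P_D}(\chi_{E/F}|\cdot|^{1/2}\circ \mathrm{Nm}_{D/F})$ or $I_B(\chi_{E/F}|\cdot|,\chi_{E/F},\eta_F|\cdot|^{-1/2})$, whose structure in the split case is governed by Lemma \ref{structure:of:induced}; combined with the Howe-duality statement (which forces the \emph{maximal semisimple quotient} to be irreducible) and an $\mathrm{Ext}$/occurrence argument one gets that the big lift is itself the Langlands quotient $J_{P_D}$ or $J_{Q}$ or $J_B$ as claimed.

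\smallskip

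Next I would reduce the general $\eta$ to these base cases. By Lemma \ref{reduction}(1), for non-split $D$ every $\Theta^D_E(\tau^\pm_\eta)$ is a twist by $\eta^\pm_F\circ\lambda^{-1}$ of $\Theta^D_E(1)$, so part (2) follows once $\Theta^D_E(1)=J_{P_D}(\chi_{E/F}|\cdot|^{1/2}\circ\mathrm{Nm}_{D/F})$ is established; for the split-$E$ case of part (2) one uses Lemma \ref{reduction}(2) and Lemma \ref{rep:of:HD:split} to reduce to $\tau^\pm_\mu$. Similarly, in the split-$D$ case of part (3) the twisting lemma reduces everything to the lift of $\tau^\pm_{\mu}$ (for $\eta=\mu\boxtimes 1$, $\mu\neq 1$, giving the dihedral case) and to the lift of $\tau^\pm_1$ (for $\mu=1$, the Galois-invariant case). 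The dihedral case is then a classical computation: the $\mathrm{O}_2$-representation $\tau^+_\eta$ induces up, under theta, to a representation of $\mathrm{GSp}_4$ whose $\mathrm{GL}_2$-Jacquet module along the Heisenberg parabolic produces the dihedral supercuspidal $\pi(\eta)$ (this is essentially the $(\mathrm{O}_2,\mathrm{Sp}_4)$ computation of Howe--Piatetski-Shapiro, upgraded to similitudes via Proposition \ref{isometry:similitude:plus}), and comparison of central/similitude characters fixes the normalization $J_Q(\chi_{E/F}|\cdot|,\pi(\eta)|\cdot|^{-1/2})$.

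\smallskip

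I expect the main obstacle to be the irreducibility of the big theta lift in the non-supercuspidal cases — i.e.\ showing $\Theta^D_E(\tau^\pm_\eta)$ for $\eta=\eta^\sigma$ is not just a representation with irreducible cosocle but is genuinely irreducible, and identifying on which side the extra Langlands-quotient vs.\ supercuspidal dichotomy falls (note the asymmetry in (3)(b): $\tau^+_\eta$ lifts to a Langlands quotient while $\tau^-_\eta$ lifts to something supercuspidal). This requires carefully tracking the two constituents in Kudla's filtration, ruling out a proper submodule using the conservation/first-occurrence input and the precise reducibility points in Lemma \ref{structure:of:induced}, and using the labeling conventions of Section \ref{labeling} to match the sign $\pm$ correctly; a clean way to organize it is to compute the full Jacquet module (not just its semisimplification) along the minimal parabolic and argue that any proper subrepresentation of the candidate induced module would have theta lift back to $H_D$ of the wrong size, contradicting Howe duality for the triple $(\Omega^D_E,H_D,G_D)$.
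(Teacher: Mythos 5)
Your overall structure (reduce to base cases via Lemma \ref{reduction}, then pin down the base case by Jacquet-module/Kudla-filtration analysis) is reasonable, but you have misidentified the main obstacle and the resulting plan is substantially more complicated than necessary.

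The irreducibility of $\Theta^D_E(\tau)$ is formal for \emph{every} irreducible $\tau$, including the one-dimensional ones with $\eta=\eta^\sigma$, and the reason is the hypothesis that $E$ is a field. In that case $H_D$ is compact modulo its center: $Z_D\backslash H_D^c\simeq E^\times/F^\times\simeq E^1$ is compact (by Hilbert 90), and $H_D/H_D^c\simeq\mu_2$ is finite. Therefore every smooth irreducible representation of $H_D$, one-dimensional or not, has compactly-supported-mod-center matrix coefficients, i.e.\ is supercuspidal, and Theorem \ref{theta:of:supercuspidal} applies uniformly to give $\Theta^D_E(\tau)=\theta^D_E(\tau)$ irreducible (or zero). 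You acknowledge finite-dimensionality implies supercuspidality for the $\eta\neq\eta^\sigma$ inputs but then, for the one-dimensional $\tau^\pm_\eta$, declare ``irreducibility of the big lift is not formal'' and propose a delicate Kudla-filtration/Ext/conservation argument to rule out proper submodules; that entire discussion is unnecessary. In the paper, the explicit intertwining map out of the Jacquet module $(\Omega^D_E)_{U_D}$ is used only to \emph{identify} the already-irreducible lift as a Langlands quotient, not to establish irreducibility.

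Two further divergences worth noting. For part (2)(a) (supercuspidality of the output when $D$ is non-split and $\eta\neq\eta^\sigma$), the paper does not run a tower/conservation or cuspidal-support argument as you suggest; it invokes Yasuda's isometry-level result that the lift of any non-trivial representation of $H^1_D$ is supercuspidal, and transfers this to similitudes via Proposition \ref{isometry:similitude:plus}, Part (2). For part (3) (split $D$) the paper simply cites [GI, Prop.\ A.8] rather than redoing the $(\mathrm{O}_2,\mathrm{GSp}_4)$ computation; your sketch of that computation is plausible but vaguer than what would be needed, in particular for pinning down the $\pm$ asymmetry in (3)(b), which again is handled in the cited reference.
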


\begin{proof} If $E$ is a field then the group $H_D$ is compact modulo its
center.  Hence, the irreducibility of
$\Theta^D_E(\tau)$  follows from Theorem \ref{theta:of:supercuspidal}.
Yasuda in [Ya2] has shown that $\theta_D(\tau)$ is not zero
for any representation $\tau$ of $H^1_D$.
 Therefore, the non-vanishing follows from Prop.
 \ref{isometry:similitude:plus}, Part $(2)$.

Let us prove the second part. If
$\eta\neq\eta^\sigma$ then the restriction
of $\tau^+_\eta$ to $H^1_D$ is a sum of two non-trivial
representations. Yasuda has shown that the lift
of a non-trivial representations to $G^1_D$ is non-trivial and supercuspidal.
Hence, by Prop.  \ref{isometry:similitude:plus}, Part $(2)$
 the representation  $\theta^D_E(\tau_\eta^+)$ is also supercuspidal.

If $\eta=\eta^\sigma=\eta_F^\pm\circ \Nm_{D/F}$ then, by the reduction lemma above,
it is enough to determine the theta lift of the trivial  representation.
We construct a map

$$T\in \Hom_{H_D\times M_D}\left(
(\Omega^D_E)_{U_D},
 1 \boxtimes
((\chi_{E/F}|\cdot|)\circ \Nm_{D/F}) \boxtimes |\cdot|^{1/2}\right)$$
by
$$T(f)=\int\limits_{F^\times} f(1, m(1,t))(0) |t|^{-1/2}  \, d^\times t.$$
The equivariance properties are easily checked.

Hence by Frobenius reciprocity  the theta lift
$\theta^D_E(1)$
is a subrepresentation  of
$$\Ind^{G_D}_{P_D} \delta_{P_D}^{-1/2}
(\chi_{E/F}|\cdot|\circ \Nm_{D/F})\boxtimes  |\cdot|^{1/2}$$
and hence it is a Langlands quotient of
$\Ind^{G_D}_{P_D} (\chi_{E/F}|\cdot|^{1/2})\circ \Nm_{D/F}\boxtimes 1,$
i.e., $$J_{P_D}(\chi_{E/F}|\cdot|^{1/2}\circ \Nm_{D/F}\boxtimes 1).$$

The third  part is proved in [GI] Prop. $A.8$. The small
discrepancy in the notations is resolved by remark
\ref{compare:to:GanTakeda}.
\end{proof}
Next we consider the case where $E$ and hence $D$ are split algebras.
Recall that any representation of $H_D$ has the form $\tau^\pm_{\eta}$, where $\eta=\mu\boxtimes \eta_F$.

\begin{Thm}\label{thetaHDGD:split}
Let both $D$ and  $E$ be split algebras.
\begin{enumerate}
\item
If $\mu\neq 1,|\cdot|^{\pm 2}$
then
$$\Theta^D_E(\tau_\eta^+)=\theta^D_E(\tau_\eta^+)=\bigl(\eta_F \circ \lambda^{-1} \bigr)\otimes I_{Q}(\mu).$$
\item
If $\mu=|\cdot|^{\pm 2}$ then
$$\theta^D_E(\tau_\mu^+)=\bigl(\eta_F \cdot |\cdot|^{\pm 1}\bigr)\circ \lambda^{-1}.$$
\item
If $\mu=1$ then  $\Theta^D_E(\tau_\mu^\pm)$ are both irreducible and
$$\Theta^D_E(\tau^+_\eta)=
(\eta_F\circ \lambda^{-1})\otimes J_{Q}\bigl(|\cdot|\boxtimes (|\det|^{-1/2} \otimes \Ind^{GL_2}_B 1) \bigr),
\quad
\Theta^D_E(\tau^-_\eta)= (\eta_F\circ \lambda^{-1})\otimes
J_{P}\bigl((|\det|^{1/2} \otimes St)\boxtimes |\cdot|^{-1/2}\bigr).$$
\end{enumerate}
\end{Thm}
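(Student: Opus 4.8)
The plan is to mimic the strategy used in the non-split case (Theorem \ref{thetaHDGD:nonsplit}), reducing everything by Lemma \ref{reduction} to the theta lift of the representations $\tau^\pm_\mu$ of $H_D$ with $\eta_F=1$, i.e. to computing $\Theta^D_E(\tau^\pm_\mu)$. By Lemma \ref{weil:F=weil:D:split} and its corollary, since $D$ splits we may work entirely with the orthogonal-symplectic pair $(H_F,G_F)$ acting on $S(V_F\otimes X_F)$, where $V_F$ is the $2$-dimensional quadratic space of discriminant $E=F\times F$, hence split, and $G_F=GSp_4$. The case analysis on $\mu$ corresponds exactly to the reducibility points of the degenerate principal series $I_Q(\mu)$ recorded in Lemma \ref{structure:of:induced}, which is the structural input that makes the three cases genuinely different.

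For part (1), the first step is to compute the Jacquet module $(\Omega^D_E)_{U_D}$ (equivalently the Jacquet module of the Weil representation along the Heisenberg unipotent $V_F$ of $G_F$) as a module for $H_F\times L_F$, using the explicit Weil-representation formulas and the mixed-model/Kudla filtration of the Weil representation; then build an intertwining functional of the shape
$$T\in\Hom_{H_F\times L_F}\!\left((\Omega^D_E)_{V_F},\ \tau^+_\mu\boxtimes\big(\mu^{-1}\boxtimes(\mu\circ\det)\big)\cdot\delta^{1/2}\right)$$
by an orbit integral over $F^\times$ exactly as in the non-split proof, and check equivariance. Frobenius reciprocity then exhibits $\Theta^D_E(\tau^+_\mu)$ inside $I_Q(\mu)$; when $\mu\neq 1,|\cdot|^{\pm2}$ Lemma \ref{structure:of:induced}(1) says $I_Q(\mu)$ is irreducible, so the containment forces equality and simultaneously gives $\Theta^D_E=\theta^D_E$. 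For part (2), the same functional lands us in $I_Q(|\cdot|^{\pm2})$, whose unique irreducible quotient is $|\cdot|\circ\lambda$ (Lemma \ref{structure:of:induced}(3), after the evident twist); one then argues that $\theta^D_E(\tau_\mu^+)$ is exactly this quotient rather than a larger subquotient, e.g. by a dimension/unramified-computation or by comparing with the known isometry lift of the corresponding constituent of $\omega_{s_F,\psi}$ for $(H^1_F,G^1_F)$. For part (3) with $\mu=1$, the key extra input is that $\tau^+_1$ and $\tau^-_1$ are the two one-dimensional constituents of $\Ind^{H_F}_{H^c_F}1$ distinguished by the labeling $\tau^+|_{H^1_F}=1$, $\tau^-|_{H^1_F}=\sgn$ (Subsection \ref{labeling}); using Proposition \ref{isometry:similitude:plus}(2) to pass between similitude and isometry lifts, the trivial constituent lifts to the "$J_Q$" summand in Lemma \ref{structure:of:induced}(2) and the sign constituent to the "$J_P$" summand, and irreducibility of each big lift follows because the relevant isometry lifts are already irreducible (so $\Theta=\theta$ there too, again via Prop. \ref{isometry:similitude:plus}(2)).

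The main obstacle I expect is part (3): separating the two reducible pieces of $I_Q(1)$ and matching each to the correct constituent $\tau^\pm_1$ requires knowing the restriction of the two theta lifts to the isometry subgroup $G^1_F$ and then using the sign-character bookkeeping of Prop. \ref{isometry:similitude:plus}(2) carefully — in particular one must check that the supercuspidal-looking lift of $\tau^-_1$ is genuinely the full induced-from-Steinberg Langlands quotient $J_P\big((|\det|^{1/2}\otimes St)\boxtimes|\cdot|^{-1/2}\big)$ and not a proper piece, which is precisely where the conservation relation and the known structure of the isometry theta correspondence for $(SO(2,2),Sp_4)$ (via [Ya2], [GI], [GTak]) have to be invoked. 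The twist-tracking by $\eta_F\circ\lambda^{-1}$ throughout is routine given Lemma \ref{reduction}, but one must be consistent about whether the twist is by $\eta_F\circ\lambda$ or its inverse, so I would fix that normalization once at the start and carry it uniformly.
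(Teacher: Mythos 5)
Your plan to ``mimic the non-split proof'' runs into a genuine structural problem that the paper avoids by arguing in the opposite direction. In the non-split case (Theorem \ref{thetaHDGD:nonsplit}), the group $H_D$ is compact modulo center, so Theorem \ref{theta:of:supercuspidal} gives you for free that $\Theta^D_E(\tau)$ is already irreducible; the orbit-integral functional $T$ on $(\Omega^D_E)_{U_D}$ then produces a \emph{subrepresentation} containment $\theta^D_E(1)\hookrightarrow \Ind^{G_D}_{P_D}(\cdots)$, and irreducibility lets you identify it with a Langlands quotient. With $E=F\times F$ the group $H_D\simeq GO(1,1)$ is not compact modulo center, so you have no a priori irreducibility of $\Theta^D_E(\tau_\mu)$, and a containment $\Theta^D_E(\tau_\mu)\hookrightarrow I_Q(\mu)$ is exactly the wrong direction for cases (2) and (3): it would let $\Theta^D_E$ be the unique irreducible \emph{sub}module of the length-two $I_Q(|\cdot|^{-2})$, rather than the full $I_Q(\mu)$ (which is what actually happens, with $\theta^D_E$ the unique irreducible quotient $|\cdot|\circ\lambda$). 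The paper instead computes $\Hom_{H^c_D}(\Omega^D_E,\mu\boxtimes 1)$ as a $G_D$-module, using the explicit Gan--Takeda filtration $0\to J\to\Omega^D_E\to\Omega^D_E/J\to 0$ along the Heisenberg direction, showing (for $\mu\neq|\cdot|^2$) that the quotient term contributes nothing and that $J$ gives $\big(\Ind^{G_D}_{Q_D}(\mu^{-1}\boxtimes\mu\circ\det)\big)^*$; this yields a \emph{surjection} $I_Q(\mu)\twoheadrightarrow\Theta^D_E(\tau_\mu)$, from which all three cases follow by reading off Lemma \ref{structure:of:induced}. Your ``dimension/unramified-computation or comparison with the isometry lift'' fallback for case (2) is the right instinct but is doing real work that the surjection makes unnecessary.

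Two further points. First, your parenthetical ``$(\Omega^D_E)_{U_D}$ (equivalently the Jacquet module along the Heisenberg unipotent $V_F$)'' conflates the Siegel unipotent $U_D$ used in the non-split argument with the Heisenberg unipotent: these are different parabolics, and the target $I_Q(\mu)$ is induced from $Q$, not $P$, so there is no reason the non-split recipe transfers ``exactly.'' Second, the filtration argument silently requires $\mu\neq|\cdot|^{2}$, which is why the paper handles $\mu=|\cdot|^2$ separately by combining $\tau_{|\cdot|^2\boxtimes 1}\simeq(|\cdot|^2\circ\lambda)\otimes\tau_{|\cdot|^{-2}\boxtimes 1}$ (Lemma \ref{rep:of:HD:split}) with Lemma \ref{reduction}; your sketch treats $\mu=|\cdot|^{\pm 2}$ symmetrically and would miss this. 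Your treatment of part (3) -- matching $\tau^\pm_1$ to the two constituents of $I_Q(1)$ via the known isometry lift from [Ya1] and Proposition \ref{isometry:similitude:plus}(2) -- is essentially what the paper does, though the phrase ``supercuspidal-looking lift of $\tau^-_1$'' is misplaced here: supercuspidality of $\Theta^D_E(\tau^-_\eta)$ occurs only when $E$ is a field (Theorem \ref{thetaHDGD:nonsplit}(3b)), whereas in the split case it is the Langlands quotient $J_P((|\det|^{1/2}\otimes St)\boxtimes|\cdot|^{-1/2})$.
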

Lemma \ref{reduction} reduces the proof of this theorem to the $\eta_F=1$ case.
\begin{proof}
The key step in the proof is the following lemma.
\begin{Lem}
For  $\mu\neq |\cdot|^{2}$ there is an injective  map of $G_D$ modules
$$\Hom_{H_D}(\Omega^D_E,\Ind^{H_D}_{H_D^0}\mu\boxtimes 1) \hookrightarrow I_{Q_D}(\mu)^*.$$
\end{Lem}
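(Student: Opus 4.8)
The plan is to compute the Jacquet module (twisted coinvariants) of the big Weil representation $\Omega^D_E$ along the unipotent radical $U_D$ of the Siegel parabolic $P_D\subset G_D$, and then use Frobenius reciprocity on the $G_D$-side together with the explicit structure of $\Omega^D_E$ on the $H_D$-side. Since $H_D$ is split here, $H_D^c\simeq\G_m\times\G_m$ and $\Ind^{H_D}_{H_D^0}(\mu\boxtimes 1)$ is the representation whose restriction to $H_D^c$ contains the character $\mu\boxtimes 1$ and its Galois conjugate $\mu^{-1}\boxtimes\mu$ (by Lemma~\ref{rep:of:HD:split}(1)). So by Frobenius reciprocity for the compact induction defining $\Omega^D_E$,
\[
\Hom_{H_D}\bigl(\Omega^D_E,\Ind^{H_D}_{H_D^0}\mu\boxtimes 1\bigr)
\hookrightarrow \Hom_{H^c_D}\bigl(\omega_{s_D,\psi},\ \mu\boxtimes 1\bigr),
\]
and the latter is naturally a $G_D$-module because $H^c_D$ commutes with $G_D^1$ inside the metaplectic group and the full $\Omega^D_E$ carries the $G_D$-action.

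First I would realize $\omega_{s_D,\psi}$ on $S(V_D\otimes_D X_D)$ as in Section~6, where $V_D$ is one-dimensional over $D$, so $V_D\otimes_D X_D\simeq X_D$ as an $F$-space of dimension $4$ (it becomes $V_F\otimes_F X_F$ once $D$ splits, by Lemma~\ref{weil:F=weil:D:split}). The action of $H^c_D\simeq E^\times$ is by the geometric action $\phi\mapsto\phi(h^{-1}\cdot)$ twisted by the genuine normalizing character $\chi_{E/F}|\cdot|$ absorbed into the $M_D$-action; the point is that taking the $\mu\boxtimes 1$-coinvariants for $H^c_D$ amounts to restricting Schwartz functions to a single $H^c_D$-orbit and twisting. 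Concretely, I expect to identify $\Hom_{H^c_D}(\omega_{s_D,\psi},\mu\boxtimes 1)$ with a space of Schwartz(-type) functions on the open $H^c_D$-orbit in $V_D\otimes X_D$, which as a $G_D$-space is an induced space from the parabolic $Q_D$ — this is precisely where the representation $I_{Q_D}(\mu)$ appears. Dualizing (the $\Hom$ into a character lands in the smooth dual / the distribution side), one gets the injection into $I_{Q_D}(\mu)^\ast$.

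The cleanest way to organize the $G_D$-equivariance is to compute $(\Omega^D_E)_{U_D}$, the Jacquet module along $U_D$: by the explicit formula $\omega_{s_D,\psi}(u(S))\phi(x)=\psi(\langle x,u(S)x\rangle)\phi(x)$, the $U_D$-coinvariants are supported on the locus where the quadratic form $x\mapsto\langle x,u(\cdot)x\rangle$ vanishes identically, i.e. on the isotropic vectors of $V_D\otimes X_D$ for the relevant pairing; restricting to that locus and keeping track of the $M_D\times H_D$-action exhibits $(\Omega^D_E)_{U_D}$ as (a filtration of) induced representations, with the generic piece giving a map to $\Ind^{G_D}_{Q_D}(\text{something involving }\mu)$. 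Matching the modulus characters and the normalization factors $\chi_{E/F}|\det|$, $|\Nm_{D/F}|$ from the Weil representation formula pins down the inducing data to be exactly that of $I_{Q_D}(\mu)=\Ind^{G_D}_{Q_D}\mu^{-1}\boxtimes(\mu\circ\det)$. Frobenius reciprocity then yields
\[
\Hom_{H_D}\bigl(\Omega^D_E,\Ind^{H_D}_{H^0_D}\mu\boxtimes 1\bigr)\hookrightarrow
\Hom_{G_D}\bigl(\text{quotient of }(\Omega^D_E)_{U_D},\ \ast\bigr)\hookrightarrow I_{Q_D}(\mu)^\ast.
\]

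The hypothesis $\mu\neq|\cdot|^{2}$ is exactly what guarantees the relevant orbit map/restriction map is injective: when $\mu=|\cdot|^{2}$ the normalizing twist $\chi_{E/F}|\cdot|$ coming from the $M_D$-action conspires with $\mu$ so that an extra closed orbit (the origin) contributes, breaking injectivity — this matches case (2) of Theorem~\ref{thetaHDGD:split} where $\theta^D_E(\tau^+_\mu)$ collapses to a character. The main obstacle I anticipate is precisely the careful bookkeeping of orbits of $H^c_D$ (equivalently $Q_D$ on the other side) on $V_D\otimes X_D$ and the exact twisting by the genuine part of the Weil representation: one must check that on each non-open orbit the $\mu\boxtimes 1$-coinvariants either vanish (generic $\mu$) or are accounted for, and that the contribution of the open orbit is faithfully a subquotient of $I_{Q_D}(\mu)$ rather than a proper subquotient. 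Once the orbit stratification and the modulus characters are correctly computed, the injectivity statement follows formally from exactness of Jacquet functors and Frobenius reciprocity.
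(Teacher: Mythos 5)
Your first step (Frobenius reciprocity reducing to $\Hom_{H^c_D}(\Omega^D_E,\mu\boxtimes 1)$), your identification of the target $I_{Q_D}(\mu)$ as a Heisenberg-parabolic induction, and your intuition that the hypothesis $\mu\neq|\cdot|^2$ is there to kill a contribution from a closed/degenerate orbit are all aligned with the paper. But the central technical step you propose — computing the Jacquet module $(\Omega^D_E)_{U_D}$ along the Siegel unipotent $U_D$ — is the wrong tool and would not produce the structure you need. $(\Omega^D_E)_{U_D}$ is a module over $M_D\times H_D$, not over $G_D$; by Frobenius reciprocity for Jacquet functors it controls maps to \emph{Siegel}-parabolic inductions $\Ind^{G_D}_{P_D}(\cdot)$, whereas the lemma requires exhibiting the Hom space inside the dual of $I_{Q_D}(\mu)=\Ind^{G_D}_{Q_D}(\mu^{-1}\boxtimes\mu\circ\det)$, a Heisenberg-parabolic induction. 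You cannot see the $Q_D$-structure from Siegel coinvariants; the ``isotropic locus in $V_D\otimes X_D$'' you describe is the support of the $P_D$-side degenerate piece and does not carry a natural $Q_D$-induced module structure.

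What the paper actually does is closer to your \emph{first} instinct (before you switched to $U_D$): it does not compute any Jacquet module on the $G_D$ side at all. Instead it restricts $\Omega^D_E$ to $H^c_D\times G_D$ and invokes the Gan--Takeda filtration $0\to J\to\Omega^D_E\to\Omega^D_E/J\to 0$ of this restriction as $H^c_D\times G_D$-modules, where $J=\ind^{H^c_D\times G_D}_{H^c_D\times Q_D}S(F^\times\times F^\times)$ and $\Omega^D_E/J\simeq S(F^\times)$. Applying $\Hom_{H^c_D}(\cdot,\mu\boxtimes 1)$ to this exact sequence, the quotient piece dies precisely when $\mu\neq|\cdot|^2$, and the subrepresentation $J$ gives (via Lemma 9.4 of [GG1]) the inclusion into $(\Ind^{G_D}_{Q_D}V)^\ast=I_{Q_D}(\mu)^\ast$, with $V$ the one-dimensional $Q_D$-module $\mu^{-1}\boxtimes\mu\circ\det$. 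So the ingredient you are missing is the mixed-model/doubling filtration of the big theta representation over $H^c_D\times G_D$ (already established in [GTak]), which is what makes the Heisenberg parabolic appear; no Siegel Jacquet module is involved. If you want to make your plan work, replace the $(\Omega^D_E)_{U_D}$ computation by this filtration.
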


\begin{proof}
By Frobenius reciprocity
$$\Hom_{H_D}(\Omega^D_E,\Ind^{H_D}_{H_D^0}\mu\boxtimes 1)=
\Hom_{H^0_D}(\Omega^D_E,\mu\boxtimes 1).$$

The restriction of $\Omega^D_E$ to $H^0_D\times G_D$
is in fact the Jacquet module $\Omega^D_E$ with respect to
the maximal parabolic subgroup $H^c_D$ in $H_D$.
The result of Gan and Takeda in [GTak] implies the following
filtration  of $H_D^c\times G_D$ modules
$$0\rightarrow J \rightarrow \Omega_E^D\rightarrow \Omega_E^D/J\rightarrow 0.$$

Here
$J=\ind^{H^c_D\times G_D}_{H^c_D\times Q_D} S(F^\times \times F^\times)$
where the action of $H^c_D\times Q_D$ on $ S(F^\times \times F^\times)$
is given by
$$(h(a,r), l(t,g)\phi)(x,y)=|r\det(g)^{-1}| \phi(xa\det(g)b^{-1},yr\det(g)). $$
The quotient $\Omega/J$ is isomorphic to $S(F^\times)$ and the action of $H^c_D\times G_D$ is
given by
$$(h(a,r),g)\phi(x)=|r|^{-1}|a|^2 \phi(xr\det(g)).$$

In particular, if $\mu\neq |\cdot|^2$ one has $\Hom_{H^c_D}(\Omega/J,\mu\boxtimes 1)=0$
and hence by Lemma 9.4 of [GG1] there is an isomorphism of $G_D$ modules
$$\Hom_{H^c_D}(\Omega^D_E,\mu\boxtimes 1)=\Hom_{H^c_D}(J,\mu\boxtimes 1)=(\Ind^{G_D}_{P_D} V)^\ast$$
where $V^\ast=\Hom_{H^c_D}(S(F^\times\times F^\times),\mu\boxtimes 1).$

A straightforward computation shows that $V$ is a one-dimensional space
on which $Q$ acts by $\mu^{-1}\boxtimes  \mu \circ \det$.
\end{proof}

To derive the theorem from the last lemma  assume  first that $\mu \neq 1$ so that
$\tau_\mu=\Ind^{H_D}_{H^c_D} \mu \boxtimes 1$ is irreducible.
 By the lemma, there exists a surjective map
$I_{Q}(\mu)\twoheadrightarrow \Theta^D_E(\tau_{\mu\boxtimes 1})$.

By Lemma \ref{structure:of:induced} we conclude that $\Theta^D_E(\tau_{\mu\boxtimes 1})=\theta^D_E(\tau_{\mu\boxtimes 1})=I_{Q}(\mu)$
for $\mu\neq |\cdot|^{\pm2}$.
It also follows that
$\theta^D_E(\tau_{|\cdot|^{-2} \boxtimes 1})=|\cdot|\circ \lambda$.
By Lemma \ref{rep:of:HD:split}, $\tau_{|\cdot|^{2} \boxtimes 1} \simeq \bigl(| \cdot | ^2 \circ \lambda \bigr)\otimes \tau_{|\cdot|^{-2} \boxtimes 1}$. Lemma \ref{reduction} implies now that
$$\theta^D_E(\tau_{|\cdot|^2 \boxtimes 1})=
(|\cdot|^2\circ \lambda^{-1})\otimes \theta^D_E(\tau_{|\cdot|^{-2}})=
|\cdot|\circ \lambda^{-1}.$$

Assume now that $\mu=1$ so that
$\Ind^{H_D}_{H^c_D} \mu \otimes 1=\tau^+_1\oplus \tau^-_1$.
By the lemma above there is a surjective map
$I_{Q_D}(1)\rightarrow \Theta^D_E(\tau^+_1)\oplus \Theta^D_E(\tau_1^-)$
and hence using Lemma \ref{structure:of:induced} both
$$\{ \Theta^D_E(\tau^+_1), \Theta^D_E(\tau_1^-)\}=
\{J_{Q} \bigl(|\cdot|\boxtimes | (\det|^{-1/2} \otimes \Ind^{GL_2}_{B_2} 1)\bigr),
J_{P} \bigl((|\det|^{1/2} \otimes St)\boxtimes |\cdot|^{-1/2} \bigr) \}$$
are irreducible.

On the other hand, the representation $\tau^+_1$ restricted to
$H^1_D$ is  the trivial representation whose lift $\Theta_{s_D,\psi}(1)$
to $G^1_D$ is determined by [Ya1]. It equals
$J^{G^1_F}_{P^1}(\Ind^{GL_2}_{B_2} 1,1/4)$.   Hence by (\ref{isometry:similitude:plus}), Part $(2)$
$$\Theta^D_E(\tau^+_1)=J_{Q} \bigl(|\cdot|\boxtimes | (\det|^{-1/2} \otimes \Ind^{GL_2}_{B_2} 1)\bigr),
\quad
\Theta^D_E(\tau^-_1)= J_{P} \bigl((|\det|^{1/2} \otimes St)\boxtimes |\cdot|^{-1/2} \bigr).$$
\end{proof}

Note that in the only case where $\theta^D_E(\tau)\neq \Theta^D_E(\tau)$
the representation $\tau$ is not unitary.
In fact it is  easy to prove that
 $\Theta^D_E(\tau)$ is not one dimensional. For $\mu=|\cdot|^{\pm2}$ it follows from the proof above that
$\Theta^D_E(\tau^+_{\mu \boxtimes 1})= I_{Q}(\mu)$.

Finally we can write explicitly the theta lift
of unramified representations.

\begin{Prop}\label{thetaHDGD:unramified}
 Let $D$ be a split algebra and let  $\eta$ be an  unramified character. Then,
$\theta^D_E(\tau_\eta^+)$ is unramified representation and
 \begin{enumerate}
\item If $E$ is a field and $\eta=\eta_F\circ Nm_{E/F}$
then $\theta^D_E(\tau_\eta^+)$  is the unique irreducible quotient of
$$I_{B}(\chi_{E/F}|\cdot|,\chi_{E/F}; \eta_F |\cdot|^{-1/2}).$$
\item If $E=F\times F$,  and $\eta=\mu\boxtimes \eta_F$
then $\theta^D_E(\tau_\eta^+)$ is the unique irreducible quotient of
$$I_{B}(|\cdot|,\mu; \eta_F |\cdot|^{-1/2}).$$
\end{enumerate}
\end{Prop}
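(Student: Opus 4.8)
The plan is to specialize Theorems \ref{thetaHDGD:nonsplit} and \ref{thetaHDGD:split} to unramified inducing data. Recall the standard fact that for an unramified character $\chi$ of the torus written in Langlands position, the unique irreducible quotient of $I_B(\chi)$ is exactly the spherical constituent, hence unramified. Consequently both assertions of the proposition — that $\theta^D_E(\tau^+_\eta)$ is unramified and that it is the stated Langlands quotient — reduce to identifying $\theta^D_E(\tau^+_\eta)$ with the unique irreducible quotient of a principal series of $G_F=GSp_4$ whose exponents are unramified. I would also note at the outset that in all cases below $\theta^D_E(\tau^+_\eta)=\Theta^D_E(\tau^+_\eta)$, since these representations are irreducible.

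Case (1) requires essentially no work: because $\eta=\eta_F\circ\Nm_{E/F}$ we have $\eta=\eta^\sigma$, so Theorem \ref{thetaHDGD:nonsplit}, part (3)(b), already identifies $\theta^D_E(\tau^+_\eta)$ with the unique irreducible quotient of $I_B(\chi_{E/F}|\cdot|,\chi_{E/F};\eta_F|\cdot|^{-1/2})$; when $E/F$ is unramified the quadratic character $\chi_{E/F}$ is unramified, so this quotient is spherical. For case (2), $E=F\times F$ and $\eta=\mu\boxtimes\eta_F$ with $\mu,\eta_F$ unramified, and by Lemma \ref{reduction} the $\eta_F$-dependence of the answer is the uniform twist by $\eta_F\circ\lambda^{-1}$, which in the coordinates $t(a,b,s)=\diag(a,b,b^{-1}s,a^{-1}s)$ multiplies the $s$-character by $\eta_F$ — precisely the third entry of $I_B(|\cdot|,\mu;\eta_F|\cdot|^{-1/2})$ — so it suffices to treat $\eta_F=1$. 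For $\mu\ne 1,|\cdot|^{\pm2}$ Theorem \ref{thetaHDGD:split}, part (1), gives $\theta^D_E(\tau^+_\eta)=I_Q(\mu)=\Ind^{G_F}_{Q_F}(\mu^{-1}\boxtimes\mu\circ\det)$, irreducible by Lemma \ref{structure:of:induced}, part (1); inducing in stages through $Q_F$ — writing the one-dimensional representation $\mu\circ\det$ of the $GL_2$-Levi factor as a Langlands quotient of the appropriate $GL_2$-principal series, and tracking the modulus characters of $Q_F$, of the Borel of $GL_2$, and of $B_F$ — realizes $I_Q(\mu)$ as the unique irreducible quotient of $I_B(|\cdot|,\mu;|\cdot|^{-1/2})$. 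The values $\mu=|\cdot|^{\pm2}$ and $\mu=1$ go the same way via parts (2) and (3): for $\mu=|\cdot|^{\pm2}$ the one-dimensional $\theta^D_E(\tau^+_\mu)=(|\cdot|^{\pm1})\circ\lambda^{-1}$ is the unique irreducible quotient of the (reducible) $I_B(|\cdot|,|\cdot|^{\pm2};|\cdot|^{-1/2})$, and for $\mu=1$ the $GL_2$-constituent $\Ind^{GL_2}_B 1$ appearing in $J_{Q}(|\cdot|\boxtimes(|\det|^{-1/2}\otimes\Ind^{GL_2}_B 1))$ is itself an irreducible unramified principal series, so the identical induction-in-stages computation identifies the lift with the unique irreducible quotient of $I_B(|\cdot|,1;|\cdot|^{-1/2})$; in every case all exponents are unramified, so each lift is spherical.

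The only genuine work — hence the step I expect to be the main obstacle — is the normalization bookkeeping in case (2): matching $\delta_{Q_F}$, the modulus of the Borel of the $GL_2$-Levi, and $\delta_{B_F}$, and checking that after induction in stages the exponents come out exactly as $(|\cdot|,\mu;|\cdot|^{-1/2})$, already in Langlands position, so that the Langlands classification for $GSp_4$ pins down the unique irreducible quotient unambiguously. Once the normalizations are fixed, irreducibility (or, for $\mu=1,|\cdot|^{\pm2}$, the precise reducibility) is exactly the content of Lemma \ref{structure:of:induced}, and no further input is needed.
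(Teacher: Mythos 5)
Your proposal is correct and takes essentially the same route as the paper: the paper disposes of part~(1) by citing [GI] Prop.~A.8 (which is exactly the content of Theorem~\ref{thetaHDGD:nonsplit}(3)(b)) and of part~(2) by saying ``follows from Theorem~\ref{thetaHDGD:split}''; you simply spell out the induction-in-stages and Weyl-chamber bookkeeping that the paper elides. One small correction: your opening remark that $\theta^D_E(\tau^+_\eta)=\Theta^D_E(\tau^+_\eta)$ ``in all cases below'' is false precisely when $\mu=|\cdot|^{\pm 2}$ --- the paragraph following Theorem~\ref{thetaHDGD:split} notes that there $\Theta^D_E(\tau^+_{\mu\boxtimes 1})=I_Q(\mu)\neq\theta^D_E(\tau^+_{\mu\boxtimes 1})$ --- but since the Proposition is about $\theta^D_E$ this does not affect the argument; also, for $\mu=|\cdot|^{-2}$ the exponent $(1,-2)$ is not in the dominant chamber, so ``unique irreducible quotient of $I_B(|\cdot|,|\cdot|^{-2};\cdot)$'' requires a word of justification rather than a direct appeal to the Langlands classification, though this non-unitary case is outside the unramified Satake-parameter setting where the Proposition is actually invoked.
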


Part one is proved in  [GI], A.8, while
the second part follows from Theorem \ref{thetaHDGD:split}.

\subsection{Twisted Jacquet modules of $G_D$}
For any irreducible representation $\Pi$ of $G_D$ denote its wave front by
$$\hat F(\Pi)=\{E\subset D: \exists \,T: \disc(T)=E \,\rm{and}\,
\Pi_{U_D,\Psi_T}\neq 0\}.$$

\begin{Prop} \label{localWF}
Let $\tau$ be an irreducible representation of $H_D$.
\begin{enumerate}
\item
$\hat F(\theta^D_E(\tau))=\{E\}.$

\item If $\disc(T)=E$ then
$\theta^D_E(\tau)_{U_D,\Psi_T}$ equals $\tau^\vee\otimes \chi_{D,-1}$
where $\Ker \chi_{D,-1}=H^c_D$ whenever
either $D$ is split  and $-1\notin \Nm_{E/F}(E^\times)$ or
$D$ is non-split and $-1\in \Nm_{E/F}(E^\times)$. Otherwise
$\chi_{D,-1}$ is a trivial character of $H_D$.
\end{enumerate}
\end{Prop}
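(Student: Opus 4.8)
The plan is to compute the twisted Jacquet module $\theta^D_E(\tau)_{U_D,\Psi_T}$ directly from the Weil representation model, using that $\theta^D_E(\tau)$ is a quotient of $\Omega^D_E$ (restricted to the $\tau$-isotypic part on the $H_D$ side). First I would recall that $\Omega^D_E$ is realized on $S(V_D\otimes X_D)$ and that, via the isomorphism $V_D\otimes_F M\simeq V_D$ (when $D$ splits) or the analogous identification, the action of $U_D$ on a Schwartz function is by the quadratic character $\phi(x)\mapsto \psi(\langle x,u(S)x\rangle)\phi(x)$. The twisted Jacquet module $(\Omega^D_E)_{U_D,\Psi_T}$ is then the quotient of $S(V_D\otimes X_D)$ by the span of $\{\omega(u(S))\phi-\Psi_T(u(S))\phi\}$; by the standard argument (as in the computation of Jacquet modules of Weil representations) this quotient is supported on the subvariety of $x\in V_D\otimes X_D$ whose ``moment map'' value — the Hermitian form $x\mapsto \langle x, -x\rangle$ read off from $s_D$ — equals the form $T$. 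Concretely this is the set of $x$ realizing $T$ as a pullback; evaluation at such an $x$ gives a map $(\Omega^D_E)_{U_D,\Psi_T}\to$ functions on the orbit, and the stabilizer of a point is exactly a copy of $H_D^1$ (the isometry group of $(Y_D,T)$), so the twisted Jacquet module becomes $\ind$ from that stabilizer.

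Next I would pin down the $H_D$-action on this model. Once $x$ ranges over the orbit realizing $T$ (a single $H_D^c$-orbit up to the component group, since $H_D$ acts on $V_D$ and $\disc T = E = \disc(V_D,s_D)$), the residual $H_D$-action is by left translation composed with the normalization factor $|\lambda(h)|^{-1}$ and the character $\chi_{E/F}(\Nm_{D/F}(a))$ appearing in the $m(a)$-formula. Tracking through: the $H_D$-representation on $(\Omega^D_E)_{U_D,\Psi_T}$ is one-dimensional over the generic point, isomorphic to $\tau^\vee$ twisted by whatever character records the sign discrepancy between the chosen splitting and the isometry $V_D\to (Y_D,T)^\vee$. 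This is where the character $\chi_{D,-1}$ enters: the comparison of polarizations and the formula $X_D\simeq \ol{Y_D}^\ast$ (stated at the end of Section 2) introduces a factor that is trivial or the quadratic character $\chi_{E/F}\circ\lambda$ according to whether $-1$ is a norm from $E$, and this condition interacts with $h(D)$, giving precisely the four cases in part (2). The non-vanishing — that $\theta^D_E(\tau)_{U_D,\Psi_T}\neq 0$ exactly when $\disc(T)=E$ — follows from the fact that $\Theta^D_E(\tau)=\theta^D_E(\tau)$ is irreducible (Theorems \ref{thetaHDGD:nonsplit}, \ref{thetaHDGD:split}) together with the observation that the only $M_D$-orbit of characters $\Psi_T$ on which $\Omega^D_E$ can have a nonzero twisted Jacquet quotient is the one with $\disc(T)=E$, since the moment map image of $V_D\otimes X_D$ lands in skew-Hermitian forms of discriminant $E$. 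This simultaneously gives part (1): $\hat F(\theta^D_E(\tau))=\{E\}$.

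The main obstacle I expect is the bookkeeping of the quadratic character $\chi_{D,-1}$: one must carefully compose the Kudla-style splitting normalization, the $|\lambda(h)|^{-1}$ twist in the extension of $\omega_{s_D,\psi}$ to $R_D$, and the identification of $(Y_D,T)$ with its dual, and check that the resulting sign is governed exactly by the membership $-1\in\Nm_{E/F}(E^\times)$ together with $h(D)$. It is easy to get the four cases but delicate to get them consistent with the labeling conventions of Section \ref{labeling}; I would verify the answer against the split case by direct comparison with the corresponding computation for $GO(V_F)\times GSp(W_F)$ in [GTak], using Remark \ref{compare:to:GanTakeda} and Lemma \ref{weil:F=weil:D:split}, and against the archimedean/non-split case by restricting to $H_D^1$ and invoking Yasuda's isometry computation in [Ya2] via Proposition \ref{isometry:similitude:plus}(2).
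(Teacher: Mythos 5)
Your overall strategy is the right one --- compute the twisted Jacquet module of $\Omega^D_E$ along $(U_D,\Psi_T)$ directly from the Schr\"odinger model, identify its support via the ``moment map,'' and read off the $H_D\times GU(Y_D,T)$-module structure --- and this is indeed what the paper does. But there are two concrete gaps.

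\textbf{(a) The model for $\Omega^D_E$ is wrong.} You say $\Omega^D_E$ is realized on $S(V_D\otimes X_D)$. That space carries $\omega_{s_D,\psi}$, which is a representation of $R_D$, not of $H_D\times G_D$; the object you actually need is $\Omega^D_E=\ind^{H_D\times G_D}_{R_D}\omega_{s_D,\psi}$. The paper handles this by producing an $H_D\times P_D$-equivariant isomorphism
$B:\Omega^D_E\longrightarrow S\bigl((V_D\otimes e)\oplus F^\times\bigr),
\qquad
B(\phi)(v\otimes e,y)=\phi(1,\tilde y)(v\otimes e),$
where $\tilde y=y\,\mathrm{Id}_{X_D}+\mathrm{Id}_{Y_D}\in G_D$. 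The extra $F^\times$ coordinate is exactly the degree of freedom coming from the compact induction across $R_D\subset H_D\times G_D$, and it is essential: the support set
$A_{s_D,T}=\{(v\otimes e,y):y\,s_D(\ol v,v)=T(e^\ast,\ol{e^\ast})\}$
is then a torsor under the full \emph{similitude} group $H_D$, via $h\mapsto(hv_0,\lambda(h)^{-1})$, whereas your version without the $y$-variable only gives a torsor under the isometry group $H_D^1$. Your claim that ``the stabilizer of a point is exactly a copy of $H_D^1$'' is a symptom of this: once the $F^\times$ factor is restored, the set is an $H_D$-torsor (the stabilizer in $H_D$ is trivial, and the commuting $GU(Y_D,T)\simeq H_D$ acts on the other side), and the Jacquet module is $S(H_D)$ rather than $S(H_D^1)$. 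The distinction matters because the Proposition is an assertion about $H_D$-modules (one needs to see $\tau^\vee$, not just $\tau^\vee|_{H_D^1}$).

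\textbf{(b) The attribution of $\chi_{D,-1}$ is incorrect.} You locate the character in ``the comparison of polarizations and the formula $X_D\simeq\ol{Y_D}^\ast$.'' That is not where it comes from. In the paper's computation the $H_D\times GU(Y_D,T)$-action on $S(H_D)$ is
$\omega_{s_D,\psi}(h_1,h_2)\phi(a)=\chi_{E/F}(\Nm_{D/F}(h_2))\,|\lambda(h_1)^{-1}|\,|\lambda(h_2)|\,\phi(h_1^{-1}ah_2),$
and the character in question is literally $\chi_{E/F}\circ\Nm_{D/F}$ restricted to $H_D\simeq GU(Y_D,T)$, coming from the $\chi_{E/F}(\Nm_{D/F}(a))$ factor in the $m(a)$-formula for the Weil representation. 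The ``four cases'' are then a direct consequence of evaluating $\chi_{E/F}(\Nm_{D/F}(j))=\chi_{E/F}(-\beta)$ for a representative $j$ of $H_D\setminus H_D^c$ with $j^2=\beta$: since $D$ splits iff $\beta\in\Nm_{E/F}(E^\times)$ and $\chi_{E/F}(-1)=1$ iff $-1\in\Nm_{E/F}(E^\times)$, the character is nontrivial exactly when $D$ is split and $-1\notin\Nm$, or $D$ is non-split and $-1\in\Nm$. No comparison of polarizations or dualities $X_D\simeq\ol{Y_D}^\ast$ is involved; your proposed sanity checks against [GTak] and [Ya2] are sensible but would not by themselves fix an incorrect conceptual source for the sign.
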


\begin{proof}

Fix non-zero vectors $e\in X_D, e^\ast\in Y_D$ such that
$h_D(e,\ol{e^\ast})=1$. Note that
$V_D\otimes_D X_D\simeq V_D\otimes e.$

In the course of the proof define for any
$t\in \G_m$ by $\tilde t=tId_{X_D}+Id_{Y_D}\in G_D$.

There is an isomorphism of $H_D\times P_D$ modules
$$B:\Omega^D_E\rightarrow S((V_D\otimes e)\oplus F^\times)$$
given by
$$B(\phi)(v\otimes e,y)=\phi(1,\tilde y)(v\otimes e).$$
By the formulas of the Weil representations
$$\left(\omega_{s_D,\psi}(u(S))-\Psi_{T}(u(S))\right)
 B(\phi)(v\otimes e,y)=$$
$$\psi(y \tr_{D/F} \sigma(s_D(\ol{v},v))h_D(e,\ol{S(e)})-\tr_{D/F} (TS))
B(\phi)(v\otimes e,y)=$$
$$\psi(\tr_{D/F}
(y \sigma(s_D(\ol{v},v))-
\sigma(T(e^\ast, \ol{e^\ast})) h_D(e,\ol{S(e)}))B(\phi)(v\otimes e,y).$$
The expression above vanishes for all the skew-Hermitian
forms $S$ if and only if  $B(\phi)$ is supported on the set
$$A_{s_D, T}=\{ (v\otimes e, y):
y s_D(\ol{v}, v)=T(e^\ast,\ol{e}^\ast)\}.$$
Hence, the restriction of functions from
$S(V_D\otimes X_D)$ to $A_{s_D,T}$ defines an isomorphism
of  $H_D\times GU(Y_D,T)$ modules
$$(\Omega^D_E)_{U_D,\Psi_{T}}\simeq S(A_{s_D,T}).$$

If $\disc(s_D)\neq \disc(T)$ then $A_{s_D,T}=\emptyset$ and hence
${(\Omega^D_E)}_{U_D, \Psi_T}=0$.
If  $\disc(s_D)=\disc(T)$, the skew-Hermitian left $D$-modules
$(Y_D,T)$ and $(\ol{V_D} , s_D)$ are equivalent.
Hence, there exists an element $v_0\in V_D$ such that
$s_D(\ol{v_0},v_0)=T(e^\ast, \ol{e^\ast})$.

There is a natural bijection  of the sets  $GU(\ol{V_D},s_D)\simeq H_D$ and
$A_{s_D,T}$  via
$$h\rightarrow (h v_0, \lambda(h^{-1})).$$

Using this isomorphism we identify $(\Omega^D_E)_{U_D,\Psi_{T}}$ with $S(H_D)$.
By the formulas of the Weil representation, the action of
$H_D\times GU(Y_D,T)\subset H_D\times M_D$  on $S(H_D)$ is

$$\omega_{s_D,\psi}(h_1,h_2)\phi(a)=\chi_{E/F}\circ\Nm_{D/F}(h_2)
 |\lambda(h_1^{-1})|\cdot|\lambda(h_2)|
\phi(h_1^{-1}ah_2).$$
The character $\chi_{E/F}\circ \Nm_{D/F}$ on $H_D$  equals to $\chi_{D,-1}.$
Hence, for any $\tau$ of $H_D$ the isotypic component of $\tau$ in
$(\Omega^D_E)_{U_D,\Psi_T}$ is
$\tau\boxtimes (\tau^\vee\otimes \chi_{D,-1}).$
The proposition now follows.
\end{proof}

\subsection{Explicit theta correspondence $\theta_L$}
This correspondence is well-known.
Consider a  character $\eta_L:L^\times \rightarrow \C$.
If $L$ is a split algebra over $K$  fix an isomorphism
$L^\times\simeq  K^\times \times K^\times$ such that
$\sigma(x,y)=(x^{-1}y,y)$.
Then, the character $\eta_L$ has the form $\mu_K\boxtimes \eta_K$ so that
 $$\eta_L(x,y) =\mu_K(x)\eta_K(y).$$
Obviously, $\eta_L$ is Galois invariant if and only if  $\mu_K=1$.

\begin{Prop} Let $\pi$ be an irreducible  representation of $G_K$.
Then $\Theta_L(\pi)=\theta_L(\pi)$. More precisely,
\begin{enumerate}
\item
Let $L$ be a field. Then $\Theta_L(\pi)=0$ unless $\pi=\pi(\eta_L)$
for some $\eta_L:L^\times \rightarrow \C^\times$
and $\Theta_L(\pi(\eta_L))=\tau_{\eta_L}^+.$
\item Let $L=K\times K$.
Then $$\theta_L(\pi)=\left\{
\begin{array}{ll}
 \tau_{\mu_K\boxtimes \eta_K}^+ & \pi=
\Ind^{G_K}_{P_K} \mu_K\boxtimes \eta_K, \mu_K \neq |\cdot|^{\pm 1}\\
\tau_{|\cdot|_K\boxtimes \eta_K }^+ & \pi=\eta_K\circ \lambda^{-1}_K\\
0 & {\rm otherwise}
\end{array}
.\right.
$$
\end{enumerate}
\end{Prop}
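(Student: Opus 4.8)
The plan is to transport the statement to the isometry dual pair $(H^1_K, G^1_K) = (O(V_K), Sp(W_K))$, where both $\dim V_K = \dim W_K = 2$, and then use Proposition \ref{isometry:similitude:plus} to pass back to the similitude groups. Since the spaces have equal dimension $2$, the isometry theta correspondence for $(O_2, Sp_2)$ is the classical correspondence between characters of $O_2$-type data and representations of $SL_2$, which is completely understood: a representation of $SL_2$ lifts to $O(V_K)$ nontrivially precisely when it is (a constituent of) the representation induced from the torus attached to the quadratic algebra $\disc(V_K) = L$, i.e.\ a dihedral representation $\pi(\eta_L)$, and in that case the lift is the corresponding character datum on $O(V_K)$. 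This is exactly the content of e.g.\ [Ca] in the non-archimedean case, which Proposition \ref{Howe:classical} already invokes for Howe duality of $(\Omega_L, H_K, G_K)$.

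First I would record that $\Theta_L(\pi) = \theta_L(\pi)$: since $H_K$ is compact modulo center when $L$ is a field (and in general the relevant big theta lifts here are one- or two-dimensional, hence already semisimple), irreducibility of the big theta lift follows either from Theorem \ref{theta:of:supercuspidal} applied to the supercuspidal $\pi(\eta_L)$, or directly from the explicit description. Next, for $L$ a field: the only irreducible representations of $G_K = GL_2$ whose restriction to $G^1_K = SL_2$ has a nonzero theta lift to $O(V_K)$ are, by the see-saw / Mackey analysis over $SL_2$, the dihedral ones $\pi(\eta_L)$ arising from characters of $L^\times$, and Frobenius reciprocity together with the Weil representation formulas for $\omega_{s_K,\psi_K}$ (the action of $H_K^1$ by $\phi \mapsto \phi(h^{-1}\cdot)$ and of the torus inside $G_K$ by the character twist) identifies the lift of $\pi(\eta_L)$ with the two-dimensional representation $\Ind^{H_K}_{H_K^c}\eta_L = \tau^+_{\eta_L}$; this uses Proposition \ref{rep:of:HD:1} to see that $\tau^+_{\eta_L}$ is irreducible when $\eta_L \neq \eta_L^\sigma$. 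One must also check the easy direction that $\Theta_L(\pi(\eta_L)) \neq 0$, which follows from nonvanishing of the isometry lift of a dihedral $SL_2$-representation together with Proposition \ref{isometry:similitude:plus}(2).

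For the split case $L = K \times K$, I would compute the Jacquet module of $\Omega_L$ along the Siegel unipotent of $G_K$ and match it against principal series $\Ind^{G_K}_{P_K}\mu_K \boxtimes \eta_K$, in the same style as the proof of Theorem \ref{thetaHDGD:split}: a straightforward unfolding of the Weil representation formulas produces a one-dimensional Jacquet-type space on which the relevant Levi acts by the expected character, giving the lift $\tau^+_{\mu_K \boxtimes \eta_K}$ for $\mu_K \neq |\cdot|_K^{\pm 1}$; the reducibility points $\mu_K = |\cdot|_K^{\pm 1}$ are handled by noting that the induced module degenerates and the lift of the one-dimensional quotient $\eta_K \circ \lambda_K^{-1}$ picks up the boundary character $\tau^+_{|\cdot|_K \boxtimes \eta_K}$, exactly as $|\cdot|\circ\lambda$ appeared in the $\mu = |\cdot|^{-2}$ case of Theorem \ref{thetaHDGD:split}. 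Everything else lifts to $0$ because the corresponding $GL_2$-representation is not a constituent of a dihedral one.

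The routine obstacle is bookkeeping of the character twists: one has to keep track precisely of the normalization in the extension of $\omega_{s_K,\psi_K}$ from $R^1_K$ to $R_K$ (the $|\lambda(h)|_K^{-1/2}$ factor), of the quadratic character $\chi_{L/K}$ appearing in the Siegel Levi action, and of the identification $H_K^c \simeq L^\times$ under which Galois conjugation acts as $\sigma(x,y) = (x^{-1}y, y)$ in the split case — these must be synchronized with the labeling conventions of Proposition \ref{rep:of:HD:1} and Lemma \ref{rep:of:HD:split} so that the lift lands in the "$+$" summand $\tau^+$ rather than $\tau^-$. I expect no conceptual difficulty beyond this, since the $(O_2, Sp_2)$ correspondence is classical; the only genuine care is the passage from isometry to similitude, for which Proposition \ref{isometry:similitude:plus} has already done the heavy lifting.
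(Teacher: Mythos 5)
The paper does not prove this Proposition: it is prefaced only by the remark ``This correspondence is well-known'' and stated without argument, so there is no in-text proof to compare against. Your proposal is a correct reconstruction of the standard argument, assembled from ingredients the paper has already supplied: Howe duality for $(\Omega_L,H_K,G_K)$ via Proposition \ref{Howe:classical} (which cites [Ca] for the isometry pair $(O_2,Sp_2)$), the isometry-to-similitude transfer of Proposition \ref{isometry:similitude:plus}, the description of $H_K$-representations from Proposition \ref{rep:of:HD:1}, and a Jacquet-module unfolding in the split case modeled on the proof of Theorem \ref{thetaHDGD:split}. Two places want tightening rather than merely flagging. First, the blanket claim $\Theta_L(\pi)=\theta_L(\pi)$ is not covered by Theorem \ref{theta:of:supercuspidal} outside the supercuspidal dihedral case; when $L$ is a field compactness of $H_K$ modulo center does handle it, but for $L=K\times K$ and $\pi$ a principal series you should deduce semisimplicity of $\Theta_L(\pi)$ directly from the Jacquet-module computation, which identifies the $\pi$-isotypic component with $\Ind^{H_K}_{H_K^c}(\mu_K\boxtimes\eta_K)$ — semisimple by Proposition \ref{rep:of:HD:1} — rather than merely bounding its dimension by two. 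Second, landing on $\tau^+_{\eta_L}$ rather than $\tau^-_{\eta_L}$ is a normalization tally (the $\chi_{L/K}$ twist in the Siegel Levi action, the $|\lambda(h)|_K^{-1/2}$ factor, and the conventions of Section \ref{labeling} and Lemma \ref{rep:of:HD:split}) that you defer but a complete proof must carry out, since the see-saw and restriction arguments later in the paper are sensitive to the sign. Neither point is a conceptual gap in your approach.
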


\subsection{Relation between $\Theta_L$ and $\Theta^0_L$}
The relation between $\Theta_L$ and $\Theta^0_L$  is given in the
following proposition whose proof is identical to the proof of Prop.
\ref{isometry:similitude:plus}, part $(2)$.

\begin{Prop}\label{theta:isometry:restriction:L}
\begin{enumerate}
\item Let $\pi$ be an irreducible
 representation of $G^0_K$. Then $\Theta^0_L(\pi)$
is an irreducible representation  of $H^0_K$.
\item
Let $\pi$ be an irreducible  representation of $G_K$ such that
$\pi|_{G^0_K}=\oplus \pi_i$, sum of irreducible representations. Then
$\Theta_L(\pi)|_{H^0_K}=\oplus \Theta^0_L(\pi_i)$.
\end{enumerate}
\end{Prop}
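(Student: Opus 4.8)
The plan is to imitate the proof of Proposition~\ref{isometry:similitude:plus}(2), now with $(G_K,H_K)$ playing the role of the ``large'' similitude pair and $(G^0_K,H^0_K)$ that of the ``small'' one; I would establish statement~(2) first and deduce~(1) from it. The structural facts that make this work are: the similitude character $\lambda_K$ is constant on conjugacy classes, so $R_K=\{(h,g):\lambda_K(h)=\lambda_K(g)\}$ is a \emph{normal} subgroup of $H_K\times G_K$; one has $G^0_K\triangleleft G_K$ and $H^0_K\triangleleft H_K$; and $G^0_K$ (resp.\ $H^0_K$) has finite index in $G_K$ (resp.\ $H_K$) modulo the centre, since $G_K/G^0_KZ(G_K)\cong K^\times/F^\times(K^\times)^2$ is finite, $K^\times/(K^\times)^2$ being finite over the non-archimedean field $F$, and similarly for $H^0_K$ in $H_K$. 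Consequently every irreducible smooth representation of $G_K$ restricts to a finite direct sum of irreducibles of $G^0_K$, and every irreducible of $G^0_K$ occurs in the restriction of some irreducible of $G_K$.

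For~(2) the key observation is that $\Omega_L|_{H^0_K\times G_K}$ is obtained from $\Omega^0_L$ by inducing along $G^0_K\subseteq G_K$. Indeed $(H^0_K\times G_K)\cdot R_K=H_K\times G_K$, because $(h,g)=(1,\,gr_g^{-1})(h,\,r_g)$ whenever $r_g\in G_K$ satisfies $\lambda_K(r_g)=\lambda_K(h)$, and such $r_g$ exists since $\lambda_K(G_K)=K^\times\supseteq\lambda_K(H_K)$. Hence the restriction of $\Omega_L=\ind^{H_K\times G_K}_{R_K}\omega_{s_K,\psi_K}$ to $H^0_K\times G_K$ involves, in Mackey's formula, a single double coset; as $(H^0_K\times G_K)\cap R_K=R^0_K$ and $\omega_{s_K,\psi_K}|_{R^0_K}=\omega^0_{s_K,\psi_K}$, transitivity of induction gives
\[
\Omega_L|_{H^0_K\times G_K}\ \simeq\ \ind^{H^0_K\times G_K}_{R^0_K}\omega^0_{s_K,\psi_K}\ \simeq\ \ind^{G_K}_{G^0_K}\Omega^0_L ,
\]
the last induction being taken in the $G$-variable only. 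Writing $\pi|_{G^0_K}=\bigoplus_i\pi_i$, one now passes to $\pi$-isotypic quotients on both sides and applies Frobenius reciprocity (the usual finite-index-modulo-centre argument of Proposition~\ref{isometry:similitude:plus}), obtaining $\Theta_L(\pi)|_{H^0_K}=\bigoplus_i\Theta^0_L(\pi_i)$. This is, step for step, the computation carried out in the proof of Proposition~\ref{isometry:similitude:plus}(2), following [R] and [GT].

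For~(1), let $\pi$ be irreducible of $G^0_K$ and choose an irreducible $\tilde\pi$ of $G_K$ with $\pi$ a constituent of $\tilde\pi|_{G^0_K}$. By the explicit determination of $\theta_L$ in Section~6 we have $\Theta_L(\tilde\pi)=\theta_L(\tilde\pi)$, and this is irreducible by the Howe duality of Proposition~\ref{Howe:classical}; since $H^0_K$ is normal in $H_K$ of finite index modulo the centre, the restriction $\theta_L(\tilde\pi)|_{H^0_K}$ is completely reducible. By~(2) it equals $\bigoplus_i\Theta^0_L(\pi_i)$ (writing $\tilde\pi|_{G^0_K}=\bigoplus_i\pi_i$ with $\pi=\pi_{i_0}$), so each $\Theta^0_L(\pi_i)$, and in particular $\Theta^0_L(\pi)$, is semisimple. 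But Howe duality for $(\Omega^0_L,H^0_K,G^0_K)$ (Proposition~\ref{Howe:classical}) asserts that the maximal semisimple quotient $\theta^0_L(\pi)$ of $\Theta^0_L(\pi)$ is irreducible, and a semisimple module is its own maximal semisimple quotient; hence $\Theta^0_L(\pi)=\theta^0_L(\pi)$ is irreducible.

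The obstacle is formal rather than conceptual: passing from the isomorphism $\Omega_L|_{H^0_K\times G_K}\simeq\ind^{G_K}_{G^0_K}\Omega^0_L$ to the statement about theta lifts requires the usual care — the big theta lift is a coisotypic quotient, so smooth duals enter, and Mackey's formula and Frobenius reciprocity must be applied with a fixed central character since $G^0_K$ has only finite index modulo the centre. All of this is precisely what is carried out in the proof of Proposition~\ref{isometry:similitude:plus}(2) (following [R] and [GT]), and it transfers here unchanged; the genuinely new inputs are just the normality of $R_K$ — immediate because $\lambda_K$ is a character — and the finiteness of $K^\times/(K^\times)^2$, which makes the relevant indices modulo the centre finite.
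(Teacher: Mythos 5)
Your proposal is correct and follows the same route the paper has in mind: the paper's proof is the one-line remark that the argument is ``identical to the proof of Proposition~\ref{isometry:similitude:plus}, part~(2)'', i.e.\ the Roberts/Gan--Takeda technique of Mackey theory plus Frobenius reciprocity for restriction to a normal subgroup of finite index modulo the centre, and that is exactly what you unpack. Your derivation of part~(1) from part~(2) (extend $\pi$ to an irreducible $\tilde\pi$ of $G_K$, use $\Theta_L(\tilde\pi)=\theta_L(\tilde\pi)$ from Section~6 and semisimplicity of the restriction of the irreducible $\theta_L(\tilde\pi)$ to $H^0_K$, then invoke Howe duality for $(\Omega^0_L,H^0_K,G^0_K)$ to identify each semisimple summand $\Theta^0_L(\pi_i)$ with its irreducible cosocle) is the natural reading of the paper's terse citation, so this is the intended argument rather than a genuinely different one.
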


\section{Local parameters and local packets}\label{local:parameters}

In this section we  describe the structure of the non-tempered
Arthur packets on $SO(V)$
and define them using the theta correspondence described above.

\subsection{Local parameters and local packets on $G_D$}\label{sec:definition:packets}
Let $F$ be a local field and let $W'_F$ be the Weil-Deligne group of $F$.
\begin{Def} The  local  Arthur parameter of $\theta_{10}$ type
is a map $$\Psi:W'_F\times SL_2(\C)\rightarrow Sp_4(\C),$$
where the image of $W'_F$ is bounded and
the image of a unipotent element of $SL_2(\C)$ is conjugated to a
short root unipotent element of $Sp_4(\C)$.
\end{Def}
The centralizer of the image of  $SL_2(\C)$ in $Sp_4(\C)$ is
the group $O_2(\C)$. Hence, by restriction, the parameter $\Psi$  gives
rise to a Langlands parameter
$\Phi :W_F\rightarrow O_2(\C)$. Obviously $\Phi$ determines $\Psi$.

The parameter
$\Phi:W_F\rightarrow O_2(\C)$ determines a quadratic algebra $E$ over $F$ as follows.
If the image of $\Phi$ is contained in $SO_2(\C)$ then
$E=F\times F$. Otherwise there exists a quadratic field extension
$E$ such that $\Phi(W_E)\subset SO_2(\C).$ Denote by $\sigma$
the non-trivial automorphism in $\Aut(E/F)$.
By class field theory $\Phi$ determines the character
$\eta:E^\times\rightarrow \C^\times$.
To stress this dependence we shall write  $\Psi_{E,\eta}, \Phi_{E,\eta}$
for $\Psi$ and $\Phi$ as above.
Note that the parameters $\Psi_{E,\eta}$ and $\Psi_{E,\eta^\sigma}$ are conjugate
and the same is true for $\Phi_{E,\eta},\Phi_{E,\eta^\sigma}$.

The dual  Langlands group of  $Z_D\backslash G_D$,
where $D$ runs over quaternionic algebras,
is $Sp_4(\C)$. Hence, by Arthur's conjecture the
parameter $\Psi_{E,\eta}$ gives rise
to a set $A^D_{E,\eta}$ of unitary admissible representations of $G_D$.
Similarly, the parameter $\Phi_{E,\eta}$ gives rise
to a set $L^D_{E,\eta}$ of unitary admissible representations of $H_D$.
 The unions
$$L_{E,\eta}=\cup_{D\supset E} L^D_{E,\eta},\quad
A_{E,\eta}=\cup_{D\supset E}A^D_{E,\eta} $$
must stay  in bijection with $\widehat S_{E,\eta}$, the set of characters of $S_{E,\eta}$, where
the  local component group $S_{E,\eta}$  of both
$\Psi_{E,\eta}$  and $\Phi_{E,\eta}$ is given by
$$S_{E,\eta}=
\left\{
\begin{array}{ll}\Z/2\Z\times \Z/2\Z & \eta=\eta^\sigma\\
\Z/2\Z & {\rm otherwise}
\end{array}.
\right.
$$

\vskip 5pt

The  L-packet $L^D_{E,\eta}$ of representations of $H_D$
is defined to consist of the
constituents of $\Ind^{H_D}_{H_D^c} \eta$.  More precisely,
$$
L^D_{E,\eta}=\left\{\begin{array}{ll}
\{ \tau_\eta^+,\tau_\eta^-\} &\eta=\eta^\sigma\\
\{\tau_\eta^+\} & \eta\neq \eta^\sigma
\end{array}
.\right.
$$
The authenticity of this construction will be
evident from  global considerations.

The structure of the packets $L^D_{E,\eta}$ and $A^D_{E,\eta}$ is identical.
Thus, it is natural to construct the Arthur packet $A^D_{E,\eta}$ of representations of $G_D$
using the theta-correspondence.
$$A^D_{E,\eta}=\{\Pi^\pm_\eta=\theta^D_E(\tau_\eta^\pm)\}.$$
Note that the  labeling of $L^D_{E,\eta}$ and hence of $A^D_{E,\eta}$
is not canonical when $D$ does not split and $\eta=\eta^\sigma$.

Let us describe the bijection $r$ between the sets
 $A_{E,\eta}$ (resp. $L_{E,\eta}$) and  $\widehat S_{E,\eta}$.

\begin{itemize}
\item
If $S_{E,\eta}=\Z/2\Z$
then
$$r(\Pi^+_\eta)=r(\tau^+_\eta)=
\left\{
\begin{array}{ll}
1 & D \, {\rm is \, split}\\
sgn &   D \, {\rm is \,not\,  split}
\end{array}
.\right.
$$
\item
If $S_{E,\eta}=\Z/2\Z\times \Z/2\Z$ then
$$r(\Pi^+_\eta)=r(\tau^+_\eta)=
\left\{
\begin{array}{ll}
1\otimes 1  & D \, {\rm is \, split}\\
1\otimes sgn &   D \, {\rm is \,not\,  split}
\end{array}
,\right.
\quad
r(\Pi^-_\eta)=r(\tau^-_\eta)=
\left\{
\begin{array}{ll}
sgn\otimes 1  & D \, {\rm is \, split}\\
sgn\otimes sgn &   D \, {\rm is \,not\,  split}
\end{array}
.\right.$$
\end{itemize}


\subsection{The local packet of $SO(U)\simeq G^0_K/F^\times$}
The packet of representations of $G^0_K(F)$  is defined as the set of
 constituents of a single irreducible representation $\pi$ of
$G_K(K)/F^\times$ after restriction to $G^0_K(F)/F^\times$.
We denote the corresponding parameter by $\Psi_{\pi}$.

\section{see-saw duality and the restriction theorem}\label{local:restriction}
In this section assume that the  algebra $D_K$ splits.

\subsection{See-saw duality}
There is a natural embedding
$i_G:G^0_K\simeq G^0_{D_K}\hookrightarrow G_D$.
The following proposition is straightforward.
\begin{Prop}\label{see:saw:preparation}
 Let $R^0_{D,K}=\{(h,g)\in H_D\times G_K^0: \lambda(h)=\lambda_K(g)\}$.
The group $R^0_{D,K}$ can be identified with the subgroup of $R^0_K$ and of $R_D$
via the imbeddings
$$ i_H\times id: R^0_{D,K}\hookrightarrow R^0_K,
\quad id\times i_G:  R^0_{D,K}\hookrightarrow R_D.$$
For compatible polarizations, the natural isomorphism
$S(V_K\otimes X_K)\rightarrow S(V_D\otimes X_D)$ defines an isomorphism
$$\omega^0_{s_K,\psi_K}|_{R^0_{D,K}}\simeq \omega_{s_D,\psi}|_{R^0_{D,K}}.$$
\end{Prop}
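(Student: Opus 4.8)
The plan is to deduce both parts by unwinding the canonical identifications set up in Sections~2--4 --- Morita equivalence over $K$, restriction of scalars along $D_K/D$ and $K/F$, and the inclusions $H_D\hookrightarrow H^0_{D_K}$ and $G^0_{D_K}\hookrightarrow G_D$ --- checking at each stage that similitude factors and symplectic forms are preserved. \textbf{For Part (1)} I would simply compose the embeddings already in hand, $i_H\colon H_D\hookrightarrow H^0_{D_K}\simeq H^0_K$ and $i_G\colon G^0_K\simeq G^0_{D_K}\hookrightarrow G_D$ (the indicated isomorphisms valid because $D_K$ splits), and observe that both respect similitudes: $\lambda_K\circ i_H=\lambda$ on $H_D$ because $H_D$ acts on $V_{D_K}=V_D\otimes_F K$ by $K$-linear extension, scaling $s_{D_K}$ by the same $F^\times$-factor, and Morita equivalence preserves similitudes; and $\lambda\circ i_G=\lambda_K$ on $G^0_K$ because $h_D=\tr_{D_K/D}h_{D_K}$ and the scalar $\lambda_{D_K}(g)\in F^\times$ factors out of the trace. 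Then for $(h,g)\in R^0_{D,K}$, i.e. $\lambda(h)=\lambda_K(g)$, one gets $\lambda_K(i_Hh)=\lambda(h)=\lambda_K(g)$ and $\lambda(i_Gg)=\lambda_K(g)=\lambda(h)$, so $i_H\times\id$ and $\id\times i_G$ carry $R^0_{D,K}$ injectively into $R^0_K$ and into $R_D$ respectively; this step is pure bookkeeping.

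\textbf{For Part (2)} I would first record the identification of models: Lemma~\ref{compatibility:polarizations}(2) gives (for compatible polarizations, $D_K$ split) a natural isomorphism of $F$-spaces $V_D\otimes_D X_D\simeq V_K\otimes_K X_K$, hence $S(V_K\otimes X_K)\simeq S(V_D\otimes X_D)$, and under the isomorphism of symplectic $F$-spaces $V_K\otimes_K W_K\simeq V_D\otimes_D W_D$ of the lemma of Section~2 the Lagrangian $V_K\otimes_K X_K$ is carried onto $V_D\otimes_D X_D$. Since the restriction of $\omega_{s_K,\psi_K}$ to $H^1_K\times G^1_K$ and the restriction of $\omega_{s_D,\psi}$ to $H^1_D\times G^1_D$ are each cut out of the Weil representation $\omega_\psi$ of the ambient $F$-metaplectic group in its Schr\"odinger model --- which depends only on the symplectic $F$-space $V\otimes W$, the Lagrangian $V\otimes X$ and $\psi$, with $\psi_K=\psi\circ\tr_{K/F}$ reconciling the additive characters --- the two representations agree, under the identification of Schwartz spaces, on the common subgroup $i_H(H^1_D)\times i_G(G^1_K)$. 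It remains to compare the extensions to the similitude groups: both are given by the identical recipe $\omega(h,g)\phi=c(h)\,\bigl(\omega(1,\diag(\lambda(h),1)g)\phi\bigr)(h^{-1}\cdot)$, with $c(h)=|\lambda(h)|_K^{-\frac{\dim_KV_K\cdot\dim_KW_K}{8}}=|\lambda(h)|_K^{-1/2}$ on the $K$-side and $c(h)=|\lambda(h)|^{-\frac{\dim_FD\cdot\dim_DV_D\cdot\dim_DW_D}{8}}=|\lambda(h)|^{-1}$ on the $D$-side; for $h\in H_D$ one has $\lambda(h)\in F^\times$ and $|\lambda(h)|_K=|\lambda(h)|^2$, so the constants match, while $\diag(\lambda(h),1)g$ lies in $G^1_K$, to which the previous sentence applies. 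Putting these together gives $\omega^0_{s_K,\psi_K}(i_Hh,g)=\omega_{s_D,\psi}(h,i_Gg)$ on the identified Schwartz space for all $(h,g)\in R^0_{D,K}$.

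The single point demanding genuine care --- the crux --- is the functoriality of the tensor-product construction of $V\otimes W$: one must check that $i_H(H^1_D)$ and $i_G(G^1_K)$, viewed inside $Sp(V_D\otimes_D W_D)=Sp(V_K\otimes_K W_K)$, are exactly the subgroups obtained by letting $H^1_D$ and $G^1_K$ act on $V_K\otimes_K W_K$, not merely subgroups isomorphic to them. This commutativity of all the identifications at once is what makes the Schr\"odinger formulas literally coincide; once it is in place, together with the elementary relation $|\lambda(h)|_K=|\lambda(h)|^2$ for $h\in H_D$, the rest is a direct unwinding of definitions, which is why the proposition is described as straightforward.
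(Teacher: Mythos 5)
Your proof is correct and supplies exactly the kind of unwinding the paper elides by calling the proposition ``straightforward'' (no proof is given in the text). Both pieces are handled appropriately: the compatibility of similitude factors under $i_H$ and $i_G$ is checked via the defining formulas $s_{D_K}=s_D\cdot(\ol{k_1}k_2)$ and $h_D=\tr_{D_K/D}h_{D_K}$, and the constant comparison $|\lambda(h)|_K^{-1/2}=|\lambda(h)|^{-1}$ for $\lambda(h)\in F^\times$ is the right elementary observation reconciling the two normalization exponents $1/2$ and $1$. You correctly flag as the real content the commutativity of all the identifications — the ambient symplectic space, the Lagrangian, and the subgroup embeddings — without which the Schr\"odinger-model formulas would not literally coincide; one could sharpen that step by noting it also entails compatibility of the two splittings $s_K,s_D$, which follows from comparing the explicit Levi and unipotent formulas (using $\chi_{L/K}=\chi_{E/F}\circ\Nm_{K/F}$ and $\Nm_{D/F}|_{K^\times}=\Nm_{K/F}$), but this is consistent with the level of detail the paper intends.
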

Using this we prove the see-saw duality theorem.
\begin{Thm}\label{sew:saw:theorem}
Let $\tau$ and $\pi$ be two irreducible representations of $H_D$ and
$G_{K}$ respectively. Then
$$\Hom_{G^0_K}(\Theta_E^D(\tau),\pi)=\Hom_{H_D}(\Theta^0_L(\pi),\tau).$$
\end{Thm}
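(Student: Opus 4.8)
The plan is to deduce the see-saw identity directly from the compatibility of Weil representations recorded in Proposition \ref{see:saw:preparation}, together with a Frobenius reciprocity / adjointness argument that turns the ``big theta'' functors into $\Hom$'s against the oscillator representation. First I would write out both sides as spaces of maps out of the appropriate oscillator modules. On one hand, by the defining property of the big theta lift $\Theta^D_E(\tau)$ as the $\tau$-isotypic quotient of $\Omega^D_E$, together with Frobenius reciprocity for the compact induction $\ind^{H_D\times G_D}_{R_D}\omega_{s_D,\psi}$, we have a natural identification
\[
\Hom_{G^0_K}\bigl(\Theta^D_E(\tau),\pi\bigr)\;\cong\;
\Hom_{R^0_{D,K}}\bigl(\omega_{s_D,\psi},\,\tau\boxtimes\pi\bigr),
\]
where $R^0_{D,K}=\{(h,g)\in H_D\times G^0_K:\lambda(h)=\lambda_K(g)\}$ acts on $\omega_{s_D,\psi}$ through the embedding $id\times i_G$. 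On the other hand, the same reasoning applied to the dual pair $(H^0_K,G^0_K)$ and the lift $\Theta^0_L(\pi)$ gives
\[
\Hom_{H_D}\bigl(\Theta^0_L(\pi),\tau\bigr)\;\cong\;
\Hom_{R^0_{D,K}}\bigl(\omega^0_{s_K,\psi_K},\,\tau\boxtimes\pi\bigr),
\]
with $R^0_{D,K}$ now sitting inside $R^0_K$ via $i_H\times id$. Proposition \ref{see:saw:preparation} says precisely that these two oscillator modules, restricted to the common overgroup $R^0_{D,K}$, are isomorphic (via the natural map $S(V_K\otimes X_K)\to S(V_D\otimes X_D)$ for compatible polarizations); hence the two $\Hom$-spaces on the right coincide, and the theorem follows.

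The main technical point to be careful about is the passage from $\Theta$ to $\Hom$ against $\omega$. The cleanest route is to avoid $\Theta$ altogether on the level of individual representations and instead use that, for a smooth representation $\rho$ of $A\times B$ and irreducibles $\tau$ of $A$, $\pi$ of $B$, one has $\Hom_{A\times B}(\rho,\tau\boxtimes\pi)\cong\Hom_B(\Theta(\tau),\pi)$ functorially in $\pi$ whenever $\Theta(\tau)$ exists as the maximal $\tau$-isotypic quotient — here $A=H_D$, $B=G^0_K$ (resp. $A=G^0_K$, $B=H_D$ for the other side), and $\rho$ is the restriction of $\Omega^D_E$ (resp. $\Omega^0_L$) along $id\times i_G$ (resp. $i_H\times id$). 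Then one uses compact-induction Frobenius reciprocity $\Hom_{H_D\times G_D}(\ind^{H_D\times G_D}_{R_D}\omega_{s_D,\psi},\tau\boxtimes\pi)\cong\Hom_{R_D}(\omega_{s_D,\psi},\tau\boxtimes\pi)$ and restricts further to $R^0_{D,K}\subset R_D$; one must check that restricting the target $\tau\boxtimes\pi$ of $G_D$ to $G^0_K$ and then to $R^0_{D,K}$ gives the same thing as restricting $\tau\boxtimes\pi$ as an $R_D$-module to $R^0_{D,K}$, which is immediate since $i_G$ is just the inclusion $G^0_K\hookrightarrow G_D$ and $R^0_{D,K}=R_D\cap(H_D\times G^0_K)$.

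I expect the genuinely delicate step to be verifying that the identification is by a \emph{single} natural isomorphism that is visibly symmetric in the two sides — i.e.\ that the same element of $\Hom_{R^0_{D,K}}(\omega,\tau\boxtimes\pi)$ produces a nonzero map on the left exactly when it does on the right. This is where one genuinely needs Proposition \ref{see:saw:preparation}: the two oscillator models are a priori defined on different Schwartz spaces ($S(V_D\otimes X_D)$ versus $S(V_K\otimes X_K)$), and only the compatibility of polarizations from Lemma \ref{compatibility:polarizations} makes the canonical map between them an $R^0_{D,K}$-equivariant isomorphism; the similitude normalizing factors $|\lambda(h)|^{-1}$ versus $|\lambda(h)|_K^{-1/2}$ must be checked to match on $R^0_{D,K}$, where $\lambda(h)=\lambda_K(g)\in F^\times$ and the two absolute values agree. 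Everything else is formal adjunction, so once this matching is in hand the proof is short.
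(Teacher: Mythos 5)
Your proposal is correct and follows essentially the same route as the paper: both sides are unwound by Frobenius reciprocity into $\Hom_{R^0_{D,K}}(\omega,\tau\boxtimes\pi)$ against the two oscillator models, and Proposition \ref{see:saw:preparation} identifies the two restrictions of the Weil representation to $R^0_{D,K}$. The only small bookkeeping step you elide (and the paper spells out) is that $\pi$ is a priori a representation of $G_K$, so one first decomposes $\pi|_{G^0_K}=\oplus\pi_i$ into irreducibles of $G^0_K$, runs the adjunction argument for each $\pi_i$, and then reassembles via Proposition \ref{theta:isometry:restriction:L} to get $\Hom_{H_D}(\Theta_L(\pi),\tau)=\oplus_i\Hom_{H_D}(\Theta^0_L(\pi_i),\tau)$; this is needed because $\Theta^0_L$ is defined on irreducibles of $G^0_K$, not of $G_K$.
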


\begin{proof}
Assume that after the restriction to $G^0_K$
the representation $\pi$ decomposes as
$\pi=\oplus \pi_i$. By definition one has
$$\Hom_{H_D}(\Theta^0_L(\pi_i),\tau)=
\Hom_{H_D}(\tau^\vee,\Hom_{G^0_K}(\Omega^0_L,\pi_i))=\Hom_{H_D\times G^0_K}(\Omega^0_L,\tau\boxtimes \pi_i).$$

Now, $\Omega^0_L|_{H_D\times G^0_K}=\ind^{H_D\times G^0_K}_{R^0_{D,K}}\omega_{s_K,\psi_K}$
and $\Omega^D_E|_{H_D\times G^0_K}=\ind^{H_D\times G^0_K}_{R^0_{D,K}}\omega_{s_D,\psi}.$
Hence, by Frobenius reciprocity and by Proposition \ref{see:saw:preparation}
the above equals

$$\Hom_{R^0_{D,K}}(\omega_{s_K,\psi_K}, \tau\boxtimes \pi_i)=
\Hom_{R^0_{D,K}}(\omega_{s_D,\psi},\tau\boxtimes \pi_i)=\Hom_{H_D\times G^0_K}(\Omega^D_E,\tau\boxtimes \pi_i)=
\Hom_{G^0_K}(\Theta^D_E(\tau),\pi_i).$$

Using Prop. \ref{theta:isometry:restriction:L} we obtain

$$\Hom_{H_D}(\Theta_L(\pi),\tau)=
\oplus_i \Hom_{H_D}(\Theta^0_L(\pi_i),\tau)=\oplus_i \Hom_{G^0_K}(\Theta^D_E(\tau),\pi_i)=
\Hom_{G^0_K}(\Theta^D_E(\tau),\pi).
$$
\end{proof}

\subsection{The main restriction theorem}

\begin{Def} We say that a character $\eta_L$ of $L^\times$
matches a character $\eta$ of $E^\times$ if there exists
$s\in Aut(L/K)$ such that $s(\eta_L)|_{E^\times}=\eta$.
\end{Def}

\begin{Thm} Let
$\Pi^\pm_\eta$ and $\pi$ be irreducible representations of $G_D$ and $G_K$
respectively.
\begin{enumerate}
\item $\Hom_{G_K^0}(\Pi^\pm_\eta,\pi)=0$ unless
$\pi=\pi(\eta_L)$ for some $\eta_L:L^\times \rightarrow \C$
such that $\eta_L$ matches $\eta$.

\item Assume that the condition in the first part holds.

\begin{enumerate}
\item If $\eta\neq \eta^\sigma$ then
$\dim \Hom_{G^0_K}(\Pi^+_\eta,\pi)=1$.
\item
If $\eta=\eta^\sigma$ and $\eta_L\neq \eta_L^\sigma$
then
 $$\dim \Hom_{G^0_K}(\Pi^+_\eta,\pi)=1, \quad
\dim \Hom_{G^0_K}(\Pi^-_\eta,\pi)=1.$$
\item
Assume that $\eta=\eta^\sigma, \eta_L=\eta_L^\sigma$.
In particular $\eta=\eta^\pm_F \circ \Nm_{E/F}$ and
$\eta_L=\eta_K \circ \Nm_{L/K}.$ Then
$\eta_K|_{F^\times}=\eta^+_F$ or $\eta^-_F$.

If $D$ splits then
$$\dim \Hom_{G^0_K}(\Pi^+_\eta,\pi)=1, \quad \dim \Hom_{G^0_K}(\Pi^-_\eta,\pi)=0.$$
If $D$ does not split then the labeling is not canonical
and  depends on the choice
$\eta^+_F$. More precisely, if  $\eta_K|_{F^\times}=\eta_F^\pm$
then
$$\dim \Hom_{G^0_K}(\Pi^\pm_\eta,\pi)=1, \quad
\dim \Hom_{G^0_K}(\Pi^\mp_\eta,\pi)=0.$$

\end{enumerate}
\end{enumerate}
\end{Thm}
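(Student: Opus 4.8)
The plan is to reduce the entire statement to the explicit knowledge of the theta correspondences $\theta^D_E$ and $\theta^0_L$ via the see-saw identity of Theorem \ref{sew:saw:theorem}. The key equality to exploit is
$$\Hom_{G^0_K}(\Pi^\pm_\eta,\pi)=\Hom_{G^0_K}(\theta^D_E(\tau^\pm_\eta),\pi)=\Hom_{H_D}(\Theta^0_L(\pi),\tau^\pm_\eta),$$
where in the first step we use the construction $\Pi^\pm_\eta=\theta^D_E(\tau^\pm_\eta)$ and $\Theta^D_E(\tau^\pm_\eta)=\theta^D_E(\tau^\pm_\eta)$ (Theorem \ref{thetaHDGD:nonsplit}, Theorem \ref{thetaHDGD:split}), and in the second step the see-saw duality. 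Thus everything is transferred to computing $\Hom_{H_D}(\Theta^0_L(\pi),\tau^\pm_\eta)$, a multiplicity of a representation of the essentially abelian-by-$\mu_2$ group $H_D$, which is completely governed by Proposition \ref{rep:of:HD:1} (parts (4)) together with the labeling conventions in \ref{labeling}.

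First I would dispose of part (1): if $\pi$ has nonzero theta lift $\Theta^0_L(\pi)$ to $H^0_K$, then by Proposition \ref{theta:isometry:restriction:L} this lift is irreducible, and by the classification of the correspondence $\theta_L$ (the Proposition describing $\Theta_L$ on $G_K$) together with the compatibility $\Theta_L(\pi)|_{H^0_K}=\oplus\Theta^0_L(\pi_i)$, a nonzero contribution forces $\pi=\pi(\eta_L)$ for a character $\eta_L$ of $L^\times$, and $\Theta_L(\pi(\eta_L))=\tau^+_{\eta_L}$. Restricting $\tau^+_{\eta_L}$ from $H_K$ down to $H_D$ (via $H_D\hookrightarrow H^0_K$) and asking for a nonzero $H_D$-map onto $\tau^\pm_\eta$ forces, by Proposition \ref{rep:of:HD:1}(4) applied after restricting further to $H^c_D\simeq E^\times$, that $\eta_L|_{E^\times}\in\{\eta,\eta^\sigma\}$ up to the $\mathrm{Aut}(L/K)$-action coming from the non-uniqueness of $\eta_L$ attached to $\pi(\eta_L)$ — i.e. exactly the matching condition. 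If no such $\eta_L$ exists the $\mathrm{Hom}$ space vanishes.

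Next, assuming the matching condition, I would compute the exact multiplicity case by case, tracking the restriction $\tau^+_{\eta_L}|_{H_D}$. The three sub-cases of part (2) correspond to whether $\eta,\eta_L$ are Galois-fixed. When $\eta\neq\eta^\sigma$, $\tau^+_\eta=\mathrm{Ind}^{H_D}_{H^c_D}\eta$ is irreducible two-dimensional and $\Theta^0_L(\pi)$ restricted to $H^c_D$ contains $\eta$ with multiplicity exactly one, giving $\dim=1$. When $\eta=\eta^\sigma$ but $\eta_L\neq\eta_L^\sigma$, the lift $\tau^+_{\eta_L}$ restricted to $H_D$ is the two-dimensional $\mathrm{Ind}^{H_D}_{H^c_D}\eta$, which surjects onto each of the two one-dimensional $\tau^\pm_\eta$ exactly once (Proposition \ref{rep:of:HD:1}(3),(4)), giving both multiplicities equal to $1$. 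The delicate sub-case is $\eta=\eta^\sigma$, $\eta_L=\eta_L^\sigma$: here $\tau^+_{\eta_L}$ is one-dimensional, $\eta=\eta^\pm_F\circ\Nm_{E/F}$, $\eta_L=\eta_K\circ\Nm_{L/K}$, and the restriction $\tau^+_{\eta_L}|_{H_D}$ is a single one-dimensional character, which must coincide with exactly one of $\tau^+_\eta,\tau^-_\eta$; to decide which, I would compute the restriction of $\tau^+_{\eta_L}$ to $H^1_D$ (split case) or unwind the labeling $\tau^\pm_\eta=\eta^\pm_F\circ\lambda$ in terms of $\eta_K|_{F^\times}$ (non-split case), using the definitions in Subsection \ref{labeling}.

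The main obstacle I expect is precisely this last bookkeeping of labelings in the non-split, totally Galois-fixed case: the correspondences $\theta^D_E$, $\theta_L$, $\theta^0_L$ and the see-saw isomorphism each carry their own normalization of the similitude character extension (the powers of $|\lambda(h)|$ in the Weil representation formulas), and the proof must verify that these all cancel so that the one-dimensional character $\tau^+_{\eta_L}|_{H_D}$ is genuinely $\eta^+_F\circ\lambda$ when $\eta_K|_{F^\times}=\eta^+_F$ and $\eta^-_F\circ\lambda$ otherwise, with no stray twist by $\chi_{E/F}$. This is a finite but careful computation with the explicit cocycles; once it is pinned down, the statement follows. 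The remaining steps — identifying $\Theta_L(\pi(\eta_L))$ and applying Proposition \ref{rep:of:HD:1}(4) — are essentially formal.
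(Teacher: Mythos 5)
Your plan is exactly the paper's proof: reduce via $\Pi^\pm_\eta=\Theta^D_E(\tau^\pm_\eta)$ and the see-saw identity to $\Hom_{H_D}(\tau^+_{\eta_L},\tau^\pm_\eta)$, identify $\Theta_L(\pi(\eta_L))=\tau^+_{\eta_L}$, and then compute the $\Hom$-space by Frobenius reciprocity for the pair $(H_D,H^c_D)$, splitting into the three cases according to whether $\eta$ and $\eta_L$ are Galois-fixed and tracking the labeling conventions of Subsection \ref{labeling}. The only thing the paper does that you omit is a short coda handling the degenerate situation $\Theta^D_E(\tau)\neq\theta^D_E(\tau)$ (which by Theorem \ref{thetaHDGD:split} occurs for $E=F\times F$, $\mu=|\cdot|^{\pm 2}$, i.e.\ non-unitary $\eta$); in that case one checks directly that $\theta^D_E(\tau)|_{G^0_K}$ is the one-dimensional $\pi(|\cdot|_K^{\mp1/2}\boxtimes\eta_F)$ and the matching condition still holds, so this is a minor bookkeeping case rather than a genuine gap in the approach.
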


\begin{proof}

First assume that $\Theta^D_E(\tau)=\theta^D_E(\tau).$
Then
$$\Hom_{G^0_K}(\Pi^\pm_\eta, \pi)=
\Hom_{G^0_K}(\Theta^D_E(\tau_\eta^\pm),\pi)=
\Hom_{H_D}(\tau_{\eta_L}^+,\tau_\eta^\pm)$$
when $\pi=\pi(\eta_L)$ and zero otherwise.
It is easy to describe the latter space using Frobenius reciprocity.

Indeed, in case $(a)$,
if $\eta_L\neq \eta^\sigma_L, \eta\neq \eta^\sigma$ one has
$$\Hom_{H_D}(\tau_{\eta_L}^+,\tau^+_\eta)=
\Hom_{H_D^c}(\eta_L, \eta)\oplus \Hom_{H_D^c}(\eta_L, \eta^\sigma)$$
that is one-dimensional if $\eta_L$ matches $\eta$ and zero otherwise.

In case $(b)$,
$\eta_L\neq \eta^\sigma_L$ but $\eta=\eta^\sigma$. One has
$$\Hom_{H_D}(\tau_{\eta_L}^+,\tau^\pm_\eta)=
\Hom_{H_D^c}(\eta_L, \eta)$$
that is one-dimensional if $\eta_L$ matches $\eta$ and zero otherwise.

Finally assume that $\eta_L=\eta^\sigma_L$ and that $\eta=\eta^\sigma$.
To prove $(c)$ note that the condition $\eta_L|_{E^\times}=\eta$ implies
$\eta_K\circ \Nm_{L/K}|_{E^\times}=\eta^\pm_F\circ \Nm_{E/F}|_{E^\times}$
and hence the restriction of  $\eta_K$ to $\Nm_{E/F}(E^\times)$ coincides
with $\eta^\pm_F$. In particular $\eta_K=\eta^\pm_F$.
Note that there are two possible choices for $\eta_K$ but their
restrictions to $F^\times$ coincide.

If $D$ splits then
$\tau_{\eta_L}^+$ restricted to $H^1_K$ is trivial
and hence the restriction to the subgroup
$H^1_D\subset H^1_K$ is also trivial.
Thus, $\tau_{\eta_L}^+$ restricted to $H_D$ equals
$\tau_\eta^+$.

For non-split $D$ we know already that $\eta_K=\eta^\pm_F$.
 Assume that $\eta_K=\eta^+_F.$ Then
$\tau_{\eta_L}^+=\eta_K\circ\lambda_K$ whose restriction to
$H_D$ is $\eta^+_F\circ \lambda=\tau_\eta^+$.  Hence
$$\dim \Hom_{G^0_K}(\Pi^+_\eta,\pi)=1, \quad \dim \Hom_{G^0_K}(\Pi^-_\eta,\pi)=0.$$
The case $\eta_K=\eta^-_F$ follows by the same argument.
\vskip 10pt

Finally, if $\Theta^D_E(\tau)\neq \theta^D_E(\tau)$ then by Propositions
\ref{thetaHDGD:nonsplit} and \ref{thetaHDGD:split}
one has $E=F\times F$,
 $\tau=\tau_{|\cdot|^{\pm 2}\boxtimes \eta_F}$ and
$\theta^D_E(\tau)=(\eta_F|\cdot|^\mp) \circ \lambda$. In this case the restriction of  $\theta^D_E(\tau)$ to $G^0_K$ is
$(\eta_F|\cdot|_K^{\mp 1/2})\circ \lambda_K$. The latter representation
is $\pi(|\cdot|_K^{\mp 1/2}\boxtimes \eta_F)$ and the  character
$|\cdot|_K^{\mp 1/2}\boxtimes \eta_F$ of $K^\times \times K^\times$
matches the  character $|\cdot|^{\mp 1}\boxtimes \eta_F$ of $F^\times \times F^\times$.

\end{proof}

\begin{Cor} Let $\Psi_1=\Psi_{E,\eta}$ and $\Psi_2=\Psi_\pi$.
Then
\begin{equation} \label{sum}
\sum_{U'\subset V'}\sum_{\Pi\in A_{E,\eta}}
\dim\Hom_{SO(U')}(\Pi,\pi)
\end{equation}
vanishes unless $\pi=\pi(\eta_L)$
for a character $\eta_L$ matching $\eta$.

If $\pi=\pi(\eta_L)$ with $\eta_L$ matching $\eta$ then
(\ref{sum}) equals
$$
\left\{
\begin{array}{ll}
4 & \eta= \eta^\sigma, \eta_L\neq \eta_L^\sigma \\
2& \rm{otherwise}
\end{array}
\right..
$$

\end{Cor}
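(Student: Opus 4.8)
The plan is to deduce this corollary directly from the preceding restriction theorem by summing the computed $\dim\Hom$-dimensions over the pure inner forms. First I would recall that, by the accidental isomorphisms established in Section~\ref{sec:groups}, the inner pair $SO(U')\subset SO(V')$ corresponds to $G^0_{D_K}/Z^0_{D_K}\subset G_D/Z_D$, and the set of pure inner forms of $SO(V)$ is indexed exactly by the quaternion algebras $D/F$ containing $E$ (there are two such: split and non-split, assuming $E$ is a field; when $E=F\times F$ one argues similarly). Crucially, only those $D$ for which the restriction $\Hom_{G^0_K}(\Pi^\pm_\eta,\pi)$ can be non-zero contribute; but by the main restriction theorem this space is non-zero only when $\pi=\pi(\eta_L)$ with $\eta_L$ matching $\eta$, which gives the first (vanishing) assertion immediately.

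Next I would run through the three cases of part (2) of the theorem and add up the contributions over the two algebras $D$. In case~(a), $\eta\neq\eta^\sigma$, each packet $A^D_{E,\eta}=\{\Pi^+_\eta\}$ is a singleton contributing $1$ for each of the two algebras $D$, for a total of $2$. The same holds whenever $\eta=\eta^\sigma$ but $\eta_L=\eta_L^\sigma$: in case~(c) precisely one of the two representations $\Pi^\pm_\eta$ in each packet $A^D_{E,\eta}$ contributes $1$ and the other contributes $0$, so again each $D$ gives total $1$, summing to $2$. By contrast, in case~(b), $\eta=\eta^\sigma$ but $\eta_L\neq\eta_L^\sigma$: here \emph{both} members $\Pi^+_\eta$ and $\Pi^-_\eta$ of each packet $A^D_{E,\eta}$ contribute $1$, so each of the two algebras $D$ contributes $2$, for a grand total of $4$. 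This yields the displayed formula.

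The one point requiring a little care is bookkeeping of the labeling: when $D$ does not split and $\eta=\eta^\sigma$ the labels $\Pi^\pm_\eta$ are not canonical (they depend on a choice of $\eta^+_F$), but the theorem is stated so that the \emph{sum} $\dim\Hom_{G^0_K}(\Pi^+_\eta,\pi)+\dim\Hom_{G^0_K}(\Pi^-_\eta,\pi)$ over a fixed $D$ is independent of this choice — it is $1$ in case~(c) and $2$ in case~(b) — so the sum over the full packet $A_{E,\eta}=\cup_D A^D_{E,\eta}$ is well-defined. I would also note the edge case where $\Theta^D_E(\tau)\neq\theta^D_E(\tau)$, which only occurs for the non-unitary $\tau_{|\cdot|^{\pm2}\boxtimes\eta_F}$ and hence never for $\Pi^\pm_\eta$ in the A-packet; thus it does not affect the count. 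The main (very mild) obstacle is simply ensuring that one has correctly enumerated the relevant inner forms and matched each term in the theorem to the right summand; once that is done the corollary is a one-line consequence.

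Concretely, I would write: ``The vanishing statement is immediate from part~(1) of the theorem. Assume $\pi=\pi(\eta_L)$ with $\eta_L$ matching $\eta$. Since $A_{E,\eta}=A^{D_{\mathrm{sp}}}_{E,\eta}\cup A^{D_{\mathrm{nsp}}}_{E,\eta}$ and each inner form contributes, by part~(2), a total of $1$ to $\sum_{\Pi}\dim\Hom_{SO(U')}(\Pi,\pi)$ in cases~(a) and~(c) and a total of $2$ in case~(b), summing over the two algebras $D$ gives $2$ in cases~(a),(c) and $4$ in case~(b); case~(b) is exactly the condition $\eta=\eta^\sigma$, $\eta_L\neq\eta_L^\sigma$.'' This completes the proof.
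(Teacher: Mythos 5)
Your proof is correct and takes essentially the same approach the paper intends; the paper presents this corollary without a separate written proof, treating it as an immediate bookkeeping consequence of the preceding restriction theorem, and your enumeration of the two quaternion algebras, case-by-case tally (giving $1+1=2$ in cases (a) and (c), $2+2=4$ in case (b)), remark on the non-canonical labeling for non-split $D$, and observation that the $\Theta\neq\theta$ edge case only involves non-unitary $\mu=|\cdot|^{\pm 2}$ and hence never occurs in the $A$-packet, all fill that in correctly.
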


\section{Global Parameters and global packets}\label{global:parameters}

\subsection{Global parameters}
Let $F$ be a number field. Denote by $pl(F)$ the set of
completions of $F$.
Let  $\mathcal L_F$ be the conjectural Langlands group
associated with $F$.
A non-tempered global  Arthur parameter of $\theta_{10}$ type  is a map
$$\Psi:\mathcal L_F\times SL_2(\C)\rightarrow Sp_4(\C),$$
where the image of the unipotent element of $SL_2(\C)$ is conjugated to the
short root unipotent element of $Sp_4(\C)$ and the image of $\mathcal L_F$
is bounded.

As before, it determines by restriction a global tempered parameter
$\Phi :\mathcal L_F\rightarrow O_2(\C)$. If $\Phi(\mathcal L_F)$ is contained
in $SO_2(\C)$ then its centralizer is not finite modulo the center
and hence  the parameter is  not expected to
 contribute to the discrete spectrum. Thus we assume that this is not the case.
In particular, there exists a unique  quadratic field extension $E$ over $F$,
with  the non-trivial automorphism  $\sigma\in Gal(E/F)$,
 such that $\Phi(\mathcal L_E)\subset SO_2(\C).$
Moreover,  by class field theory $\Phi$ determines a
unitary automorphic  character
$$\eta=\otimes_{w\in pl(E)} \eta_w:E^\times\backslash \I_E  \rightarrow \C^\times$$
with a trivial restriction to $\I_F$.
We shall write $\Phi_{E, \eta}$ and $\Psi_{E,\eta}$
for $\Phi,\Psi$ respectively.

For any place $v$ of $F$ there is an inclusion
$W'_{F_v}=\mathcal L_{F_v}\hookrightarrow \mathcal L_F$.
Thus, by composition, the parameters $\Psi_{E,\eta},\Phi_{E,\eta}$
give rise to a family of local parameters
$\Psi_{E_v,\eta_v},\Phi_{E_v,\eta_v}$ where
$E_v=E\otimes_F F_v$ is a quadratic algebra over $F_v$ and
$$\eta_v=
\left\{
\begin{array}{ll}
\eta_w & v=w\\
\eta_{w_1}\boxtimes \eta_{w_2} & v=w_1w_2
\end{array}.\right.
$$

\begin{Def}
The automorphic characters $\eta_1$ and $\eta_2$
will be called {\rm globally equivalent} if there exists $s\in Gal(E/F)$
such that $s(\eta_1)=\eta_2$.
The characters will be called almost everywhere equivalent
if for almost all places $v$ of $F$ there exists  $s_v\in Aut(E_v/F_v)$
such that ${\eta_1}_v=s_v({\eta_2}_v)$.
\end{Def}

\begin{Remark}
The parameters $\Psi_{E,\eta_1}$ and $\Psi_{E,\eta_2}$ (resp.
 $\Phi_{E,\eta_1}$ and $\Phi_{E,\eta_2}$ ) associated with the globally equivalent
characters are conjugate.
\end{Remark}

In the next section the following lemma will be used

\begin{Lem}\label{local:global:1}
Let $\eta_1,\eta_2: E^\times \backslash \I_E\rightarrow \C^\times$
be two almost everywhere equivalent unitary characters.
Then $\eta_1,\eta_2$ are
globally equivalent.
\end{Lem}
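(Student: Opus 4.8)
The plan is to reduce the global statement to the fact that a character of $\I_E$ is determined by its local components at almost all places, together with a finiteness argument to handle the ambiguity introduced by the Galois action. Recall that $\sigma$ generates $\Gal(E/F)$; the group $\Gal(E/F)$ acts on Hecke characters of $E^\times\backslash\I_E$ via $\eta^\sigma(x)=\eta(\sigma(x))$. The hypothesis of almost-everywhere equivalence says that for almost all places $v$ of $F$ there is $s_v\in\Aut(E_v/F_v)$ with $\eta_{1,v}=s_v(\eta_{2,v})$. The subtlety is that $\Aut(E_v/F_v)$ can be nontrivial even at places where $E_v$ is split (where it is $\Z/2\Z$, swapping the two factors) and at inert or ramified places where it is again of order two; so the local isomorphism $s_v$ can vary with $v$, and one cannot immediately glue them into a single global $s$.

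First I would dispose of the case distinction at the infinite and finite level by a counting/density argument. Consider the set $S$ of places $v$ at which $\eta_{1,v}=\eta_{2,v}$ (with $s_v=\id$) and the set $S'$ of places at which $\eta_{1,v}=\eta_{2,v}^\sigma$ (with $s_v=\sigma$, interpreting $\sigma$ as acting on $E_v$ through $E\hookrightarrow E_v$); by hypothesis $S\cup S'$ contains almost all places. At least one of $S$, $S'$ is infinite — in fact has Dirichlet density at least $1/2$ among all places — since their union is cofinite. If $S$ is infinite, then $\eta_1$ and $\eta_2$ are two Hecke characters of $E^\times\backslash\I_E$ agreeing at infinitely many places, hence (by strong multiplicity one for $GL_1$, i.e. the injectivity of a Hecke character on a set of places of positive density, which follows from the fact that $\eta_1\eta_2^{-1}$ is a Hecke character trivial on a set of places of positive density and a Hecke character is trivial iff it is trivial on such a set) we get $\eta_1=\eta_2$, and we take $s=\id$. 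If instead $S'$ is infinite, the same argument applied to $\eta_1$ and $\eta_2^\sigma$ gives $\eta_1=\eta_2^\sigma$, and we take $s=\sigma$.

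The main technical point to nail down carefully is the identification of the \emph{local} isomorphisms $\Aut(E_v/F_v)$ with the restriction to $F_v$ of the \emph{global} $\Gal(E/F)$, so that ``$s_v=\sigma$ for infinitely many $v$'' really does translate into an equality of the global objects $\eta_1$ and $\eta_2^\sigma$. At a place $v$ that is inert or ramified in $E$, $E_v$ is a field and $\Aut(E_v/F_v)=\Gal(E_v/F_v)$ is generated by the local image of $\sigma$; at a split place $v=w_1w_2$, $E_v=F_v\times F_v$ and the nontrivial element of $\Aut(E_v/F_v)$ is the factor-swap, which is exactly the image of $\sigma$ under $\Gal(E/F)\to\Aut(E_v/F_v)$. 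So in all cases $s_v\in\{\id,\sigma|_{F_v}\}$, and the two cases $s_v=\id$, $s_v=\sigma$ partition (up to the finite exceptional set) all places, as used above. The remaining step — that a Hecke character of $\I_E$ trivial at a set of places of positive density is trivial — is standard (it is the $GL_1$ case of strong multiplicity one, or can be seen directly from Chebotarev applied to the finite-order part together with the archimedean/ramification data being pinned down by infinitely many unramified places). I expect the density bookkeeping in this last step to be the only place requiring genuine care; everything else is formal.
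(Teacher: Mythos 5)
Your proof has a genuine gap, and it is located exactly where you flagged the need for care. You invoke, as a ``standard'' fact, that a unitary Hecke character of $\I_E$ which is trivial at a set of places of positive density must be trivial, and you label this ``strong multiplicity one for $GL_1$.'' That is not what strong multiplicity one says (it requires agreement at \emph{almost all} places), and the density-$\tfrac12$ version is simply false: any nontrivial quadratic Hecke character $\chi$ is locally trivial precisely at the places that split in the corresponding quadratic extension, a set of Dirichlet density $\tfrac12$. In the setting of the lemma this failure is not hypothetical. If $\eta_1=\sigma(\eta_2)$ but $\eta_1\ne\eta_2$, then at every inert place where $\eta_2$ is unramified one has $\eta_{2,v}=\sigma(\eta_{2,v})$, so $\eta_{1,v}=\eta_{2,v}$ there; thus your set $S$ has density $\ge\tfrac12$, and your argument would let you ``conclude'' $\eta_1=\eta_2$, which is wrong. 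So the step ``$S$ infinite (or of positive density) $\Rightarrow\eta_1=\eta_2$'' is not valid, and the whole reduction collapses.

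The paper circumvents this precisely by not trying to deduce local triviality of $\eta_1\eta_2^{-1}$ from density information alone. Instead it considers the \emph{product} $L(\eta_1\eta_2^{-1},s)\,L(\eta_1\sigma(\eta_2)^{-1},s)$. At each place $v$ outside a finite set, exactly one of the two local characters $(\eta_1\eta_2^{-1})_v$, $(\eta_1\sigma(\eta_2)^{-1})_v$ is trivial, and the other equals $(\eta_1\sigma(\eta_1)^{-1})_v$. Hence the partial product equals $\zeta_E^S(s)\,L^S(\eta_1\sigma(\eta_1)^{-1},s)$, which has a pole at $s=1$ because of the partial Dedekind zeta factor (the other partial $L$-function is nonvanishing at $s=1$ for a unitary character, and the finitely many local factors over $S$ are holomorphic and nonzero at $s=1$). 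Therefore at least one of the two completed $L$-functions $L(\eta_1\eta_2^{-1},s)$, $L(\eta_1\sigma(\eta_2)^{-1},s)$ has a pole at $s=1$, and for a unitary Hecke character such a pole occurs if and only if the character is trivial. This gives $\eta_1=\eta_2$ or $\eta_1=\sigma(\eta_2)$, i.e., global equivalence. The $L$-function mechanism is what detects, in one stroke, which of the two options is forced; a pointwise density count cannot.
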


\begin{proof} Let $S$ be a finite set of places outside of which
the local equivalence holds. Denote by $L^S(\cdot,s)$ the
Euler product of the local L-functions over the places outside of $S$
and by $L_S$ the finite product of the local L-functions over the places
in $S$.

Consider the L-function
$L(\eta_1\eta^{-1}_2,s)L(\eta_1\sigma(\eta_2)^{-1},s).$
By assumption it equals
$$\zeta_E^S(s)L^S(\eta_1\sigma(\eta_1)^{-1},s)
L_S(\eta_1\eta^{-1}_2,s)L_S(\eta_1\sigma(\eta_2)^{-1},s)$$
and hence has a pole at $s=1$. Thus, one of the terms
$L(\eta_1\eta_2^{-1},s), L(\eta_1\sigma(\eta_2)^{-1},s)$ has a pole at $s=1$.
In particular, either $\eta_1=\eta_2$ or $\eta_1=\sigma(\eta_2)$.
\end{proof}

\subsection{Global packets}

Let $D$ be a quaternion division algebra over $F$ containing  $E$
and let $\eta=\otimes_{v\in pl(F)}
 \eta_v$ be an automorphic character of $\I_E$.
 We shall define several subsets of $pl(F)$.

\begin{itemize}
\item
Let $S_E$ be the  set of places $v$ of $F$ such that $E_v=E\otimes_v F_v$
does not split over $F_v$.
\item
Let $S_D$ be the set of places $v$ of $F$  at which
$D_v$ does not split over $F_v$. This set is always finite and of even cardinality.
Clearly, $S_D\subset S_E$.
 \item
Let $S_\eta$ be the set of places $v$ of $F$ such that
$\eta_v=\eta_v^\sigma$ for the non-trivial $\sigma$ in $Aut(E_v/F_v)$.
In particular, for any $D$ the local packets $A^{D_v}_{E_v,\eta_v}$ and
$L^{D_v}_{E_v,\eta_v}$
constructed in Section \ref{sec:definition:packets}
contain two elements for $v\in S_\eta$
and are singletons otherwise. The set $S_\eta$ is always infinite since any $v\in S_E$ such that
$\eta_v$ is unramified belongs to it.
\item Let $S_{-1}$ be the set of places $v$ of $F$
such that $-1\notin \Nm_{E_v/F_v}(E_v^\times)$.
This set is always finite and of even cardinality.

\end{itemize}

For the
one-dimensional skew-Hermitian space  $(V_D,s_D)$ over $D$ of discriminant
$E$ and for the two-dimensional {\sl hyperbolic} Hermitian space $(W_D,h_D)$ over
$D$ we denote
$$H_D(\A)=GU(V_D,s_D)(\A),\quad G_D(\A)=GU(W_D,h_D)(\A).$$

The parameters $\Phi_{E,\eta}$ and $\Psi_{E,\eta}$ give rise to the
packets $L^D_{E,\eta}$ and $A^D_{E,\eta}$ of representations of
$H_D(\A)$ and $G_D(\A)$ as follows.

For a collection of representations
$\epsilon=(\epsilon_v\in \widehat {S_{E_v,\eta_v}})$,
where $\epsilon_v$ is the trivial representation
for almost all $v$, define the
representations $\Pi_\eta^\epsilon$ of $G_D(\A)$ and
$\tau_\eta^\epsilon$ of $H_D(\A)$ by
$$\Pi_\eta^\epsilon=\otimes \Pi_{\eta_v}^{\epsilon_v}, \quad
\tau_\eta^\epsilon=\otimes \Pi_{\eta_v}^{\epsilon_v}.$$

Furthermore, for any finite set $S\subseteq S_\eta$
consider the collection $\epsilon_S=(\epsilon_v\in \widehat{S_{E_v,\eta_v}})$
such that

$$\Pi^{\epsilon_v}_{\eta_v}=\left\{
\begin{array}{ll}
\Pi^+_{\eta_v} & v\notin S\\
\Pi^-_{\eta_v} & v\in S
\end{array}
.\right.$$

We shall denote by $\tau_\eta^S$ and  $\Pi_\eta^S$
the representations of $H_D(\A)$ and $G_D(\A)$ respectively,
corresponding to the collection $\epsilon_S$.

The global Arthur packets for $G_D$ and Langlands packets for $H_D$ are
 defined to be
$$A^D_{E,\eta}=\{ \Pi_\eta^S:S\subseteq S_\eta, |S|<\infty\},\quad
L^D_{E,\eta}=\{ \tau_\eta^S: S\subseteq S_\eta, |S|<\infty\}.$$


\subsection{Arthur multiplicity formula}

The global component groups of the parameters $\Psi_{E,\eta}$  $\Phi_{E,\eta}$
 are isomorphic and are equal to
$$S_{E,\eta}=\pi_0(Cent_{^L{G}}(\Image \Psi_{E,\eta}))=
\left\{
\begin{array}{ll}\Z/2\Z \times \Z/2\Z& \eta=\eta^\sigma\\
\Z/2\Z & \eta\neq \eta^\sigma
\end{array}
\right.
$$
and there is a natural inclusion $i:S_{E,\eta}\hookrightarrow \Pi_v S_{E_v,\eta_v}$.

According to Arthur's conjecture for a collection of representations
$\epsilon=(\epsilon_v\in \widehat {S_{E_v,\eta_v}})$
where $\epsilon_v$ is trivial for almost all $v$, the multiplicities
of the representations $\Pi_\eta^\epsilon,\tau_\eta^\epsilon$ in the discrete
spectrum of $G_D$ and $H_D$ respectively  are equal to

\begin{equation}
\label{multiplicity:formula:conj}
m(\epsilon)=\frac{\sum_{s\in S_{E,\eta}} \Pi_v\epsilon_v(i(s))}{|S_{E,\eta}|}.
\end{equation}

\begin{Lem}
For an automorphic character $\eta$ and a finite set $S\subset S_\eta$
one has
$m(\epsilon^S)=m(\eta,S)$ where
$$m(\eta,S)=\left\{
\begin{array}{ll} 0 & \eta=\eta^\sigma, {\rm |S|\,\, is \,\, odd} \\
1 & {\rm otherwise}
\end{array}
.\right.$$
\end{Lem}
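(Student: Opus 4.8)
The plan is to evaluate the sum $\sum_{s\in S_{E,\eta}}\prod_v\epsilon^S_v(i(s))$ directly by splitting into the two cases governed by whether $\eta=\eta^\sigma$, and then to identify each local factor $\epsilon^S_v(i(s))$ using the explicit description of the bijection $r$ between $A_{E_v,\eta_v}$ (equivalently $L_{E_v,\eta_v}$) and $\widehat{S_{E_v,\eta_v}}$ given in Section~\ref{sec:definition:packets}.

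First I would treat the case $\eta\neq\eta^\sigma$. Then $S_{E,\eta}=\Z/2\Z$ and, since $\eta_v=\eta_v^\sigma$ can only happen at places in $S_\eta$ which, under the hypothesis $\eta\neq\eta^\sigma$, is still infinite but the relevant observation is that $S_{E_v,\eta_v}=\Z/2\Z$ at every place where $\eta_v\neq\eta_v^\sigma$ and $=\Z/2\Z\times\Z/2\Z$ where $\eta_v=\eta_v^\sigma$. The inclusion $i:S_{E,\eta}=\Z/2\Z\hookrightarrow\prod_v S_{E_v,\eta_v}$ sends the nontrivial element $s$ to the tuple whose $v$-component is the nontrivial element of $\Z/2\Z$ (or the element $(1,\sgn)$-detecting coordinate, i.e.\ the image of the ``$D$'' class) at each $v$. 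For the collection $\epsilon^S$ with $\epsilon^S_v=r(\Pi^-_{\eta_v})$ exactly for $v\in S$ and $r(\Pi^+_{\eta_v})$ otherwise, one checks from the formulas for $r$ that $\epsilon^S_v(i(s))=1$ for all $v$ (the relevant coordinate of $i(s)$ is the one on which both $r(\Pi^+)$ and $r(\Pi^-)$ are trivial when $D_v$ is split, and when $D_v$ is non-split one uses $|S_D|$ even so that the product of the ``$\sgn$'' contributions over $S_D$ is $1$). Hence $\sum_{s}\prod_v\epsilon^S_v(i(s))=2=|S_{E,\eta}|$, so $m(\epsilon^S)=1=m(\eta,S)$.

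Next I would treat the case $\eta=\eta^\sigma$, which is the substantive one. Here $S_{E,\eta}=\Z/2\Z\times\Z/2\Z$, and I would choose generators so that one generator $s_1$ is the ``global'' analogue of the $\sgn\otimes 1$ class (detecting the $S\leftrightarrow$ sign choice) and the other $s_2$ is the one detected by the quaternion algebra $D$. Using the explicit formulas for $r$: when $D_v$ splits, $r(\Pi^+_{\eta_v})=1\otimes 1$ and $r(\Pi^-_{\eta_v})=\sgn\otimes 1$; when $D_v$ is non-split, $r(\Pi^+_{\eta_v})=1\otimes\sgn$ and $r(\Pi^-_{\eta_v})=\sgn\otimes\sgn$. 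Therefore $\epsilon^S_v(i(s_1))=-1$ iff $v\in S$, giving $\prod_v\epsilon^S_v(i(s_1))=(-1)^{|S|}$; and $\epsilon^S_v(i(s_2))=-1$ iff $v\in S_D$, giving $\prod_v\epsilon^S_v(i(s_2))=(-1)^{|S_D|}=1$ since $|S_D|$ is even; for the fourth element $s_1s_2$ the product is $(-1)^{|S|}\cdot 1$. Summing, $\sum_{s\in S_{E,\eta}}\prod_v\epsilon^S_v(i(s))=1+(-1)^{|S|}+1+(-1)^{|S|}=2\bigl(1+(-1)^{|S|}\bigr)$, so dividing by $|S_{E,\eta}|=4$ gives $m(\epsilon^S)=0$ if $|S|$ is odd and $1$ if $|S|$ is even, which is exactly $m(\eta,S)$.

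The main obstacle I anticipate is bookkeeping the compatibility of the \emph{non-canonical} local labelings $\tau^\pm_{\eta_v},\Pi^\pm_{\eta_v}$ at the non-split places $v\in S_D$ with the global labeling $\tau^S_\eta,\Pi^S_\eta$, i.e.\ making precise that the map $i:S_{E,\eta}\to\prod_v S_{E_v,\eta_v}$ really does send $s_2$ to the tuple supported on $S_D$ with the sign-of-$D_v$ convention, independently of the choices of $\eta^+_{F_v}$ made in Section~\ref{labeling}. The point is that changing the local labeling at a non-split place flips $\epsilon^S_v$ on the $s_1$-coordinate, but a global change of labeling flips it at an \emph{even} number of places (because $\eta$ is a fixed automorphic character and the ambiguity is governed by $\chi_{E/F}$), so the parity $(-1)^{|S|}$ is well-defined; I would spell this out carefully. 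Everything else is the substitution of the formulas for $r$ into \eqref{multiplicity:formula:conj} together with the evenness of $|S_D|$, which are routine.
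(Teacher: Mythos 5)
Your proof is correct and follows essentially the same route as the paper: both evaluate $\sum_{s\in S_{E,\eta}}\prod_v\epsilon^S_v(i(s))$ directly, reading off the local values of $\epsilon^S_v$ from the explicit description of the bijection $r$ and using that $|S_D|$ is even. The paper's proof is just a more compressed version of your computation (recording $\epsilon_S(1,-1)=(-1)^{|S_D|}$, $\epsilon_S(-1,1)=(-1)^{|S|}$, $\epsilon_S(-1,-1)=(-1)^{|S\triangle S_D|}$), and your remarks on the labeling ambiguity at $S_D$ correspond to the discussion the paper places immediately after the lemma.
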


\begin{proof}
If $S_{E,\eta}=\Z/2\Z$ then
$\epsilon_S(-1)=(-1)^{|S_D|}=1$ since the set $S_D$ has even cardinality.

If $S_{E,\eta}=\Z/2\Z\times \Z/2\Z$ then
$$\epsilon_S(1,-1)=(-1)^{|S_D|}, \quad
\epsilon_S(-1,1)=(-1)^{|S|},
\quad
\epsilon_S(-1,-1)=(-1)^{|S\triangle S_D|}.$$
Here $\triangle$ denotes the symmetric difference of the sets.

Hence, for any finite $S\subseteq S_\eta$ the RHS of
 (\ref{multiplicity:formula:conj})
equals
$$\left\{
\begin{array}{ll} 0 & \eta=\eta^\sigma, {\rm |S|\,\, is \,\, odd} \\
1 & {\rm otherwise}
\end{array}
.\right.$$
\end{proof}

Recall that for $v\in S_D \cap S_\eta$   the
labeling of the representations in $L^{D_v}_{E_v,\eta_v}$ (and hence
in $A^{D_v}_{E_v, \eta_v}$) was not canonical.
For the multiplicity formula to hold we should choose the labeling
in a coherent way.
If $\eta\neq \eta^\sigma$ then the labeling of  $L^{D_v}_{E_v,\eta_v}$ for
$v\in S_D\cap S_\eta$ can be chosen arbitrary.
However, if
$\eta=\eta^\sigma$ then $\eta=\eta_F\circ \Nm_{E/F}$
for an automorphic character $\eta_F$.
The character $\eta_F$ is defined up to multiplication by $\chi_{E/F}$.
The choice of $\eta_F$ fixes the labeling for every $v\in S_D$.
Replacing $\eta_F$ by
$\chi_{E/F}\eta_F$ will change the labeling simultaneously at all the places in $S_D$.

Our next task will be to prove that Arthur's multiplicity conjecture
holds for all the representations  in $L_{E,\eta}$ and $A_{E,\eta}$.

\section{Discrete spectrum of $H_D$}\label{discrete:spectrum:of:H}

In this section we shall describe explicitly the decomposition
of the space $L^2(Z_{D}(\A)H_D(F)\backslash H_D(\A))$.
Since $E$ is assumed to be a field, the space
$Z_{D}(\A)H_D(F)\backslash H_D(\A))$ is compact and hence the
space of square integrable functions decomposes discretely.

\begin{Thm}\label{discrete:spectrum:of:HD}
$$L^2(Z_{H_D}(\A)H_D(F)\backslash H_D(\A))=
\bigoplus_{\eta/\sim, \eta|_{\I_F}=1} V(\eta),\quad
V(\eta)= \bigoplus_{S\subseteq S_\eta, |S|<\infty}
m(\eta,S)\tau_{\eta}^S.$$
\end {Thm}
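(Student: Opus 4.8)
The plan is to decompose the regular representation of $H_D(\A)$ on $L^2(Z_{H_D}(\A)H_D(F)\backslash H_D(\A))$ by first passing to the connected component $H_D^c = R_{E/F}\G_m$ and then inducing back up. Since $E$ is a field, $H_D^c(\A) = \I_E$ and the quotient $E^\times \I_F \backslash \I_E$ is compact, so $L^2(Z_{H_D}(\A)H_D^c(F)\backslash H_D^c(\A))$ decomposes discretely as $\bigoplus_{\eta|_{\I_F}=1} \C_\eta$, the sum running over all automorphic characters $\eta$ of $\I_E$ trivial on $\I_F$. The exact sequence $1 \to H_D^c \to H_D \to \mu_2 \to 1$ from the Proposition in Section 4, together with the fact (same Proposition) that $H_D(F) \to \mu_2(F)$ is surjective, means $H_D(F)\backslash H_D(\A)$ fibers over $H_D^c(F)\backslash H_D^c(\A)$ with a $\mu_2$-action, and Mackey/Frobenius reciprocity gives $L^2(Z_{H_D}(\A)H_D(F)\backslash H_D(\A)) = \bigoplus_{\eta} \mathrm{Ind}_{H_D^c(\A)}^{H_D(\A)} \C_\eta$ where now $\eta$ runs over orbits under the global Galois action $\eta \mapsto \eta^\sigma$.

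**Next I would** identify each induced piece $\mathrm{Ind}_{H_D^c(\A)}^{H_D(\A)} \C_\eta$ with a sum of members of the global $L$-packet $L^D_{E,\eta}$ defined in Section 9. If $\eta \neq \eta^\sigma$ globally, the induced representation is (essentially by definition of $\tau^\epsilon_\eta$ as a restricted tensor product, together with Proposition \ref{rep:of:HD:1}(2) at each place) irreducible after accounting for the global constraint; more precisely, the restricted tensor product $\bigotimes_v \mathrm{Ind}_{H_{D_v}^c}^{H_{D_v}} \eta_v$ breaks into the pieces $\tau_\eta^S$ indexed by finite $S \subseteq S_\eta$, since locally $\mathrm{Ind}\,\eta_v$ is a sum of two characters exactly at $v \in S_\eta$. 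If $\eta = \eta^\sigma$, the local factor at each $v \in S_\eta$ is $\tau^+_{\eta_v} \oplus \tau^-_{\eta_v}$, and the global induced module is $\bigoplus_{S} \tau_\eta^S$ over \emph{all} finite $S \subseteq S_\eta$. The content of the theorem is then that, in the automorphic realization, the representation $\tau_\eta^S$ actually occurs with multiplicity $m(\eta,S)$ — i.e. with multiplicity one when $\eta \neq \eta^\sigma$, and with multiplicity one or zero according to the parity of $|S|$ when $\eta = \eta^\sigma$.

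**The main obstacle** is the $\eta = \eta^\sigma$ case: here $\mathrm{Ind}_{H_D^c(\A)}^{H_D(\A)} \C_\eta$ contains all $\tau_\eta^S$ with $|S| < \infty$, but only those with $|S|$ even are automorphic. The reason is that the global induced space is $\mathrm{Ind}_{E^\times \I_F \backslash \I_E}^{H_D(\A)}$ of a Galois-invariant character, and the obstruction to descending a decomposition of $H_D^c(F)\backslash H_D^c(\A)$-functions to $H_D(F)\backslash H_D(\A)$-functions is governed by whether the local constituents $\tau^{\pm}_{\eta_v}$ glue to a character of the global $\mu_2 = H_D(\A)/H_D^c(\A)H_D(F)$. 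Writing $\eta = \eta_F \circ \Nm_{E/F}$, the extension of $\eta$ from $\I_E$ to $H_D(\A)$ is a character of a group fitting in $1 \to \I_E \to H_D(\A) \to \mu_2(\A) \to 1$, and the two extensions differ by the quadratic idele class character $\chi_{E/F}$; a vector $\tau_\eta^S$ is automorphic precisely when the product of the local signs $\prod_{v \in S}(-1)$ against the relevant local components of $\chi_{E/F}$ (combined with the contribution from $S_D$ via the non-canonical labeling fixed by $\eta_F$) is trivial, which by the product formula for the Hilbert symbol / quadratic character reduces to the parity condition $|S|$ even. This is exactly the content of the Lemma computing $m(\epsilon^S) = m(\eta,S)$ in Section 9, so the proof amounts to: (i) the abstract decomposition via Frobenius reciprocity, (ii) matching the summands with the packet $L^D_{E,\eta}$, and (iii) invoking the sign computation to cut down to the automorphic ones. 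Steps (i) and (ii) are routine harmonic analysis on the compact abelian-by-$\mu_2$ quotient; step (iii), the parity obstruction, is the crux and is where the global product formula enters.
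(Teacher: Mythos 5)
Your overall strategy — decompose via the connected component $H_D^c \simeq R_{E/F}\G_m$, induce to $H_D$, match the constituents of $\Ind_{H_D^c(\A)}^{H_D(\A)}\eta$ with the packet $L^D_{E,\eta}$, and then impose a parity obstruction to pick out the automorphic pieces — is in substance the same as the paper's proof. The paper realizes your Frobenius map concretely as the averaging (Eisenstein) map $E_\eta(\phi)(h)=\sum_{s\in H_D^c(F)\backslash H_D(F)}\phi(sh)$, identifies $\Ker E_\eta=\bigoplus_{m(\eta,S)=0}\tau_\eta^S$, and proves spanning by showing that any $f$ orthogonal to all $E_\eta(\phi)$ has vanishing partial Fourier coefficients along $\I_F E^\times\backslash\I_E$, hence is zero. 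Your steps (i)--(iii) and the parity argument in the ``main obstacle'' paragraph cover the same ground. However, note that the displayed equality in your first paragraph, $L^2(Z_{H_D}(\A)H_D(F)\backslash H_D(\A))=\bigoplus_\eta\Ind_{H_D^c(\A)}^{H_D(\A)}\C_\eta$, contradicts your own third paragraph: the full induced module contains the $\tau_\eta^S$ with $|S|$ odd, which are not automorphic. What is true is that $E_\eta$ is a map from $\Ind\eta$ into $L^2$ with a kernel, not an isomorphism onto a summand.

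The more substantive omission is the pairwise non-isomorphism of the summands, which the paper proves at the outset. If $\tau_{\eta_1}^{S_1}\simeq\tau_{\eta_2}^{S_2}$ for globally inequivalent $\eta_1,\eta_2$, the multiplicity statement in the theorem would be false. By Proposition~\ref{rep:of:HD:1}(4), local isomorphism forces ${\eta_1}_v\in\{{\eta_2}_v,{\eta_2}_v^{\sigma_v}\}$ at every place, i.e., $\eta_1$ and $\eta_2$ are almost-everywhere Galois-equivalent; one then needs Lemma~\ref{local:global:1} — a genuine input, proved via a pole-at-$s=1$ argument for Hecke $L$-functions — to conclude global equivalence and hence $(\eta_1,S_1)\sim(\eta_2,S_2)$. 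This is not ``routine harmonic analysis on the compact abelian-by-$\mu_2$ quotient,'' and your proposal is silent on it.
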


\begin{proof}
First note that all the representations in the sum are non-isomorphic.
Indeed $\tau_{\eta_1}^{S_1}\simeq \tau_{\eta_2}^{S_2}$ if and only if
for each place $v$ of $F$ there exists $\sigma_v\in Aut(E_v/F_v)$ such that
${\eta_1}_v={\eta_2}_v^{\sigma_v}$. By Lemma
\ref{local:global:1} the characters
$\eta_1,\eta_2$ are equivalent. Hence, also $S_1=S_2$.

The representations $\tau_\eta^S$ with $m(\eta,S)=1$
are realized in the discrete spectrum $L^2 (Z_{H_D}(\A)H_D(F)\backslash H_D(\A))$
 by an Eisenstein series.
$$
 E_\eta: \Ind^{H_D(\A)}_{H_D^c(\A)}\eta\rightarrow
 L^2(Z_{H_D}(\A)H_D(F)\backslash H_D(\A))$$
defined by
\begin{equation}\label{Eisen:HD}
E_\eta(\phi)(h)=\sum_{s \in H_D^c(F)\backslash H_D(F)} \phi(s h).
\end{equation}
It follows  immediately from the definition that
 $$\Ker (E_\eta)=\bigoplus_{(\eta,S):m(\eta,S)=0} \tau_\eta^S.$$
In particular, we have shown one inclusion. To establish the equality
take $f$ such that $(f,E_\eta(\phi))=0$ for all
$\eta, \phi\in \Ind^{H_D(\A)}_{H_D^c(\A)}\eta$.
One has
$$(f,E_\eta(\phi))_{H_D}=\int\limits_{H_D^c(\A)\backslash H_D(\A)}
\left(
\int\limits_{Z_{H_D}(\A)H_D^c(F)\backslash H_D^c(\A)}
 f(rh) \overline{\eta(r)}\, dr
\right)
 \overline{\phi(h)}\, dh.$$

If the integral vanishes for all $\phi$ then the inner integral vanishes
for almost all $h$  and for all $\eta$. Hence $f=0$.
\end{proof}

\begin {Remark}
In particular, for a fixed equivalence class of $\eta$,
the space $V(\eta)$ is a full nearly
equivalence class of cuspidal representations of $H_D(\A)$.
This shows that the L-packets $L_{E,\eta}$ were defined correctly.
\end{Remark}

Since the global packets $L_{E,\eta}$ and  $A_{E,\eta}$
have the same structure it is
natural to attempt to construct the automorphic representations in $A_{E,\eta}$
using the automorphic representations of $L_{E,\eta}$.
The global theta correspondence method provides such a construction.

\section{The global theta correspondence}
Whenever $m(\eta,S)=1$ we shall construct an automorphic realization of
$\Pi_\eta^S$ using the global theta correspondence.

Recall the construction of the global theta correspondence for the
dual pair $(G_D,H_D)$. Let $\psi:F\backslash \A\rightarrow \C^\times$
be a non-trivial additive character.
The representation
$$\left(\omega_{s_D,\psi}=
\otimes_v \omega_{s_{D_v},\psi_v}, R_D(\A), S(V_D\otimes X_D)(\A)\right)$$
admits an automorphic realization
$$\theta\in \Hom_{R_D(\A)}(S(V_D\otimes X_D)(\A),
\mathcal A(R_D(F)\backslash R_D(\A)))$$
given by
$$\theta(\phi)(r)=\sum_{x\in (V_D\otimes X_D)(F) } \omega_{s_D,\psi}(r)\phi(x).$$


\subsection{The global theta correspondence from $H_D$ to $G_D$}

For a cuspidal representation
$\tau$ of $H_D(\A)$ define a map
$$\theta^D_E:\omega_{\psi,s_D}\boxtimes \overline{\tau}
\longrightarrow \mathcal A(G_D(F)\backslash G_D(\A))$$
as follows:
For $g\in G_D(\A)$ such that  $\lambda(g)\in \lambda(H_D(\A))$ take $h\in H_D(\A)$
such that $(h,g)\in R_D(\A)$ and define
$$\theta^D_E(\phi\otimes  f)(g)=
\integral{H_D^1} \theta(\phi)(h_1h,g) \overline{f(h_1h)} dh_1.$$
For  $\lambda(g)\notin \lambda(H_D)$  define
$\theta^D_E(\phi\otimes  f)(g)$ to be zero.

We denote by $\theta^D_E(\tau)$ the representation spanned by the functions
$$\{\theta^D_E(\phi\otimes f)(g), \quad \phi\in \omega_{s_D,\psi}, f\in \tau \}.$$

\begin{Thm}
\begin{enumerate}
\item
For any irreducible representation $\tau_\eta^S$ of $H_D(\A)$,
the representation $\theta^D_E(\tau^S_\eta)$
is irreducible, non-zero and contained in the discrete spectrum of $G_D$.
Moreover,
$$\theta^D_E(\tau^S_\eta)\simeq
\otimes_{v\notin S} \theta^{D_v}_{E_v}(\tau_{\eta_v}^+)\otimes \otimes_{v\in S} \theta^{D_v}_{E_v}(\tau_{\eta_v}^-).$$
\item
Let $D$ be a split algebra. Then
$\theta^D_E(\tau_\eta^S)$ is cuspidal unless $S=\emptyset$.
\item
Let $D$ be a non-split algebra. Then
$\theta^D_E(\tau_\eta^S)$ is cuspidal unless $\eta=\eta^\sigma$
 and $S\subseteq S_D$.
\end{enumerate}
\end{Thm}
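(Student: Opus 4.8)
The proof I have in mind proceeds in four stages; throughout, the standing hypothesis is $m(\eta,S)=1$, so that by Theorem~\ref{discrete:spectrum:of:HD} the representation $\tau^S_\eta$ is a genuine cuspidal automorphic representation of $H_D(\A)$, realized inside $L^2$ by the Eisenstein series $(\ref{Eisen:HD})$, and the map $\theta^D_E$ of the preceding subsection is defined on $\omega_{\psi,s_D}\otimes\overline{\tau^S_\eta}$. The first stage is to collect the local input: by Theorems~\ref{thetaHDGD:nonsplit} and~\ref{thetaHDGD:split}, together with their archimedean counterparts (which follow from Howe's theorem exactly as in Proposition~\ref{Howe:classical} and the computations of Section~6), at every place $v$ the local lift $\theta^{D_v}_{E_v}(\tau^{\pm}_{\eta_v})$ is nonzero, irreducible and equal to the big lift $\Theta^{D_v}_{E_v}(\tau^{\pm}_{\eta_v})$; by Proposition~\ref{thetaHDGD:unramified} it is the expected unramified Arthur constituent at almost every $v$. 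I record for later use the two cases in which the local lift is supercuspidal: $\theta^{D_v}_{E_v}(\tau^+_{\eta_v})$ when $D_v$ is non-split and $\eta_v\neq\eta_v^\sigma$, and $\theta^{D_v}_{E_v}(\tau^-_{\eta_v})$ when $v\in S_\eta$ and $E_v$ is a field.

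\emph{Non-vanishing.} I would prove $\theta^D_E(\tau^S_\eta)\neq 0$ by the Rallis inner product formula for the similitude pair $(G_D,H_D)$: via the see-saw with $(G_D\times G_D,H_D)$ on one side and $(G_D,H_D\times H_D)$ on the other, the Petersson pairing of two theta lifts unfolds to the integral over the compact quotient of $H^1_D\times H^1_D$ of a (Siegel--Weil regularized) theta integral over the $G_D$-quotient; by the Siegel--Weil formula this equals a value, in the first term range, of a degenerate Siegel Eisenstein series on the doubled group, whose unfolding yields a product of local doubling zeta integrals times a global Hecke $L$-factor of $\eta$. The local factors are nonzero precisely because the local lifts above are, and the global factor is, in the $\theta_{10}$ range, essentially a nonzero residue of $\zeta_E$. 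Equivalently, Rallis's tower property---reduced to the isometry case of Yasuda via Proposition~\ref{isometry:similitude:plus}, part (2)---propagates non-vanishing up the Witt tower from the first occurrence, namely the nonzero automorphic induction of $\eta$. Pinning down this residue/$L$-value is the first technical point.

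\emph{Irreducibility and the product formula.} The assignment $\phi\otimes f\mapsto\theta^D_E(\phi\otimes f)$ factors through the $H^1_D(\A)$-coinvariants of $\omega_{\psi,s_D}\otimes\overline{\tau^S_\eta}$. Since each $\Theta^{D_v}_{E_v}(\tau^{\pm}_{\eta_v})$ is irreducible, the $v$-local coinvariant space is exactly $\theta^{D_v}_{E_v}(\tau^{\pm}_{\eta_v})$, so the global coinvariant space is the irreducible admissible representation $\Pi:=\bigl(\otimes_{v\notin S}\theta^{D_v}_{E_v}(\tau^+_{\eta_v})\bigr)\otimes\bigl(\otimes_{v\in S}\theta^{D_v}_{E_v}(\tau^-_{\eta_v})\bigr)$. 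Being a nonzero quotient of $\Pi$, the automorphic representation $\theta^D_E(\tau^S_\eta)$ is irreducible and isomorphic to $\Pi$, which is the asserted identity.

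\emph{Square-integrability and cuspidality.} Whenever $\Pi$ has a supercuspidal local component it is cuspidal; by the first stage this settles cuspidality when $D$ is non-split with $\eta\neq\eta^\sigma$, and, for the remaining $S$, at every place $v\in S$ at which $E_v$ is a field. For the general statement I would compute the constant terms of $\theta^D_E(\phi\otimes f)$ along the unipotent radicals of the maximal parabolics of $G_D$ (the Siegel and Klingen parabolics if $D$ splits, the unique $P_D$ otherwise) by the usual mixed-model/Poisson-summation argument: each such constant term is a finite sum over $H_D$-orbits whose terms are theta lifts of $\tau^S_\eta$, or of its constant term along $H^c_D$, to the Levi, multiplied by period integrals of $f$ over subgroups of $H_D$. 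Since $H^c_D=R_{E/F}\G_m$ is anisotropic modulo the centre, these periods vanish except precisely when $D$ splits and $S=\varnothing$, or $D$ is non-split with $\eta=\eta^\sigma$ and $S\subseteq S_D$; outside those cases all constant terms vanish and $\theta^D_E(\tau^S_\eta)$ is cuspidal. In the two exceptional families one surviving constant term, a dihedral Klingen datum built from $\pi(\eta)$ (or its Jacquet--Langlands transfer when $D$ is non-split), has exponents satisfying Langlands's square-integrability criterion, and identifying $\theta^D_E(\tau^S_\eta)$ with the residue of the corresponding Eisenstein series on $G_D$ shows it is non-cuspidal and places it in the residual, hence discrete, spectrum. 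This constant-term bookkeeping---tracking, case by case, which period survives as a function of whether $\eta=\eta^\sigma$ and of the parity and location of $S$, and matching the residual lifts with Eisenstein residues---is the main obstacle.
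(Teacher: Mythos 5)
Your plan is correct and, apart from one offered alternative, reconstructs what the paper compresses into citations. The paper's own proof is four sentences: non-vanishing is delegated to the isometry case in [Ya2] via Proposition~\ref{isometry:similitude:plus} part (2); irreducibility follows from the local irreducibility of all the $\Theta^{D_v}_{E_v}(\tau^{\pm}_{\eta_v})$ exactly as you argue (the global lift is a nonzero quotient of the irreducible restricted tensor product $\otimes_v\theta^{D_v}_{E_v}$); and the cuspidality/square-integrability dichotomy (parts~(2),(3) and the ``otherwise discrete'' statement) is delegated to Lemma~1.3 of [S], Lemma~3.1 of [Ya2], [Ya1] and [KRS], which are precisely the constant-term/Poisson-summation and Langlands-criterion computations you sketch. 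The one genuine departure is your primary non-vanishing strategy: the Rallis inner product/Siegel--Weil route. The paper does not go this way, and it carries real overhead for the quaternionic similitude pair $(H_D,G_D)$ (one would need the doubling method and a (regularized) Siegel--Weil identity in the relevant range for $GU_2(D)$, none of which is set up in this paper). Your own ``equivalently'' clause --- descend to $H^1_D\times G^1_D$, invoke Yasuda's isometry non-vanishing, and climb back via Proposition~\ref{isometry:similitude:plus}(2) --- is the path the paper actually takes, and is clearly the lighter one; I would promote it to the primary argument and drop the doubling route, or keep it only as a remark. The rest of your outline is sound; the only technical point worth adding is that the irreducibility step tacitly uses $\Theta^{D_v}_{E_v}=\theta^{D_v}_{E_v}$ at every place (so that the restricted tensor product is the full global $\Theta$), which you did already note follows from Theorems~\ref{thetaHDGD:nonsplit} and~\ref{thetaHDGD:split}; for the archimedean $v$ this requires the analogous statement for $SO(3,2)$ and $SO(4,1)$, which the paper leaves implicit, as you flagged.
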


\begin{proof} The non-vanishing follows from
the non-vanishing of the global theta lift for isometries,
proven in [Ya2].
 The irreducibility of the global lift
follows from the irreducibility of the local lifts.
Parts $(2)$ and $(3)$ follow from Lemma 1.3 of [S]
and Lemma 3.1 of [Ya2].
If the conditions in Parts (2) and (3) do not hold then applying the
square-integrability criterion  it is shown in [Ya1] and [KRS]  that the theta lifts are contained in the
discrete spectrum.
\end{proof}

In particular, for any $\eta,S$ one has
$$m(\Pi_\eta^S)\ge m(\eta,S).$$
The other inequality will be proved in Section \ref{WFandMF}.

\section{Wave front and the multiplicity formula}\label{WFandMF}
\subsection{The Fourier coefficient}
Fix a non-trivial automorphic additive character $\psi:F\backslash \A\rightarrow \C^\times$.
We identify the unitary characters of $U_D(F)\backslash U_D(\A)$ with
the space of skew-Hermitian forms on $Y_D(F)$.
Any skew-Hermitian form $T$ on $Y_D(F)$ gives rise to a form
on $Y_D(\A)$. Denote
$$\Psi_T(u(S))=\psi(\tr_{D(\A)/\A}(TS)).$$

The $M_D(F)$ orbits  on the space of characters stay in bijection
with the equivalence  classes
of {\sl locally isometric}
skew-Hermitian forms on $Y_D(F)$
and hence are parameterized by the quadratic algebras $E$ inside $D$.
The stabilizer in $M_D(\A)$ of the character associated with the form $T$ is isomorphic
to $GU(Y_D,T)(\A)$.

For any automorphic form $\varphi$ on $G_D$ define
its  Fourier coefficient with respect to the skew-Hermitian form $T$ by
$$F_T(\varphi)=\integral{U_D}\varphi(u)\overline{\Psi_T(u)} \,du.$$

For any automorphic representation $\Pi$ of $G_D(\A)$
the Fourier coefficient defines a map
 $$F_T\in \Hom(\Pi_{U_D,\Psi_T}, \mathcal A(GU(Y_D,T)(F)\backslash GU(Y_D,T)(\A))$$
by $$F_T(\varphi)(h)=F_T(h \varphi).$$

In particular, for $\Pi\simeq \Pi^S_\eta$ one has  $F_T(\Pi)=0$
unless $\disc T=E$. For $\disc T=E$ there is an isomorphism
$$GU(Y_D,T)\simeq GU(\ol{V_D},s_D)\simeq H_D$$
 and by Proposition \ref{localWF} $F_T(\Pi)$ is a quotient of
 $(\tau_\eta^S)^\vee\otimes\chi_{D,-1}$,
where
$${(\chi_{D,-1})}_v=
\left\{\begin{array}{ll}
sgn & v\in S_D \triangle S_{-1}\\
1 & {\rm otherwise}
\end{array}.
\right.
$$

\subsection{The wave front}

For any irreducible automorphic representation $\Pi$ define
the wave front set by
$$\hat F(\Pi)=\{ E: \exists \,T: \disc(T)=E, \varphi\in \Pi:
 F_T(\varphi)\neq 0\}.$$
If $\Pi=\otimes \Pi_v$ and $E\in \hat F(\Pi)$
then obviously $E_v\in \hat F(\Pi_v)$. We shall make use of the following important theorem by Jian-Shu Li, [Li].
\begin{Thm}\label{Li}
Let $\Pi$ be a cuspidal irreducible representation of $G_D(\A)$. Then
$\hat F(\Pi)\neq \emptyset.$
\end{Thm}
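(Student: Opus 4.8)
The plan is to argue by contradiction, treating separately the cases where $D$ does and does not split; the non-split case is elementary, and the split case reduces to the non-existence of singular cusp forms on $GSp_4$, which I will import from [Li] rather than reprove. So suppose $\Pi$ is cuspidal, as in the statement, and that $\hat F(\Pi)=\emptyset$; equivalently, $F_T(\varphi)=0$ for every $\varphi\in\Pi$ and every non-degenerate skew-Hermitian form $T$ on $Y_D(F)$. Since $U_D$ is abelian and $U_D(F)\backslash U_D(\A)$ is compact, each $\varphi\in\Pi$ has the Fourier expansion
\[
\varphi(ug)=\sum_T F_T(\varphi)(g)\,\Psi_T(u),\qquad u\in U_D(\A),\ g\in G_D(\A),
\]
the sum running over all skew-Hermitian forms $T$ on $Y_D(F)$, with $\Psi_0\equiv1$. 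Cuspidality of $\Pi$ kills the $T=0$ summand (the constant term along $P_D$), and the hypothesis kills every summand with $T$ non-degenerate; hence only the nonzero \emph{degenerate} $T$ can contribute.

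If $D$ does not split, this already proves the theorem. Indeed $Y_D$ is then a line over the quaternion division algebra $D$, and a skew-Hermitian form on it is, after fixing a basis vector, the map $(y_1,y_2)\mapsto\sigma(y_1)\tau y_2$ for some $\tau\in D$ of reduced trace zero; then $\tau^2=-\Nm_{D/F}(\tau)\in F$, so $\tau$ is either $0$ or invertible, i.e.\ the form is either zero or non-degenerate. Thus there is no nonzero degenerate $T$, the expansion above forces $\varphi=0$, and we are done. (This is the dichotomy underlying Remark~\ref{assumption}: only the split side of $G_D$ admits degenerate characters of $U_D$.)

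Now suppose $D$ splits, so that $G_D\simeq GSp_4$, $U_D$ is the abelian unipotent radical of the Siegel parabolic, $\Hom(U_D,\G_a)$ is identified with the space of symmetric $2\times2$ matrices $T$ over $F$, and non-degeneracy means $\det T\ne0$. What remains is exactly J.-S.\ Li's theorem [Li] on the non-existence of singular cusp forms; I sketch only the reduction. The nonzero rank-one matrices $T$ form a single orbit of the Siegel Levi $M_D(F)=GL(X_D)(F)\times\G_m(F)$ (such a $T$ equals $t\cdot v\,{}^tv$ for a nonzero $v$ and $t\in F^\times$, and the central $\G_m$ sweeps out all square classes). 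Fixing a representative $T_0$ and unfolding the surviving part of the Fourier expansion over this orbit gives
\[
\varphi(g)=\sum_{\gamma\in Stab_{M_D}(T_0)(F)\backslash M_D(F)}\Phi(\gamma g),\qquad \Phi:=F_{T_0}(\varphi),
\]
where $\Phi$ transforms under $U_D(\A)$ by $\Psi_{T_0}$ and is left-invariant under $Stab_{M_D}(T_0)(F)$, a group that contains a non-trivial unipotent subgroup $N_0$ with $N_0U_D$ a maximal unipotent subgroup of $G_D$. The remaining content of [Li] is that this rank-one Fourier--Jacobi coefficient $\Phi$ is governed by the oscillator representation of an orthogonal group of dimension $\le 1$ --- so that the ``singular part'' of $\varphi$ is pinned to a representation of rank $\le 1$ --- and that no such representation occurs in the cuspidal spectrum of $GSp_4$; this is consistent with the behaviour of the small theta towers recalled in Section~5. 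Hence $\Phi\equiv0$, so $\varphi=0$, contradicting $\Pi\ne0$, and therefore $\hat F(\Pi)\ne\emptyset$.

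The main obstacle is precisely this last step in the split case: an automorphic form on $GSp_4$ all of whose Siegel--Fourier coefficients are supported on matrices of rank $\le 1$ cannot be cuspidal. This is the heart of [Li], and its proof needs either the Fourier--Jacobi descent indicated above together with an analytic (growth/convergence) estimate, or else the classification of low-rank unitary representations, which identifies any such putative cusp form with a global theta lift from $O_1$ and hence places it outside the cuspidal spectrum. In this paper only the non-split reduction is carried out in full, the split case being deferred to [Li].
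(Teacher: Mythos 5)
The paper does not prove Theorem~\ref{Li}: it quotes it as a theorem of J.-S.~Li [Li] on non-existence of singular cusp forms and uses it as a black box. Your write-up therefore does more than the text. The organization is correct: the Fourier expansion of a cusp form along $U_D$ loses the $T=0$ term by cuspidality and every non-degenerate $T$ by the hypothesis $\hat F(\Pi)=\emptyset$, so everything rides on whether nonzero degenerate characters of $U_D$ exist. Your elementary observation --- that a traceless element of a quaternion division algebra is either $0$ or a unit, so $U_D$ has no nonzero degenerate characters when $D$ is non-split --- disposes of that case without any appeal to [Li], which is a nice extra. In the split case you correctly identify the remaining assertion with Li's theorem for $GSp_4$ and defer to it, which is precisely what the paper does for all $D$. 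One small gloss: [Li] is stated for isometry groups, so strictly one should restrict to $G^1_D(\A)$, apply [Li] to a cuspidal constituent of that restriction, and note that a non-degenerate Fourier coefficient of such a constituent yields one for $\Pi$; this is routine but worth flagging. Also, your parenthetical appeal to Remark~\ref{assumption} is slightly off-target: that remark concerns anisotropy of the codimension-one subgroup $SO(U)$, governed by whether $D_K$ splits, rather than the existence of degenerate characters of $U_D$, which is governed by $D$ itself.
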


\begin{Prop}\label{globalWF}
Let $\Pi$ be an automorphic representation of $G_D(\A)$ which is nearly equivalent to $\Pi_\eta^\emptyset$.
Then, $\hat F(\Pi)=\{E\}$.
\end{Prop}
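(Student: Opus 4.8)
The plan is to prove the two inclusions $\hat F(\Pi)\subseteq\{E\}$ and $E\in\hat F(\Pi)$ separately; the first is soft, the second is the substantive point.

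For $\hat F(\Pi)\subseteq\{E\}$: since $\Pi$ is nearly equivalent to $\Pi_\eta^\emptyset$, at almost every (in particular non-archimedean) place $v$ the local component $\Pi_v$ is the unramified representation $\theta^{D_v}_{E_v}(\tau_{\eta_v}^+)$, whose local wave front is the singleton $\{E_v\}$ by Proposition \ref{localWF}. If $E'\in\hat F(\Pi)$, then a nonzero global $T$-Fourier coefficient with $\disc(T)=E'$ has nonzero local component, so $E'_v\in\hat F(\Pi_v)=\{E_v\}$, i.e. $E'_v\cong E_v$, for almost all $v$. A quadratic algebra over $F$ is determined by the isomorphism classes of its localizations at a set of places of density one (the associated quadratic idele class character is recovered from its local components off a finite set by strong multiplicity one for $\G_m$); hence $E'\cong E$, and since $E'\subseteq D$ this forces $E'=E$ as a subalgebra. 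Thus $\hat F(\Pi)\subseteq\{E\}$.

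For $E\in\hat F(\Pi)$, equivalently $\hat F(\Pi)\neq\emptyset$: when $\Pi$ is cuspidal this is exactly Theorem \ref{Li} of Jian-Shu Li (for non-split $D$, where $G_D$ has $F$-rank one and $P_D$ is its only proper parabolic, it is even more elementary: the Fourier expansion of a cusp form along $U_D$ has vanishing constant term, so some $F_T$ with $T\neq 0$ is nonzero, and the first inclusion forces $\disc(T)=E$). When $\Pi$ is not cuspidal, I would note that the first inclusion together with the explicit local components recorded in Theorems \ref{thetaHDGD:nonsplit}--\ref{thetaHDGD:split} — none of which is one-dimensional for representations in the packet — rules out $\Pi$ being a residue of an Eisenstein series whose data is too degenerate to support a nonzero $F_T$; in the only case that actually occurs, $\Pi\simeq\Pi_\eta^\emptyset$ with $D$ split, and this representation is realized as a global theta lift from $H_D$, whose nonvanishing is proved (via the see-saw/Rankin-Selberg unfolding) precisely by exhibiting a nonzero $T$-Fourier coefficient with $\disc(T)=E$. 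Either way $E\in\hat F(\Pi)$, and combined with the first inclusion this gives $\hat F(\Pi)=\{E\}$.

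The main obstacle is the non-emptiness of $\hat F(\Pi)$ when $\Pi$ is not assumed cuspidal, since Li's theorem covers only the cuspidal spectrum; this is handled by identifying the non-cuspidal discrete members of this near-equivalence class — they are the theta lifts $\Pi_\eta^\emptyset$ for split $D$ — and reading off the required Fourier coefficient from the theta construction itself. By contrast the inclusion $\hat F(\Pi)\subseteq\{E\}$ is routine given Proposition \ref{localWF} and the rigidity of quadratic algebras over a number field.
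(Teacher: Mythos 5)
Your inclusion $\hat F(\Pi)\subseteq\{E\}$ and the cuspidal case of $\hat F(\Pi)\neq\emptyset$ match the paper's argument exactly: Proposition \ref{localWF} at almost all places, strong multiplicity one for Hecke characters to pin down the quadratic algebra, and Theorem \ref{Li} for cuspidal $\Pi$.

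The non-cuspidal case has a genuine gap. You assert that ``the only case that actually occurs'' is $\Pi\simeq\Pi_\eta^\emptyset$ with $D$ split. This is false: by part (3) of the theorem on the global theta lift from $H_D$ to $G_D$, when $D$ is non-split, $\theta^D_E(\tau_\eta^S)$ fails to be cuspidal precisely when $\eta=\eta^\sigma$ and $S\subseteq S_D$, so there are non-cuspidal discrete representations in the near-equivalence class with $D$ non-split and with nontrivial $S$. Moreover, the reduction to ``$\Pi$ is the theta-lift realization'' is exactly what the multiplicity formula (Proposition \ref{MF:theorem}) gives, and that proposition is proved \emph{using} the present one; so the argument as written is circular. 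Your intermediate step — that because none of the local constituents is one-dimensional, $\Pi$ cannot be a residue of an Eisenstein series ``too degenerate to support a nonzero $F_T$'' — is not a proof either: non-one-dimensionality is necessary but not sufficient for having a nondegenerate Fourier coefficient, and nothing in Theorems \ref{thetaHDGD:nonsplit}–\ref{thetaHDGD:split} directly computes the Fourier coefficients of residual automorphic realizations.

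What the paper does instead is cleaner and avoids the circularity: it restricts $\Pi$ and the functions in its space to $G^1_D(\A)$, getting a representation of $G^1_D(\A)$ not in the cuspidal spectrum. By Yasuda's classification of the residual spectrum of $G^1_D$ ([Ya1]), every irreducible residual representation of $G^1_D(\A)$ nearly equivalent to a constituent of $\Pi_\eta^\emptyset|_{G^1_D(\A)}$ is isomorphic to $\theta^D_{s_D,\psi}(1)$, and by [Ya1, Lemma 4.12] this representation has a nontrivial Fourier coefficient along $U_D$. Lifting back from $G^1_D$ to $G_D$ gives $\hat F(\Pi)\neq\emptyset$. This is the key external input your proof is missing; without it (or some equally sharp classification of the non-cuspidal discrete spectrum) you cannot identify the non-cuspidal $\Pi$'s or exhibit their Fourier coefficients.
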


\begin{proof}
First note that  $\hat F(\Pi)\subseteq \{E\}$. Indeed, assume that
$E'\in \hat F(\Pi)$. For almost all places $v$ one has
 $\hat F(\Pi_v)=\hat F(\Pi^+_\eta)=\{E_v\}$
by Proposition \ref{localWF}.
Hence, the characters $\chi_{E/F}$ and $\chi_{E'/F}$ are nearly
equivalent. By the strong multiplicity one theorem $E=E'$.

It remains to show that $\hat F(\Pi)\neq \emptyset.$
For a cuspidal $\Pi$ this follows from Theorem \ref{Li}. Assume now that $\Pi$ is not contained  in the cuspidal
spectrum of  $G_D(\A)$. Then, restricting $\Pi$
to $G^1_D(\A)$ and  restricting the functions in $\Pi$
to $G^1_D(F)\backslash G^1(\A)$ we obtain a (possibly
reducible) representation of $G^1_D(\A)$ that is not
contained in the cuspidal spectrum of $G^1_D(\A)$.
By [Ya1] every irreducible residual representation
that is nearly equivalent to a constituent of $\Pi_\eta^\emptyset|_{G^1_D(\A)}$
is isomorphic to $\theta^D_{s_D,\psi}(1)$ and hence by
Lemma 4.12 in [Ya1] has some non-trivial Fourier coefficient.
Thus, $\hat F(\Pi)\neq \emptyset$.

\end{proof}

From this we can deduce Arthur's multiplicity formula.

\begin{Prop}\label{MF:theorem}
 For any $\eta$ and $S$ the multiplicity $m(\Pi^S_\eta)$ of $\Pi^S_\eta$
in the discrete spectrum of $G_D(\A)$ equals $m(\eta,S)$.
\end{Prop}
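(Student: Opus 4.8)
The plan is to establish the inequality $m(\Pi^S_\eta) \le m(\eta, S)$, which, combined with the lower bound $m(\Pi^S_\eta) \ge m(\eta, S)$ already obtained from the global theta lift construction in Section~\ref{WFandMF}, yields equality. First I would dispose of the case $m(\eta,S) = 0$, i.e. $\eta = \eta^\sigma$ with $|S|$ odd. Here I claim no automorphic realization of $\Pi^S_\eta$ can exist at all. Suppose $\Pi$ were a copy of $\Pi^S_\eta$ in the discrete spectrum of $G_D(\A)$. By Proposition~\ref{globalWF}, $\hat F(\Pi) = \{E\}$, so there is a skew-Hermitian form $T$ with $\disc T = E$ and a nonzero Fourier coefficient $F_T$. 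As recalled in Section~\ref{WFandMF}, $F_T$ then realizes a nonzero map from $\Pi_{U_D,\Psi_T}$ into $\mathcal{A}(GU(Y_D,T)(F)\backslash GU(Y_D,T)(\A))$, and since $GU(Y_D,T) \simeq H_D$, the image is (a quotient isomorphic to) $(\tau^S_\eta)^\vee \otimes \chi_{D,-1}$. Thus $(\tau^S_\eta)^\vee \otimes \chi_{D,-1}$ embeds in the discrete (indeed cuspidal, by compactness of $Z_{H_D}(\A)H_D(F)\backslash H_D(\A)$) spectrum of $H_D$. But by Theorem~\ref{discrete:spectrum:of:HD} the discrete spectrum of $H_D$ contains $\tau^{S'}_{\eta'}$ only when $m(\eta',S') = 1$; twisting by $\chi_{D,-1}$ changes $S$ by the even-cardinality set $S_D \triangle S_{-1}$ (up to the subtlety that $\chi_{D,-1}$ is a quadratic character whose relevant local components sit on $S_D \triangle S_{-1}$), and contragredient does not change the parity of $|S|$, so $(\tau^S_\eta)^\vee \otimes \chi_{D,-1}$ is of the form $\tau^{S''}_{\eta}$ with $|S''| \equiv |S| \pmod 2$, which is odd — contradicting $m(\eta, S'') = 0$. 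Hence $m(\Pi^S_\eta) = 0$.

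For the case $m(\eta, S) = 1$ I would argue that the multiplicity is at most one. Again let $\Pi$ be any irreducible constituent of the $\Pi^S_\eta$-isotypic part of the discrete spectrum of $G_D(\A)$. Exactly as above, $\hat F(\Pi) = \{E\}$ forces the existence of a nonzero Fourier-coefficient map $F_T: \Pi \to \mathcal{A}(H_D(F)\backslash H_D(\A))$ whose image is isomorphic to $(\tau^S_\eta)^\vee \otimes \chi_{D,-1}$, which by Theorem~\ref{discrete:spectrum:of:HD} occurs with multiplicity exactly one in $L^2(Z_{H_D}(\A)H_D(F)\backslash H_D(\A))$. The key point is then that the Fourier coefficient, composed with the global theta lift $\theta^D_E$ in the reverse direction (from $H_D$ back to $G_D$), recovers $\Pi$; more precisely, one shows that the space of automorphic forms on $G_D$ generating $\Pi^S_\eta$ and having the prescribed $T$-th Fourier coefficient lying in the one-dimensional $\tau^S_\eta$-space is at most one-dimensional as a $G_D(\A)$-submodule up to isomorphism. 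This is the mechanism by which the rigidity of the $H_D$-spectrum (Theorem~\ref{discrete:spectrum:of:HD}) is transported to $G_D$.

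The main obstacle is precisely the last step: making rigorous that the Fourier coefficient map $F_T$ is injective on the $\Pi^S_\eta$-isotypic subspace of the discrete spectrum of $G_D$, equivalently that an automorphic form in this space with vanishing $T$-th Fourier coefficient (for the unique relevant $T$) must itself vanish. For the cuspidal part this follows because a cuspidal $\Pi$ with $\hat F(\Pi) \ne \emptyset$ is, by the theta-lift theory of Section~\ref{WFandMF} and the results of Yasuda, a theta lift from $H_D$, and the theta lift and the Fourier coefficient are adjoint in a way that makes the composite the identity (up to a nonzero constant) — this is the usual "theta lift followed by its Fourier coefficient" computation; for the residual part one invokes the description of residual representations from [Ya1]. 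I would carry out the cuspidal case by unfolding the global theta integral $\theta^D_E(\phi \otimes f)$ against $\overline{\Psi_T}$ over $U_D(F)\backslash U_D(\A)$, using the explicit support computation of Proposition~\ref{localWF} (the set $A_{s_D,T}$) to see that the inner integral collapses to an $H_D^1$-integral reproducing $f$, so that $F_T \circ \theta^D_E$ is nonzero and hence, by irreducibility, injective on each isotypic line. Combining, every constituent of the $\Pi^S_\eta$-part of the discrete spectrum of $G_D(\A)$ injects via $F_T$ into a one-dimensional space, so $m(\Pi^S_\eta) \le 1 = m(\eta, S)$, completing the proof.
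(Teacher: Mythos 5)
Your treatment of the case $m(\eta,S)=0$ is essentially the paper's argument and is correct: one uses Proposition~\ref{globalWF} to produce a nonvanishing $F_T$ with $\disc T = E$, observes via Proposition~\ref{localWF} that the image is $(\tau^S_\eta)^\vee\otimes\chi_{D,-1}=\tau_{\eta^{-1}}^{S\triangle(S_D\triangle S_{-1})}$ (so the new index set has the same parity as $S$), and derives a contradiction with Theorem~\ref{discrete:spectrum:of:HD}.

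For the case $m(\eta,S)=1$, however, your proposed route has a genuine gap. You want injectivity of $F_T$ on the full $\Pi^S_\eta$-isotypic subspace of the discrete spectrum, and to get it you appeal to the claim that a cuspidal $\Pi$ with $\hat F(\Pi)\neq\emptyset$, nearly equivalent to $\Pi^\emptyset_\eta$, must itself be a theta lift from $H_D$, after which the unfolding identity $F_T\circ\theta^D_E \propto \id$ would apply. But this backward statement is exactly the content of the theorem in Section~\ref{NEC}, which is proved \emph{after} the multiplicity formula and requires the pole of the degree-$5$ $L$-function and Corollary~\ref{Lfunction:hence:thetanonzero}; it is not available at this stage, and your argument would become circular. (Moreover, even if granted, the unfolding computation only shows $F_T$ is nonzero on the known theta realization; you would still need to rule out a second copy not arising as a theta lift, which is precisely what is being asked.) The paper avoids this entirely with a short linear-algebra argument: if $J_1, J_2$ are two embeddings of $\Pi^S_\eta$ into the discrete spectrum, both composites $F_T\circ J_i$ are nonzero (by Proposition~\ref{globalWF}) $H_D(\A)$-maps onto the \emph{unique} automorphic realization of $(\tau^S_\eta)^\vee\otimes\chi_{D,-1}$, hence differ by a scalar $c$; then $F_T\circ(J_1-cJ_2)=0$ annihilates every non-degenerate Fourier coefficient, and Proposition~\ref{globalWF} applied to the (possibly zero) image of $J_1-cJ_2$ forces $J_1=cJ_2$. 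You should replace your unfolding step with this $J_1-cJ_2$ argument, which uses only ingredients already in hand.
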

\begin{proof}

Assume that $m(\eta,S)=0$. Let us show that $m(\Pi_\eta^S)=0$. Indeed, if
$\Pi_\eta^S$ can be embedded into the discrete spectrum of $G_D$
then $\hat F(\Pi_\eta^S)=\{E\}$. Hence, for a skew-Hermitian form $T$
with discriminant $E$, the space $F_T(\Pi_\eta^S)$ defines a non-zero irreducible
automorphic representation of $H_D(\A)$ isomorphic to
$(\tau_\eta^S)^\vee\otimes \chi_{D,-1}.$
Since by assumption
 $\eta=\eta^\sigma$,  the above representation is
$$\tau_{\eta^{-1}}^{S\triangle (S_D\triangle S_{-1})}.$$
In particular $S\triangle (S_D\triangle S_{-1})$ has even cardinality and
 hence $S$ has even cardinality so $m(\eta,S)=1$. This is a contradiction.

Assume now that $(\eta,S)$ is such that $m(\eta,S)=1$.
Let us show that the multiplicity of $\Pi_\eta^S$ in the discrete
spectrum is $1$. The multiplicity is at least one because
one realization is given by the theta correspondence
$\Pi_\eta^S=\theta^D_E(\tau_\eta^S)$.
If there are  two embeddings
$$J_1, J_2 \in\Hom_{G_D(\A)}(\Pi_\eta^S, L_{disc}^2(Z_D(\A)G_D(F)\backslash G_D(\A)),$$
we know that $\hat F(J_1(\Pi_\eta^S)), \hat F(J_2(\Pi_\eta^S))=\{E\}$.
The image of the maps
$$F_T\circ J_i
\in \Hom_{U_D(\A)}(\Pi_\eta^S, L^2(H_D(F)\backslash H_D(\A))$$
define an automorphic irreducible  representation of $H_D(\A)$
isomorphic to $(\tau_\eta^S)^\vee\otimes \chi_{D,-1}$.
By the multiplicity one property for the discrete spectrum of $H_D$
there exists a constant $c\in \C^\times$ such that $F_T (J_1-cJ_2)(\Pi_\eta^S)=0$.
Hence, the automorphic representation $(J_1-cJ_2)(\Pi_\eta^S)$ does not support
any non-degenerate coefficients along $U_D(\A)$ and therefore by Proposition \ref{globalWF} it is zero. In other words, $J_1$ and $J_2$ are proportional and hence
the multiplicity of $\Pi_\eta^S$ in the discrete spectrum is one.
\end{proof}

\section{L-function}
Our next goal is to show that the constructed Arthur packets contain
the full nearly equivalence class of cuspidal representations.
Our approach exploits a Rankin-Selberg integral representation
of an $L$-function of degree $5$ of a cuspidal representation of $G_D(\A)$.
This is a generalization of  the $L$-function studied in [PS-R].

\subsection{Notations}
Below $F$ is a number field and $\A$ is its ring of adeles. For any place $v$ of $F$,
$F_v$ denotes the $v$-adic completion of $F$. If $F_v$ is non-archimedean $\mathcal O_v$
denotes the ring of integers of $F_v$,
$\varpi_v$ denotes a uniformizer inside $\mathcal O_v$ and $q_v$ denotes the cardinality of the residue field .
For any finite set of places $S$ we denote  $\A_S=\Pi_{v\in S} F_v$.

The group $Sp_4(\C)$ which is the L-group of $Z_D\backslash G_D$,  admits a
$5$ dimensional irreducible complex representation $\rho$, given
by the accidental isomorphism $PSp_4(\C)\simeq SO_5(\C)$ discussed above.
hLet
$\chi:E^\times\backslash \I_E\rightarrow \C^\times$
 be an automorphic character
and let $\Pi=\otimes_v \Pi_v$ be an irreducible representation of $G_D(\A).$
There exists a finite set of places $\Omega$ which includes all the archimedean places
and which contains $S_D$ such that for $v\notin \Omega$ the representation $\Pi_v$ is
unramified with a Satake parameter $t_{\Pi_v}$  and the character $\chi_v$ is unramified too.
 Define
$$L^\Omega(\Pi,\chi, \rho,s)=\Pi_{v\notin \Omega}
\det(1-\chi_v(\varpi_v)\rho(t_{\Pi_v})q_v^{-s})^{-1}.$$

Let $\Pi$ be an irreducible representation of $G_D(\A)$ nearly equivalent
to $\Pi_\eta^\emptyset$ for some
automorphic character $\eta$ of $E^\times\backslash \I_E$.
Hence $\Pi$ and $\Pi_\eta^\emptyset$ share partial L-functions
$L^\Omega(\cdot,\chi_{E/F},s)$ for a set $\Omega$ large enough.

By  Proposition  \ref{thetaHDGD:unramified}
$$L^{\Omega}(\Pi,\chi_{E/F},\rho,s)=
\zeta^\Omega(s-1)\zeta^\Omega(s)L^\Omega(\chi_{E/F},s)^2\zeta^\Omega(s+1)$$
and hence it  has a simple pole at $s=2$.
We shall show that this property characterizes
the cuspidal representations in the
nearly equivalence class of $\Pi_\eta^\emptyset$.

\subsection{Eisenstein series} Let $K$ denote the maximal  compact subgroup
of $G_D(\A)$.
For any  $K$-finite standard section
$f(\cdot,s)$ in the unitary induced representation
$\Ind^{G^1_D}_{P^1_D} |\Nm_{D/F}|^s$, consider
the associated Eisenstein series for $s$, whose real part is sufficiently large
$$E(g,f,s)=\sum_{\gamma \in P^1_D(F)\backslash G^1_D(F)} f(\gamma g,s).$$
The Eisenstein series admits a meromorphic continuation for the entire compex plane.

\begin{Thm}
For any standard section $f(g,s)$,  the Eisenstein series $E(g,f,s)$
has at most a simple pole
at $s=3/2$. The pole is attained by the spherical  section and
the residue is the constant function.
\end{Thm}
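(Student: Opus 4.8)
The plan is to analyze the constant term of the Eisenstein series $E(g,f,s)$ along the parabolic $P^1_D$ and show that its poles in the region $\Re(s)\ge 0$ are controlled by the poles of the relevant intertwining operator, which in turn are governed by a ratio of completed zeta functions. First I would compute the constant term $E_{P^1_D}(g,f,s)$ by the standard Bruhat decomposition of $P^1_D(F)\backslash G^1_D(F)/P^1_D(F)$; since $P^1_D$ is the unique (up to conjugacy) maximal parabolic of the rank-one-over-$D$ group $G^1_D$, there are only two cells, giving $E_{P^1_D}(g,f,s)=f(g,s)+M(s)f(g,s)$ where $M(s)$ is the standard intertwining operator from $\Ind^{G^1_D}_{P^1_D}|\Nm_{D/F}|^s$ to $\Ind^{G^1_D}_{P^1_D}|\Nm_{D/F}|^{-s}$ (the Weyl element acting by $-s$ because the Levi is $GL_1(D)\times\{\pm\}$-type and the modulus character pairing is one-dimensional). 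The key point is that a pole of $E(g,f,s)$ in $\Re(s)>0$ can only come from a pole of $M(s)$, since $f(\cdot,s)$ itself is entire in $s$ for a standard section; and by the theory of Eisenstein series (Langlands, Moeglin--Waldspurger) the residual representation at such a pole is square-integrable, which will later feed into the identification of the residue.

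Next I would identify $M(s)$ via the Gindikin--Karpelevich formula: its normalizing factor is a ratio of Dedekind zeta functions of $F$ evaluated at arguments that are affine-linear in $s$. For the Siegel-type parabolic of $G^1_D\cong Sp_4$-inner-form with Levi $GL_1(D)$, the relevant $L$-factor in the numerator of the (un-normalized) operator is $\zeta_F(2s-1)$ (or $\zeta_F(s-1/2)$ after a suitable normalization of the induction parameter matching the convention $|\Nm_{D/F}|^s$), so that the operator $M(s)$ has at most a simple pole at the point where this zeta factor has its pole, namely $s=3/2$ in the paper's normalization, while the denominator $\zeta_F(2s)$-type factor is nonzero there. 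One must also check that the local intertwining operators at ramified and archimedean places are holomorphic and nonzero at $s=3/2$ for suitable section choices, and that the bad local factors do not cancel or add poles — this is routine given that $|\Nm_{D/F}|^{3/2}$ is in the appropriate positive chamber. This analysis shows $E(g,f,s)$ has at most a simple pole at $s=3/2$.

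To see that the pole is actually attained and the residue is constant, I would take $f(\cdot,s)=f^0(\cdot,s)$ the spherical (normalized so $f^0(1,s)=1$) section. Then $M(s)f^0(\cdot,s)=c(s)f^0_{-}(\cdot,s)$ where $c(s)$ is precisely the global Gindikin--Karpelevich ratio $\dfrac{\xi_F(2s-1)}{\xi_F(2s)}$ (completed zeta functions), which has a genuine simple pole at $s=3/2$ with nonzero residue since $\xi_F(2)\ne 0,\infty$ and $\xi_F$ has its pole at argument $1$. Hence $\Res_{s=3/2}E(g,f^0,s)=\Res_{s=3/2}c(s)\cdot f^0_{-}(g,-3/2)$, and $f^0_-(\cdot,-3/2)$ is the section in $\Ind^{G^1_D}_{P^1_D}|\Nm_{D/F}|^{-3/2}$ whose value at the identity is $1$; but $-3/2$ is the exponent at which the induced module has the trivial representation as its unique irreducible quotient (this is the "most negative" point, analogous to the Langlands quotient being trivial), so the spherical vector there spans the trivial subrepresentation, i.e.\ $f^0_-(g,-3/2)\equiv 1$. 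Therefore the residue is a nonzero constant multiple of the constant function, which after rescaling we may take to be the constant function itself.

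The main obstacle I anticipate is the precise bookkeeping of normalizations: matching the exponent $s$ in $\Ind^{G^1_D}_{P^1_D}|\Nm_{D/F}|^s$ (where $\Nm_{D/F}$ is the reduced norm, a degree-$4$ map) to the half-sum of roots $\rho_{P^1_D}$ so that the pole lands exactly at $s=3/2$ and not at $s=1$ or $s=2$, and confirming that the relevant zeta factor is $\zeta_F(2s-1)$ rather than, say, $\zeta_F(s-1)$. This requires carefully writing down the root data of the rank-one-over-$D$ form of $Sp_4$ — equivalently using the accidental isomorphism $G^1_D/\text{center}\cong SO(V,q)$ with $\dim V=5$, where $P^1_D$ corresponds to the stabilizer of an isotropic line and the modulus character is $|\det|^{3}$ on the $GL_1$ part of the Siegel Levi of $SO_5$, pinning the pole at $s=3/2$. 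The subsidiary obstacle is ruling out extra poles from the finite set of bad places; this is handled by choosing, at each bad place, a section on which the local normalized intertwining operator is holomorphic and nonvanishing at $s=3/2$, which is possible by the standard theory since the point is in the positive Weyl chamber and the inducing data $|\Nm_{D/F}|^{3/2}$ is far from the unitary axis.
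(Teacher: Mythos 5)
The paper does not actually prove this theorem; it simply cites Soudry [S] for the split case and Yasuda [Ya1] for the non-split case. Your overall strategy --- constant term along $P^1_D$, intertwining operator $M(s)$, Gindikin--Karpelevich, residue equals the spherical vector at the dual parameter --- is the right framework and is presumably what those references carry out. But there is a genuine gap in the execution: the claim that $P^1_D(F)\backslash G^1_D(F)/P^1_D(F)$ has only two cells holds only when $D$ is a division algebra, so that $G^1_D$ has $F$-rank one. When $D$ splits, $G^1_D\cong Sp_4$ has $F$-rank two, $P^1_D$ is the Siegel parabolic, and the double coset space has three Bruhat cells; the ``middle-cell'' contribution to the constant term must be written down and controlled near $s=3/2$, and your argument omits it entirely. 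Relatedly, the identification ``$P^1_D$ corresponds to the stabilizer of an isotropic line in $SO_5$'' is correct only in the non-split (rank-one) case; when $D$ splits the Siegel parabolic of $Sp_4$ corresponds to the stabilizer of an isotropic \emph{plane} in $SO_5$. This is precisely why the theorem has two separate citations in the paper --- the two cases have structurally different constant terms.

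Beyond the Bruhat-cell issue there are two further problems. For the Siegel parabolic of $Sp_4$ the Gindikin--Karpelevich $c$-function is a product over the \emph{three} $T$-roots in the unipotent radical $(2e_1,\,e_1+e_2,\,2e_2)$, not a single ratio; a careful bookkeeping in the paper's normalization gives a ratio of the shape $\zeta_F(2s)\zeta_F(s-1/2)\big/\big(\zeta_F(2s+1)\zeta_F(s+3/2)\big)$, whose simple pole at $s=3/2$ comes from the $\zeta_F(s-1/2)$ factor, whereas your committed formula $\zeta_F(2s-1)/\zeta_F(2s)$ is finite at $s=3/2$ and hence cannot produce the pole at all. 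And when $D$ is division, the local factors of $M(s)$ at the places in $S_D$ are intertwining operators on a rank-one unitary group over a quaternion division algebra; these are not computed by Gindikin--Karpelevich with local zeta factors, so $c(s)$ is \emph{not} ``precisely'' a ratio of completed Dedekind zetas --- one must separately establish holomorphy and non-vanishing of those finitely many ramified local factors near $s=3/2$. A small final slip: at $s=-3/2$ the trivial representation is the unique irreducible \emph{subrepresentation} (not quotient) of $\Ind^{G^1_D}_{P^1_D}|\Nm_{D/F}|^{-3/2}$, though your conclusion $f^0_-(\cdot,-3/2)\equiv 1$ is correct.
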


\begin{proof} For $D$ split this is proved in Theorem $3.1$ in [S],
and for non-split $D$ this is proved in [Ya1].
\end{proof}

Let us define the normalized Eisenstein series by
$$E^\ast(g,f,s)=\zeta(2s-1)\zeta(s+1/2)E(g,f,s)$$

\subsection{Rankin-Selberg integral}

Let $\Pi$ be an irreducible cuspidal representation of $G_D(\A)$ such that
$E\in \hat F(\Pi)$. For any $\varphi\in \Pi$, $\phi\in S(V_D\otimes X_D)$
and a section $f(\cdot,s)$ consider the integral
\begin{equation}\label{RS:integral}
\mathcal Z(\varphi,\phi,f,s)=
\integral{G^1_D} \ol{\varphi(g)}\theta^D_E(\phi)(g)E^\ast(g,f,s-1/2) \,dg.
\end{equation}

The function $\varphi$ is rapidly decreasing on
$G^1_D(F)\backslash G^1_D(\A)$.
In particular the integral converges absolutely and hence defines a meromorphic function
on $\C$.

\begin{Thm}
Let $\Omega$ be a finite set of places which includes all the archimedean places
and the set  $S_D$ such that
outside of $\Omega$ the representation $\Pi_v$ and the field extension
$E_v$ are unramified.
Let $\varphi=\otimes_v\varphi_v$ and $f=\otimes f_v$ be factorizable  data,
that is spherical outside of the set $\Omega$.
Then, for $Re(s)$ sufficiently large
\begin{equation}\label{RS:Lfunction}
\mathcal Z(\varphi,\phi,f,s)=
L^\Omega(\Pi,\chi_{E/F},\rho,s) d_\Omega(\varphi,\phi,f, s),
\end{equation}
where
$$d_\Omega(\varphi,\phi,f)=
\int\limits_{U_D(\A_\Omega)\backslash G^1_D(\A_\Omega)}
\ol{F_{s_D}(g\varphi) }\omega_{s_D,\psi}(g)(\phi)(r_0) f^*(g,s) dg$$

Moreover, for every $s_0\in \C$
there exist $\varphi,f,\phi$ such that
$d_\Omega(\varphi,\phi,f,s)$ defines a  holomorphic
non-zero function in the neighborhood of $s_0$.
\end{Thm}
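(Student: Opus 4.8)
The plan is to follow the classical unfolding paradigm of Piatetski-Shapiro and Rallis, adapted to the quaternionic setting. First I would substitute the definitions of the theta lift $\theta^D_E(\phi)(g)=\int_{H^1_D(F)\backslash H^1_D(\A)}\theta(\phi)(h_1,g)\,dh_1$ and of the Eisenstein series $E^\ast(g,f,s-1/2)$ into the integral $\mathcal Z(\varphi,\phi,f,s)$, and unfold the Eisenstein series against the quotient $G^1_D(F)\backslash G^1_D(\A)$. This collapses the sum over $P^1_D(F)\backslash G^1_D(F)$ and replaces the domain of integration by $U_D(F)\backslash G^1_D(\A)$, with the section $f^\ast(g,s)$ appearing as an inner datum; the $U_D$-integration then produces precisely the Fourier coefficient $F_{s_D}(g\varphi)$ attached to the skew-Hermitian form $s_D$ (whose discriminant is $E$), which is where the hypothesis $E\in\hat F(\Pi)$ is used to guarantee non-vanishing of the resulting functional. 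After this step one is left with an integral over $U_D(\A)\backslash G^1_D(\A)$ of the product $\ol{F_{s_D}(g\varphi)}\,\omega_{s_D,\psi}(g)(\phi)(r_0)\,f^\ast(g,s)$, with $r_0$ a suitable base point in $V_D\otimes X_D$ fixed by the theta unfolding.

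Next I would factorize this integral over the places of $F$. At a place $v\notin\Omega$ all data are spherical, the representation $\Pi_v$ is unramified with Satake parameter $t_{\Pi_v}$, and one performs the standard local unramified computation: the local zeta integral equals the local $L$-factor $\det(1-\chi_{E_v/F_v}(\varpi_v)\rho(t_{\Pi_v})q_v^{-s})^{-1}$ attached to the $5$-dimensional representation $\rho$ of $Sp_4(\C)$. This is exactly the Piatetski-Shapiro–Rallis local computation when $D_v$ is split, and its evident analogue (via the accidental isomorphism and the explicit Weil-representation formulas of Section 6) when $D_v$ is a division algebra; I would cite [PS-R] for the split case and carry out the minor modification otherwise. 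Taking the Euler product over $v\notin\Omega$ yields the factor $L^\Omega(\Pi,\chi_{E/F},\rho,s)$, while the finite set of remaining local integrals over $\A_\Omega$ is, by definition, the factor $d_\Omega(\varphi,\phi,f,s)$. Absolute convergence for $\mathrm{Re}(s)$ large is inherited from rapid decay of $\varphi$ and the convergence of the Eisenstein series in that range, justifying the manipulations.

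Finally, for the non-vanishing and holomorphy of $d_\Omega$ near an arbitrary point $s_0$: since $E\in\hat F(\Pi)$, there is a $\varphi$ and a place $v_0\in\Omega$ for which the local functional $F_{s_{D_{v_0}}}(\varphi_{v_0})\neq 0$; the local zeta integral at each $v\in\Omega$ is a standard integral of Schwartz data against a smooth section, so by choosing $\phi_v$ and $f_v$ supported near points where the integrand is non-degenerate one can make each local factor holomorphic and non-zero in a neighborhood of $s_0$ — this is the usual ``Gelfand–Kazhdan/partition of unity'' argument for local Rankin–Selberg integrals. I expect the main obstacle to be the unramified local computation in the non-split case: one must verify that replacing the split quaternion algebra by a division algebra does not alter the unramified local integral (the groups $G_D$ and $G_F$ become isomorphic at unramified places, so in fact $v\notin\Omega$ forces $D_v$ split, which resolves the issue — but checking that $S_D\subset\Omega$ is genuinely harmless requires care), together with the bookkeeping in the unfolding to confirm that the orbit of $G^1_D(F)$ acting on the relevant flag variety is single and open, so that no boundary terms survive.
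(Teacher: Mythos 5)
Your proposal follows the same three-step skeleton as the paper (unfold the Eisenstein series against the theta kernel, compute the unramified local factors, show the ramified factor is nonvanishing near any $s_0$), and your observations about the orbit structure, the role of cuspidality in killing the closed orbit, the reduction $G^1_{D_v}\simeq Sp_4$ for $v\notin\Omega$ via $S_D\subset\Omega$, and the use of [PS-R] for $n=2$ at the unramified places all match the paper. However there is one genuine gap and one smaller error.

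The gap is the step where you write ``Next I would factorize this integral over the places of $F$'' and then treat the Euler factorization as routine. After unfolding, the integrand over $U_D(\A)\backslash G^1_D(\A)$ involves $\ol{F_{s_D}(g\varphi)}$, where $F_{s_D}$ is the global degenerate Fourier coefficient along $U_D$ with character $\Psi_{s_D}$. This functional is \emph{not} factorizable in general: by the structure of the twisted Jacquet modules (cf. the local wave-front proposition in the paper, where $\theta^D_E(\tau)_{U_D,\Psi_T}\simeq\tau^\vee\otimes\chi_{D,-1}$ can be two-dimensional), the local space $\Hom_{U_D(F_v)}(\Pi_v,\C_{\Psi_{s_D}})$ need not be one-dimensional, so there is no \emph{a priori} reason the global period splits into a product of local integrals. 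The paper's argument precisely hinges on a supplementary proposition stating that, at an unramified place $v\notin\Omega$, the local integral $\int_{U_D(F_v)\backslash G^1_D(F_v)}\ol{L(gv^0_v)}\,\omega_{\psi,s_D}(g)\phi(r_0)f^*_v(g,s-1/2)\,dg$ equals $L(\Pi_v,\chi_{E_v/F_v},\rho,s)\cdot L(v^0_v)$ for \emph{every} functional $L$ in that Hom space, not just a distinguished one. It is exactly this uniform proportionality that makes the unfolded integral behave as if the functional were factorizable and allows one to pull out the partial $L$-function. Without identifying and addressing this obstruction, the claimed factorization $\mathcal Z=L^\Omega\cdot d_\Omega$ is not justified.

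A smaller point: in your first paragraph you write $\theta^D_E(\phi)(g)=\int_{H^1_D(F)\backslash H^1_D(\A)}\theta(\phi)(h_1,g)\,dh_1$. In the zeta integral of the paper, $\theta^D_E(\phi)(g)$ denotes the theta kernel itself, $\sum_{x\in(V_D\otimes X_D)(F)}\omega_{s_D,\psi}(g)\phi(x)$, not a lift integrated over $H^1_D(F)\backslash H^1_D(\A)$. The $H^1_D$-integration only appears later, when the zeta integral is used to detect nonvanishing of the theta lift $\theta^D_E(\Pi)$ to $H_D$; it is not part of the object being unfolded, and inserting it here would disrupt the orbit analysis in the unfolding.
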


The proof of this theorem will occupy the rest of this section.
Let us list some immediate corollaries:
\vskip 5pt
The zeta integral $\mathcal Z(\varphi,\phi,f,s)$ has meromorphic
continuation to the whole complex plane
and hence the identity \ref{RS:Lfunction}
can be used to define  the meromorphic continuation of
the partial L-function $L^\Omega(\Pi,\chi_{E/F},\rho,s)$.

For an irreducible cuspidal representation $\Pi$ of $G_D(\A)$ define
the representation $\theta^D_E(\Pi)$ of $H_D(\A)$ spanned
by the functions
$$\theta^D_E(\phi,\varphi)(h)=\integral{G^1_D}
\theta^D_E(\phi)(h,g_1g)
\ol{\varphi(g_1g)} dg_1, \quad \lambda(g)=\lambda(h),
\varphi\in \Pi, \phi\in \omega_{\psi,s_D}.$$

\begin{Cor} \label{Lfunction:hence:thetanonzero}
Let $\Pi$ be an irreducible
cuspidal representation of  $G_D(\A)$ such that $E\in \hat F(\Pi)$
and the finite set $\Omega$ is as above.
If $L^\Omega(\pi,\chi_{E/F},\rho,s)$ has a pole at $s=2$ then
$\theta^D_E(\Pi)\neq 0$.
\end{Cor}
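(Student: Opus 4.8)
The argument follows the Piatetski-Shapiro--Rallis philosophy: a pole of the degree-$5$ $L$-function produces a pole of the Rankin--Selberg integral (\ref{RS:integral}), and the residue of that integral turns out to be a period of the theta lift $\theta^D_E(\Pi)$. First I would produce the pole of the zeta integral. Applying the last assertion of the theorem containing (\ref{RS:Lfunction}) with $s_0 = 2$, choose factorizable data $\varphi = \otimes_v \varphi_v$, $\phi = \otimes_v \phi_v$, $f = \otimes_v f_v$, spherical outside $\Omega$, for which $d_\Omega(\varphi,\phi,f,s)$ is holomorphic and non-vanishing near $s = 2$. Since by hypothesis $L^\Omega(\Pi,\chi_{E/F},\rho,s)$ has a simple pole at $s = 2$, the identity (\ref{RS:Lfunction}) shows that for this data
$$\Res_{s=2}\mathcal Z(\varphi,\phi,f,s)=\Bigl(\Res_{s=2}L^\Omega(\Pi,\chi_{E/F},\rho,s)\Bigr)\,d_\Omega(\varphi,\phi,f,2)\neq 0.$$

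Next I would compute the same residue directly from (\ref{RS:integral}). The only $s$-dependence on the right-hand side is carried by $E^\ast(g,f,s-1/2)$; since $\varphi$ is cuspidal it is rapidly decreasing on $G^1_D(F)\backslash G^1_D(\A)$ while $\theta^D_E(\phi)$ is of moderate growth, so the residue may be taken inside the integral. Writing $E^\ast(g,f,s')=\zeta(2s'-1)\zeta(s'+1/2)E(g,f,s')$ and using the theorem above that $E(g,f,s')$ has at most a simple pole at $s'=3/2$ with residue a constant function of $g$, while $\zeta(2s'-1)\zeta(s'+1/2)$ is finite and non-zero at $s'=3/2$, we get $\Res_{s=2}E^\ast(g,f,s-1/2)=C$ for a constant $C$ not depending on $g$, hence
$$\Res_{s=2}\mathcal Z(\varphi,\phi,f,s)=C\integral{G^1_D}\ol{\varphi(g)}\,\theta^D_E(\phi)(g)\,dg.$$
Unwinding $\theta^D_E(\phi)(g)=\integral{H^1_D}\theta(\phi)(h_1,g)\,dh_1$ (the theta lift of the trivial representation of $H^1_D$) and interchanging the two integrations — legitimate because $E$ is a field, so $H^1_D(F)\backslash H^1_D(\A)$ is compact, and $\ol{\varphi}$ is rapidly decreasing — the inner integral $\integral{G^1_D}\ol{\varphi(g)}\theta(\phi)(h_1,g)\,dg$ is, by the very definition of the space $\theta^D_E(\Pi)$, a function $\theta^D_E(\phi,\varphi)\in\theta^D_E(\Pi)$, so
$$\Res_{s=2}\mathcal Z(\varphi,\phi,f,s)=C\integral{H^1_D}\theta^D_E(\phi,\varphi)(h_1)\,dh_1.$$

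Finally, if $\theta^D_E(\Pi)$ were zero, every function $\theta^D_E(\phi,\varphi)$ would vanish identically, so the last displayed integral would be $0$ — contradicting the non-vanishing obtained in the first step. Hence $\theta^D_E(\Pi)\neq 0$. I expect the delicate points to be purely notational and analytic: matching the one-variable symbol $\theta^D_E(\phi)(g)$ of (\ref{RS:integral}) with the two-variable theta kernel used to define $\theta^D_E(\Pi)$, with the correct conjugation and Haar-measure normalisations, and verifying that the residue of the normalised Eisenstein series $E^\ast(g,f,s-1/2)$ at $s=2$ really is a constant function of $g$. Once these are pinned down, the remaining steps are routine Fubini and meromorphic-continuation bookkeeping.
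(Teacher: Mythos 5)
Your proposal follows exactly the same Rankin--Selberg residue strategy as the paper, and the two-step logic (non-vanishing of $d_\Omega$ plus the pole of $L^\Omega$ forces a pole of $\mathcal Z$; computing the residue directly exhibits a value of the theta lift) is correct. However, there is a genuine error in your identification of the symbol $\theta^D_E(\phi)(g)$ appearing in the integrand of (\ref{RS:integral}). You claim $\theta^D_E(\phi)(g) = \int_{H^1_D(F)\backslash H^1_D(\A)}\theta(\phi)(h_1,g)\,dh_1$, i.e.\ the theta \emph{integral} over $H^1_D$ (theta lift of the constant function). This is not what that symbol means here. From the unfolding proposition one sees that $\theta^D_E(\phi)(g) = \sum_{x\in (V_D\otimes X_D)(F)}\omega_{s_D,\psi}(g)\phi(x) = \theta(\phi)(1,g)$, the raw theta kernel evaluated at $(1,g)\in R_D(\A)$, with no $H^1_D$-integration. (If the theta integral were present, the unfolded integral would carry an extra $H^1_D$-period, which it does not.) Consequently your residue formula
$$\Res_{s=2}\mathcal Z(\varphi,\phi,f,s)=C\int_{H^1_D(F)\backslash H^1_D(\A)}\theta^D_E(\phi,\varphi)(h_1)\,dh_1$$
is incorrect; the actual residue is the simpler expression $\bigl(\Res_{s'=3/2}E^\ast(\cdot,f,s')\bigr)\cdot\theta^D_E(\phi,\varphi)(1)$, exactly as stated in the paper's proof.

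This misreading is benign for the final conclusion: whether the residue is $C\,\theta^D_E(\phi,\varphi)(1)$ or $C\int_{H^1_D}\theta^D_E(\phi,\varphi)$, vanishing of $\theta^D_E(\Pi)$ would force the residue to vanish, contradicting the pole of $L^\Omega$. So your proof-by-contradiction still lands. But the intermediate computation is wrong, and the fix (just drop the fictitious $H^1_D$-integral, reading $\theta^D_E(\phi)(g)$ as the theta kernel) makes the argument both correct and simpler: there is then no interchange of integration to justify, no appeal to compactness of $H^1_D(F)\backslash H^1_D(\A)$, and the residue is literally the value at $h=1$ of a vector in $\theta^D_E(\Pi)$.
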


\begin {proof}
Let $\varphi,\phi,f$ be functions such
that $d_\Omega(\varphi,\phi,f,s)$ is holomorphic,
nonzero around $s=2$. Hence
$\mathcal Z(\varphi,\phi,f,s)$ has a pole at $s=2$ and
the leading term of Laurent expansion of $\mathcal Z(\varphi,\phi,f,s)$ at $s=2$ is
is $Res_{s=2} E^\ast(g,f,s)\theta^D_E(\phi,\varphi)(1)$.
\end{proof}

If $D$ splits, the theorem was proven by [PS-R].
The proof in the case where $D$ does not split is almost identical and is sketched in the following three
subsections.

\subsection{Unfolding}
\begin{Prop}
For $Re(s)$ sufficiently large it holds
\begin{equation}\label{unfolded:integral}
\mathcal Z(\varphi,\phi,f,s)=\int\limits_{U_D(\A)\backslash G^1_D(\A)}
\ol{F_{s_D}(g\varphi)} f(g,s-1/2) \omega_{\psi,s_D}(g)\phi(r_0) \, dg,
\end{equation}
where $r_0$ is a fixed non-zero vector of $V_D\otimes X_D$.
\end{Prop}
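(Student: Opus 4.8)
The plan is the standard Rankin--Selberg unfolding, run exactly as in [PS-R], the only change being that $D$ is now a division algebra. For $\mathrm{Re}(s)$ sufficiently large every ingredient is controlled: the cusp form $\varphi$ is rapidly decreasing on $G^1_D(F)\backslash G^1_D(\A)$, the theta kernel $\theta^D_E(\phi)(g)=\sum_{x\in (V_D\otimes X_D)(F)}\bigl(\omega_{s_D,\psi}(g)\phi\bigr)(x)$ is of moderate growth, and $E(g,f,s-1/2)$ is given by its defining series over $P^1_D(F)\backslash G^1_D(F)$. So one inserts this series (the normalising $\zeta$-factors of $E^\ast$ being carried along in the section $f$), interchanges sum and integral, and collapses the sum against $G^1_D(F)\backslash G^1_D(\A)$ to obtain
\[
\mathcal Z(\varphi,\phi,f,s)=\int_{P^1_D(F)\backslash G^1_D(\A)}\ol{\varphi(g)}\,\theta^D_E(\phi)(g)\,f(g,s-1/2)\,dg,
\]
still absolutely convergent.

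Next I would unfold the theta kernel. Writing $P^1_D=M^1_D U_D$ with $M^1_D\simeq GL(X_D)$, I decompose the sum over $(V_D\otimes X_D)(F)$ into $P^1_D(F)$-orbits. The unipotent radical $U_D$ acts geometrically trivially on $V_D\otimes X_D$ (since $u(S)$ carries $X_D$ into $Y_D$), contributing only the Weil character $u(S)\mapsto\psi(\langle x,u(S)x\rangle)$; and since $D$ is a division algebra, $M^1_D(F)\simeq D^\times$ acts simply transitively on the nonzero vectors of $(V_D\otimes X_D)(F)\simeq D$. Hence there are exactly two orbits: $\{0\}$, and the orbit of a fixed nonzero vector $r_0$, whose stabiliser in $P^1_D(F)$ is precisely $U_D(F)$ (the skew-Hermitian form $T_0$ on $Y_D$ attached to $r_0$ then automatically has $\disc(T_0)=E=\disc(s_D)$).

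For the zero orbit, after integrating over $U_D(F)\backslash U_D(\A)$ the integrand becomes $\ol{\varphi_{U_D}(g)}\,\bigl(\omega_{s_D,\psi}(g)\phi\bigr)(0)\,f(g,s-1/2)$, where $\varphi_{U_D}$ is the constant term of $\varphi$ along the Siegel radical $U_D$; this vanishes since $\varphi$ is cuspidal. For the open orbit, collapsing $\sum_{\gamma\in U_D(F)\backslash P^1_D(F)}$ against $\int_{P^1_D(F)\backslash G^1_D(\A)}$ gives $\int_{U_D(F)\backslash G^1_D(\A)}\ol{\varphi(g)}\bigl(\omega_{s_D,\psi}(g)\phi\bigr)(r_0)\,f(g,s-1/2)\,dg$; factoring $U_D(F)\backslash G^1_D(\A)$ through $U_D(\A)$ and using $\bigl(\omega_{s_D,\psi}(u(S))(\omega_{s_D,\psi}(g)\phi)\bigr)(r_0)=\Psi_{T_0}(u(S))\bigl(\omega_{s_D,\psi}(g)\phi\bigr)(r_0)$ together with the left $U_D(\A)$-invariance of $f$, the inner integral $\int_{U_D(F)\backslash U_D(\A)}\ol{\varphi(ug)}\,\Psi_{T_0}(u)\,du$ equals $\ol{F_{s_D}(g\varphi)}$ (the Fourier coefficient introduced above, generically nonzero by the hypothesis $E\in\hat F(\Pi)$). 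This is exactly (\ref{unfolded:integral}).

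The orbit count, the triviality of the $U_D$-stabiliser and the identification of the Weil character with $\Psi_{T_0}$ are routine; the step requiring real care is the analytic justification of the unfolding, namely that the unfolded integral over $U_D(\A)\backslash G^1_D(\A)$ converges absolutely for $\mathrm{Re}(s)\gg 0$, so that the interchange of summation and integration and the orbit-by-orbit evaluation are legitimate. I would establish this by an Iwasawa-coordinate (gauge) estimate on $U_D(\A)\backslash G^1_D(\A)$, bounding the section $f$ in the $s$-aspect and using the rapid decay of $F_{s_D}(g\varphi)$ along the Siegel torus, just as in [PS-R].
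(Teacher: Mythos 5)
Your proof matches the paper's argument: substitute the Eisenstein series for $\mathrm{Re}(s)\gg 0$, observe that since $D$ is division the action of $M^1_D(F)\simeq D^\times$ on $(V_D\otimes X_D)(F)\simeq D$ has exactly two orbits (zero and open), discard the zero orbit by cuspidality, identify the stabiliser of $r_0$ as $U_D(F)$, and fold the resulting $U_D$-integral into the Fourier coefficient $F_{s_D}$. The paper likewise defers the absolute-convergence justification of the interchange to the subsequent subsection, which you correctly flag as the one step requiring real care.
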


\begin{proof}
Substituting the definition of the
Eisenstein series for $Re(s)$ sufficiently large we obtain
$$\mathcal Z(\varphi,f,s)=
\int\limits_{P^1_D(F)\backslash G^1_D(\A)}
\ol{\varphi(g)}
\sum_{x\in (V_D\otimes X_D)(F)}
\omega_{s_D,\psi}(g)\phi(x) f(g,s-1/2) dg=$$
$$
\int\limits_{U_D(F)\backslash G^1_D(\A)}
\ol{\varphi(g)}\omega_{s_D,\psi}(g)\phi(r_0) f(g,s-1/2) dg=$$
$$
\int\limits_{U_D(\A)\backslash G^1_D(\A)}
\ol{F_{s_D}(g\varphi)}
\omega_{s_D,\psi}(g)\phi(r_0) f(g,s-1/2) \, dg.$$

Since  $D$ does not split there are only two orbits of the action of $M^1_D(F)$ on
$V_D\otimes X_D(F)$: the zero orbit and the open orbit. The element $r_0$
is the representative of an open orbit. The contribution from the zero orbit vanishes because
of cuspidality of $\Pi$.
\end{proof}

\begin{Remark}
Note that collapsing the sum and the integration above
will be justified if we show that
the integral on the right hand side absolutely converges. We shall show it
in the next subsection.
\end{Remark}

\subsection{Unramified computation}
For a general $\Pi$,
the space $\Hom_{U_D(F_v)}(\Pi_v,\C_{\Psi_{\ol{s_D}}})$ is not
one-dimensional. Hence,
the functional $F_{{s_D}}$ is not necessarily factorizable.
However, the integral (\ref{unfolded:integral})
is factorizable due to the following striking proposition.

\begin{Prop}
Let $v\notin \Omega$ and let $v^0_v$
be a $K_v$-fixed vector of $\pi_v$.
Let $f^\ast(g,s)$ be a spherical section normalized by
$f^\ast(e,s)=\zeta(2s-1)\zeta(s+1/2)$.
For any  functional $L\in \Hom_{U_D(F_v)}(\Pi_v,\C_{\Psi_{s_D}})$ one has
$$\int\limits_{U_D(F_v)\backslash G^1_D(F_v)}
\ol{L(g\, v_v^0)}\, \omega_{\psi,s_D}(g)(\phi)(r_0\otimes e) f^*_v(g,s-1/2) dg =
L(\Pi_v,\chi_{E_v/F_v},\rho,s) L(v^0_v).$$
\end{Prop}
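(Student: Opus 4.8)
The plan is to evaluate the local integral by the unfolding-and-Iwasawa-decomposition method of Piatetski-Shapiro and Rallis, adapted to the quaternionic group $G^1_D$. First I would apply the Iwasawa decomposition $G^1_D(F_v)=P^1_D(F_v)K_v$, where $K_v$ is the standard maximal compact subgroup, together with the integration formula $dg=\delta_{P^1_D}(m)^{-1}\,dm\,dk$ on $U_D(F_v)\backslash G^1_D(F_v)$, in which $m$ ranges over the Levi $M^1_D(F_v)\cong GL(X_D)(F_v)$. Since $f^\ast(\cdot,s)$ is the normalized spherical section and both $v^0_v$ and the Schwartz function $\phi$ are $K_v$-fixed, each of the three factors in the integrand restricts to the Levi explicitly: $f^\ast(m(a)k,s-1/2)=|\Nm_{D/F}(a)|^{s-1/2}\,\delta_{P^1_D}(m(a))^{1/2}\,f^\ast(e,s-1/2)$ for $a\in GL(X_D)(F_v)$; $\omega_{\psi,s_D}(m(a)k)\phi(r_0\otimes e)=\chi_{E_v/F_v}(\Nm_{D/F}(a))\,|\Nm_{D/F}(a)|\,\phi\bigl((r_0\otimes e)a\bigr)$, by the Weil representation formulas of Section $5$; and $\overline{L(m(a)k\,v^0_v)}=\overline{L(m(a)\,v^0_v)}$ since $v^0_v$ is $K_v$-fixed. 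Thus the integral collapses to a sum over $(M^1_D\cap K_v)\backslash M^1_D(F_v)$, which by the Cartan decomposition of $GL(X_D)$ is indexed by dominant cocharacters, and the support of $\phi\bigl((r_0\otimes e)a\bigr)$ cuts this down to a positive cone.

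The next step is to control the ``unramified Bessel function'' $b(a):=L(m(a)\,v^0_v)$. The essential observation is that $b$ is right $(M^1_D\cap K_v)$-invariant and that, because $v^0_v$ spans the line of $K_v$-fixed vectors in $\Pi_v$ and is therefore a spherical Hecke eigenvector whose eigenvalues are read off the Satake parameter $t_{\Pi_v}$, a Casselman--Shalika-type recursion pins down $b(a)$ on the positive cone as a universal expression --- a Laurent polynomial in $t_{\Pi_v}$ --- with value $L(v^0_v)$ at $a=e$. Consequently the whole integral equals $L(v^0_v)$ times a power series in $q_v^{-s}$ depending only on $t_{\Pi_v}$ and $\chi_{E_v/F_v}(\varpi_v)$. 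This already accounts for the asserted factorization through $L(v^0_v)$, despite the fact that $\Hom_{U_D(F_v)}(\Pi_v,\C_{\Psi_{s_D}})$ need not be one-dimensional: only the value of $L$ on $K_v$-fixed vectors and the Hecke eigenvalues ever enter.

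Finally I would sum this series and recognize it as the degree-five standard Euler factor $L(\Pi_v,\chi_{E_v/F_v},\rho,s)=\det\bigl(1-\chi_v(\varpi_v)\rho(t_{\Pi_v})q_v^{-s}\bigr)^{-1}$; the zeta factors built into the normalization of $f^\ast$ are exactly those that, after the summation, combine to leave this clean $L$-factor rather than an $L$-factor times spurious $\zeta$'s --- the local counterpart of working with $E^\ast$ in place of $E$. For $D$ split this is precisely the computation carried out in [PS-R]; the remaining task is to check that the non-split case is parallel. There $GL(X_D)=D^\times$, with $D^\times$ modulo the units of a maximal order isomorphic to $\Z$ and generated by a uniformizer $\varpi_D$ satisfying $\Nm_{D/F}(\varpi_D)\in\varpi_v\mathcal O_v^\times$, so the torus sum has a single parameter rather than the two of the split case, but the unramified data of the $D^\times$-type representation and the shape of the section match so that the identical degree-five factor emerges.

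The hard part will be the explicit evaluation of $b(a)$ on the positive cone and the verification that the resulting combinatorial sum is exactly the degree-five Euler factor: this is where the non-uniqueness of $L$ has to be handled carefully, and where the bookkeeping with $\delta_{P^1_D}$, the reduced norm $\Nm_{D/F}$, and the twist by $\chi_{E_v/F_v}$ must be done precisely. As a by-product, the fact that the right-hand side is holomorphic and non-zero for $Re(s)$ large retroactively justifies the interchange of summation and integration used in the unfolding step (the point deferred in the remark following the unfolding proposition).
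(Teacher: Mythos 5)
The paper's own proof of this proposition is a one-line reduction: since $v\notin\Omega$ and, by the hypothesis stated at the start of the theorem, $\Omega\supset S_D$, the algebra $D_v$ is split, so $G^1_{D_v}\simeq Sp_4(F_v)$ and the claim is exactly the unramified computation in the main theorem of [PS-R] (the degree-$5$ $L$-function for $Sp_4$, the case $n=2$). Your proposal instead tries to re-derive the PS-R computation from scratch --- Iwasawa decomposition, Weil representation formulas on the Levi, a Casselman--Shalika-type analysis of the ``Bessel function'' $b(a)=L(m(a)v^0_v)$, and a final resummation. That outline does follow the PS-R strategy, and your remark that only the value of $L$ on the spherical line and the Hecke eigenvalues enter (despite the non-uniqueness of the twisted Jacquet functional) is precisely the key point that makes the computation well-defined.

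There is, however, a genuine oversight: you have not noticed that the hypothesis $v\notin\Omega\supset S_D$ forces $D_v$ to be split for every place under consideration. Your entire final paragraph --- treating the non-split case, with $GL(X_D)=D^\times$, the uniformizer $\varpi_D$ with $\Nm_{D/F}(\varpi_D)\in\varpi_v\mathcal O_v^\times$, and the single-parameter torus sum --- is therefore vacuous: it addresses a situation excluded by the hypothesis. If anything it suggests you believed this was the genuinely new content of the proposition, when in fact the proposition is, verbatim, a result already in the literature. The clean route is to note at the outset that the hypothesis on $\Omega$ collapses everything to the split form $Sp_4(F_v)$, and then cite [PS-R] rather than re-derive it. Beyond that, your sketch also defers the actual hard part (``explicit evaluation of $b(a)$\dots this is where the non-uniqueness of $L$ has to be handled carefully'') without carrying it out, so even in the split case the proposal is an outline rather than a proof; it would only become a proof by either working through the Bessel recursion or, more efficiently, invoking [PS-R] as the paper does.
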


When $v\notin S_D$ one has $G^1_D\simeq Sp(4)$.
Hence, the proposition is a special case of the main theorem in
[PS-R] for $n=2$.
As a corolllary  we obtain the decomposition
$$\mathcal Z(\varphi,\phi,f,s)=
L^\Omega(\Pi,\chi_{E/F},\rho, s) d_\Omega(\varphi,\phi, f,s),$$
where
$$d_\Omega(\varphi, \phi,f, s)=\int\limits_{U_D(\A_\Omega)\backslash G^1_D(\A_\Omega)}
\ol{F_{s_D}(g\varphi)} \omega_{\psi,s_D}(g)(\phi)(r_0) f^*(g,s-1/2) \, dg.$$

The integral on the right hand side converges for $Re(s)$ sufficiently large.
Indeed, by Iwasawa decomposition its convergence is equivalent to the convergence
of
$$J(\varphi,\phi, s)=\int\limits_{M^1_D(\A_\Omega)}
 \ol{F_{s_D}(m\varphi)}\phi(m r_0)|\Nm_{D/F}(m)|^s\chi_{E/F}(\Nm_{D/F}(m))\, dm.$$

Since $\varphi$ is of moderate growth there exists a constant $k$
such that $F_{s_D}(m \varphi)\le C(\varphi_S) \|m\|^k$. Hence
for $Re(s)$ sufficiently large the right hand side of (\ref{RS:Lfunction}) converges absolutely.
This justifies the formal operation in the process of the unfolding.

\subsection{The ramified factor}
It remains to show that for  any $s_0\in \C$ there exists
$$\varphi\in \Pi,\quad
\phi\in S(V_D\otimes X_D)(F_\Omega), \quad f\in
\Ind^{G^1_D(\A_\Omega)}_{P^1_D(\A_\Omega)} |\Nm_{D/F}|^{s-1/2}$$
such that
$$d_\Omega(\varphi,\phi,f)=
\int\limits_{U_D(\A_\Omega)\backslash G^1_D(\A_\Omega)}
\ol{F_{s_D}(g\varphi) }\omega_{s_D,\psi}(g)(\phi)(r_0\otimes e) f^*(g,s-1/2) dg$$
is holomorphic  and does not vanish in a neighborhood of $s_0$.

Define as above
$$J(\varphi,\phi, s)=\int\limits_{M^1_D(\A_\Omega)}
 \ol{F_{s_D}(m\varphi)}\phi(m r_0)|\Nm_{D/F}(m)|^s\chi_{E/F}(\Nm_{D/F}(m))\, dm.$$
Suppose $\varphi$ is such that $F_{s_D}(\varphi)\neq 0.$


It is possible to find a Schwarz function $\phi$ whose support is small enough
to ensure the holomorphicity and non-vanishing of
$J(\varphi,\phi, s)$ around $s=s_0$. Then,
$$d_\Omega(\varphi,f,\phi,s)=
\int\limits_{K_\Omega} J(k \varphi, k \phi, s) f(k) \, dk.$$

Choose now a standard section $f$ whose restriction to $K$ has a
support which is
small enough to ensure the non-vanishing of
$d_\Omega(\varphi,f,\phi,s)$ around $s=s_0$.

\section{The nearly equivalence classes}\label{NEC}
We shall use the results of the previous section
to show that the constructed set of representations contains,
together with every cuspidal representation,
its  full nearly equivalence class of cuspidal representations.

\begin{Thm}
Let $\Pi$ be a cuspidal irreducible  representation of $G_D(\A)$
that is nearly equivalent to $\Pi_\eta^\emptyset$.
Then, there exists $S$ such that $\Pi=\theta^D_E(\tau_\eta^S)$.
\end{Thm}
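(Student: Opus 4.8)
The plan is to run the theta correspondence up from $G_D$ to $H_D$ and then back down, using the degree-$5$ $L$-function of Section~\ref{NEC} to force the first lift to be nonzero and the explicit description of the discrete spectrum of $H_D$ to identify it. First I would note that the hypothesis that $\Pi$ is nearly equivalent to $\Pi_\eta^\emptyset$ gives, via Proposition~\ref{globalWF}, that $\hat F(\Pi)=\{E\}$, so the Rankin--Selberg construction applies to $\Pi$. Near-equivalence also forces $\Pi$ and $\Pi_\eta^\emptyset$ to share the partial standard $L$-function, so for a large enough finite set $\Omega$ of places
$$L^\Omega(\Pi,\chi_{E/F},\rho,s)=\zeta^\Omega(s-1)\zeta^\Omega(s)L^\Omega(\chi_{E/F},s)^2\zeta^\Omega(s+1),$$
which has a pole at $s=2$. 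By Corollary~\ref{Lfunction:hence:thetanonzero} this yields $\theta^D_E(\Pi)\neq 0$.

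Next I would identify $\sigma:=\theta^D_E(\Pi)$, a nonzero automorphic representation of $H_D(\A)$. Since $E$ is a field, the identity component $H_D^c=R_{E/F}\G_m$ is a torus, so $H_D$ has no proper parabolic $F$-subgroup and $\sigma$ lies in the discrete spectrum; by Theorem~\ref{discrete:spectrum:of:HD} it decomposes into representations of the form $\tau_{\eta'}^{S'}$. Its local components are the local theta lifts $\theta^{D_v}_{E_v}(\Pi_v)$, which are irreducible by Howe duality, so $\sigma$ is itself irreducible and, by multiplicity one on $H_D$, equals a single $\tau_{\eta'}^{S'}$ with $m(\eta',S')=1$. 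For almost every $v$ one has $\Pi_v\cong\Pi_{\eta_v}^+$, and the unramified computation of Proposition~\ref{thetaHDGD:unramified} together with local Howe duality gives $\theta^{D_v}_{E_v}(\Pi_{\eta_v}^+)=\tau_{\eta_v}^+$; hence $\eta'_v$ and $\eta_v$ are locally equivalent for almost all $v$, and Lemma~\ref{local:global:1} upgrades this to a global equivalence. Replacing $\eta$ by a Galois conjugate if necessary, we obtain $\sigma=\tau_\eta^{S_0}$ for some finite $S_0\subseteq S_\eta$ with $m(\eta,S_0)=1$.

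Finally I would descend. By the construction of the automorphic realization of the global $A$-packet, $\theta^D_E(\tau_\eta^{S_0})$ realizes the irreducible representation $\Pi_\eta^{S_0}$, which occurs with multiplicity one in the discrete spectrum of $G_D$ by Proposition~\ref{MF:theorem}. Since $\sigma=\theta^D_E(\Pi)$ is a nonzero cuspidal representation of $H_D(\A)$, the standard see-saw (Rallis inner product) argument for the pair $(G_D,H_D)$ exhibits $\Pi$ as a subrepresentation of $\theta^D_E(\sigma)=\theta^D_E(\tau_\eta^{S_0})\cong\Pi_\eta^{S_0}$; irreducibility of the target then gives $\Pi\cong\Pi_\eta^{S_0}$ as abstract representations, and the multiplicity-one statement of Proposition~\ref{MF:theorem} identifies the two as automorphic representations. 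Thus $\Pi=\theta^D_E(\tau_\eta^{S_0})$, so $S=S_0$ works and the construction is exhaustive.

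The main obstacle is this last step: one must be sure that the double theta lift returns $\Pi$ on the nose rather than merely some nearly equivalent representation. This is exactly where the cuspidality of $\sigma$ and the compactness of $Z_D(\A)H_D(F)\backslash H_D(\A)$ enter, collapsing the see-saw computation to an honest nonzero pairing. A secondary technical point is the identification in the middle step, which requires the local theta lift to carry the unramified member of the $A$-packet to the unramified member of the $L$-packet, i.e. an unramified biduality statement to be extracted from the explicit formulas of Section~\ref{local:parameters} and the local theta computations preceding them.
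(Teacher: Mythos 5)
Your proposal follows essentially the same route as the paper's proof: use Proposition~\ref{globalWF} and the Rankin--Selberg $L$-function pole at $s=2$ plus Corollary~\ref{Lfunction:hence:thetanonzero} to get $\theta^D_E(\Pi)\neq 0$, identify it in the discrete (indeed compact) spectrum of $H_D$ via Theorem~\ref{discrete:spectrum:of:HD} and the unramified matching of Lemma~\ref{local:global:1}, and then descend using the multiplicity-one statement of Proposition~\ref{MF:theorem}. The only stylistic difference is in the descent: the paper concludes $\Pi\simeq\Pi_\eta^S$ directly from local Howe biduality (no see-saw/Rallis inner product needed), and then the multiplicity-one result forces equality of the two automorphic realizations; your appeal to a Rallis inner product argument to re-extract $\Pi$ is an unnecessary extra step, though it arrives at the same place.
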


\begin{proof}
 We start with a cuspidal  representation
$\Pi$ nearly equivalent to $\Pi_\eta^\emptyset$.
There is a finite set $\Omega$ such that
$$L^\Omega(\Pi,\chi_{E/F},\rho,s)=L^\Omega(\Pi^\emptyset_\eta,\chi_{E/F},\rho,s)$$
and hence  has a pole at $s=2$.
Moreover, by Proposition \ref{globalWF} one has
$\hat F(\Pi)=\{E\}$. Hence, by Corollary \ref{Lfunction:hence:thetanonzero}
 $\theta^D_E(\Pi)$ is a non-zero
irreducible representation  of $H_D(\A)$ which is
nearly equivalent to $\tau_\eta^\emptyset$.
By Proposition \ref{discrete:spectrum:of:HD}
one has $\theta^D_E(\Pi)=\tau_\eta^S$ for some $S$ with
$m(\eta,S)=1$. Hence $\Pi\simeq \Pi_\eta^S$. From Proposition
\ref{MF:theorem} it now follows that $\Pi=\theta^D_E(\tau_\eta^S)$.
\end{proof}

\section {Global theta correspondence from $G_K$ to $H_K$}

\subsection{ The pairs $(G_K, H_K)$ and $(G^0_K, H^0_K)$}

 Denote by $\A_K$ the ring of adeles of the field $K$.
The automorphic realization of the global Weil representation
$$(\omega_{\psi,s_K}, R_K(\A_K), S(V_K\otimes X_K(\A_K))$$
is given by
$$\theta(\phi)(h,g)=\sum_{x\in {V_K\otimes X_K}(K)}\omega_{s_K,\psi_K}(h,g)\phi(x).$$

For a cuspidal representation $\pi$ of $G_K(\A)$ define its theta lift to be the space
spanned by the functions of the form
$$\theta_L(\phi\otimes \varphi)(h)=
\int\limits_{G^1_K(K)\backslash G^1_K(\A_K)}
\theta(\phi)(h,g_1g)\ol{\varphi(g_1g)}\, dg_1,$$
where $\lambda(g)=\lambda(h), \phi\in \omega_{s_K,\psi_K}, \varphi\in \pi$.

This is a well-known lift. Below we summarize its properties.

\begin{Prop}
Let $\pi$ be an irreducible cuspidal representation of $G_K(\A)$.
 The following statements are equivalent.
\begin{enumerate}
\item
$\theta_L(\pi)\neq 0.$
\item
$L(\pi, Ad\otimes \chi_{L/K},s)$ has a pole at $s=1$.
\item
$\theta_L(\pi)=\tau^\emptyset_{\eta_L}$ for some character $\eta_L$.
\item
 $\pi=\pi(\eta_L)$
is a dihedral cuspidal representation for some character
$$\eta_L: L^\times \backslash \I_L\rightarrow \C^\times,
\quad \eta_L\neq s(\eta_L),\, s \in Aut(L/K).$$

\end{enumerate}
\end{Prop}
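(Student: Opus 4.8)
The plan is to establish the equivalences by proving $(4)\Rightarrow(3)\Rightarrow(1)\Rightarrow(4)$ and, separately, $(2)\Leftrightarrow(4)$. Throughout we work over $K$ and its ring of adeles $\A_K$, and we may assume that $L$ is a field: if $L=K\times K$ then none of the four statements can hold for a cuspidal $\pi$, since a cuspidal representation of $G_K(\A_K)\simeq GL_2(\A_K)$ is never automorphically induced from a split quadratic algebra. Recall that $H_K$ is the similitude orthogonal group of the anisotropic binary quadratic space $(V_K,s_K)$ of discriminant $L$, so $GSO(V_K)$ is the $L$-torus, $H_K$ has no proper $K$-parabolic, and every automorphic representation of $H_K(\A_K)$ lies in the discrete spectrum.

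For $(4)\Rightarrow(3)$ and $(1)\Leftrightarrow(4)$: by the classical theory of dihedral automorphic forms (Hecke, Shimura, Jacquet--Langlands, Waldspurger) a cuspidal $\pi$ on $GL_2(\A_K)$ occurs in the global theta lift from the binary orthogonal similitude group $H_K$ if and only if $\pi\simeq\pi(\eta_L)$ for a Hecke character $\eta_L$ of $\A_L^\times$ with $\eta_L\ne\eta_L^\sigma$; in that case $\pi$ is itself the theta lift of the automorphic character $\eta_L$ of $GSO(V_K)(\A_K)$, i.e. of a member of the packet associated to $H_K$. Since $\dim V_K=\dim W_K$, this is an equal-rank situation in which the theta lifts in the two directions are mutually adjoint, so $\theta_L(\pi)$ is non-zero and contains the representation we started from. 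This already gives $(1)\Leftrightarrow(4)$ and $(3)\Rightarrow(1)$. To pin down the member of the packet, restrict the Weil representation and the automorphic forms to the isometry pair $(G^1_K,H^1_K)$, compatibly with Howe duality (Proposition \ref{Howe:classical}); the explicit local description of $\theta_L$ given above shows that every local component of a non-zero $\theta_L(\pi)$ is the ``$+$''-constituent $\tau^+_{\eta_L}$ of the local packet, hence $\theta_L(\pi)=\tau^\emptyset_{\eta_L}$, which is $(3)$.

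Finally, $(2)\Leftrightarrow(4)$ is a matter of $L$-function bookkeeping: from the Rankin--Selberg factorization $L(s,\pi\times\widetilde\pi\otimes\chi_{L/K})=L(\pi,Ad\otimes\chi_{L/K},s)\,L(s,\chi_{L/K})$ and the fact that $\chi_{L/K}$ is non-trivial (so $L(s,\chi_{L/K})$ is entire and non-zero at $s=1$), the adjoint-twisted $L$-function has a pole at $s=1$ if and only if $L(s,\pi\times\widetilde\pi\otimes\chi_{L/K})$ does, i.e. if and only if $\pi\otimes\chi_{L/K}\simeq\pi$; by base change for $GL_2$ (Labesse--Langlands) the latter self-twist condition is equivalent to $\pi$ being automorphically induced from $L$, that is $\pi=\pi(\eta_L)$, and cuspidality forces $\eta_L\ne\eta_L^\sigma$. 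I expect the main obstacle to be the implication $(1)\Rightarrow(4)$, precisely the identification of the image of the global theta correspondence from the binary orthogonal group $H_K$ inside the cuspidal spectrum of $GL_2(\A_K)$; the cleanest route is to quote the classical theory of dihedral forms, but one could instead run the Rallis inner product formula for this pair, which expresses $\|\theta_L(\varphi\otimes\phi)\|^2$ -- up to explicit, easily non-vanishing local integrals -- in terms of the value at $s=1$ of $L(\pi,Ad\otimes\chi_{L/K},s)$, thereby giving $(1)\Leftrightarrow(2)$ directly.
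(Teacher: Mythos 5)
The paper itself gives no proof of this Proposition, introducing it only with ``This is a well-known lift. Below we summarize its properties,'' so there is no internal argument to compare against; your sketch supplies the standard one. The logic is sound: reducing everything to the self-twist criterion $\pi\otimes\chi_{L/K}\simeq\pi$, reached via the Rankin--Selberg factorization $L(s,\pi\times\tilde\pi\otimes\chi_{L/K})=L(\pi,Ad\otimes\chi_{L/K},s)L(s,\chi_{L/K})$ and base change for $(2)\Leftrightarrow(4)$, and via the classical theory of dihedral forms (equivalently, the Rallis inner product formula in the equal-rank case $\dim V_K=\dim W_K$) for $(1)\Leftrightarrow(4)$, with $(3)$ then pinned down by the explicit local theta correspondence, which always produces the ``$+$'' constituent $\tau^+_{\eta_{L_v}}$, so the global lift is $\tau^\emptyset_{\eta_L}$.

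One small loose end concerns your opening dismissal of the split case $L=K\times K$. There you argue only that $(4)$ fails (a cuspidal $\pi$ cannot be automorphically induced from a split algebra), but then assert that ``none of the four statements can hold,'' which tacitly uses the equivalences you are about to prove and is circular as written. To close it cleanly: observe that in this case $\chi_{L/K}=1$, so $L(\pi,Ad\otimes\chi_{L/K},s)=L(\pi,Ad,s)$ is entire for cuspidal $\pi$ (giving $(2)$ fails independently), and the Rallis inner product formula you invoke in the field case applies verbatim to force $\theta_L(\pi)=0$, giving $(1)$ and $(3)$ independently. This is precisely what the paper's Remark \ref{vanishing:E=K} records, and it is worth writing out rather than leaning on the yet-unproved equivalences.
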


\begin{Remark}\label{vanishing:E=K}
\begin{enumerate}
\item
If $E=K$ then
$V_K$ is a split quadratic space, $\chi_{L/K}=1$. The L-function
$L(\pi,Ad,s)$ is entire   and hence $\theta_{L}(\pi)=0$
for any cuspidal representation $\pi$ of $G_K(\A)$.

\item
The automorphic realization of the representations $\tau_{\eta_L}^S$ of
 $H_K(\A_K)$ is given as in \ref{Eisen:HD} by
$$E_{\eta_L}:
\Ind^{H_K}_{H_K^c}\eta_L\rightarrow L^2(Z_{H_{K}}(\A)H_K(K)\backslash H_K(\A_K))$$
$$E_{\eta_L}(\phi_K)(h)=\sum_{H_K^c(K)\backslash H_K(K)} \phi_K(\gamma h)=
\sum_{s\in Aut(L/K)} \phi_K(sh).$$

\end{enumerate}
\end{Remark}

Let $\pi$ be an irreducible  cuspidal  representation of
$G_K(\A_K)$.
Consider a space of automorphic functions
on $G^0_K(\A)$ obtained by the restriction of functions in the space of $\pi$.
This space decomposes as direct sum of nearly equivalent
representations of $G^0_K(\A)$, that constitute an automorphic  packet.
Moreover, any automorphic packet on $G^0_K(\A)$ arises in this way.

\section{global see-saw identity}

\begin{Prop} Let $\tau$ and $\pi$ be two unitary cuspidal representations
of $H_D(\A)$ and $G_K(\A_K)$ respectively.
Assume that  the central character of $\tau$ is trivial and the
central character of $\pi$ has trivial restriction to $\I_F$.
Then, there is an equality of the Petersson inner products
$$(\theta^D_E(\phi\otimes f_\tau),f_\pi)_{G^0_K}=
(\theta_L(\phi\otimes f_\pi),f_\tau)_{H_D}.$$
\end{Prop}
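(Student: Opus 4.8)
The plan is to unfold both Petersson inner products into one and the same integral over a quotient of the fibre product
\[
R^0_{D,K}(\A)=\{(h,g)\in H_D(\A)\times G^0_K(\A):\lambda(h)=\lambda_K(g)\},
\]
exploiting the see-saw formed by the dual pairs $(H^1_K,G^1_K)$ and $(H^1_D,G^1_D)$ inside the common symplectic space $Sp(V_D\otimes W_D)(\A)\simeq Sp(V_K\otimes W_K)(\A)$ (here $H^1_D\subset H^1_K$ via $H_D\hookrightarrow H^0_K$ and $G^1_K\subset G^1_D$ via $G^0_K\hookrightarrow G_D$), together with the Weil--representation compatibility of Proposition \ref{see:saw:preparation}.

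First I would write out both sides from the definitions of the global theta lifts. With $\theta(\phi)(r)=\sum_{x\in(V_D\otimes X_D)(F)}\omega_{s_D,\psi}(r)\phi(x)$ the automorphic kernel on $R_D(F)\backslash R_D(\A)$, and choosing for each $g\in G^0_K(\A)\subset G_D(\A)$ an element $h_g\in H_D(\A)$ with $\lambda(h_g)=\lambda_K(g)$, the left-hand side becomes
\[
\int\limits_{Z^0_K(\A)G^0_K(F)\backslash G^0_K(\A)}\ \integral{H^1_D}\theta(\phi)(h_1h_g,g)\,\overline{f_\tau(h_1h_g)}\,\overline{f_\pi(g)}\,dh_1\,dg.
\]
For the right-hand side I would first invoke the adelic form of Proposition \ref{see:saw:preparation} --- obtained by taking restricted tensor products of the local statement, noting $\psi_K=\psi\circ\tr_{K/F}$ and that the lattices of $F$-points are identified compatibly with the theta distributions --- to identify the automorphic kernel for the pair $(G_K,H_K)$, restricted to $R^0_{D,K}(\A)$, with the same lattice sum $\theta(\phi)$. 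Then, choosing for each $h\in H_D(\A)\subset H^0_K(\A)$ an element $g_h\in G^0_K(\A)$ with $\lambda_K(g_h)=\lambda(h)$, the right-hand side becomes
\[
\int\limits_{Z_{H_D}(\A)H_D(F)\backslash H_D(\A)}\ \int\limits_{G^1_K(K)\backslash G^1_K(\A_K)}\theta(\phi)(h,g_1g_h)\,\overline{f_\pi(g_1g_h)}\,\overline{f_\tau(h)}\,dg_1\,dh.
\]
The hypotheses on the central characters (that of $\tau$ trivial, that of $\pi$ trivial on $\I_F$) are exactly what makes the two integrands descend to these quotients.

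By Proposition \ref{see:saw:preparation} the restrictions of $\omega_{s_D,\psi}$ and of $\omega^0_{s_K,\psi_K}$ to $R^0_{D,K}$ coincide under the natural identification $S(V_K\otimes X_K)\simeq S(V_D\otimes X_D)$, so both integrands above are the restriction to $R^0_{D,K}(\A)$ of a single function $F(h,g)=\theta(\phi)(h,g)\,\overline{f_\tau(h)}\,\overline{f_\pi(g)}$. It then remains to check that both iterated integrals compute $\int_{\mathcal D}F(h,g)\,d(h,g)$ for the same quotient $\mathcal D$ of $R^0_{D,K}(\A)$ by $R^0_{D,K}(F)$ and by the diagonal central torus: the first display fibres $R^0_{D,K}$ over $G^0_K$ with fibre $H^1_D$ (since $\{h:\lambda(h)=\lambda_K(g)\}=H^1_D\,h_g$), the second fibres it over $H_D$ with fibre $G^1_K$ (since $\{g\in G^0_K:\lambda_K(g)=\lambda(h)\}=G^1_K\,g_h$), so the two displays are precisely the two orders of integration over $\mathcal D$. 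Absolute convergence --- which legitimizes interchanging the theta sum with the integrations and then the integrations with one another --- follows from the rapid decrease of the cusp forms $f_\tau$ and $f_\pi$, the compactness of $H^1_D(F)\backslash H^1_D(\A)$ (recall $E$ is a field), and a Gaussian majorant for the theta kernel.

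The main obstacle is the precise bookkeeping in this last step: one must match the two fundamental domains and the two choices of Haar measure on $\mathcal D$, reconcile the similitude normalizations $|\lambda(\cdot)|^{-1}$ built into the two extended Weil representations, and --- since $\lambda(H_D)$ and $\lambda_K(G^0_K)$ need not have the same image --- verify that the supports in the similitude variable of $\theta^D_E(\phi\otimes f_\tau)|_{G^0_K(\A)}$ and of $\theta_L(\phi\otimes f_\pi)|_{H_D(\A)}$ are cut out compatibly, so that neither side is tacitly truncated; Proposition \ref{theta:isometry:restriction:L}, relating $\Theta_L$ to $\Theta^0_L$, is the tool that controls this. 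Once these compatibilities are in hand no further analytic input is needed, and the identity follows; the whole argument is the global counterpart of the local see-saw identity of Theorem \ref{sew:saw:theorem}.
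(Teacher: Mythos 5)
Your overall strategy is the paper's: both sides are unfolded to a single integral of the theta kernel against $\overline{f_\tau}\cdot\overline{f_\pi}$ over a quotient of the fibre product $R^0_{D,K}(\A)=\{(h,g):\lambda(h)=\lambda_K(g)\}$, using the compatibility of Weil representations (Proposition~\ref{see:saw:preparation}) and Fubini. That is exactly the computation the paper performs. However, the step you isolate as the ``main obstacle'' --- matching the similitude supports --- is not resolved by Proposition~\ref{theta:isometry:restriction:L}, which is a local statement relating $\Theta_L$ and $\Theta^0_L$ and has nothing to do with how the idele-class quotient on the $G$-side aligns with the one on the $H$-side. The paper's actual resolution is crisper and purely group-theoretic: one replaces the integral over $G^0_K$ by one over $G^{0,+}_K:=G^0_K\cap G^+_D$ (legitimate since $\theta^D_E(\phi\otimes f_\tau)$ vanishes off $G^+_D$), and then observes that the two similitude characters induce isomorphisms
\[
Z^0_K(\A)G^1_K(\A)G^{0,+}_K(F)\backslash G^{0,+}_K(\A)\;\simeq\;\mathcal C\;\simeq\;Z_D(\A)H^1_D(\A)H_D(F)\backslash H_D(\A),
\]
for an explicit set $\mathcal C$ of idele classes (built from $\Nm_{E/F}$ and the places in $S_D$). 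It is this common parameterization of the two ``outer'' quotients that makes the two iterated integrals over $\mathcal C\times(G^1_K(K)\backslash G^1_K(\A_K))\times(H^1_D(F)\backslash H^1_D(\A))$ literally identical; once you see that, there is no truncation mismatch to control. Your choice of sections $g\mapsto h_g$ and $h\mapsto g_h$ only makes sense after this identification, since for general $g\in G^0_K(\A)$ no $h_g$ exists; the paper's $c\mapsto g_c$, $c\mapsto h_c$ (sections of $\mathcal C$ into $G^{0,+}_K(\A)$ and $H_D(\A)$) is the precise version of what you want. Apart from that one misplaced citation and the resulting imprecision in the fibring step, the rest --- the convergence argument via rapid decay and compactness of $H^1_D(F)\backslash H^1_D(\A)$, and the role of the central-character hypotheses --- is in order and matches the paper.
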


\begin{proof}
In the course of this proof denote $G^{0,+}_K=G^0_K \cap G^+_D$. In particular

$$G^{0,+}(F_v)=
\left\{
\begin{array}{ll}
\{ g\in G_K(K_v): \det(g)\in F_v\} & v\in S_D\\
\{g \in G_K(K_v): \det(g)\in \Nm_{E_v/F_v}(E_v)\} & v\notin S_D
\end{array}
\right.
$$

Let
$$\mathcal C=(\A_F^\times)^2 \Nm(E^\times)\backslash
(\Pi_{v\in S_D}F_v) \Nm_{E/F}(\A^{S_D}_E).$$
The similitude characters of the groups $G_K^{0,+}(\A)$ and $H_D(\A)$
 induce isomorphisms
$$Z^0_{K}(\A)G^1_K(\A)G^{0,+}_K(F)\backslash G_K^{0,+}(\A)\simeq \mathcal C,
\quad
Z_{D}(\A)H^1_D(\A)H_D(F)\backslash H_D(\A)\simeq \mathcal C.$$

Fix the splitting maps
$\mathcal C\rightarrow  G^{0,+}_K(\A)$ and $\mathcal C\rightarrow  H_D(\A)$
and denote them by $c\mapsto g_c$ and $c\mapsto h_c$ respectively.



To obtain the global see-saw duality we write for $f_\tau\in \tau, f_\pi\in \pi$
$$(\theta^D_E(\phi\otimes f_\tau),f_\pi)_{G^0_K}=
\int\limits_{Z_K(\A) G^0_K(F)\backslash G^0_K(\A)}
\theta^D_E(\phi\otimes f_\tau)(g)\overline{f_\pi(g)} dg=
\int\limits_{Z_K(\A) G^{0,+}_K(F)\backslash G^{0,+}_K(\A)}
\theta^D_E(\phi\otimes f_\tau)(g)\overline{f_\pi(g)} dg=$$
$$\int\limits_{\mathcal C}
\integral{G_K^1}
\integral{H_D^1}
\theta_L(\phi)(g_1g_c,h_1h_c)
\ol{f_\tau(h_1h_c)}dh_1
\overline{f_\pi(g_1g_c)}dg_1 dc=$$
$$\int\limits_{Z_D(\A) H_D(F)\backslash H_D(\A)}
\theta(\phi\otimes f_\pi)(h)
\overline{f_\tau(h)} dh=
(\theta_L(\phi,f_\pi),f_\tau)_{H_D}.$$
\end{proof}

\section{The main global theorem}\label{global:restriction}

Let $K$ be a quadratic algebra and $D$ be a quaternion algebra
such that $S_D\subset S_K$.

Define the period integral
$$P_{D,K}: \mathcal A (Z_{H_D}(\A) G_D(F)\backslash G_D(\A))
\otimes \ol{\mathcal A_{cusp} (Z_K(\A)G_K(F)\backslash G_K(\A))}\rightarrow \C$$
by
$$P_{D,K}(f,\varphi)=
\int\limits_{Z_K(\A)G_K^0(F)\backslash G^0_K(\A)} f(g) \overline{\varphi(g)} \, dg.$$
The convergence of this period follows from the cuspidality of $\varphi$.

We investigate the non-vanishing of $P_{D,K}$
on the representation $\Pi\boxtimes \ol\pi$
whenever   $\Pi\in A^D_{E,\eta}$ and  $\pi$ is a cuspidal
representation of $G_K(\A)$ whose central character has trivial restriction
to  $\I_F$.


\begin{Thm}\label{global:restriction:thm}
Let  $\Pi\in A^D_{E,\eta}$ be
an automorphic representation and let
 $\pi$ be an  irreducible cuspidal representations of $G_K(\A).$
\begin{enumerate}
\item
If $K=E$ then $P_{D,K}$ vanishes on $\Pi\boxtimes \ol\pi$.
\item
If  $K\neq E$ then $P_{D,K}$  vanishes on $\Pi\boxtimes \ol\pi$  if and only if
 $\Hom_{G^0_K(\A)}(\Pi,\pi)=0.$
\end{enumerate}
\end{Thm}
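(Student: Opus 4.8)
The plan is to derive both statements from the global see-saw identity of the previous section, combined with the explicit form of the lift $\theta_L$ and the local restriction theorem of Section~\ref{local:restriction}. First I would reduce to the case $\Pi=\Pi^S_\eta=\theta^D_E(\tau^S_\eta)$ for a finite set $S\subseteq S_\eta$ with $m(\eta,S)=1$, which by Proposition~\ref{MF:theorem} is exactly what it means for a member of $A^D_{E,\eta}$ to occur in the discrete spectrum. Writing $\tau=\tau^S_\eta$, the global see-saw identity asserts that $P_{D,K}(\theta^D_E(\phi\otimes f_\tau),f_\pi)=(\theta_L(\phi\otimes f_\pi),f_\tau)_{H_D}$ for all data, where $\theta_L(\phi\otimes f_\pi)$ is the automorphic theta lift of $\pi$ from $G_K$ to $H_K$, restricted to $H_D(\A)$. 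Letting the data vary, this shows that $P_{D,K}$ does not vanish on $\Pi\boxtimes\ol\pi$ precisely when $\theta_L(\pi)|_{H_D(\A)}$ has nonzero $\tau^S_\eta$-isotypic projection in $L^2(Z_{H_D}(\A)H_D(F)\backslash H_D(\A))$; since $E$ is a field this quotient is compact and, by Theorem~\ref{discrete:spectrum:of:HD}, its spectrum is multiplicity free, so equivalently $\tau^S_\eta$ occurs as a constituent of $\theta_L(\pi)|_{H_D(\A)}$.

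The heart of the argument is then the analysis of $\theta_L(\pi)|_{H_D(\A)}$. By the known properties of the lift $G_K\to H_K$, it is nonzero exactly when $\pi=\pi(\eta_L)$ is dihedral for a Hecke character $\eta_L$ of $L$ with $\eta_L\neq\eta_L^\sigma$, in which case $\theta_L(\pi)=\tau^\emptyset_{\eta_L}$, realised, as in Remark~\ref{vanishing:E=K}, by an Eisenstein series of the shape (\ref{Eisen:HD}). Since $H_D$ meets both connected components of $H_K$, the summation defining this Eisenstein series, restricted to $H_D$, matches place by place the summation defining $E_\eta$; I would use this to identify $\theta_L(\pi)|_{H_D(\A)}$ with $E_\eta$ applied to $\bigotimes_v(\tau^+_{\eta_{L_v}}|_{H_{D_v}})$, that is, with $\bigoplus_{S'}\tau^{S'}_\eta$ where $S'$ runs over the sets with $m(\eta,S')=1$ whose local sign at every place $v$ is among those occurring in $\tau^+_{\eta_{L_v}}|_{H_{D_v}}$. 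By the local restriction theorem of Section~\ref{local:restriction} — together with $(\Pi^{S'}_\eta)_v=\theta^{D_v}_{E_v}((\tau^{S'}_\eta)_v)$ and local see-saw (Theorem~\ref{sew:saw:theorem}) — this occurrence condition at $v$ is precisely $\Hom_{G^0_{K_v}}((\Pi^{S'}_\eta)_v,\pi_v)\neq0$.

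Now I would assemble the pieces. If $K=E$, then $V_K$ is split, $\chi_{L/K}=1$, and by Remark~\ref{vanishing:E=K} the lift $\theta_L(\pi)$ vanishes for every cuspidal $\pi$; hence $\tau^S_\eta$ is not a constituent and $P_{D,K}$ vanishes, giving (1). Suppose $K\neq E$. If $\Hom_{G^0_K(\A)}(\Pi,\pi)=0$, some local Hom vanishes; whether or not $\theta_L(\pi)=0$, its restriction to $H_D(\A)$ is a subrepresentation of $\bigotimes_v(\tau^+_{\eta_{L_v}}|_{H_{D_v}})$ (vacuously if it is zero), so the vanishing of a single local Hom forces $\tau^S_\eta$ not to occur, and $P_{D,K}=0$. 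Conversely, if $\Hom_{G^0_K(\A)}(\Pi,\pi)\neq0$, then by the local restriction theorem $\pi_v$ is dihedral with respect to $L_v$ at every place, hence $\pi\cong\pi\otimes\chi_{L/K}$, hence (strong multiplicity one) $\pi=\pi(\eta_L)$ is globally dihedral with $\eta_L\neq\eta_L^\sigma$, so $\theta_L(\pi)=\tau^\emptyset_{\eta_L}\neq0$; since every local Hom is nonzero, the preceding paragraph shows $\tau^S_\eta$ is a constituent of $\theta_L(\pi)|_{H_D(\A)}$, and $P_{D,K}$ does not vanish. This proves (2).

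The step I expect to be the main obstacle is the identification in the second paragraph: showing that the concrete automorphic space $\theta_L(\pi)|_{H_D(\A)}$ realises the \emph{full} ``compatible'' sum $\bigoplus_{S'}\tau^{S'}_\eta$, and not merely a proper subsum of it. This demands a careful comparison of the Eisenstein realisation of $\tau^\emptyset_{\eta_L}$ on $H_K$ with its restriction to $H_D$, fed by the explicit local restriction computations. Subsidiary technical points I would have to attend to are the compatibility of the Hecke characters $\eta_L$ on $\I_L$ and $\eta$ on $\I_E$ (via Lemma~\ref{local:global:1}, after replacing $\eta_L$ by a Galois conjugate if necessary), the degenerate cases where $K$ is a split algebra, and the verification of the central-character hypotheses required by the global see-saw identity.
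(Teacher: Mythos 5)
Your proof takes essentially the same route as the paper's: both reduce the period nonvanishing via the global see-saw identity to a pairing of $\theta_L(\pi)|_{H_D(\A)}$ against $\tau^S_\eta$, then invoke the local restriction theorem together with strong multiplicity one and Lemma~\ref{local:global:1} to force $\pi$ to be a global dihedral representation with $\eta_L$ matching $\eta$, and part (1) via Remark~\ref{vanishing:E=K}. The ``main obstacle'' you flag — showing the relevant constituent genuinely appears in the $L^2$-realisation of the restricted Eisenstein series — is precisely the step the paper carries out by unfolding the inner product of $E_{\eta_L}$ against $E_\eta$ over $Z_{H_D}(\A)H_D(F)\backslash H_D(\A)$ and reducing it to the convergent integral $\int_{\mu_2(\A)}\phi_K(s)\ol{\phi(s)}\,ds$, whose nonvanishing follows from the nonvanishing of the local $\Hom$ spaces; so you have correctly located the remaining work and the paper's direct computation is its resolution.
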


\begin{proof}
(1) The first statement follows immediately from the  see-saw identity
and Remark \ref{vanishing:E=K}.

(2) Assume $K\neq E$. If $P_{D,K}\neq 0$ then
obviously $\Hom_{G^0_K(\A)}(\Pi^S_\eta\boxtimes \ol{\pi},\C)\neq 0$.
Let us prove the other direction.
If $\Hom_{G^0_K(\A)}(\Pi^S_\eta\boxtimes \ol{\pi},\C)\neq 0$
then for any finite place $v$ the representation $\pi_v$ is  dihedral
representation of $G_K(F_v)$
with respect to $L_v$ and some character.
 In particular, $\pi_v\simeq \pi_v\otimes \chi_{L_v/K_v}$ for any finite $v$.
Equivalently, $\pi$ and $\pi\otimes \chi_{L/K}$ are nearly equivalent
representations and hence by strong multiplicity one are isomorphic.
So, $\pi=\pi(\eta_L)$ is a global dihedral representation
of $G_K(\A)$.
By the main local theorem, $\eta_{L_v}$ matches $\eta_v$ for
any finite $v\in pl(F)$. Hence by Lemma \ref{local:global:1}
$\eta_L$ matches $\eta$.  Without loss of generality we may assume
that $\eta_L|_{\I_E}=\eta$.


By the see-saw identity, the non-vanishing of
$P_{D,K}$ on $\Pi\boxtimes \ol{\pi}$  is equivalent
to the  non-vanishing
of the integral
$$\int\limits_{Z_D(\A)H_D(F)\backslash H_D(\A)}
 E_{\eta_L}(\phi_K)(h) \,\ol{ E_{\eta}(\phi)(h)}\, dh$$
for some pure tensor products vectors
$\phi_K\in \tau_{\eta_L}^\emptyset\subset \Ind^{H_K(\A)}_{H_K^c(\A)}\eta_L $ and
$ \phi\in \tau_\eta^S\subset \Ind^{H_D(\A)}_{H_D^c(\A)}\eta.$

One has
$$\int\limits_{Z_D(\A)H_D(F)\backslash H_D(\A)}
E_{\eta_L}(\phi_K)(h) \ol{E_{\eta}(\phi)(h)}\, dh=$$
$$\integral{\mu_2}
\int\limits_{\I_F E^\times \backslash \I_E}
\sum_{\gamma_1 \in H_K(F)^c\backslash H_K(F)}\phi_K(\gamma_1xs)
\sum_{\gamma_2\in H_D(F)^c\backslash H_D(F)}\ol{\phi(\gamma_2xs)}\, dx\, ds$$

$$\integral{\mu_2}
\sum_{(\gamma_1, \gamma_2) }
\left(\int\limits_{ \I_F E^\times \backslash \I_E}
(\eta_L)^{\gamma_1}(x) \ol{\eta^{\gamma_2}(x)} dx
\right)
\phi_K(\gamma_1s) \ol{\phi(\gamma_2s)} \, ds.$$

Since $\gamma_1,\gamma_2$ is acting on $\I_L$ and $\I_E$
respectively by Galois action the inner integral vanishes unless
$(\eta_L)^{\gamma_1}|_{\I_E}=\eta^{\gamma_2}$
 in which case it is equal to
the measure of $\I_F E^\times \backslash \I_E$.

 %

Assume that $\eta\neq \eta^\sigma$.
Then only elements of the form
 $(\gamma,\gamma)$ contribute to the inner sum where
$\gamma\in H_D^c(F)\backslash H_D(F)$.

Thus the integral above equals
$$\integral{\mu_2} \sum_{\gamma\in \mu_2(F)}
\phi_K(\gamma s) \ol{ \phi(\gamma s)}\, ds=
\int\limits_{\mu_2(\A)}\phi_K(s) \ol{\phi(s)}\,ds.$$
Since $\Hom_{H_D(\A)}(\tau^\emptyset_{\eta_L},\tau_\eta^{S})\neq 0$
one  can always choose $\phi_K$ and $\phi$ such that the local integrals
equal $1$
and hence also the global integral  equals 1.

The case $\eta=\eta^\sigma$ is treated similarly. We omit the details.

\end{proof}
\section {Compatibility with Ichino-Ikeda refined conjecture}

\subsection{The conjecture for tempered parameters}
Gross and Prasad have conjectured that for tempered automorphic
representation $\Pi\boxtimes \ol\pi$ of $SO(V')\times SO(U')$
such that $\Hom_{SO(U')(\A)}(\Pi,\pi)\neq 0$,
the non-vanishing of the period $P_{V',U'}$ on
$\Pi\boxtimes \ol{\pi}$
is equivalent to the non-vanishing of $L(\Pi\boxtimes \ol{\pi},1/2)$.

Later Ichino and Ikeda [II] refined the conjecture expressing
$$\frac{|P_{V',U'}(f\boxtimes \ol{\varphi})|^2}{\|f\|^2\cdot\|\varphi\|^2}
\quad f\in \Pi, \varphi \in \pi$$
as a product of
\begin{equation}\label{II:conj}
\frac{L(\Pi\boxtimes \ol{\pi},s)}
{L(\Pi,Ad, s+1/2)L(\ol{\pi}, Ad, s+1/2)}|_{s=1/2}
\end{equation}
and a  finite number of certain local integrals, whose non-vanishing
is related to the non-vanishing of
 $\Hom_{SO(U')(F_v)}(\Pi_v,\pi_v)$ for $v\in \Omega$ for some finite set $\Omega$.

\subsection{The conjecture for non-tempered parameters}

There is a difficulty in extending the refined  conjecture
for the non-tempered representations since the local integrals
do not converge and hence a regularization is required.
For Saito-Kurokawa representations such a regularization
has been recently carried out in [Q].

We investigate the following weak version of Ichino-Ikeda conjecture
for the non-tempered representations.

\begin{Conj}
Let $\Psi_1\times \Psi_2$ be Arthur parameters of $SO(V)\times SO(U),$
and $\Phi_1\times \Phi_2$ be associated Langlands parameters.
Let  $\Pi\boxtimes \ol\pi$  be a cuspidal automorphic
representation of  $SO(V')\times SO(U')$ in the packet
 $A_{\Psi_1}\times A_{\Psi_2}$
such that $\Hom_{SO(U')(\A)}(\Pi,\pi)\neq 0.$
The non-vanishing of the period $P_{V',U'}$
on $\Pi\boxtimes \ol{\pi}$ is equivalent to the non-vanishing of

\begin{equation}\label{II:conj}
\frac{L(\Phi_1\times \Phi^\vee_2,s)}
{L(\Phi_1,Ad, s+1/2)L(\Phi^\vee_2, Ad, s+1/2)}|_{s=1/2}.
\end{equation}
\end{Conj}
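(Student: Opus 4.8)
The plan is to \emph{verify} this conjecture for the pair of parameters at hand, namely $\Psi_1=\Psi_{E,\eta}$ on $SO(V)$ (of $\theta_{10}$ type) and $\Psi_2=\Psi_\pi$ on $SO(U)$, by reducing everything to the global restriction theorem \ref{global:restriction:thm} and an explicit computation of Artin $L$-functions. Under the standing hypothesis $\Hom_{SO(U')(\A)}(\Pi,\pi)\neq 0$, Theorem \ref{global:restriction:thm} already gives that $P_{V',U'}$ is non-vanishing on $\Pi\boxtimes\ol\pi$ if and only if $E\neq\disc(q|_{U'})=K$; moreover, exactly as in the proof of that theorem (using the local restriction theorem of Section \ref{local:restriction} together with Lemma \ref{local:global:1}), the non-vanishing of $\Hom$ forces $\pi=\pi(\eta_L)$ to be the cuspidal dihedral representation attached to a Hecke character $\eta_L$ of $L=E\otimes_F K$ matching $\eta$, which in particular already forces $L/K$ to be a field and hence $E\neq K$. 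So on the period side the condition is automatically satisfied, and what has to be shown is that \textbf{the ratio (\ref{II:conj}) evaluated at $s=1/2$ is likewise automatically non-zero} for $\pi$ of this dihedral shape (and conversely, when $\pi$ is not of this shape, $\Hom=0$ and the conjecture's hypothesis fails, so there is nothing to prove). This is precisely the content needed to match Theorem \ref{global:condition}(2) with (\ref{II:global:condtition}).

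Next I would make the parameters explicit. The Langlands parameter of $\Psi_{E,\eta}$ is the four–dimensional representation $\Phi_1=\Phi_{E,\eta}\cdot(|\cdot|^{1/2}\oplus|\cdot|^{-1/2})$, where $\Phi_{E,\eta}=\Ind_{W_E}^{W_F}\eta$ has determinant $\chi_{E/F}$ (recall $\eta|_{\I_F}=1$, so $\eta^\sigma=\eta^{-1}$), and $\Phi_2=\Ind_{W_L}^{W_F}\eta_L$ is the four–dimensional parameter of $\pi(\eta_L)$, with $\eta_L|_{\I_E}=\eta$ after applying an element of $\Aut(L/K)$. Since $^{L}SO(V)=Sp_4(\C)$ has adjoint representation $\operatorname{Sym}^2$ of the standard $4$–dimensional one, and $^{L}SO(U)=SO_4(\C)$ has adjoint representation $\wedge^2$ of its standard $4$–dimensional one, the plan is to feed these into the ratio and expand. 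For the numerator,
\[
L(\Phi_1\times\Phi_2^\vee,s)=L\!\left(\Phi_{E,\eta}\otimes\Phi_2^\vee,\ s+\tfrac12\right)\,L\!\left(\Phi_{E,\eta}\otimes\Phi_2^\vee,\ s-\tfrac12\right),
\]
and the Mackey/Artin formalism $\Phi_{E,\eta}\otimes\Phi_2^\vee=\Ind_{W_E}^{W_F}\!\big(\eta\cdot\Res_{W_E}\Phi_2^\vee\big)$ together with $W_L=W_E\cap W_K$ (valid because $E\neq K$) rewrites each factor as a product of Hecke $L$-functions of $L$ attached to characters built from $\eta\circ\Nm_{L/E}$, $\eta^\sigma\circ\Nm_{L/E}$ and $\eta_L^{-1}$. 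Likewise, expanding $\operatorname{Sym}^2\Phi_1$ and $\wedge^2\Phi_2^\vee$ by the standard formulas for symmetric and exterior squares of induced representations yields $L(\Phi_1,Ad,s)$ and $L(\Phi_2^\vee,Ad,s)$ as products of $\zeta_F$ (shifted, because $\Psi_1$ is non-tempered), of $L(\chi_{E/F},\cdot)$, and of Hecke $L$-functions of $E$ and $L$ attached to $\eta^2$, $\eta_L\eta_L^{-\sigma}$ and their quadratic twists — crucially, no cuspidal $GL_n$ factors ever appear. A bookkeeping cross–check is that this must reproduce the unramified computation $L^\Omega(\Pi,\chi_{E/F},\rho,s)=\zeta^\Omega(s-1)\zeta^\Omega(s)L^\Omega(\chi_{E/F},s)^2\zeta^\Omega(s+1)$ of Proposition \ref{thetaHDGD:unramified}.

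The final step is to evaluate the quotient at $s=1/2$. Here the first numerator factor is taken at argument $1$ and the second at argument $0$; the latter is converted to a value at argument $1$ of the dual via the completed functional equation, whose $\varepsilon$– and $\Gamma$–factors are finite and non-zero once the trivial zeros are accounted for (this is where the $\det\Phi_{E,\eta}=\chi_{E/F}$ contributions, and the quadratic characters of $E$, $K$, $L$, get used). The poles of $L(\Phi_1,Ad,\cdot)$ at $s=1/2$ produced by the $\zeta_F$–shifts are matched in order by the poles of $L(\Phi_1\times\Phi_2^\vee,\cdot)$ (and, in the dihedral degenerate cases, by poles of $L(\Phi_2^\vee,Ad,\cdot)$), so the ratio is finite; the surviving factors are Hecke $L$-values of $F$, $E$, $K$, $L$ on the line $\Re(s)=1$, which are non-zero by the Hadamard–de la Vallée-Poussin non-vanishing theorem. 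This shows (\ref{II:conj}) is non-zero exactly when $E\neq K$, in agreement with the period side, and the equivalence in Theorem \ref{global:condition} together with the conjecture for this case follows. I expect the main obstacle to be precisely this last pole/zero accounting: keeping exact track of the orders of the $\zeta_F$–poles of $L(\Phi_1,Ad,\cdot)$ against those of the numerator, and of the trivial zeros at argument $0$ under the functional equation, especially in the degenerate sub-cases $\eta=\eta^\sigma$ (so $\eta^2=1$) and $\eta_L=\eta_L^\sigma$, where the relevant $L$-functions split further as $\zeta_F\cdot L(\chi_{E/F},\cdot)$ and the component groups $S_{E,\eta}$ change.
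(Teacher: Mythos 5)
Your overall strategy — reduce the period side to Theorem \ref{global:restriction:thm}, substitute the explicit dihedral and induced parameters, and compute the ratio (\ref{II:conj}) by decomposing it into Hecke $L$-functions — is broadly the road the paper takes. There is, however, one genuine logical gap in your opening reduction, and the role of Ikeda's theorem [I] in the paper is worth contrasting with your plan to avoid it.

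The gap is the assertion that the hypothesis $\Hom_{SO(U')(\A)}(\Pi,\pi)\neq 0$ alone already forces $\pi=\pi(\eta_L)$ to be a global cuspidal dihedral representation, whence $L/K$ is a field and $E\neq K$. You say this follows ``exactly as in the proof of that theorem,'' but the step in the proof of Theorem \ref{global:restriction:thm} that upgrades the local condition ``$\pi_v$ is dihedral with respect to $L_v$ for every $v$'' to the global conclusion ``$\pi$ is a global dihedral $\pi(\eta_L)$'' is the strong multiplicity one argument $\pi\simeq\pi\otimes\chi_{L/K}$, which needs $\chi_{L/K}\neq 1$, i.e.\ $K\neq E$, as an \emph{input}. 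You cannot use it to \emph{derive} $K\neq E$; the argument as you have stated it is circular. When $K=E$ the algebra $L$ is globally split, $\chi_{L/K}=1$, and the local dihedral condition is very weak (every principal series is dihedral with respect to the split quadratic algebra), so it is not obvious that a cuspidal $\pi$ with $\Hom\neq 0$ must have $K\neq E$. In that case the period vanishes by Theorem \ref{global:restriction:thm}(1), and you would still need to show that (\ref{II:conj}) vanishes as well — a case your proposal does not address. The paper closes this case by quoting Ikeda's result [I] on the location of poles of triple product $L$-functions: the numerator $L(\pi(\eta)\times\ol\pi,s+\tfrac12)L(\pi(\eta)\times\ol\pi,s-\tfrac12)$ has the requisite pole at $s=1/2$ only if $K\neq E$ and $\pi=\pi(\eta_L)$ with $\eta_L$ matching $\eta$, which handles the direction ``(\ref{II:conj}) nonzero $\Rightarrow$ period nonzero'' uniformly in $K$. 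To keep your reduction you would either have to import such a pole-location theorem after all, or prove directly that $K=E$ together with cuspidal $\pi$ and $\Hom\neq 0$ is impossible — neither follows from the lemmas you cite.

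Apart from that, your plan differs from the paper mainly in labour distribution. The paper uses [I] as a black box to detect poles of the two triple $L$-functions, and only at the very end, after the period side has pinned down $\pi=\pi(\eta_L)$, does it write the explicit dihedral factorization $L(\pi(\eta)\boxtimes\ol{\pi(\eta_L)},s)=\zeta_E(s)\,L_E(\eta^{-1}\sigma(\eta),s)\,L_L(\eta_L^{-1}\sigma(\eta_L),s)$. Your plan — to expand $\operatorname{Sym}^2\Phi_1$, $\wedge^2\Phi_2^\vee$, and the numerator via Mackey theory from the outset into shifted $\zeta$'s and abelian Hecke $L$-functions of $F$, $E$, $K$, $L$ — is sound in principle and more self-contained, but you will have to keep careful track of the order of the $\zeta$-poles against the numerator poles, especially in the degenerate subcases $\eta=\eta^\sigma$ and $\eta_L=\eta_L^\sigma$ where additional $\zeta_E$ and $\zeta_L$ factors appear on both sides, and of the trivial zeros at argument $0$ that are removed by the functional equation. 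You are right to flag this accounting as the hard part; note that the paper's own treatment of this final step is terse and does not carry it out in detail, so a complete version of your computation would actually add something the paper leaves implicit.
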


The conjecture holds for Saito-Kurokawa packets as shown in [GG].
Let us show that it also holds for the packets of the type $\theta_{10}$.

The L-parameter associated to $\Psi_{E,\eta}$ equals
$\Phi_{E,\eta}|\cdot|^{1/2}\oplus \Phi_{E,\eta}|\cdot|^{-1/2}$.
The parameter $\Psi_2$ is associated to a representation $\pi$ of $G_K(\A)$
whose central character has trivial restriction to $\I_F$.

Thus, (\ref{II:conj}) equals

$$\frac{L(\Phi_{E,\eta}\times \Phi^\vee_{\pi},s+1/2)
L(\Phi_{E,\eta}\times \Phi^\vee_{\pi},s-1/2)}
{L(\Phi_{\pi},Ad,s+1/2)L(\Phi_{E,\eta},Ad,s+1/2)\zeta(s+1/2)L(\Phi_{E,\eta},Ad,s+3/2) L(\Phi_{E,\eta},Ad,s-1/2)}|_{s=1/2}.$$

Assume that this expression does not vanish.
The denominator has a simple pole at $s=1/2$ coming from the factor $\zeta(s+1/2)$.
Hence the numerator also must have a pole. The numerator is the product
of two triple L-functions whose analytic behavior was studied in [I].
In particular,  if $L(\pi(\eta)\times \ol{\pi},s)$ has a pole at $s=1$ then
$K\neq E$ and $\pi=\pi(\eta_L)$ with $\eta_L$ matches $\eta$.
In this case $L(\pi(\eta)\times \ol{\pi},s)$ also has a pole at $s=0$

Conversely, assume $P_{D,K}(\Pi\boxtimes \ol{\pi})\neq 0$.
Then $K\neq E$ and $\pi=\pi(\eta_L)$, where $\eta_L$ matches $\eta$.
In this case
$$L(\pi(\eta)\boxtimes \ol\pi(\eta_L),s)=
\zeta_E(s)
L_E(\eta^{-1} \sigma(\eta),s)L_L(\eta^{-1}_L \sigma(\eta_L),s)$$
and hence has a pole at $s=1$ and at $s=0$.
Thus the (\ref{II:conj}) does not vanish.


\begin{thebibliography}{99}

\bibitem [AGRS]{AGRS} A. Aizenbud, D. Gourevitch, S. Rallis, G.  Schiffmann, {\em  Multiplicity one theorems}.
Ann. of Math. (2) 172 (2010), no. 2, 1407-1434.
\bibitem[Ca]{Ca}
W. Casselman, {\em On the representations of $SL_2(k)$ related to
binary quadratic forms} , Americal Journal of Mathematics,  vol 94(1972) no.3,
810-834.
\bibitem
[G]{G} W.T. Gan,
{\em The Saito-Kurokawa space of PGSp4 and its transfer to inner forms.} Eisenstein series and applications, 87-123, Progr. Math., 258, Birkhäuser Boston, Boston, MA, 2008.
\bibitem
[GG]{GG} W.T. Gan, N. Gurevich,  {\em Restrictions of Saito-Kurokawa representations. With an appendix by Gordan Savin.} Contemp. Math., 488, Automorphic forms and L-functions I. Global aspects, 95-124, Amer. Math. Soc., Providence, RI, 2009.
\bibitem
[GG1]{GG1} W.T. Gan, N. Gurevich, {\em Nontempered A-packets of G2: liftings from SL˜2.} Amer. J. Math. 128 (2006), no. 5, 1105-1185.
\bibitem
[GGP]{GGP} W.T. Gan, B. Gross, D. Prasad,
{\em Sur les conjectures de Gross et Prasad. I.}
Asterisque No. 346 (2012)
\bibitem
[GI] {GI} W.T. Gan, A. Ichino,
{\em On endoscopy and the refined Gross-Prasad conjecture for (SO5,SO4)}.
 J. Inst. Math. Jussieu 10 (2011), no. 2, 235-324.
\bibitem
[GTak] {GTak} W.T. Gan, S. Takeda,
{\em Theta correspondences for GSp(4).} Represent. Theory 15 (2011), 670-718.
\bibitem
[GT] {GT} W.T. Gan, W. Tantono {\em The local Langlands conjecture
for GSp(4) II: the case of inner forms},
American Jorunal of Mathematics, 136 (2014), no. 3, 761-805.
\bibitem
[GP1] {GP1} B. Gross, D. Prasad,
{\em On the decomposition of a representation of
$SO_n$ when restricted to $SO_{n-1}$.}
 Canad. J. Math. 44 (1992), no. 5, 974–1002.
\bibitem
[GP2] {GP2} B. Gross, D. Prasad, {\em On irreducible representations
of $SO_{2n+1}\times SO_{2m}$},  Canad. J. Math. 46 (1994), 939-950.

\bibitem[H]{H}
R. Howe,
{\em Transcending classical invariant theory.}
J. Amer. Math. Soc. 2 (1989), no. 3, 535-552.

\bibitem [HPS] {HPS} R. Howe, I. Piatetskii-Shapiro, {\em  A counterexample to the
"generalized Ramanujan conjecture'' for (quasi-) split groups.}
 Automorphic forms, representations and L-functions (Proc. Sympos. Pure Math., Oregon State Univ., Corvallis,
Ore., 1977), Part 1.  315-322

\bibitem [I] {I} T. Ikeda {\em On the location of poles of the triple L-functions}
Compositio Mathematica, 83 , no 2 (1992), 187-237.

\bibitem[II]{II} A. Ichino, T. Ikeda {\em  On the periods of automorphic forms on special orthogonal groups and the Gross-Prasad conjecture}. Geom. Funct. Anal. 19 (2010), no. 5, 1378–-1425.

\bibitem[K]{K} K. Konno,
{\em Irreducible representations of $GSp_4$ over a p-adic field $F$},
Proceedings of the 9-th Workshop on Number Theory, Hakuba, 2006. 119-161.

\bibitem
[KRS] {KRS} S. Kudla, S. Rallis, D. Soudry {\em On the degree 5 L-function for Sp(2)}. Invent. Math. 107 (1992), no. 3, 483–541.

\bibitem [MVW] {MVW} C. Moeglin, J-L. Waldspurger, M-F Vigneras
{\em Correspondances de Howe sur un corps p-adique.}
Lecture Notes in Mathematics, 1291. Springer-Verlag, Berlin, 1987.

\bibitem [MW] {MW} C. Moeglin, J-L. Waldspurger
{\em Sur les conjectures de Gross et Prasad. II.} Asterisque No. 347 (2012).


\bibitem[Li]{Li} J.-S. Li {\em Non-existence of singular cusp forms}. Compositio Mathematica, 83 (1) (1992), 43-51.
\bibitem
[PSR] {PSR} I. Piatetskii-Shapiro, S. Rallis
{\em A new way to get Euler products.}
J. Reine Angew. Math. 392 (1988), 110–124.
\bibitem
[P1]{P1} D. Prasad {\em Invariant forms for representations of $GL_2$
 over a local field.} Amer. J. Math. 114 (1992), no. 6, 1317–1363.
\bibitem
[P2]{P2} D. Prasad {\em On the decomposition of a representation of GL(3) restricted to GL(2) over a p-adic field.} Duke Math. J. 69 (1993), no. 1, 167–177.

\bibitem [Q] {Q} Y. Qiu {\em Periods of Saito-Kurokawa representations} ,
 Int. Math. Res. Not. 2013.

\bibitem
[R] {R} B. Roberts, {\em The theta correspondence for similitudes.} Israel J. Math. 94 (1996), 285–317.
\bibitem
[S] {S} D. Soudry, {\em The CAP representations of GSp(4,A)}. J. Reine Angew. Math. 383 (1988), 87–108.
\bibitem
[SZ]{SZ} B. Sun, C-B. Zhu {\em Conservation relations for local theta correspondence},  J. Amer. Math. Soc., to appear.
\bibitem
[W] {W}J-L. Waldspurger {\em Une variante d'un resultat de Aizenbud, Gourevitch, Rallis et Schiffman.}
Asterisque No. 346 (2012), 313-318
\bibitem
[Ya1] {Ya1} T. Yasuda, {\em The residual spectrum of inner forms of Sp(2)}.
Pacific J. Math. 232 (2007), no. 2, 471–490.

\bibitem
[Ya2] {Ya2} T. Yasuda , {\em CAP representations of inner forms of $Sp(4)$ with respect to Klingen parabolic subgroup},
submitted to Israel Journal of Mathematics.

\end{thebibliography}
\end{document}